\newtheorem{thm}{Theorem}[section]
\newtheorem{rem}[thm]{Remark}
\newtheorem{lem}[thm]{Lemma}
\newtheorem{eg}{Example}[section]
\newtheorem{defn}[thm]{Definition}
\newtheorem{ass}[thm]{Assumption}
\newcommand{\beq}{\begin{equation}}
\newcommand{\eeq}{\end{equation}}
\newcommand{\ben}{\begin{equation*}}
\newcommand{\een}{\end{equation*}}
\newcommand{\eps}{\epsilon}
\newcommand{\partt}{\partial_t}
\newcommand{\partx}{\partial_x}
\newcommand{\muint}[1]{\int_{-1}^{1} #1 \, d \mu}
\newcommand{\quand}{\quad \text{and} \quad}
\newcommand{\qquand}{\qquad \text{and} \qquad}
\newcommand{\mO}{\mathcal{O}}
\newcommand{\Flow}{f^{\rm{low}}} 
\newcommand{\Fhigh}{f^{\rm{high}}} 
\newcommand{\elow}{\xi^{\rm{low}}}
\newcommand{\fl}{f_\ell}
\newcommand{\fNl}{f_\ell^N}
\newcommand{\uhigh}{u^{\rm{high}}}
\newcommand{\ulow}{u^{\rm{low}}}
\newcommand{\ulhigh}{u_\ell^{\rm{high}}}
\newcommand{\ullow}{u_\ell^{\rm{low}}}
\newcommand{\ghigh}{g^{\rm{high}}}
\newcommand{\etahigh}{\eta^{\rm{high}}}
\newcommand{\etalow}{\eta^{\rm{low}}}
\newcommand{\xihigh}{\xi^{\rm{high}}}
\newcommand{\xilow}{\xi^{\rm{low}}}
\newcommand{\flhigh}{f_\ell^{\rm{high}}}
\newcommand{\fllow}{f_\ell^{\rm{low}}}
\newcommand{\xilhigh}{\xi_\ell^{\rm{high}}}
\newcommand{\xillow}{\xi_\ell^{\rm{low}}}
\newcommand{\fNzero}{f_0^N}
\newcommand{\fNone}{f_1^N}
\newcommand{\fNtwo} {f_2^N}
\newcommand{\fNthree}{f_3^N}
\newcommand{\fNfour}{f_4^N}
\newcommand{\fNfive}{f_5^N}
\newcommand{\fNlmo}{f_{\ell-1}^N}
\newcommand{\fNlpo}{f_{\ell+1}^N}
\newcommand{\fNN}{f_N^N}
\newcommand{\fNNmo}{f_{N-1}^N}
\newcommand{\fzero}{f_0}
\newcommand{\fone}{f_1}
\newcommand{\flmo}{f_{\ell-1}}
\newcommand{\flpo}{f_{\ell+1}}
\newcommand{\fNlk}{f_{\ell,k}^N}
\newcommand{\flk}{f_{\ell,k}}
\newcommand{\flzero}{f_{\ell,0}}
\newcommand{\fzerozero}{f_{0,0}}
\newcommand{\fzerok}{f_{0,k}}
\newcommand{\fonek}{f_{1,k}}
\newcommand{\ftwok}{f_{2,k}}
\newcommand{\flpok}{f_{\ell+1,k}}
\newcommand{\flmok}{f_{\ell-1,k}}
\newcommand{\glzero}{g_{\ell,0}}
\newcommand{\gnk}{g_{n,k}}
\newcommand{\fnk}{f_{n,k}}
\newcommand{\fnpok}{f_{n+1,k}}
\newcommand{\fnmok}{f_{n-1,k}}
\newcommand{\fNzerok}{f_{0,k}^N}
\newcommand{\fNonek}{f_{1,k}^N}
\newcommand{\fNlmok}{f_{\ell-1,k}^N}
\newcommand{\fNlpok}{f_{\ell+1,k}^N}
\newcommand{\fNNk}{f_{N,k}^N} 
\newcommand{\fNNmok}{f_{N-1,k}^N}  
\newcommand{\fNpo}{f_{N+1}}
\newcommand{\fNpok}{f_{N+1,k}}
\newcommand{\elk}{\xi_{\ell,k}}
\newcommand{\ezerok}{\xi_{0,k}}
\newcommand{\eonek}{\xi_{1,k}}
\newcommand{\elpok}{\xi_{\ell+1,k}}
\newcommand{\elmok}{\xi_{\ell-1,k}}
\newcommand{\eNk}{\xi_{N,k}}
\newcommand{\eNmok}{\xi_{N-1,k}}
\newcommand{\eNmtk}{\xi_{N-2,k}}
\newcommand{\etwok}{\xi_{2,k}}
\newcommand{\enk}{\xi_{n,k}}
\newcommand{\elstarmok}{\xi_{\ell_*-1,k}}
\newcommand{\elstarmtk}{\xi_{\ell_*-2,k}}
\newcommand{\elstarmthk}{\xi_{\ell_*-3,k}}
\newcommand{\ulk}{u_{\ell,k}}
\newcommand{\ulks}{u_{\ell,k}^*}
\newcommand{\ulnk}{u_{\ell,-k}}
\newcommand{\uzerok}{u_{0,k}}
\newcommand{\uonek}{u_{1,k}}
\newcommand{\utwok}{u_{2,k}}
\newcommand{\uoneks}{u_{1,k}^*}
\newcommand{\utwoks}{u_{2,k}^*}
\newcommand{\elzero}{\xi_{\ell,0}}
\newcommand{\el}{\xi_\ell}
\newcommand{\enpok}{\xi_{n+1,k}}
\newcommand{\enmok}{\xi_{n-1,k}}
\newcommand{\eNpolN}{e^{N+1}_\ell}
\newcommand{\eNlN}{e^N_\ell}
\newcommand{\eNzeroN}{e^N_0}
\newcommand{\eNoneN}{e^N_1}
\newcommand{\eNtwoN}{e^N_2}
\newcommand{\eNzeroNpo}{e^{N+1}_0}
\newcommand{\eNoneNpo}{e^{N+1}_1}
\newcommand{\eNtwoNpo}{e^{N+1}_2}
\newcommand{\eNl}{\xi_\ell}
\newcommand{\eNzero}{\xi_0}
\newcommand{\eNone}{\xi_1}
\newcommand{\eNtwo}{\xi_2}
\newcommand{\eNthree}{\xi_3}
\newcommand{\eNfour}{\xi_4}
\newcommand{\eNfive}{\xi_5}
\newcommand{\eNlpo}{\xi_{\ell+1}}
\newcommand{\eNlmo}{\xi_{\ell-1}}
\newcommand{\eNN}{\xi_N}
\newcommand{\eNNmo}{\xi_{N-1}}
\newcommand{\etal}{\eta_\ell}
\newcommand{\xil}{\xi_\ell}
\newcommand{\im}{\operatorname{Im}}
\newcommand{\re}{\operatorname{Re}}
\newcommand{\Hjk}{\mathcal{H}^j_k}
\newcommand{\Hzerok}{\mathcal{H}^0_k}
\newcommand{\Honek}{\mathcal{H}^1_k}
\newcommand{\Hthreek}{\mathcal{H}^3_k}
\newcommand{\hgammak}{h^\gamma_k}
\newcommand{\Hjik}{\mathcal{H}_k^{j,i}} 
\newcommand{\HjNk}{\mathcal{H}_k^{j,N}} 
\newcommand{\Hzerolsmtwok}{\mathcal{H}_k^{0,\ell_*-2}} 
\newcommand{\Hzerozero}{\mathcal{H}^0_0}
\newcommand{\HzeroNmok}{\mathcal{H}_k^{0,N-1}} 
\newcommand{\an}{a_n}
\newcommand{\anK}{a_n^K}
\numberwithin{equation}{section}
\begin{document}             

\title{Multiscale convergence properties for spectral approximations of a model kinetic equation%
	 \footnote{This material is based upon work supported by the U.S. Department of Energy, Office of Science, Office of Advanced Scientific Computing Research.}%
	 \footnote{This manuscript has been authored by UT-Battelle, LLC under Contract No. DE-AC05-00OR22725 with the U.S. Department of Energy. The United States Government retains and the publisher, by accepting the article for publication, acknowledges that the United States Government retains a non-exclusive, paid-up, irrevocable, world-wide license to publish or reproduce the published form of this manuscript, or allow others to do so, for United States Government purposes. The Department of Energy will provide public access to these results of federally sponsored research in accordance with the DOE Public Access Plan (\texttt{http://energy.gov/downloads/doe-public-access-plan}).}
	}
\author{
Zheng Chen\footnote{Computational and Applied Mathematics Group, Oak Ridge National Laboratory, Oak Ridge, TN 37831 USA. Email: chenz1@ornl.gov.},
Cory D. Hauck\footnote{Computational and Applied Mathematics Group, Oak Ridge National Laboratory, Oak Ridge, TN 37831 USA. Email: hauckc@ornl.gov.}}

\maketitle

\begin{abstract}
	\label{sec:abstract} 
	In this work, we prove rigorous convergence properties for a semi-discrete, moment-based approximation of a model kinetic equation in one dimension.  This approximation is equivalent to a standard spectral method in the velocity variable of the kinetic distribution and, as such, is accompanied by standard algebraic estimates of the form $N^{-q}$, where $N$ is the number of modes and $q>0$ depends on the regularity of the solution.  However, in the multiscale setting, the error estimate can be expressed in terms of the scaling parameter $\epsilon$, which measures the ratio of the mean-free-path to the characteristic domain length.  We show that, for isotropic initial conditions, the error in the spectral approximation is $\mathcal{O}(\epsilon^{N+1})$.  More surprisingly, the coefficients of the expansion satisfy super convergence properties.  In particular, the error of  the $\ell^{th}$ coefficient of the expansion scales like $\mathcal{O}(\epsilon^{2N})$  when $\ell =0$ and $\mathcal{O}(\epsilon^{2N+2-\ell})$ for all $1\leq \ell \leq N$.  This result is  significant, because the low-order coefficients correspond to physically relevant quantities of the underlying system.   All the above estimates involve constants depending on $N$, the time $t$, and the initial condition.  We investigate specifically the dependence on $N$, in order to assess whether increasing $N$ actually yields an additional factor of $\epsilon$ in the error.   Numerical tests will also be presented to support the theoretical results.

\end{abstract}

\textbf{Keywords:}  kinetic equation;  multiscale;  super convergence; spectral method; diffusion approximation



\tableofcontents

\section{Introduction} 
\label{sec:intro}
In this paper, we study the following linear kinetic model
\begin{subnumcases}{\label{eqn:transport_simple}}
\eps \partt f(x,\mu, t) + \mu \partx f(x,\mu,t) + \frac{1}{\eps}f(x,\mu,t) = \frac{1}{\eps} \bar{f}(x,t),
& $(x,\mu,t) \in [-\pi,\pi) \times [-1,1] \times (0,\infty),$ \qquad \label{eqn:transport_simple_1}\\
f(\pi,\mu,t) = f(-\pi,\mu,t),
& $(\mu,t) \in [-1,1] \times (0,\infty),$ \\
f(x,\mu, 0) = g(x,\mu),
& $(x,\mu) \in  [-\pi,\pi) \times [-1,1],$
\end{subnumcases}
where $\bar{f} = \frac12 \muint{f}$.  
In particular, we prove interesting convergence properties for spectral discretization with respect to the variable $\mu$. The function $f$ is a kinetic distribution function; the physical interpretation is that $f(x,\mu, t)$ gives the density of particles with respect to the measure $d \mu dx $ that at time $t$ are located at position $x \in [-\pi,\pi)$ and moving with velocity $\mu \in [-1,1]$.  
The parameter $\eps >0$ is a scaling parameter that measures the relative strength of different processes; more about this will be said below. 

System \eqref{eqn:transport_simple} is among the most elementary examples of a kinetic model. 
However, despite its simplicity, it shares the basic features of many kinetic equations:  particle advection (modeled by the operator $\cA \colon f \mapsto  -\mu \partx f$) and particle interactions (modeled by the scattering operator $\cL\colon f \mapsto \bar{f} - f$). 
These basic features are found in more realistic models that describe dilute gases  \cite{chapman1970mathematical, Cercignani-1988, Cercignani-Illner-Pulvirenti-1994}; neutron \cite{lewis1984computational,dautray2012mathematical,case1967linear,davison1957neutron}, photon \cite{Pomraning-1973,mihalas1999foundations}, and neutrino \cite{mezzacappa1999neutrino} radiation; charged transport in semiconductor devices \cite{selberherr2012analysis,Markovich-Ringhofer-Schmeiser-1998}; and ionized plasmas \cite{hazeltine2004framework,boyd2003physics}.   
However, connecting \eqref{eqn:transport_simple} to these more realistic models requires the introduction of more complicated geometries, global field equations, nonlinearities, more complex collision mechanisms, and physical boundary conditions.  

Existence and uniqueness results for \eqref{eqn:transport_simple} follow from classical transport theory.  See, for example, \cite[Chapter XXI]{dautray2012mathematical}. For data $g(x,\mu) \in L^2(d \mu dx)$, \eqref{eqn:transport_simple} has a unique solution $f \in C^0([0,\infty); L^2( d \mu dx))$.  If further, $ g \in D(\cA) := \{ u \in L^2( d \mu dx) : \mu \partx u \in L^2( d \mu dx) \}$, then $f \in C^1([0,\infty); L^2( d \mu dx)) \cap C^0([0,\infty); D(\cA))$.

The scattering operator $\cL$ is self adjoint in $L^2(d \mu)$ and satisfies
\begin{equation}
\label{eqn:dissipation}
\muint{\psi \cL \psi} \leq 0  
\quand
\muint{(\psi - \bar{\psi}) \cL (\psi - \bar{\psi}) } = -\muint{(\psi - \bar{\psi})^2 }
\end{equation}  
for any function $\psi \in  L^2(d \mu)$. This simple dissipative structure motivates a diffusion approximation for \eqref{eqn:transport_simple} when $\eps \ll 1$.	
In such cases, $f = f^{(0)} + \cO(\eps)$, where $f^{(0)}$ is independent of $\mu$ and satisfies the diffusion equation
\cite{Larsen-Keller-1974,Habetler-Matkowsky-1975,bardos1984diffusion,bensoussan1979boundary}
\begin{equation}
	\partt f^{(0)} - \frac13 \partx^2 f^{(0)} = 0. 
\end{equation}

The diffusion approximation is useful because it removes the need for angular discretization and is therefore relatively cheap to compute; however, it does so at the expense of an $O(\eps)$ error.
Spectral methods (see \cite{hesthaven2007spectral,canuto2010spectral} in general or \cite[Chapter 3]{Lewis-Miller-1984} for applications to kinetic transport equations), on the other hand, are more expensive but can be used to discretize \eqref{eqn:transport_simple} with respect to $\mu$ when $\eps$ is not small.  
A standard spectral method for \eqref{eqn:transport_simple} seeks an approximation  
\beq \label{eqn:fN}
f^N(x,\mu,t) =  \sum_{\ell=0}^N \fNl(x,t) p_\ell(\mu),
\eeq 
	such that  
	\begin{equation}\label{eqn:spectral}
	\eps \partt f^N = \cP \cT f^N, \qquad f^N|_{t=0} = \cP g ,
	\end{equation} 
	where $\cT = \cA + \eps^{-1}\cL$, $p_\ell$ is the normalized, degree $\ell$ Legendre polynomial, and $\cP$ is the orthogonal projection from $L^2(d \mu)$ onto the space $\bbP^N$ of polynomials on $[-1,1]$ with degree at most $N$; that is
	\begin{equation}
	\cP \psi = \sum_{\ell=0}^N \psi_\ell p_\ell, \quad \text{where $\psi_\ell =  \int_{-1}^1 p_\ell \psi d \mu$}
	\end{equation}
	for any $\psi \in L^2(d \mu)$.  When expressed in terms of the expansion coefficients $\fNl$ in \eqref{eqn:fN}, \eqref{eqn:spectral} takes the form of a linear, symmetric hyperbolic system of balance laws in $x$ and $t$.  Standard semi-group theory (see for example \cite[Chapter 7]{brezis2010functional} or \cite[Chapter 7.4]{evans1998partial}) implies that this system has a solution in $C^0([0,\infty);[L^2(dx)]^{N+1})$ that is also in $C^1([0,\infty);[L^2(dx)]^{N+1}) \cap C^0( [0,\infty);[H^1(dx)]^{N+1} )$ when the expansion coefficients of $\cP g$ are in $H^1(dx)$.
	
	We refer to $f^N$ as the spectral approximation or $P_N$ solution. A straight-forward calculation shows that this approximation converges like
	\begin{equation}
	\label{eqn:spectral_est}
	\| f(\cdot,\cdot,t) - f^N(\cdot,\cdot,t)  \|_{L^2(d \mu dx)} \leq \frac{C(t)}{N^q},
	\end{equation}
	where $q$ is the number of $L^2$ angular derivatives of $f$ and $\partx f$ and the constant $C$ depends on $t$ but, due to the dissipative structure of $\cL$, does not depend on $\eps$ in a bad way.%
	\footnote{An estimate of the form \eqref{eqn:spectral_est} can be found in \cite{frank2016convergence} when $\eps=O(1)$.  However, a more general argument is needed to show that $C$ can be made independent of $\eps \in [0,1]$.  We give such an argument in the appendix.}  

A natural question for the spectral approximation is whether it provides an improvement over the diffusion approximation when $\eps$ is small.  The goal of the current paper is to derive an error estimate to demonstrate that this is in fact the case.  Specifically, let 
\begin{equation}\label{eqn:legendre_expansion}
f(x,\mu,t) = \sum_{\ell=0}^\infty \fl(x,t) p_\ell(\mu),\qquad \textrm{where } \fl(x,t)  = \int_{-1}^{1} p_\ell(\mu) f(x,\mu,t) d \mu,
\end{equation}
be the spectral expansion of $f$ in $L^2(d \mu)$.  
For small values of $\ell$, the coefficients $\{\fl\}$ correspond to measurable quantities and thus have physical significance.  For example, $\fzero$ is a constant multiple of the particle concentration.  Thus we also derive estimates for the errors in these coefficients, respectively. 

For the purposes of the current paper, we introduce the following assumption.
\begin{ass}\label{ass:1}
	The function $g$ is isotropic; that is, it is independent of $\mu$.  We write it as $g(x,\mu) = \frac{1}{\sqrt{2}}g_0(x)$, where the $\frac{1}{\sqrt{2}}$ is a normalization constant.
\end{ass}
\noindent This assumption is critical for the results in this paper, but will be removed in future work.  With it, our main result is the following:

\begin{thm}\label{thm:error}
	Suppose that $g_0 \in H^1(dx)$.  
	Then there exists an absolute constant $\lambda_1>0$ such that the $L^2$ error of the $P_N$ approximation satisfies
	\beq
	\label{eqn:L2_error}
	\|f - f^N\|_{L^2(d \mu dx)} (t) \leq B(g) e^{ -\frac{\lambda_1t}{\eps^2}} + C(\partx g)\sqrt t e^{-\frac{\lambda_1 t}{\eps^2}} + D(g,N,t) \eps^{N+1},
	\eeq
	where $D(g, N, t)$ is positive and bounded for any $t >0$ and is decreasing exponentially in $t$ for $t$ sufficiently large.
	Moreover, the $L^2$ error for each coefficient 
	satisfies
		\beq
		\label{eqn:moment_error}
		\|\fl - \fNl \|_{L^2(dx)} (t) \le 
		\begin{cases}
			C(\partx g)\sqrt t e^{-\frac{\lambda_1 t}{\eps^2}} + E(g,N,2,t)  \eps^{2N}, 
			& \ell=0,	\\
			C(\partx g)\sqrt t e^{-\frac{\lambda_1 t}{\eps^2}} + E(g,N,\ell,t) \eps^{2N+2-\ell}, 
			& 1 \leq \ell \leq N,
		\end{cases}
		\eeq
		where $E(g,N,\ell,t)$ is positive and bounded for any $t >0$ and is monotonically decreasing with respect to $t$.
\end{thm}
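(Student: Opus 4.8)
The plan is to diagonalize the problem by expanding in the Legendre basis in $\mu$ and the Fourier basis in $x$. Writing $f(x,\mu,t)=\sum_k\sum_\ell f_{\ell,k}(t)\,p_\ell(\mu)\,e^{ikx}$ and similarly for $f^N$, the scattering operator $\cL$ becomes diagonal (eigenvalue $0$ on the isotropic mode $\ell=0$ and $-1$ on every mode $\ell\ge1$), while advection becomes the skew-adjoint tridiagonal coupling coming from $\mu p_\ell=a_\ell p_{\ell+1}+a_{\ell-1}p_{\ell-1}$. For each fixed $k$ the exact coefficients then solve
\begin{equation*}
\eps\,\dot f_{\ell,k}+ik\bigl(a_\ell f_{\ell+1,k}+a_{\ell-1}f_{\ell-1,k}\bigr)+\tfrac1\eps(1-\delta_{\ell 0})f_{\ell,k}=0 ,
\end{equation*}
and the $P_N$ coefficients solve the identical system for $0\le\ell\le N$ with the single modification $f^N_{N+1,k}\equiv 0$. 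Consequently the error $\xi_{\ell,k}:=f_{\ell,k}-f^N_{\ell,k}$ obeys the same dissipative hierarchy, driven only by the closure source $-ik\,a_N f_{N+1,k}$ in the $\ell=N$ equation, and, crucially, with vanishing initial data $\xi_{\ell,k}(0)=0$, since the isotropic datum projects exactly onto $\bbP^N$. Parseval reduces all claimed $L^2$ bounds to summing weighted mode-by-mode estimates, the $H^1$ regularity of $g_0$ (together with the parabolic smoothing below, which tames high $k$ for $t>0$) supplying the summability and the $t$-dependence of the constants.

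The second ingredient is the interior $\eps$-scaling of the exact coefficients. I would establish, via a Hilbert/Chapman--Enskog expansion whose leading term solves the diffusion equation $\partt f_0-\tfrac13\partx^2 f_0=0$ together with the fast $\eps^{-2}$ relaxation of the non-equilibrium modes, the bound $|f_{\ell,k}(t)|\lesssim \eps^\ell$ (with a $k$-dependent prefactor), up to a transient decaying like $e^{-\lambda_1 t/\eps^2}$. This pins the size of the closure source at $|a_N f_{N+1,k}|\lesssim\eps^{N+1}$, which is the origin of the factor $\eps^{N+1}$. A global energy estimate for the error system---in which the skew-adjointness of advection cancels the cross terms and scattering contributes the dissipation $\eps^{-1}\sum_{\ell\ge1}|\xi_{\ell,k}|^2$---combined with Duhamel's formula and the fast relaxation then yields the total bound $\|f-f^N\|=\cO(\eps^{N+1})$, i.e.\ the $D$-term of \eqref{eqn:L2_error}; the homogeneous part of the semigroup and the advective excitation during the initial layer produce the exponentially decaying $B$- and $C\sqrt t$-terms.

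The heart of the theorem, and the step I expect to be hardest, is the super-convergence \eqref{eqn:moment_error}, which cannot come from the global energy estimate and instead requires propagating the single $\eps^{N+1}$ source down the damped part of the hierarchy. For $1\le\ell\le N$ each damped equation can be solved for its own coefficient,
\begin{equation*}
\xi_{\ell,k}=-\eps^2\dot\xi_{\ell,k}-\eps\, ik\bigl(a_\ell\xi_{\ell+1,k}+a_{\ell-1}\xi_{\ell-1,k}\bigr),
\end{equation*}
so that the $\eps^{-1}$ damping converts each advective coupling to the neighbour one step closer to the source into an extra factor of $\eps$; iterating downward from $\ell=N$ (where $|\xi_{N,k}|\lesssim\eps\cdot\eps^{N+1}=\eps^{N+2}$) produces $\|\xi_{\ell,k}\|\lesssim\eps^{2N+2-\ell}$, the stated exponent. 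The delicate point is that this relation is not a clean recursion: its right-hand side contains both the not-yet-estimated lower neighbour $\xi_{\ell-1,k}$ and the time derivative $\dot\xi_{\ell,k}$, so the naive induction is circular. I would close it by writing a genuine asymptotic expansion $\xi_{\ell,k}=\sum_j\eps^{\,j}\mathcal{H}^{\,j}_{\ell,k}$, determining the $\mathcal{H}^{\,j}_{\ell,k}$ order by order from the hierarchy (with a cut-off index $\ell_*$ controlling how deep the expansion must run), and bounding the remainder by a separate dissipative energy estimate, the $\dot\xi$-terms being absorbed at one higher order in $\eps$ and controlled using time-derivative bounds inherited from differentiating the system.

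Finally the conservative equation $\eps\dot\xi_{0,k}=-ik\,a_0\xi_{1,k}$ has no damping, so it merely integrates the $\cO(\eps^{2N+1})$ bound on $\xi_{1,k}$ over $[0,t]$; because the source decays with the diffusing profile $f_{0,k}\sim e^{-k^2 t/3}$, this integral stays bounded and produces the $\ell=0$ exponent $2N$ together with the stated time behaviour of $E$ (accumulation in the conservative mode, tempered and ultimately dominated by the decay of the diffusing source). Throughout, tracking how the constants accumulate in $N$---one factor per level of the hierarchy---is what determines the $N$-dependence flagged in the abstract.
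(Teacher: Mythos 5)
Your overall architecture matches the paper's: Legendre--Fourier diagonalization, the error hierarchy driven solely by the closure source $-ik\,a_N f_{N+1,k}$ with zero initial data, interior bounds $|f_{\ell,k}|\lesssim\eps^\ell$ for the exact coefficients, and downward propagation of smallness through the damped equations; the exponents you predict are the correct ones. But there are two genuine gaps. First, everything in the theorem rests on quantitative decay: the high-frequency rate $e^{-\lambda_1 t/\eps^2}$, the low-frequency diffusive rate $e^{-\lambda_2 k^2 t}$ for the $f_{\ell,k}$ (hence for the source $f_{N+1,k}$), and the boundedness and decay in $t$ of $D$ and $E$. You invoke these through formal Chapman--Enskog/diffusion heuristics (``fast relaxation'', ``$f_{0,k}\sim e^{-k^2t/3}$''), but the natural energy $\sum_\ell |f_{\ell,k}|^2$ is only \emph{non-increasing}, because the $\ell=0$ mode is undamped; proving actual decay requires a hypocoercivity device. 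The paper's proof hinges on the modified energy $\Hzerok+\hgammak$ with compensating function $\hgammak(u)=-\frac{\gamma}{4a_0}\im(u_{0,k}u_{1,k}^*)$ (Lemmas \ref{thm:compensating function} and \ref{thm:energy_f}); nothing in your plan supplies an equivalent mechanism, and without it neither the $\eps^\ell$ coefficient bounds nor the convergence of the time integrals you rely on can be established rigorously.

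Second, the heart of the theorem, the superconvergence \eqref{eqn:moment_error}, is not actually closed by your plan. You correctly identify the circularity (the damped recursion involves the not-yet-estimated lower neighbor, and $\xi_{0,k}$ is undamped), but the difficulty is sharper than your sketch acknowledges: any downward iteration needs, at \emph{every} level, an improved bound on $\xi_{0,k}$, since $\xi_{0,k}$ is the neighbor of $\xi_{1,k}$. Integrating the undamped equation $\eps\partt\ezerok=-ia_0 k\,\eonek$ directly costs a factor $\eps^{-1}$, so used inside the iteration it locks $(\xi_{0,k},\xi_{1,k})$ at $(\eps^{N+1},\eps^{N+2})$ and the bounds saturate near $\eps^{\min(N+1+\ell,\,2N+2-\ell)}$, well short of $\eps^{2N+2-\ell}$ for small $\ell$; your direct-integration endgame for $\ell=0$ is sound only once $\xi_{1,k}\lesssim\eps^{2N+1}$ is already known. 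The paper's device here is specific and is exactly what is missing from your proposal: at each backward-induction level it re-applies the modified-energy estimate to the truncated subsystem $\{\xi_{n,k}\}_{n=0}^{m}$ with the top coefficient treated as a source; because that source then enters through Young's inequality against the damping, as in \eqref{eqn:partt_Hh_error}, it costs a power of $k$ but \emph{no} power of $\eps$, so $\xi_{0,k}$ is bounded at the same order as the source and the induction advances (Lemma \ref{thm:xi_l_1st}, steps (a)(ii) and (b)(ii)). Your proposed alternative --- a Hilbert-type expansion of the error system with a remainder estimate --- is left entirely schematic, and its remainder equation would face the very same undamped-mode obstruction, so as written it restates the difficulty rather than resolving it.
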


\begin{rem}
	A formal statement of the $\eps$-dependent scaling in \eqref{eqn:moment_error}, based on a Chapman-Enskog expansion, can be found in \cite{hauck2009temporal}.  In \cite{larsen1996asymptotic}, formal asymptotic results for the $SP_N$ equations, which are equivalent to the spectral approximation of \eqref{eqn:transport_simple} in the current setting, predict a similar scaling, at least for the coefficient $f_1$. 
\end{rem}

\begin{rem}
	In our proofs, we use $\lambda_1 = 1/45$  (cf. \eqref{eqn:def_lbd1} in Section \ref{sec:moment}).  We do not believe this value is optimal; nor have we made any effort to optimize it.
\end{rem}

\begin{rem}
	All of the rates in \eqref{eqn:L2_error} and \eqref{eqn:moment_error} are observed in the numerical tests in Section \ref{sec:numerics}.  
	Moreover, we observe these rates numerically even when $\partial_x g \notin L^2(dx)$.  Further discussion of this point is given in Section \ref{sec:numerics}.
\end{rem}

Theorem \ref{thm:error} has important practical consequences for the discretization of \eqref{eqn:transport_simple_1} in transition regimes, when $\eps$ is small, but not small enough to invoke the diffusion approximation.  Indeed, for a fully discrete scheme in space, time, and angle, it is important to balance errors with respect to each variable.   While not crucial for the solution of \eqref{eqn:transport_simple_1}, the efficiency gained from proper balancing of errors is essential for more general kinetic problems, for which the distribution function depends on six phase-space variables, plus time.  Theorem \ref{thm:error} justifies the use of fewer spectral modes than the standard estimate \eqref{eqn:spectral_est} in transition regimes.  The first statement of the theorem says that after an initial layer, the approximation of the transport solution is accurate up to $\mO(\epsilon^{N+1})$.   The second statement on the individual coefficients, which is much stronger, plays an even more important role, since it is the low-order coefficients that correspond to physically meaningful quantities.  However, for more realistic applications, these estimates will ultimately need to be extended beyond the current idealized setting.

The remainder of this paper is dedicated to the proof of Theorem \ref{thm:error} and the presentation of supporting numerical results.  
Preliminary notation and an introduction of the modified energy are given in Section \ref{sec:outline}.  
Details of proofs are provided in Section \ref{sec:proof}.
In Section \ref{sec:numerics}, we present some numerical tests to validate the convergence rates in theory.  
The benefit of increasing the number of moments $N$ is discussed in Section \ref{sec:numerics2}.  
Conclusions and future work are discussed in Section \ref{sec:conclusion}.

\section{Preliminaries} 
\label{sec:outline}

In this section, we provide some preliminaries.  We first set the notation, and then introduce the modified energy approach borrowed from \cite{Dolbeault:2015fj}.

\subsection{Setup and Notation}

The proof of Theorem \ref{thm:error} relies on estimates of expansion coefficients for functions in $L^2(d \mu dx)$. 

\begin{defn}[Legendre and Legendre-Fourier expansion]
	\label{defn:expansions}
For any $u \in L^2(d \mu dx)$, the Legendre expansion of $u$ is
\beq
\label{eqn:L-expansion}
u(x,\mu) = \sum\limits_{\ell=0}^\infty u_\ell(x) p_\ell(\mu) , 
\qquad  u_\ell(x) = \int_{-1}^{1} u(x,\mu) p_\ell (\mu)d \mu,
\eeq
and the
Legendre-Fourier expansion is
\beq
\label{eqn:LF-expansion}
u(x,\mu) = \frac{1}{\sqrt{2\pi}} \sum\limits_{\ell=0}^\infty \sum\limits_{k=-\infty}^\infty u_{\ell,k} p_\ell(\mu)  e^{ikx}, 
\qquad u_{\ell,k} = \frac{1}{\sqrt{2\pi}} \int_{-\pi}^{\pi} u_\ell(x) e^{-ikx} dx .
\eeq
The coefficients ${u_\ell}$ and $u_{\ell,k}$ will be referred to as the Legendre and Legendre-Fourier coefficients.
\end{defn}

\begin{rem}
	The definition of $u_\ell$ in \eqref{eqn:L-expansion} is consistent with the use of $g_0$ in Assumption \ref{ass:1} and the definition of $f_\ell$ in \eqref{eqn:legendre_expansion}.
\end{rem}

\label{subsec:setup}
We begin by decomposing the error $e^N:= f - f^N$ into the sum of two components:
\begin{equation}
\eta = f - \cP f = \sum_{\ell=N+1}^\infty \etal(x,t) p_\ell(\mu)
\quad \text{and} \quad
\xi = \cP f - f^N = \sum_{\ell=0}^N \xil(x,t) p_\ell(\mu), \label{eqn:def_el}
\end{equation} 
where
\beq
\label{eqn:eta_xi_ell}
\etal(x,t) = \fl (x,t)
\quand
\xil(x,t) = \fl(x,t) - \fNl(x,t).
\eeq
These components are orthogonal with respect to the $L^2(d \mu)$ inner product, i.e., $\int_{-1}^{1} \eta \xi d \mu = 0$.

Equations for the expansion coefficients $\{\fl\}_{\ell=0}^\infty$ are derived using the three-term recurrence relation for the Legendre polynomials:
\beq \label{eqn:pl_recurrence}
\mu p_\ell(\mu) = a_\ell p_{\ell+1}(\mu) + a_{\ell-1}p_{\ell-1}(\mu),
\eeq
where
\beq \label{eqn:as}
\frac12 < a_\ell = \frac{\ell+1}{\sqrt{(2\ell+1)(2\ell+3)}} \leq \frac{1}{\sqrt{3}} .
\eeq
By taking the $L^2(d \mu)$ inner product of \eqref{eqn:transport_simple_1} with $p_\ell$, $\ell=0,\dots,\infty$, and invoking \eqref{eqn:pl_recurrence}, one arrives at an infinite system of equations for the expansion coefficients $\{ \fl \}_{\ell=0}^\infty$:
\beq \label{eqn:moments}
\left\{
\begin{array}{ll}
	\eps\partt \fzero + a_0 \partx \fone = 0,
	& \ell = 0,\\
	\eps\partt \fl + a_\ell \partx \flpo + a_{\ell-1} \partx \flmo + \frac{1}{\eps} \fl = 0,
	& \ell \ge 1.
\end{array}
\right.
\eeq 
When applied to \eqref{eqn:spectral}, the same procedure yields a similar set of equations for the coefficients $\{\fNl\}_{\ell=0}^N$:
\beq  \label{eqn:PN_moments}
\left\{
\begin{array}{ll}
	\eps\partt \fNzero + a_0 \partx \fNone  = 0,
	& \ell = 0,\\
	\eps\partt \fNl + a_\ell \partx \fNlpo + a_{\ell-1} \partx \fNlmo + \frac{1}{\eps} \fNl = 0,
	& 1 \le \ell \le N-1,\\
	\eps\partt \fNN  + a_{N-1} \partx \fNNmo + \frac{1}{\eps} \fNN = 0,
	& \ell = N,
\end{array}
\right.
\eeq
with initial condition $\fNl(\cdot,0) = \fl(\cdot,0)$, for $\ell = 0 ,\dots, N$.  We refer to this system as the $P_N$ system.
The equations in \eqref{eqn:PN_moments} differ in form from the first $N+1$ equations of \eqref{eqn:moments} only when $\ell = N$; it is this difference that is the origin of the 
error between $Pf$ and $f^N$.  Subtracting \eqref{eqn:PN_moments} from \eqref{eqn:moments} yields a system equations 
for $\{\xi\}_{\ell=0}^N$, with an additional source term in the last equation:
\beq  \label{eqn:PN_error}
\left\{
\begin{array}{ll}
	\eps\partt \eNzero + a_0 \partx \eNone  = 0,
	& \ell = 0,\\
	\eps\partt \el + a_\ell \partx \eNlpo + a_{\ell-1} \partx \eNlmo + \frac{1}{\eps} \el = 0,
	& 1 \le \ell \le N-1,\\
	\eps\partt \eNN  + a_{N-1} \partx \eNNmo + \frac{1}{\eps} \eNN = -a_N \partx \fNpo,
	& \ell = N.
\end{array}
\right.
\eeq
with initial condition $\el (\cdot,0) = 0$, for $\ell = 0, \dots, N$.

By taking Fourier transforms in $x$, we can write \eqref{eqn:moments} in terms of the Legendre-Fourier coefficients of $f$.
\beq \label{eqn:moments_k}
\left\{
\begin{array}{ll}
	\eps\partt \fzerok + a_0 ik \fonek  =0,
	& \ell = 0,\\
	\eps\partt \flk + a_\ell ik \flpok + a_{\ell-1} ik \flmok + \frac{1}{\eps} \flk = 0,
	& \ell \geq 1. \qquad \qquad
\end{array}
\right.
\eeq 
Similarly, the Legendre-Fourier coefficients of $f^N$ satisfy
\beq \label{eqn:PN_k}
\left\{
\begin{array}{ll}
	\eps\partt \fNzerok + a_0  ik  \fNonek = 0,
	& \ell = 0,\\
	\eps\partt \fNlk + a_\ell  ik  \fNlpok + a_{\ell-1}  ik  \fNlmok + \frac{1}{\eps} \fNlk = 0	,
	& 1 \le \ell \le N-1,\\
	\eps\partt \fNNk  + a_{N-1}  ik  \fNNmok + \frac{1}{\eps} \fNNk = 0	,
	& \ell = N,
\end{array}
\right.
\eeq 
and the Legendre-Fourier coefficients of $\xi$ satisfy
\beq \label{eqn:PN_error_k}
\left\{
\begin{array}{ll}
	\eps\partt \ezerok + a_0 ik \eonek = 0,
	& \ell = 0,\\
	\eps\partt \elk + a_\ell ik \elpok + a_{\ell-1} ik \elmok + \frac{1}{\eps} \elk = 0,
	& 1 \le \ell \le N-1,\\
	\eps\partt \eNk  + a_{N-1} ik \eNmok + \frac{1}{\eps} \eNk = -a_N ik \fNpok,
	& \ell = N.
\end{array}
\right.
\eeq 
It turns out the behavior of the Legendre-Fourier coefficients $f_{\ell,k}$ and $\xi_{\ell,k}$ depends on the wave number $k$, with the long-time behavior being dominated by the low frequency parts. We therefore separate the coefficients into high and low frequency terms. 

\begin{defn}[High and low frequency parts]
	\label{defn:frequencies}
	Let $\epsilon > 0$ be given and let $u \in L^2(d \mu dx)$ have Legendre and Legendre-Fourier coefficients as defined in Definition \ref{defn:expansions}.
	Then $u_\ell$ can be decomposed into a high frequency part $\ulhigh$ and a low frequency part $\ullow$, given by
	\beq
	\label{eqn:high_low_def_ell}
	\ulhigh(x) := \sum\limits_{|k|\eps >\frac12} u_{\ell,k} e^{ikx},
	\quad \text{and} \quad
	\ullow(x) :=  \sum\limits_{|k|\eps \leq \frac12} u_{\ell,k} e^{ikx},
	\eeq
	respectively.  Similarly, $u$ can be decomposed into a high frequency part $\uhigh$ and a low frequency part $\ulow$, given by
	\beq
	\label{eqn:high_low_def}
	\uhigh(x,\mu) := \sum\limits_{\ell=0}^\infty \ulhigh(x) p_\ell(\mu),
	\quad \text{and} \quad
	\ulow(x,\mu) := \sum\limits_{\ell=0}^\infty \ullow(x) p_\ell(\mu),
	\eeq
	respectively.
	
\end{defn}

\subsection{Modified energy method}
\label{sec:energy}
One may conclude from \eqref{eqn:dissipation} that solutions of \eqref{eqn:transport_simple} dissipate the energy functional $ \cH \colon L^2(d \mu dx) \to \bbR$, given by
\begin{equation}
\cH(u) =  \| u \|^2_{L^2(d \mu dx)}.
\end{equation}
A key tool in the proof of Theorem \ref{thm:error} is the spectral decomposition of $\cH$.

\begin{defn}
	\label{defn:energy_frequency}
	Given $\epsilon > 0$ and any $u \in L^2(d \mu dx)$ with Legendre-Fourier expansion in \eqref{eqn:LF-expansion}, let
	\beq\label{eqn:Hkj}
	\Hjk(u) := \frac{1}{2}\sum_{\ell=j}^\infty |\ulk|^2.
	\eeq
\end{defn} 
\noindent A direct consequence of Definitions \ref{defn:frequencies} and \ref{defn:energy_frequency} is that 
\beq
\label{eqn:u_low_high_L2}
\sum\limits_{|k|\eps > \frac12} \Hzerok(u) = \frac12 \| \uhigh\|^2_{L^2(d \mu dx)}
\quand
\sum\limits_{|k|\eps \leq \frac12} \Hzerok(u) = \frac12 \| \ulow\|^2_{L^2(d \mu dx)}.
\eeq

Since $f$ satisfies \eqref{eqn:moments_k}, it follows that
\beq \label{eqn:partt_H0_complex}
\partt \Hzerok(f) + \frac{2}{\eps^2} \Honek(f) + ik \sum_{\ell=0}^{\infty} a_\ell \left( \flk^* \flpok + \flpok^* \flk \right)= 0.
\eeq
For real-valued $f$, the real part of \eqref{eqn:partt_H0_complex} gives
\beq \label{eqn:partt_H0}
\partt \Hzerok(f) + \frac{2}{\eps^2} \Honek(f) = 0.
\eeq
Thus $\Hzerok(f)$ is a non-increasing function of time.  
However, this is not enough to prove that $\Hzerok(f)$ decays to zero or how.    In a similar calculation, \eqref{eqn:PN_error_k} implies
\beq \label{eqn:partt_H0_error}
\partt \Hzerok(\xi) + \frac{2}{\eps^2} \Honek(\xi) 
	\leq \frac{a_N |k|}{\eps}|\eNk| |\fNpok|
	\leq \frac{1}{2\eps^2}|\eNk|^2  +  \frac{k^2}{6} |\fNpok|^2, 
\eeq
where the third expression is a direct consequence of Young's inequality and the bound on $a_N$ from \eqref{eqn:as}.

In order to estimate the decay rate of the $\Hzerok(f)$ or $\Hzerok(\xi)$, the energy needs to be modified.  
Thus following \cite{Dolbeault:2015fj}, we modify the energy by adding a compensating function.  

\begin{defn}[Compensating function] \label{defn:compensating function}
	Given any $u \in L^2(d \mu dx)$ with Legendre-Fourier expansion in \eqref{eqn:LF-expansion},  a compensating function for $\Hzerok(u)$ in \eqref{eqn:Hkj} is a real-valued function 
\beq \label{eqn:hgamma}
\hgammak(u) = -\frac{\gamma}{4a_0}\im (\uzerok \uoneks),
\eeq 
where $\gamma \in \bbR$ is a positive scalar parameter to be determined and $a_0$ is the constant defined in \eqref{eqn:as}.  
\end{defn}

The role of the compensating function is elucidated by the following lemma
\begin{lem}\label{thm:compensating function}
	Let $u \in L^2(d \mu dx)$ have Legendre-Fourier coefficients that satisfy
	\beq  \label{eqn:two-moment}
	\left\{
	\begin{array}{ll}
		\eps\partt \uzerok + a_0 ik \uonek  =0, \\
		\eps\partt \uonek + a_1 ik \utwok + a_0 ik \uzerok + \frac{1}{\eps} \uonek = 0.
	\end{array}
	\right.
	\eeq
 Then 
	\beq
	\label{eqn:H_bound_gamma}
	(1-\frac{\gamma}{2}) \Hzerok(u) \le (\Hzerok + \hgammak)(u) \le  (1+\frac{\gamma}{2}) \Hzerok(u);
	\eeq
	and, for positive $k$, the time derivative is bounded by
	\beq \label{eqn:partt_h}
	\partt \hgammak(u) 
	\leq  - \gamma \left( \frac{k}{16\eps}|\uzerok|^2  - \left(\frac{ k}{4\eps}+\frac{3}{8\eps^3k}\right)|\uonek|^2  - \frac{ k}{5\eps}|\utwok|^2 \right).
	\eeq
\end{lem}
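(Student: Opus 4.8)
The plan is to prove the two assertions separately, both by direct computation from the definition $\hgammak(u) = -\frac{\gamma}{4a_0}\im(\uzerok\uoneks)$.

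For the two-sided equivalence \eqref{eqn:H_bound_gamma}, I would estimate $\hgammak$ pointwise in $k$. Since $|\im(\uzerok\uoneks)| \le |\uzerok|\,|\uonek| \le \tfrac12(|\uzerok|^2 + |\uonek|^2)$, and since Definition \ref{defn:energy_frequency} gives $\tfrac12(|\uzerok|^2+|\uonek|^2) \le \Hzerok(u)$, we obtain $|\hgammak(u)| \le \frac{\gamma}{4a_0}\Hzerok(u)$. Invoking the lower bound $a_0 > \tfrac12$ from \eqref{eqn:as} yields $\frac{1}{4a_0} < \tfrac12$, hence $|\hgammak(u)| \le \frac{\gamma}{2}\Hzerok(u)$, which rearranges directly into \eqref{eqn:H_bound_gamma}. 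This part is routine; the only ingredient beyond elementary inequalities is the sharp lower bound on $a_0$.

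For the time-derivative bound \eqref{eqn:partt_h}, I would differentiate via the product rule, $\partt\hgammak(u) = -\frac{\gamma}{4a_0}\im(\partt\uzerok\,\uoneks + \uzerok\,\partt\uoneks)$, and eliminate $\partt\uzerok$ and $\partt\uoneks$ using the two-moment system \eqref{eqn:two-moment} (the conjugate of the second equation supplies $\partt\uoneks$). Taking imaginary parts and using that $|\uzerok|^2,|\uonek|^2$ are real, the computation should collapse to exactly four contributions: a clean dissipative term $-\frac{\gamma k}{4\eps}|\uzerok|^2$, a growth term $+\frac{\gamma k}{4\eps}|\uonek|^2$ coming from the advective coupling, a cross term proportional to $\re(\uzerok\utwoks)$ carrying the factor $\frac{a_1}{a_0}\frac{k}{\eps}$, and a cross term proportional to $\im(\uzerok\uoneks)$ carrying the \emph{singular} prefactor $\eps^{-2}$ inherited from the relaxation term $\eps^{-1}\uonek$ in \eqref{eqn:two-moment}.

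The remaining and most delicate step is to bound the two cross terms by Young's inequality with weights tuned so that the three target coefficients in \eqref{eqn:partt_h} emerge exactly. For the $\re(\uzerok\utwoks)$ term I would split $|\uzerok|\,|\utwok|$ with a constant weight $\delta_1$, choosing $\delta_1$ so that its $|\utwok|^2$ contribution is precisely $\frac{\gamma k}{5\eps}$; using the explicit values $a_0 = 1/\sqrt3$ and $a_1 = 2/\sqrt{15}$, so that $a_1/a_0 = 2/\sqrt5$, this forces its $|\uzerok|^2$ contribution to be exactly $\frac{\gamma k}{16\eps}$. For the $\im(\uzerok\uoneks)$ term, the key device is an \textbf{$\eps$-dependent} Young weight of order $a_0\eps k$: this balances the singular $\eps^{-2}$ prefactor so that its $|\uzerok|^2$ contribution stays $\eps$-clean (equal to $\frac{\gamma k}{8\eps}$), while pushing the singularity onto $|\uonek|^2$, where $a_0^2 = 1/3$ produces precisely the coefficient $\frac{3\gamma}{8\eps^3 k}$. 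Adding the $|\uzerok|^2$ contributions $\frac{\gamma k}{16\eps} + \frac{\gamma k}{8\eps}$ against the clean dissipation $-\frac{\gamma k}{4\eps}$ leaves exactly $-\frac{\gamma k}{16\eps}|\uzerok|^2$, and collecting the $|\uonek|^2$ and $|\utwok|^2$ terms reproduces \eqref{eqn:partt_h}. I expect the main obstacle to be exactly this bookkeeping: the weights must be chosen so that all three coefficients land simultaneously, which is possible only because the sharp constants $a_0, a_1$ and the $\eps k$-scaling of the second weight conspire to close the budget on $|\uzerok|^2$.
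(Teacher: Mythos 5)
Your proposal is correct and takes essentially the same route as the paper's proof: the same elementary bound $|\hgammak(u)| \le \frac{\gamma}{2}|\uzerok|\,|\uonek| \le \frac{\gamma}{2}\Hzerok(u)$ for \eqref{eqn:H_bound_gamma}, and the same differentiate-then-substitute computation yielding the four terms of the paper's intermediate identity, followed by Young's inequality with weights that reproduce exactly the paper's bounds (your $\re(\uzerok\utwoks)$ and $\im(\uzerok\uoneks)$ splittings give precisely $\frac{\gamma k}{16\eps}|\uzerok|^2 + \frac{\gamma k}{5\eps}|\utwok|^2$ and $\frac{\gamma k}{8\eps}|\uzerok|^2 + \frac{3\gamma}{8\eps^3 k}|\uonek|^2$, matching the paper after its factor of $\frac{\gamma}{4}$). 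The only cosmetic difference is that you invoke the generic bound $a_0 > \frac12$ from \eqref{eqn:as} in the first part, where the paper uses the exact value $a_0 = 1/\sqrt{3}$.
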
	

\begin{proof}
	From the definition of $\hgammak$ in \eqref{eqn:hgamma} and the fact that $a_0 = 1/\sqrt{3}$, it follows that
		\beq
		|\hgammak(u)|   
		\leq \frac{\gamma}{2} |\uzerok| |\uonek| 
		\leq \frac{\gamma}{2} \left(\frac12|\uzerok|^2 + \frac12 |\uonek|^2\right)
		\leq \frac{\gamma}{2} \Hzerok(u),
		\eeq
	which immediately implies \eqref{eqn:H_bound_gamma}.
	To derive \eqref{eqn:partt_h}, we differentiate \eqref{eqn:hgamma} in time and use \eqref{eqn:two-moment} to conclude that
	\beq 
	\label{eqn:h_t}
	\partt \hgammak(u) = 
	\frac{\gamma}{4 } \left( 
	-\frac{ k}{\eps}|\uzerok|^2  + \frac{ k}{\eps}|\uonek|^2 + 
	\frac{1}{a_0\eps^2}\im(\uzerok \uoneks) -  \frac{a_1 k}{a_0\eps}\re(\uzerok \utwoks) 
	\right).
	\eeq
	Using Young's inequality, we compute bounds for the last two terms in \eqref{eqn:h_t}:
	\beq
	\frac{1}{a_0\eps^2}\im(\uzerok \uoneks) \leq \frac{ k}{2\eps} |\uzerok|^2  + \frac{1}{2a_0^2\eps^3k}|\uonek|^2 
	= \frac{ k}{2\eps} |\uzerok|^2  + \frac{3}{2\eps^3k}|\uonek|^2
	\eeq
	and
	\beq
	-\frac{a_1 k}{a_0\eps}\re(\uzerok \utwoks) \leq \frac{ k}{4\eps} |\uzerok|^2  + \frac{a_1^2 k}{a_0^2\eps}|\utwok|^2 
	= \frac{k}{4\eps} |\uzerok|^2  + \frac{4 k}{5\eps}|\utwok|^2.
	\eeq
	These bounds, when substituted into \eqref{eqn:h_t}, give \eqref{eqn:partt_h}.
\end{proof}

\section{Proofs} 
\label{sec:proof}

This section is dedicated to the proof of Theorem \ref{thm:error}, which proceeds in 4 steps.  First, in Section \ref{sec:moment},  we determine bounds on the coefficients $\flk$ for $\ell=0, \dots,\infty$ and $k =-\infty, \dots, \infty$.  Second, in Section \ref{sec:error_expansion}, we use the bounds on $\flk$ to estimate $\eta$.  Third, in Section \ref{sec:energy_error}, we use the bound on $\fNpok$ to estimate $\xi$.  Fourth,  in Section \ref{sec:error}, we compute finer estimates on $\xillow$ for $\ell=0, \dots,N$.  In Section \ref{sec:proof_main}, the results of these four steps are combined to prove Theorem \ref{thm:error}. More specifically, the first three steps are used to establish the spectral error in \eqref{eqn:PN_moments}, while the last is required to establish the moment errors given in \eqref{eqn:PN_error}. 

In many cases, the proofs below rely on the decomposition of functions into high- and low-frequency components, as prescribed in Definition \ref{defn:frequencies}.  Since we consider only real-valued functions $u \in L^2(d \mu dx)$, $\ulks = \ulnk$.  Therefore $|\ulk| = |\ulnk|$, which means it is sufficient to consider only non-negative components of the Fourier spectrum, i.e., wave numbers $k \geq0$.

\subsection{Bounding the coefficients of $f$} 
\label{sec:moment}
	
In this section, we first use the method of modified energy to bound $\Hzerok(f)$ in Lemma \ref{thm:energy_f}.  With such bounds and method of induction, we find bounds on $\flk$ in Lemma \ref{thm:f}.

\begin{lem}\label{thm:energy_f} For any $g_0 \in L^2(dx)$,
	
	\begin{empheq}[left= \Hzerok(f) (t) \leq
		\empheqlbrace]{alignat=2}
		& 6 e^{-\frac{2\lambda_1 t}{\eps^2}} \Hzerok(g), 
		&\quad |k|\eps > \frac12, 
		\label{eqn:est_Hk_high_f} \\
		& 6  e^{-2\lambda_2 k^2 t}  \Hzerok(g),
		&\quad  |k|\eps \leq \frac12,
		\label{eqn:est_Hk_low_f} 
	\end{empheq}		
	with $\lambda_1 = \frac{1}{45}$ (cf. \eqref{eqn:def_lbd1}) and $\lambda_2 = \frac{4}{45}$ (cf. \eqref{eqn:def_lbd2}).    
\end{lem}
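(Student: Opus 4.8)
The plan is to apply the modified-energy method of Lemma \ref{thm:compensating function} directly to $f$, whose Legendre--Fourier coefficients satisfy \eqref{eqn:moments_k}. Since the first two equations of \eqref{eqn:moments_k} coincide with the two-moment system \eqref{eqn:two-moment} (with $u=f$), the compensating function $\hgammak(f)$ is available, and by \eqref{eqn:H_bound_gamma} the modified energy $\mathcal{E}_k(f) := \Hzerok(f) + \hgammak(f)$ is equivalent to $\Hzerok(f)$ whenever $\gamma<2$, namely $(1-\tfrac{\gamma}{2})\Hzerok(f)\le \mathcal{E}_k(f)\le(1+\tfrac{\gamma}{2})\Hzerok(f)$. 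I would work at a fixed wave number $k\ge 0$ throughout (the restriction to $k\ge0$ is justified in the introduction to Section \ref{sec:proof}) and establish a differential inequality of the form $\partt \mathcal{E}_k(f) \le -\kappa\, \mathcal{E}_k(f)$, with $\kappa$ depending on the frequency regime. Grönwall's inequality together with the equivalence then yields \eqref{eqn:est_Hk_high_f}--\eqref{eqn:est_Hk_low_f}, with the prefactor controlled by the equivalence ratio $\frac{1+\gamma/2}{1-\gamma/2}$.

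Second, I would combine the exact energy balance \eqref{eqn:partt_H0}, which reads $\partt\Hzerok(f) = -\tfrac{2}{\eps^2}\Honek(f) = -\tfrac{1}{\eps^2}\sum_{\ell\ge1}|\flk|^2$, with the compensating bound \eqref{eqn:partt_h}. Distributing the factor $-\gamma$ in \eqref{eqn:partt_h} and adding the two gives
\[
\partt\mathcal{E}_k(f)\le -\gamma\frac{k}{16\eps}|\fzerok|^2 - \Big[\frac1{\eps^2}-\gamma\Big(\frac{k}{4\eps}+\frac{3}{8\eps^3k}\Big)\Big]|\fonek|^2 - \Big[\frac1{\eps^2}-\gamma\frac{k}{5\eps}\Big]|\ftwok|^2 - \frac1{\eps^2}\sum_{\ell\ge3}|\flk|^2 .
\]
The genuine dissipation $-\tfrac1{\eps^2}|\fonek|^2$ and $-\tfrac1{\eps^2}|\ftwok|^2$ must absorb the anti-dissipative contributions that the compensating function produces in the $\ell=1$ and $\ell=2$ modes, while the term $-\gamma\tfrac{k}{16\eps}|\fzerok|^2$ supplies the \emph{only} source of decay for the otherwise conserved lowest coefficient.

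Third --- and this is where the two regimes separate --- I would choose $\gamma$ as a function of $k$ and $\eps$. In the low-frequency regime $\eps k\le\tfrac12$ the term $\tfrac{3\gamma}{8\eps^3 k}$ blows up as $k\to0$, forcing the scaling $\gamma\sim \eps k$; with this choice every bracket above becomes a positive multiple of $k^2$ (using $\tfrac1{\eps^2}\ge 4k^2$ to recast the $\ell\ge1$ dissipation), so that $\partt\mathcal{E}_k(f)\le -\kappa k^2\,\Hzerok(f)\le -2\lambda_2 k^2\,\mathcal{E}_k(f)$. In the high-frequency regime $\eps k>\tfrac12$ the term $\tfrac{\gamma k}{4\eps}$ blows up as $k\to\infty$, forcing the opposite scaling $\gamma\sim \tfrac{1}{\eps k}$; with this choice every coefficient is a positive multiple of $\tfrac1{\eps^2}$, giving $\partt\mathcal{E}_k(f)\le -\tfrac{2\lambda_1}{\eps^2}\mathcal{E}_k(f)$. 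Integrating each inequality and passing back through \eqref{eqn:H_bound_gamma} then produces the stated bounds; the explicit constants $\lambda_1=\tfrac1{45}$ and $\lambda_2=\tfrac4{45}$ come from optimizing the proportionality constants in $\gamma$ against the Young splits, while the common prefactor $6$ simply absorbs the equivalence factor $\frac{1+\gamma/2}{1-\gamma/2}$ for the $\gamma$'s actually used.

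I expect the main obstacle to be the simultaneous balancing rather than any single estimate. The compensating function is indispensable because the bare identity \eqref{eqn:partt_H0} controls only the $\ell\ge1$ modes and says nothing about the decay of $|\fzerok|^2$; yet that same function injects anti-dissipative terms whose magnitudes scale oppositely in $k$ in the two regimes. Producing one clean exponential rate requires an $\eps,k$-dependent $\gamma$ that keeps the $\ell=0$ dissipation as large as possible while leaving the $\ell=1,2$ brackets nonnegative, and then checking that the resulting $\kappa$ is bounded below by $\tfrac{2\lambda_1}{\eps^2}$ (resp.\ $2\lambda_2 k^2$) with $\gamma$ uniformly below the threshold keeping the prefactor at $6$. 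The constant-chasing in these coupled requirements is mechanical but delicate. Finally, the $k=0$ case falls trivially under the low-frequency bound, since then $\fzerok$ is conserved and the higher modes decay like $e^{-t/\eps^2}$.
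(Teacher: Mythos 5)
Your proposal follows essentially the same route as the paper's proof: combine the exact balance \eqref{eqn:partt_H0} with the compensating bound \eqref{eqn:partt_h}, choose $\gamma \sim \frac{1}{k\eps}$ in the high-frequency regime and $\gamma \sim k\eps$ in the low-frequency regime (the paper uses $\gamma^{\rm{high}} = \frac{16}{29}\frac{1}{k\eps}$ and $\gamma^{\rm{low}} = \frac{64}{29}k\eps$), apply Gr\"onwall to the modified energy, and recover the prefactor $6$ from the equivalence \eqref{eqn:H_bound_gamma}, with the $k=0$ case handled trivially. The plan is correct and matches the paper's argument step for step.
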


\begin{proof}
We set $u = f$ in \eqref{eqn:partt_h}, add the result to \eqref{eqn:partt_H0}, and use the fact that $\Honek(f) = \Hthreek(f) +  \frac12 |\fonek|^2  + \frac12 |\ftwok|^2$.  This gives
\beq
\label{eqn:partt_Hh}
\partt \left( \Hzerok(f) + \hgammak(f) \right) 
	\leq - \frac{2}{\eps^2} \Hthreek(f) - \sum_{\ell=0}^2 c_{\gamma,\ell} |f_{\ell,k}|^2,
\eeq
where
\beq \label{eqn:def_c_gamma}
c_{\gamma,0} = \frac{\gamma k}{16\eps},
\qquad
c_{\gamma,1} = \frac{1}{\eps^2} -\frac{\gamma k}{4\eps} - \frac{3\gamma}{8\eps^3k},
\qquad
c_{\gamma,2} = \frac{1}{\eps^2}-\frac{\gamma k}{5\eps} .
\eeq
We next separate the frequency spectrum into high-frequency terms, when $k \eps > 1/2$, and low-frequency terms, when $ 0 \leq k \eps \leq 1/2$.   The choice of $\gamma$ and the subsequent estimates will depend on which part of the spectrum is being considered.

\begin{itemize}
	\item[(i)] {\bf High frequency.} For $k\eps > 1/2$, we set
	\beq\label{eqn:gamma_1}
	\gamma = \gamma^{\rm{high}} := \frac{16}{29}\frac{1}{k\eps} < \frac{32}{29}
	\eeq
	so that 
	\beq
	c_{\gamma,0} = \frac{1}{29\eps^2},
	\qquad
	c_{\gamma,1} = \left( \frac{1}{\eps^2} -\frac{4}{29\eps^2} - \frac{6}{29 k^2 \eps^4}\right) 
				> \frac{1}{29\eps^2},
	\qquad
	c_{\gamma,2} = \left(\frac{1}{\eps^2}-\frac{16}{145\eps^2} \right) 	> \frac{1}{29\eps^2}.
	\eeq
	By substituting these bounds into \eqref{eqn:partt_Hh}, we find that
	\beq
	\label{eqn:partt_Hh_high}
	\partt  \left( \Hzerok(f) + \hgammak(f) \right)  
		\leq - \frac{2}{\eps^2} \Hthreek(f) - \frac{1}{29\eps^2}\sum_{\ell=0}^2  |f_{\ell,k}|^2
		 \leq  - \frac{2}{29\eps^2} \Hzerok(f) 
		 \leq  - \frac{2}{45\eps^2}   \left( \Hzerok(f) + \hgammak(f) \right),
	\eeq
	where the last inequality uses the upper bound on $\Hzerok(f) + \hgammak(f)$ in \eqref{eqn:H_bound_gamma} and the upper bound on $\gamma^{\rm{high}}$ in \eqref{eqn:gamma_1}.
	We integrate the inequality in \eqref{eqn:partt_Hh_high} and apply the bounds in \eqref{eqn:H_bound_gamma}, using the fact that $\frac13 < \frac{13}{29} < 1-\frac{\gamma}{2} < 1+\frac{\gamma}{2} < \frac{45}{29} < 2$.     
	This gives
	\beq 
	\frac13 \Hzerok(f)(t) <  \left( \Hzerok(f) + \hgammak(f) \right) (t) \leq e^{ -\frac{2\lambda_1 t}{\eps^2}} \left( \Hzerok(g) + \hgammak(g) \right)  <  2 e^{ -\frac{2\lambda_1 t}{\eps^2}}\Hzerok(g),
	\quad \text{$\lambda_1 = \frac{1}{45}$,}
	\label{eqn:def_lbd1} 
	\eeq  
	from which \eqref{eqn:est_Hk_high_f} follows.

	\item[(ii)] {\bf Low frequency.} For $k=0$, the result in \eqref{eqn:est_Hk_low_f} follows trivially from \eqref{eqn:two-moment}.  For $0< k\eps \leq \frac12$, we let 
	\beq\label{eqn:gamma_2}
	\gamma = \gamma^{\rm{low}} :=\frac{64}{29} k\eps \leq \frac{32}{29}
	\eeq
		so that 
	\beq
	c_{\gamma,0} = \frac{4k^2}{29},
	\qquad
	c_{\gamma,1} = \left( \frac{1}{\eps^2} -\frac{16k^2}{29} - \frac{24}{29 \eps^2}\right) 
	\geq \frac{4k^2}{29},
	\qquad
	c_{\gamma,2} = \left(\frac{1}{\eps^2}-\frac{64}{145\eps^2} \right) 	> \frac{4k^2}{29}.
	\eeq

	By substituting these bounds into \eqref{eqn:partt_Hh}, we find that
	\beq
	\label{eqn:partt_Hh_low}
	\partt  \left( \Hzerok(f) + \hgammak(f) \right) 
	\leq - \frac{2}{\eps^2} \Hthreek(f) - \frac{4}{29}k^2\sum_{\ell=0}^2  |f_{\ell,k}|^2
	\leq  -\frac{8}{29} k^2 \Hzerok(f) \leq- \frac{8}{45}k^2   \left( \Hzerok(f) + \hgammak(f) \right), 
	\eeq
	where the last inequality uses the upper bound on $H$ in \eqref{eqn:H_bound_gamma} and the upper bound on $\gamma^{\rm{low}}$ in \eqref{eqn:gamma_2}. We integrate the inequality in \eqref{eqn:partt_Hh_low} and apply the bounds in \eqref{eqn:H_bound_gamma}, using the fact that $\frac13 < \frac{13}{29} \leq 1-\frac{\gamma}{2} < 1+\frac{\gamma}{2} \leq \frac{45}{29} < 2$.
    This gives
	\beq
	\frac13 \Hzerok(f)(t) <  \left( \Hzerok(f) + \hgammak(f) \right) (t) \leq e^{-2\lambda_2 k^2t}\left( \Hzerok(g) + \hgammak(g) \right) < 2 e^{-2\lambda_2 k^2t} \Hzerok(g),
	\quad \text{$\lambda_2 = \frac{4}{45}$},
	\label{eqn:def_lbd2} 
	\eeq
	from which \eqref{eqn:est_Hk_low_f} follows.  
\end{itemize}

\end{proof}

\begin{lem}\label{thm:f}
Let $g_0 \in L^2(dx)$ be given.
For $|k|\eps > 1/2$,
\beq \label{eqn:est_flk_high}
|\flk|(t) \le \sqrt{12 \Hzerok(g)}\, e^{ -\frac{\lambda_1 t}{\eps^2}}, \quad \ell = 0,1,2,\dots.
\eeq  
As a result,
		\beq
		\label{eqn:est_fl_high}
			\|\flhigh\|_{L^2(dx)}(t) \leq \|\Fhigh\|_{L^2(d \mu dx)} (t) \leq \sqrt{6} \, \|\ghigh\|_{L^2( d \mu dx)} e^{-\frac{\lambda_1 t}{\eps^2}} ,\quad \ell = 0,1,2,\dots,
		\eeq		
For $|k|\eps \leq 1/2$,
		\beq
			 |\flk| (t) \leq C_\ell^k \eps^\ell k^\ell e^{-\lambda_2 k^2 t},  \quad \ell = 0,1,2,\dots,
			 \label{eqn:est_flk_low}
	 \eeq
	 with 
	 \beq
			 C_\ell^k (g) =  \sqrt{12 \Hzerok(g)} A^\ell \quand A = \frac{2}{\sqrt3 (1-\lambda_2/4)} \simeq 1.2.
			 \label{eqn:clk}
	 \eeq
	As a result, 
		\beq
		\label{eqn:est_fl_low}
			\|\fllow\|_{L^2(dx)} (t) \leq F(g,\ell,t) \eps^{\ell},  \quad \ell = 0,1,2,\dots
		\eeq
			where
		\begin{align}
			F(g,\ell,t) &= \left[2 \max_{k >0}(C_\ell^k)^2 \sum_{k > 0}  k^{2\ell} e^{-2\lambda_2 k^2 t} + \delta_{\ell,0}|g_{0,0}|^2 \right]^\frac12 \nonumber \\
			&= \left[24 \max\limits_{k >0} \Hzerok(g) \sum\limits_{k > 0} (Ak)^{2\ell} e^{-2\lambda_2 k^2 t}+ \delta_{\ell,0}|g_{0,0}|^2 \right]^\frac12  
		\label{eqn:C_glt}
		\end{align}
		is positive, bounded for any $t>0$, independent of $k$ or $\eps$, and monotonically decreasing with respect to $t$.  
		As a result,
		\beq
			\|\fl\|_{L^2(dx)} (t) \leq  \sqrt{6}  \|\ghigh\|_{L^2(d \mu dx)} e^{-\frac{\lambda_1 t}{\eps^2}}+ F(g,\ell,t) \eps^{\ell}.
		\eeq
\end{lem}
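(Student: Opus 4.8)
The plan is to feed the two energy bounds of Lemma~\ref{thm:energy_f} into the coefficient equations \eqref{eqn:moments_k}, treating the high- and low-frequency bands separately. In the high-frequency band $|k|\eps>\tfrac12$, Definition~\ref{defn:energy_frequency} gives $\tfrac12|\flk|^2\le\Hzerok(f)$ for every $\ell$, so \eqref{eqn:est_Hk_high_f} yields the pointwise bound \eqref{eqn:est_flk_high} at once; summing $|\flk|^2$ over $\ell$ and then over $\{|k|\eps>\tfrac12\}$ by Parseval and identifying $\sum_{|k|\eps>1/2}\Hzerok(g)=\tfrac12\|\ghigh\|^2$ via \eqref{eqn:u_low_high_L2} gives \eqref{eqn:est_fl_high}, the inequality $\|\flhigh\|\le\|\Fhigh\|$ being merely the statement that one Legendre mode is dominated by the full norm. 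The same mode-by-mode inequality applied to \eqref{eqn:est_Hk_low_f} produces the base case $|\fzerok|(t)\le\sqrt{12\Hzerok(g)}\,e^{-\lambda_2 k^2 t}$ (so $C_0^k=\sqrt{12\Hzerok(g)}$) and, crucially, the crude $\ell$-uniform bound $|\flk|(t)\le C_0^k e^{-\lambda_2 k^2 t}$ that will seed the iteration below.

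For the low-frequency gain of $\eps^\ell k^\ell$ I would, for $\ell\ge1$ and $k\neq0$, rewrite the $\ell$-th line of \eqref{eqn:moments_k} as $\partt\flk+\eps^{-2}\flk=-\tfrac{ik}{\eps}(a_\ell\flpok+a_{\ell-1}\flmok)$, and integrate against the factor $e^{t/\eps^2}$, using the isotropic data $\flk(0)=0$ for $\ell\ge1$ (Assumption~\ref{ass:1}), to obtain the Duhamel representation
\[
\flk(t)=-\frac{ik}{\eps}\int_0^t e^{-(t-s)/\eps^2}\bigl(a_\ell\flpok(s)+a_{\ell-1}\flmok(s)\bigr)\,ds.
\]
The key estimate is that $|k|\eps\le\tfrac12$ forces $\eps^{-2}-\lambda_2 k^2\ge\eps^{-2}(1-\lambda_2/4)>0$, so the convolution of $e^{-(t-s)/\eps^2}$ with $e^{-\lambda_2 k^2 s}$ is bounded by $\tfrac{\eps^2}{1-\lambda_2/4}\,e^{-\lambda_2 k^2 t}$. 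Setting $G_\ell:=\sup_{s\ge0}|\flk(s)|e^{\lambda_2 k^2 s}$ and using $a_\ell,a_{\ell-1}\le1/\sqrt3$, the Duhamel formula collapses to the closed recursion $G_\ell\le\tfrac{A\eps|k|}{2}(G_{\ell+1}+G_{\ell-1})$ for $\ell\ge1$, with $r:=A\eps|k|\le A/2<1$, alongside $G_0\le C_0^k$ and the crude bound $G_\ell\le C_0^k$.

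The main obstacle is that this recursion couples each coefficient to \emph{both} neighbors, so a plain upward induction on $\ell$ is circular: pinning $G_\ell$ at the target order $r^\ell$ seems to require the unknown $G_{\ell+1}$. I would break the circularity by a bootstrapping argument that exploits $r<1$, proving by induction on $n$ that $G_\ell\le C_0^k r^{\min(\ell,n)}$ for \emph{all} $\ell$ simultaneously. The base case $n=0$ is the crude bound; in the inductive step, the elementary inequalities $\tfrac12(1+r^2)\le1$ and $\tfrac12(1+r)\le1$ (both valid since $r<1$) show that feeding the level-$n$ bounds into the recursion preserves $G_\ell\le C_0^k r^\ell$ for $\ell\le n$ while upgrading the plateau from $r^n$ to $r^{n+1}$ for $\ell\ge n+1$. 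Letting $n\to\infty$ gives $G_\ell\le C_0^k(A\eps|k|)^\ell$, which is precisely \eqref{eqn:est_flk_low}--\eqref{eqn:clk}.

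Finally, to pass from these modal bounds to the $L^2(dx)$ statements I would apply Parseval over the low-frequency band, handling $k=0$ separately: \eqref{eqn:moments_k} shows that $f_{0,0}$ is conserved while $f_{\ell,0}\equiv0$ for $\ell\ge1$ under isotropic data, which accounts for the isolated term $\delta_{\ell,0}|g_{0,0}|^2$ in \eqref{eqn:C_glt}. Pulling $\eps^{2\ell}$ out of $\sum_{0<|k|\eps\le1/2}(C_\ell^k)^2 k^{2\ell}e^{-2\lambda_2 k^2 t}$, bounding $(C_\ell^k)^2=12\Hzerok(g)A^{2\ell}$ by its maximum over $k$, and using the Gaussian factor to guarantee convergence, yields \eqref{eqn:est_fl_low} with $F$ as in \eqref{eqn:C_glt}; positivity, independence of $k$ and $\eps$, boundedness for $t>0$, and monotone decay in $t$ are then read off termwise, and adding the high- and low-frequency $L^2(dx)$ bounds gives the last displayed estimate. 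Apart from the two-sided recursion, every step is routine bookkeeping with Parseval and Young's inequality.
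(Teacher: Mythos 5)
Your proposal is correct and takes essentially the same approach as the paper: the high/low frequency split fed by Lemma \ref{thm:energy_f}, a Duhamel representation with the convolution bound $\int_0^t e^{-(t-s)/\eps^2}e^{-\lambda_2 k^2 s}\,ds \le \tfrac{\eps^2}{1-\lambda_2/4}\,e^{-\lambda_2 k^2 t}$ yielding a gain of $A\eps k$ per level, and Parseval sums with the $k=0$ mode handled separately via the isotropy of $g$. Your plateau bootstrap $G_\ell \le C_0^k (A\eps k)^{\min(\ell,n)}$ is the same device, in different notation, as the paper's strengthened induction hypothesis \eqref{eqn:est_flk_low_strong} (the level-$\ell$ bound asserted for \emph{all} coefficients $n \ge \ell$), so both resolve the two-sided neighbor coupling identically.
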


\begin{proof}
We again consider high and low frequencies separately.  
\begin{itemize}
	\item[(i)] {\bf High frequency.}
	For $|k| \eps >1/2$, the  definition of $\Hzerok$ in  \eqref{eqn:Hkj}, along with bound in \eqref{eqn:est_Hk_high_f}, implies that 
\beq
\label{eqn:est_flk_high_sq}
|\flk|^2 (t)
	\le 2 \Hzerok(f)  (t)
	\leq 12 \Hzerok(g) e^{ -\frac{2\lambda_1 t}{\eps^2}}. 
\eeq
Taking square roots gives \eqref{eqn:est_flk_high}.
We sum \eqref{eqn:est_flk_high_sq} over all $k$ such that $|k|\eps > 1/2$ and use the definition of $\flhigh$ in \eqref{eqn:high_low_def_ell} and the expression for $\Hzerok$ in \eqref{eqn:u_low_high_L2} to conclude that 

\beq
\|\flhigh\|^2_{L^2(dx)}(t) 
	= \sum\limits_{|k|\eps >\frac12}|\flk|^2(t)
	\leq \sum\limits_{|k|\eps >\frac12} 12 \Hzerok(g) e^{ -\frac{2\lambda_1 t}{\eps^2}} 
	=  6 \|\ghigh\|^2_{L^2(d \mu dx )} e^{-\frac{2\lambda_1 t}{\eps^2}}.
\eeq
Taking square roots gives \eqref{eqn:est_fl_high}.

	\item[(ii)] {\bf Low frequency.}
	To establish \eqref{eqn:est_flk_low} for $0 \leq k\eps \leq 1/2$, we consider three cases, the first two of which are rather specific. 
\begin{itemize}
	\item Case 1: $\ell=0$, $k=0$.  In this case, direct inspection of \eqref{eqn:moments_k} shows that $\fzerozero(t) = g_{0,0}$ is constant w.r.t. $t$. The assumption that $g$ is isotropic implies that $\glzero(t) = 0$ for $\ell \geq 1$ and all $t \geq 0$. Hence $\Hzerozero(g) = \frac{1}{2}|g_{0,0}|^2$, whereby $C^0_0 = \sqrt{6} |g_{0,0}|$.  Thus the bound in \eqref{eqn:est_flk_low} is satisfied. 
	\item Case 2: $\ell \geq 1$ , $k=0$.  In this case, \eqref{eqn:moments_k} implies that $\flzero(t) = e^{-\frac{t}{\eps^2}}\flzero(0)$.  Hence, with the isotropic assumption on $g$, $\flzero(t) = 0$. Thus the bound in \eqref{eqn:est_flk_low} holds. 
	\item Case 3: $\ell \geq 0$, $0 < k \eps \leq 1/2$.  In this case, we actually prove the stronger statement
	\beq \label{eqn:est_flk_low_strong}
	|\fnk|(t) \le C_\ell^k \eps^\ell k^\ell e^{-\lambda_2 k^2 t},
	\qquad \ell  \geq 0, \quad n \geq \ell,
	\eeq
	with $C_\ell^k$ defined in \eqref{eqn:clk}.  The result in \eqref{eqn:est_flk_low} then follows by setting $n=\ell$ in \eqref{eqn:est_flk_low_strong}. 
	We proceed by induction on $\ell$.  According to the definition of $\Hzerok$ in \eqref{eqn:Hkj} and the bound in \eqref{eqn:est_Hk_low_f},
	\beq \label{ineq:H0}
	|\fnk|^2(t) \leq 2 \Hzerok(f)(t) \le 12 \Hzerok(g) \, e^{-2 \lambda_2 k^2 t} \,, \qquad \lambda_2 = \frac{4}{45}, \qquad n \geq 0.
	\eeq
	Taking square roots in \eqref{ineq:H0} recovers \eqref{eqn:est_flk_low_strong} for the case $\ell=0$.  Next, assume that \eqref{eqn:est_flk_low_strong} holds for $\ell = \ell_*$ for some $\ell_* \geq 0$ fixed. 
	Using \eqref{eqn:moments_k}, the estimate  \eqref{eqn:est_flk_low_strong} with $\ell = \ell_*$, and the fact that $a_n \leq 1/\sqrt{3}$ for all $n \geq 0$, we arrive at the following estimate for $|\fnk|$ for all $n \ge \ell_*+1$:
	\beq
	\label{eq:partt_fnk}
	\partt |\fnk| + \frac{1}{\eps^2} |\fnk| 
	\leq  \frac{k}{\eps} \left(a_n |\fnpok| + a_{n-1} |\fnmok|\right) 
	\leq \frac{2k}{\sqrt{3}\eps} \left(C_{\ell_*}^k \eps^{\ell_*} k^{\ell_*} e^{-\lambda_2 k^2 t}\right).  
	\eeq
	Thus integration of \eqref{eq:partt_fnk} in time (with an integrating factor on the left-hand side) gives 
	\begin{eqnarray}
	|\fnk|(t)  
	&\leq& e^{-\frac{t}{\eps^2}} |\gnk| + \frac{2k}{\sqrt{3}\eps}  \int_0^t e^{-\frac{t-s}{\eps^2}} \left( C_{\ell_*}^k \eps^{\ell_*} k^{\ell_*} e^{-\lambda_2 k^2 s} \right)ds \label{eqn:integrate} \nonumber \\
	&=& \frac{2k}{\sqrt{3}\eps}  \int_0^t e^{-\frac{t-s}{\eps^2}} \left( C_{\ell_*}^k \eps^{\ell_*} k^{\ell_*} e^{-\lambda_2 k^2 s} \right)ds \nonumber \\
	&=& \frac{2}{\sqrt{3}}\frac{C_{\ell_*}^k}{1-\lambda_2 \eps^2 k} \eps^{{\ell_*}+1} k^{{\ell_*}+1} \left(e^{-\lambda_2 k^2 t}  - e^{-\frac{t}{\eps^2}}\right) \nonumber \\
	&\leq& A C_{\ell_*}^k \eps^{{\ell_*}+1} k^{{\ell_*}+1} e^{-\lambda_2 k^2 t} \nonumber \\
	&=& C_{{\ell_*}+1}^k \eps^{{\ell_*}+1} k^{{\ell_*}+1} e^{-\lambda_2 k^2 t},
	\label{eqn:induction_rhs_1}
	\end{eqnarray}
	where $C_{{\ell_*}+1}^k$ is given in \eqref{eqn:clk} and we have again used the fact that $\gnk = 0$ for $n \geq 1$.  This proves \eqref{eqn:est_flk_low_strong} and hence \eqref{eqn:est_flk_low}.
\end{itemize}

To show \eqref{eqn:est_fl_low}, we sum \eqref{eqn:est_flk_low} over all low frequency values of $k$:
\begin{eqnarray}
\|\Flow_\ell\|^2_{L^2(dx)}(t) 
&=& \sum_{|k|\eps \le \frac12} |\flk|^2(t) 
\leq |\flzero|^2(t)  + 2 \sum_{0 < k\eps \le \frac12} |\flk|^2(t)   \nonumber \\
&\leq& \delta_{\ell,0}|g_{0,0}|^2  + 2 \sum_{0 < k\eps \le \frac12} \left\{ ( C_\ell^k)^2 \eps^{2\ell} k^{2\ell} e^{-2\lambda_2 k^2 t} \right\} 
\nonumber \\
&\leq& \delta_{\ell,0}|g_{0,0}|^2  + 2 \eps^{2\ell}  \max_{0 < k\eps \le \frac12}(C_\ell^k)^2 \sum_{0 < k\eps \le \frac12}  k^{2\ell} e^{-2\lambda_2 k^2 t} 
\nonumber \\
&\leq&  F(g,\ell,t)^2 \eps^{2\ell},
\label{eqn:fl_low_estimate}
\end{eqnarray}
where $F(g,\ell,t)$ is given in \eqref{eqn:C_glt}.  
 This proves \eqref{eqn:est_fl_low}.

\end{itemize}

\end{proof}

\begin{rem}
	While $F(g,\ell,t)$ is independent of $\eps$, it depends on $\ell$.  A more careful examination of this dependence is provided in Section \ref{sec:numerics2}.
\end{rem}

\begin{rem}
	The assumption that $g$ is isotropic is critical to the proof above.  More specifically, it is needed in order to ignore the contribution of the initial condition in the first line of \eqref{eqn:induction_rhs_1}.  If $\gnk$ is not zero, then the estimates are quite different and the proofs are much more complicated.  We leave the analysis for anisotropic initial conditions to future work.
\end{rem}

\begin{rem}\label{thm:remark_fNl}
	The proof above also works for the coefficients $\fNl$ of the $P_N$ system. Hence the estimates on $\fl$ in Lemma \ref{thm:f} also apply to $\fNl$.  In Section \ref{sec:numerics}, we use $\fNl$ as a proxy for $\fl$ in order to verify these estimates numerically.
\end{rem}

\subsection{Estimating  $\eta$} 
\label{sec:error_expansion}
In this section, we use the bounds on $\fl$ to bound $\eta$.
\begin{lem}\label{thm:eta} 
	Let $g_0 \in L^2(dx)$ be given.
	\beq \label{eqn:est_eta_high}
	\|\etahigh\| _{L^2(d \mu dx)} (t) 
	\leq  \sqrt{6}  \|\ghigh\|_{L^2(d \mu dx)} e^{-\frac{\lambda_1 t}{\eps^2}},
	\eeq
	and
	\beq
	\label{eqn:est_eta_low}
	\|\etalow\| _{L^2(d \mu dx)} (t) 
	\leq \sqrt2 F(g,N+1,t) \eps^{N+1},
	\eeq
	where $F$ is given in \eqref{eqn:C_glt} with $\ell = N+1$.
	As a result,
	\beq
	\label{eqn:est_eta}
	\|\eta\| _{L^2(d \mu dx)} (t) 
	\leq  \sqrt{6} \|\ghigh\|_{L^2(d \mu dx)} e^{-\frac{\lambda_1 t}{\eps^2}}
	+ \sqrt2 F(g,N+1,t) \eps^{N+1}.
	\eeq	
\end{lem}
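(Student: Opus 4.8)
The plan is to exploit that $\eta$ is exactly the Legendre tail of $f$, so that $\etal = \fl$ for $\ell \geq N+1$ and $\etal = 0$ otherwise, and then to reduce both displayed bounds to the per-coefficient estimates already established in Lemma \ref{thm:f}. By orthonormality of the $p_\ell$ in $L^2(d\mu)$ together with Parseval in $x$, both $\|\etahigh\|_{L^2(d \mu dx)}^2$ and $\|\etalow\|_{L^2(d \mu dx)}^2$ split into double sums over $\ell \geq N+1$ and over the appropriate frequency band. For the high-frequency bound \eqref{eqn:est_eta_high}, I would write
\beq
\|\etahigh\|_{L^2(d \mu dx)}^2 = \sum_{\ell=N+1}^\infty \|\flhigh\|_{L^2(dx)}^2 \leq \sum_{\ell=0}^\infty \|\flhigh\|_{L^2(dx)}^2 = \|\Fhigh\|_{L^2(d \mu dx)}^2 ,
\eeq
and invoke \eqref{eqn:est_fl_high} to bound the right-hand side by $6\|\ghigh\|_{L^2(d \mu dx)}^2 e^{-2\lambda_1 t/\eps^2}$; taking square roots finishes this case. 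In words, truncation only discards modes, so the tail high-frequency energy is controlled by the total high-frequency energy, which already decays at the boundary-layer rate.

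The low-frequency bound \eqref{eqn:est_eta_low} is the main work, and here I would operate directly at the Fourier level rather than summing the coefficient estimate \eqref{eqn:est_fl_low}, whose $\ell$-dependence through $F(g,\ell,t)$ does not telescope cleanly. Since $\ell \geq N+1 \geq 1$, the mode $\flzero$ vanishes by Case 2 of Lemma \ref{thm:f}, and the reality symmetry $|\flk| = |\flmk|$ lets me write
\beq
\|\etalow\|_{L^2(d \mu dx)}^2 = 2\sum_{\ell=N+1}^\infty \sum_{0 < k\eps \leq \frac12} |\flk|^2 .
\eeq
Substituting \eqref{eqn:est_flk_low} in the form $|\flk|^2 \leq 12\,\Hzerok(g)\,(Ak\eps)^{2\ell} e^{-2\lambda_2 k^2 t}$ and exchanging the order of summation isolates the geometric series $\sum_{\ell=N+1}^\infty (Ak\eps)^{2\ell}$. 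Because $A \simeq 1.2 < \sqrt2$ (cf. \eqref{eqn:clk}) and $k\eps \leq \tfrac12$ in the low band, the ratio satisfies $(Ak\eps)^2 \leq A^2/4 < \tfrac12$, so the series converges and sums to $(Ak\eps)^{2(N+1)}/(1-A^2/4) \leq 2\,(Ak\eps)^{2(N+1)}$. Bounding $\Hzerok(g) \leq \max_{k>0}\Hzerok(g)$ and extending the sum to all $k>0$ then matches the definition \eqref{eqn:C_glt} of $F$ (the prefactor $24 = 2\cdot 12$ is already absorbed there), giving $\|\etalow\|_{L^2(d \mu dx)}^2 \leq 2\,F(g,N+1,t)^2 \eps^{2(N+1)}$, which is \eqref{eqn:est_eta_low}.

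Finally, I would combine the two pieces: the high- and low-frequency parts involve disjoint Fourier modes and are therefore $L^2$-orthogonal, so $\|\eta\|^2 = \|\etahigh\|^2 + \|\etalow\|^2$, and the subadditivity $\sqrt{a^2+b^2}\leq a+b$ yields \eqref{eqn:est_eta}. The only genuine obstacle is the low-frequency step, specifically verifying that the geometric ratio stays strictly below $1$ (in fact below $\tfrac12$), so that the series both converges and carries a constant bounded by $2$. This is precisely where the explicit value $A<\sqrt2$ and the band restriction $k\eps\leq\tfrac12$ are used, and it is what produces the clean $\sqrt2$ prefactor in \eqref{eqn:est_eta_low}.
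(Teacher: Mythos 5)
Your proof is correct, and its overall skeleton matches the paper's: the high-frequency bound is obtained exactly as in the paper, by dominating $\|\etahigh\|_{L^2(d\mu dx)}$ with $\|\Fhigh\|_{L^2(d\mu dx)}$ and invoking \eqref{eqn:est_fl_high}, and the low-frequency bound rests on the per-mode estimate \eqref{eqn:est_flk_low} plus a geometric series in $\ell$ with ratio $(Ak\eps)^2 < \tfrac12$. The difference is in how the low-frequency double sum is organized, and your ordering is in fact the more rigorous one. The paper first sums the coefficient bounds \eqref{eqn:est_fl_low}, writing $\|\etalow\|^2 \leq \sum_{\ell\geq N+1}\left[F(g,\ell,t)\eps^\ell\right]^2$, and then expands the definition \eqref{eqn:C_glt} of $F$ and exchanges the $\ell$- and $k$-sums. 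But the $k$-sum inside $F$ runs over \emph{all} $k>0$, so after the exchange the inner series $\sum_{\ell\geq N+1}(Ak\eps)^{2\ell}$ diverges whenever $Ak\eps\geq 1$; in fact the intermediate quantity $\sum_{\ell\geq N+1}[F(g,\ell,t)\eps^\ell]^2$ is already infinite for any fixed $\eps>0$ (consider the contribution of $k>1/(A\eps)$ inside $F$), and the paper's step of pulling the factor $\sum_{\ell\geq 0}(Ak\eps)^{2\ell}$ outside the $k$-sum and then asserting $Ak\eps<0.6$ is only legitimate when the $k$-sum is restricted to the low band. Your version keeps the restriction $0<k\eps\leq\tfrac12$ in force throughout: you sum the geometric series first, with the cutoff guaranteeing the ratio $A^2/4<\tfrac12$ and hence the factor $\bigl(1-(Ak\eps)^2\bigr)^{-1}\leq 2$, and only afterwards enlarge the $k$-sum to all $k>0$ to recognize $F(g,N+1,t)$. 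Every intermediate expression is then finite, and your constant bookkeeping ($2\cdot 12=24$ absorbed into $F$, the geometric factor surviving as the $\sqrt2$) is exactly right, as is the final combination via orthogonality of the two frequency bands (the paper's triangle inequality gives the same thing). So you prove the same bound by the same underlying idea, but your execution of the low-frequency step repairs a genuine lapse in the paper's write-up, which should be read as (and corrected to) your argument.
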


\begin{proof}
We prove \eqref{eqn:est_eta_high} and \eqref{eqn:est_eta_low}, which combine to give  \eqref{eqn:est_eta}.
\begin{itemize}
	\item[(i)] {\bf High frequencies.} 
We first recall the high-frequency definitions in \eqref{eqn:high_low_def_ell} and \eqref{eqn:high_low_def} and the definition of $\eta$ in \eqref{eqn:def_el}.  We then apply \eqref{eqn:est_fl_high}.  This gives
\beq
\|\etahigh\| _{L^2(d \mu dx)} (t) 
\leq \|\Fhigh\|_{L^2(d \mu dx)} (t)
\leq  \sqrt{6}  \|\ghigh\|_{L^2(d \mu dx)} e^{-\frac{\lambda_1 t}{\eps^2}}.
\eeq

	\item[(ii)] {\bf Low frequencies.} 
We recall the low-frequency definitions in \eqref{eqn:high_low_def_ell} and \eqref{eqn:high_low_def} and then apply the bound in \eqref{eqn:est_fl_low}.  This gives 
\beq
\|\etalow\| _{L^2(d \mu dx)}^2 (t)
=  \sum_{\ell=N+1}^\infty \|\Flow_\ell\|^2_{L^2(dx)}(t)
\leq  \sum_{\ell=N+1}^\infty  \left[F(g,\ell,t) \eps^\ell\right]^2 
\eeq
Using the definition of $F(g,\ell,t)$ in \eqref{eqn:C_glt}, we have 
\begin{eqnarray}
\sum_{\ell=N+1}^\infty  \left[F(g,\ell,t) \eps^\ell\right]^2 
& = &  \sum_{\ell=N+1}^\infty \left(24 \max\limits_{k>0} \Hzerok(g) \sum\limits_{k > 0}(Ak)^{2\ell} e^{-2\lambda_2 k^2 t} \eps^{2\ell} 
\right)\\
& = &  \eps^{2(N+1)}   24 \max\limits_{k>0} \Hzerok(g)  \sum\limits_{k > 0} (Ak)^{2(N+1)} e^{-2\lambda_2 k^2 t}  \sum_{\ell=0}^\infty  (Ak\eps)^{2\ell} \\
&= &   \left[F(g,N+1,t) \right]^2 \eps^{2(N+1)} \sum_{\ell=0}^\infty  (Ak\eps)^{2\ell} .
\end{eqnarray}
Recall that $0 < A < 1.2$.  Therefore, $(Ak\eps) < 0.6 < 1$ and  
\beq
\sum_{\ell=0}^\infty  (Ak\eps)^{2\ell} = \frac{1}{1-(Ak\eps)^2} < 2. 
\eeq
\end{itemize}

\end{proof}

\subsection{Estimating $\xi$}	 
\label{sec:energy_error}

With bounds on $\fNpok$ in \eqref{eqn:est_flk_high}, we use method of modified energy to bound  $\Hzerok(\xi)$ and then estimate $\xi$.

\begin{lem}\label{thm:energy_xi} Let $g_0 \in H^1(dx)$, then
	
	\begin{empheq}[left= \Hzerok(\xi) (t) \leq
		\empheqlbrace]{alignat=2}
		&6t e^{-\frac{2\lambda_1 t}{\eps^2}} \Hzerok(\partx g),
		&\quad  |k|\eps > \frac12, 
		\label{eqn:est_Hk_high_xi} \\
		&\frac{t}{2} [C_{N+1}^k]^2 k^{2(N+2)} e^{-2\lambda_2 k^2 t} \eps^{2(N+1)} ,
		&\quad  |k|\eps \leq \frac12,
		\label{eqn:est_Hk_low_xi}
	\end{empheq}	
	with $\lambda_1 = \frac{1}{45}$, $\lambda_2 = \frac{4}{45}$  and $C_{N+1}^k$ defined in \eqref{eqn:clk}.  Hence
	\beq
	\label{eqn:est_Hhigh_xi}
	\| \xihigh\|_{L^2(d \mu dx)} (t) \leq \sqrt{6t}  e^{-\frac{\lambda_1 t}{\eps^2}} \| \partx \ghigh\|_{L^2(d \mu dx)} 
	\eeq 	
	and 
	\beq
	\| \xilow\|_{L^2(d \mu dx)}(t) \leq \frac{\sqrt{t} }{A} F(g,N+2,t) \eps^{N+1} ,
	\label{eqn:est_hlow_xi}
	\eeq
	where $F$ is given in \eqref{eqn:C_glt} with $\ell = N+2$.  	As a result,
	\beq
	\label{eqn:est_xi}
	\|\xi\|_{L^2(d \mu dx)} (t)
	\leq \sqrt{6t} \|\partx g\|_{L^2(d \mu dx)} e^{-\frac{\lambda_1 t}{\eps^2}} + \frac{\sqrt t}{ A} F(g,N+2,t) \eps^{N+1}.
	\eeq
	
\end{lem}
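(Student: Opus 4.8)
The plan is to reproduce the modified-energy argument of Lemma \ref{thm:energy_f} almost verbatim, the only new ingredient being the source term $-a_N ik \fNpok$ in the last equation of \eqref{eqn:PN_error_k}. The crucial structural observation is that the $\ell=0$ and $\ell=1$ equations of \eqref{eqn:PN_error_k} for $\ezerok$ and $\eonek$ are identical in form to the two-moment system \eqref{eqn:two-moment}, so Lemma \ref{thm:compensating function} applies directly with $u=\xi$: the same compensating function $\hgammak(\xi)$ can be used, and both \eqref{eqn:H_bound_gamma} and \eqref{eqn:partt_h} hold with $u$ replaced by $\xi$. The one genuine difference from the homogeneous case is that $\xi(\cdot,0)=0$, which will be what forces the homogeneous part of the energy to vanish and leaves only the contribution of the source.

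First I would add the compensating-function bound \eqref{eqn:partt_h} (with $u=\xi$) to the energy identity \eqref{eqn:partt_H0_error}. This produces an inequality of the same shape as \eqref{eqn:partt_Hh}, but with two extra terms on the right-hand side coming from \eqref{eqn:partt_H0_error}, namely $\frac{1}{2\eps^2}|\eNk|^2$ and $\frac{k^2}{6}|\fNpok|^2$. The first of these I would absorb into the dissipative term $-\frac{2}{\eps^2}\Hthreek(\xi)$ (or, more directly, into $-\frac{2}{\eps^2}\Honek(\xi)$), using $\Honek(\xi)\ge\frac12|\eNk|^2$; this leaves the single genuine forcing term $\frac{k^2}{6}|\fNpok|^2$. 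Choosing $\gamma=\gamma^{\rm{high}}$ as in \eqref{eqn:gamma_1} for $|k|\eps>\frac12$ and $\gamma=\gamma^{\rm{low}}$ as in \eqref{eqn:gamma_2} for $0<|k|\eps\le\frac12$ then converts the remaining dissipation into $-\kappa\,(\Hzerok+\hgammak)(\xi)$, exactly as in \eqref{eqn:partt_Hh_high} and \eqref{eqn:partt_Hh_low}, with $\kappa=\frac{2\lambda_1}{\eps^2}$ and $\kappa=2\lambda_2 k^2$, respectively.

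At this stage I would have the scalar differential inequality
\beq
\partt (\Hzerok + \hgammak)(\xi) \le -\kappa\,(\Hzerok + \hgammak)(\xi) + \frac{k^2}{6}|\fNpok|^2,
\eeq
to be integrated by Duhamel's principle. Since $\xi(\cdot,0)=0$, the initial value $(\Hzerok+\hgammak)(\xi)(0)$ vanishes and only the time-integral of the source survives. For the source I would insert the bounds on $\fNpok$ already established in Lemma \ref{thm:f}: \eqref{eqn:est_flk_high} in the high-frequency regime and \eqref{eqn:est_flk_low} (with the constant \eqref{eqn:clk}) in the low-frequency regime. The decisive point is that in each regime the source decays at exactly the rate $e^{-\kappa t}$ dictated by the homogeneous problem, so that $\int_0^t e^{-\kappa(t-s)}e^{-\kappa s}\,ds = t\,e^{-\kappa t}$; this is precisely the mechanism that produces the linear factor of $t$ (hence $\sqrt t$ after taking square roots) in \eqref{eqn:est_Hk_high_xi}, \eqref{eqn:est_Hk_low_xi}, \eqref{eqn:est_Hhigh_xi}, and \eqref{eqn:est_hlow_xi}. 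Converting $(\Hzerok+\hgammak)(\xi)$ back to $\Hzerok(\xi)$ via the lower bound in \eqref{eqn:H_bound_gamma} yields the per-mode estimates \eqref{eqn:est_Hk_high_xi} and \eqref{eqn:est_Hk_low_xi}.

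Finally, I would sum the per-mode estimates over frequencies. In the high-frequency case, summing \eqref{eqn:est_Hk_high_xi} over $|k|\eps>\frac12$ and using $k^2\Hzerok(g)=\Hzerok(\partx g)$ together with \eqref{eqn:u_low_high_L2} gives \eqref{eqn:est_Hhigh_xi}. In the low-frequency case, summing \eqref{eqn:est_Hk_low_xi} over $0<|k|\eps\le\frac12$ and substituting $[C_{N+1}^k]^2=12\Hzerok(g)A^{2(N+1)}$ from \eqref{eqn:clk}, I would match the result against the definition of $F(g,N+2,t)$ in \eqref{eqn:C_glt}; the mismatch of one power of $A^2$ between $[C_{N+1}^k]^2 k^{2(N+2)}$ and the summand of $F(g,N+2,t)^2$ is exactly what yields the factor $1/A$ in \eqref{eqn:est_hlow_xi}. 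The combined bound \eqref{eqn:est_xi} then follows by the triangle inequality, using $\|\partx \ghigh\|_{L^2(d\mu dx)}\le\|\partx g\|_{L^2(d\mu dx)}$. I expect the main obstacle to be bookkeeping rather than conceptual: one must verify that the source $\frac{1}{2\eps^2}|\eNk|^2$ is genuinely dominated by the available dissipation (which needs a little care when $N$ is small and $\Hthreek(\xi)$ is empty, in which case the absorption should be routed through the term $c_{\gamma,2}|\eNk|^2$ instead), and one must track the constants carefully enough to land on the stated coefficients $6t$, $\tfrac12$, and $1/A$ rather than merely on estimates of the correct order in $\eps$.
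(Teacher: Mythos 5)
Your proposal follows the paper's proof essentially step for step: the same modified energy $\Hzerok(\xi)+\hgammak(\xi)$, the same absorption of $\frac{1}{2\eps^2}|\eNk|^2$ into the dissipation, the same choices $\gamma^{\rm{high}}$, $\gamma^{\rm{low}}$, the same Duhamel integration in which $\int_0^t e^{-\kappa(t-s)}e^{-\kappa s}\,ds = t\,e^{-\kappa t}$ produces the factor of $t$, and the same frequency-wise summation (including the $1/A$ bookkeeping against $F(g,N+2,t)$). Your closing caveat about small $N$ is in fact a point where you are more careful than the paper, whose stated absorption into $-\frac{2}{\eps^2}\Hthreek(\xi)$ is only literally valid for $N\geq 3$: for $N=2$ your rerouting through $c_{\gamma,2}$ works (one checks the remaining coefficient still dominates the required rate in both frequency regimes), while for $N=1$ even that fails and one must weaken the Young split of the source term (smaller weight on $|\eNk|^2$, larger constant on $|\fNpok|^2$), which preserves the structure and rates but alters the stated numerical constants.
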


\begin{proof}
The proof relies on the same calculations as Lemma \ref{thm:energy_f}, but must incorporate the presence of a source term in the energy equation. (Compare \eqref{eqn:partt_H0} to \eqref{eqn:partt_H0_error}.)  We set $u = \xi$ in \eqref{eqn:partt_h}, add the result to \eqref{eqn:partt_H0_error}, and use the fact that $\Honek(\xi) = \Hthreek(\xi) +  \frac12 |\eonek|^2  + \frac12 |\etwok|^2$.  This gives 
\begin{align}
\label{eqn:partt_Hh_error}
\partt  \left( \Hzerok(\xi) + \hgammak(\xi) \right)
&\leq - \frac{2}{\eps^2} \Hthreek(\xi) - \sum_{\ell=0}^2 c_{\gamma,\ell} |\xi_{\ell,k}|^2
+ \frac{1}{2\eps^2}|\eNk|^2  +  \frac{k^2}{6} |\fNpok|^2 \nonumber \\
&\leq - \frac{1}{\eps^2} \Hthreek(\xi) - \sum_{\ell=0}^2 c_{\gamma,\ell} |\xi_{\ell,k}|^2
 +  \frac{k^2}{6} |\fNpok|^2,
\end{align}
where the coefficients $c_{\gamma,0}$, $c_{\gamma,1}$, and $c_{\gamma,2}$ are defined in \eqref{eqn:def_c_gamma} and the term $\frac{1}{2\eps^2}|\eNk|^2 $ in the first line has been absorbed by $-\frac{2}{\eps^2} \Hthreek(\xi)$.%
	\footnote{The cost of combining these two terms is that the coefficient of $\Hthreek(\xi)$ in \eqref{eqn:partt_Hh_error} is only half the coefficient of $\Hthreek(f)$ in \eqref{eqn:partt_Hh}.  However, the bound with respect to these coefficients is very loose.  Hence the estimates in the proof of Lemma \ref{thm:energy_f} follow, except for the source term.
	}	
	  As in the proof of Lemma \ref{thm:energy_f}, we separate the frequency spectrum of $\xi$ into high frequency and low-frequency parts, and choose $\gamma$ appropriately in each case.

\begin{itemize}
	\item[(i)] {\bf High frequency.} For $k\eps > 1/2$, we set $\gamma = \gamma^{\rm{high}}$ (defined in \eqref{eqn:gamma_1}) into \eqref{eqn:partt_Hh_error} and repeat the arguments in part (i) of the proof of Lemma \ref{thm:energy_f}.
	This gives
	\begin{eqnarray}
	\partt \left( \Hzerok(\xi) + \hgammak(\xi) \right) 
	\leq -\frac{2\lambda_1}{\eps^2}(\Hzerok(\xi)  + \hgammak(\xi) )  +  \frac{k^2}{6} |\fNpok|^2.
	 \label{eqn:pt_error_bound_high}
	\end{eqnarray}
	We integrate \eqref{eqn:pt_error_bound_high} in time.  Using \eqref{eqn:est_flk_high} to evaluate $|\fNpok|$ and the fact that $\left(\Hzerok(\xi) + \hgammak(\xi)\right)(0)= 0$, we find that
	
	\beq
	\left(\Hzerok(\xi) + \hgammak(\xi)\right)(t)
		\leq \frac{k^2}{6}  \int_0^t e^{-\frac{2\lambda_1 (t-s)}{\eps^2}} |\fNpok|^2(s) ds
			\leq 2 t e^{-\frac{2\lambda_1 t}{\eps^2}} k^2 \Hzerok(g).
	\eeq

	Applying the left bound from \eqref{eqn:H_bound_gamma}, we find 
	\beq \label{eqn:H0_e_partx}
	\Hzerok(\xi) (t) 
	\leq   6t e^{-\frac{2\lambda_1 t}{\eps^2}} k^2 \Hzerok(g)
	= 6t e^{-\frac{2\lambda_1 t}{\eps^2}} \Hzerok(\partx g),
	\eeq
	which is  \eqref{eqn:est_Hk_high_xi}. 
	Then \eqref{eqn:est_Hhigh_xi} is recovered by summing over all high frequency values of $k$.

	\item[(ii)] {\bf Low frequency.} When $k=0$, \eqref{eqn:PN_error_k} implies that $\elk =0$ (since the initial condition is zero by definition).  For $0< k\eps \leq 1/2$, we set $\gamma = \gamma^{\rm{low}}$ (defined in \eqref{eqn:gamma_2}) into \eqref{eqn:partt_Hh_error} and repeat the arguments in part (ii) of the proof of Lemma \ref{thm:energy_f}.
	This gives
		\beq \label{eqn:parrt_Hxi}
		\partt \left( \Hzerok(\xi) + \hgammak(\xi) \right)
		\leq   - 2\lambda_2 k^2 \left( \Hzerok(\xi) + \hgammak(\xi) \right) +  \frac{k^2}{6} |\fNpok|^2.
		\eeq
		We integrate \eqref{eqn:parrt_Hxi} in time, using the fact $\left(\Hzerok(\xi) + \hgammak(\xi)\right)(0)= 0$ and the estimate in \eqref{eqn:est_flk_low} for $|\fNpok|$.  This gives
		\begin{eqnarray}
		 \label{eqn:enery_error} 
		\left(\Hzerok(\xi) + \hgammak(\xi)\right)(t)  &\leq&  \frac{k^2}{6} e^{-2\lambda_2 k^2t}  \int_0^t \! e^{2\lambda_2 k^2s} |\fNpok|^2 \, ds 
		\nonumber \\
		&\leq&  \frac{t}{6}(C_{N+1}^k)^2 k^{2(N+2)}  e^{-2\lambda_2 k^2t} \eps^{2(N+1)}. 
		\end{eqnarray}
		To arrive at \eqref{eqn:est_Hk_low_xi} from \eqref{eqn:enery_error}, we use the fact that $\Hzerok(\xi) \leq 3(\Hzerok(\xi) + \hgammak(\xi))$ (cf. \eqref{eqn:H_bound_gamma}).  Then to establish \eqref{eqn:est_hlow_xi}, we sum \eqref{eqn:est_Hk_low_xi} over the low frequency values of $k$:
		\begin{eqnarray}
		\| \xilow\|^2_{L^2(d \mu dx)}(t)
		& \leq & \sum\limits_{|k|\eps \le \frac12}\frac{12t}{A^2} (Ak)^{2(N+2)} e^{-2\lambda_2 k^2 t}  \Hzerok( g)  \eps^{2(N+1)} \nonumber\\\
		& \leq & \frac{24t}{A^2} \eps^{2(N+1)} \max_{0 < k\eps \leq \frac12} \Hzerok( g)   \sum\limits_{0<k\eps \le \frac12} (Ak)^{2(N+2)} e^{-2\lambda_2 k^2 t}  \nonumber\\\
		& \leq & \frac{t}{A^2} \eps^{2(N+1)} \left[F(g,N+2,t) \right]^2,
		\end{eqnarray}
		with $F$ defined in \eqref{eqn:C_glt}.
\end{itemize}
\end{proof}

\subsection{Finer estimate on $\xillow$} 
\label{sec:error}
In Lemma \ref{thm:energy_xi}, we proved an $\eps$-dependent estimate for $\xilow$.  In this section, we combine the method of induction with a reduced version of the modified energy to refine the estimate for $\xillow$.
\begin{lem} \label{thm:xi_l_1st}
	Let $g_0 \in L^2(dx)$ be given.  
	For $|k|\eps \leq \frac12$, 
		\begin{empheq}[left=	|\elk| (t)  \leq 
			\empheqlbrace]{alignat=2}
			&\tilde{C}_{N-1}^k(t) \eps^{2N} k^{3N}e^{-\lambda_2 k^2 t},
			&\quad  \ell=0,
			\label{eqn:est_xi_lk_low_0} \\
			&\tilde{C}_{N+1-\ell}^k(t) \eps^{2N+2-\ell} k^{3N+4-2\ell} e^{-\lambda_2 k^2 t}, 
			&\quad  1 \leq \ell \leq N, 
			\label{eqn:est_xi_lk_low_1}
		\end{empheq}	
	where 
	\beq \label{eqn:cnk_hat}
	\tilde{C}_{N+1-\ell}^k(t)= M(t) \tilde{C}_{N-\ell}^k(t), \quand \tilde{C}_1^k(t) = \max\{1, t^{1/2}\} \hat{C}(t)  C^k_{N+1},
	\eeq
	with 
	\beq \label{eqn:def_M_Chat}
	M(t) = \frac{2\max\{1, t^{1/2}\}}{\sqrt{3}(1-\lambda_2/4)} \quand \hat{C}(t) = \frac{2t^{1/2} + k^{-1}}{\sqrt3 (1-\lambda_2/4)},
	\eeq
	and $C^k_{N+1}$ defined in \eqref{eqn:clk}.   
\end{lem}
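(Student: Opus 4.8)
The plan is to argue frequency-by-frequency in the low-frequency band $0 < k\eps \le \tfrac12$, noting that $k$ is an integer so $k \ge 1$ whenever $k \neq 0$, while the case $k=0$ is trivial (with $g$ isotropic, every $\elk$ vanishes identically by \eqref{eqn:PN_error_k}, and the right-hand sides of \eqref{eqn:est_xi_lk_low_0}--\eqref{eqn:est_xi_lk_low_1} vanish as well). The single source in the coupled system \eqref{eqn:PN_error_k} sits in the top equation and is controlled by Lemma \ref{thm:f}, namely $|\fNpok| \le C_{N+1}^k \eps^{N+1} k^{N+1} e^{-\lambda_2 k^2 t}$. For $1 \le \ell \le N$ each equation carries the strong damping $\tfrac1{\eps}\elk$, so I would rewrite it as
\[
\partt\elk + \frac{1}{\eps^2}\elk = -\frac{ik}{\eps}\bigl(a_\ell\elpok + a_{\ell-1}\elmok\bigr)
\]
(plus the $\fNpok$ source when $\ell=N$) and integrate against $e^{t/\eps^2}$ with zero initial data. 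The engine of the whole estimate is the elementary bound
\[
\int_0^t e^{-(t-s)/\eps^2} e^{-\lambda_2 k^2 s}\,ds \le \frac{\eps^2}{1-\lambda_2/4}\,e^{-\lambda_2 k^2 t},
\]
valid since $\lambda_2 k^2\eps^2 \le \lambda_2/4 < 1$ in this band; it turns the $\tfrac1\eps$ prefactor into a net factor of $\eps$, which is exactly the one-extra-power-of-$\eps$-per-level behaviour $\eps^{2N+2-\ell}$. The constant produced is governed by $A = \tfrac{2}{\sqrt3(1-\lambda_2/4)}$ from \eqref{eqn:clk}, and indeed $M(t) = A\max\{1,t^{1/2}\}$, which explains the geometric recursion $\tilde{C}_{N+1-\ell}^k = M(t)\tilde{C}_{N-\ell}^k$, with the base constant $\tilde{C}_1^k$ arising from feeding the $\fNpok$ bound into the $\ell=N$ equation.

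The proof would then run by downward induction on $\ell$, from the base case $\ell=N$ down to $\ell=1$. At each step the already-estimated upper neighbor $\elpok$, when passed through the integrated ODE, sets the correct $\eps$-order of $\elk$ (gaining one power of $\eps$), so it is the upper neighbor that carries the physically important scaling. The genuine difficulty, and the reason the pointwise ODE cannot close by itself, is the lower-neighbor term $a_{\ell-1}\elmok$: in a downward induction $\elmok$ has not yet been bounded, and the crude bound of Lemma \ref{thm:energy_xi} on it (uniformly $\mO(\eps^{N+1})$) is too weak to remain below the sharper target $\mO(\eps^{2N+2-\ell})$ as soon as $\ell < N$. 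This backward, circular coupling is the main obstacle.

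To break it I would invoke the reduced modified energy announced in the section header: apply the compensating-function machinery of Lemma \ref{thm:compensating function} and Lemma \ref{thm:energy_xi} not to the full system but to the truncated bottom block $\{\xi_{0,k},\dots,\xi_{\ell-1,k}\}$, which is itself a $P_{\ell-1}$-type system forced at its top by $a_{\ell-1}ik\,\elk$. Because $|k|\eps\le\tfrac12$, this coupling carries the small factor $\eps k\le\tfrac12$, so the feedback of the lower block into the $\elk$-equation enters at one higher order; combining it with the upper-neighbor contribution through the integrated ODE closes the induction self-consistently and produces the remaining $\sqrt t$ in $M(t)$ together with the extra powers of $k$ in $\tilde{C}_{N+1-\ell}^k$. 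I expect this closure — reconciling the pointwise $\eps$-gain with the energy control of the lower block — to be the delicate, computation-heavy part. Finally, the case $\ell=0$ must be handled separately, since its equation $\partt\ezerok = -\tfrac{a_0 ik}{\eps}\eonek$ has no damping; direct integration in time gives
\[
|\ezerok|(t) \le \frac{a_0 k}{\eps}\int_0^t |\eonek|\,ds,
\]
which injects a factor $\tfrac1\eps$ and so loses exactly one power of $\eps$ relative to $\xi_{1,k}$, accounting for the jump from $\eps^{2N+1}$ at $\ell=1$ to $\eps^{2N}$ at $\ell=0$ in \eqref{eqn:est_xi_lk_low_0}.
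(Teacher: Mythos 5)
Your frame matches the paper's: the reduction to $0<k\le\frac{1}{2\eps}$ with $k=0$ trivial, the Duhamel engine $\int_0^t e^{-(t-s)/\eps^2}e^{-\lambda_2 k^2 s}\,ds\le \frac{\eps^2}{1-\lambda_2/4}\,e^{-\lambda_2 k^2 t}$, the downward induction starting at $\ell=N$ with $\fNpok$ as the only source, and even the identification $M(t)=A\max\{1,t^{1/2}\}$. You also correctly isolate the lower-neighbor term as the crux. The gap is in your proposed closure. You claim the bottom block $\{\xi_{0,k},\dots,\xi_{\ell-1,k}\}$, forced by $\elk$, feeds back into the $\elk$ equation ``at one higher order'' because $\eps k\le\frac12$. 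Quantify the loop: the modified-energy estimate for that block (the analogue of \eqref{eqn:enery_error}) converts a source of amplitude $\bar X$ (meaning $|\elk|(s)\le\bar X e^{-\lambda_2 k^2 s}$) into block bounds $|\xi_{n,k}|(t)\lesssim k\sqrt{t}\,\bar X e^{-\lambda_2 k^2 t}$, since the source enters the energy inequality as $\frac{k^2}{6}|\elk|^2$ and the Duhamel integral contributes a factor $t$; feeding $|\elmok|\lesssim k\sqrt{t}\,\bar X e^{-\lambda_2 k^2 s}$ back through the damped ODE for $\elk$ multiplies by $\frac{k}{\eps}$ and integrates to a factor $\eps k$. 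The total loop factor is therefore $\sim\eps k^2\sqrt{t}$, not $\eps k$. Since $k$ ranges up to $\frac{1}{2\eps}$ in the low-frequency band, $\eps k^2$ can be as large as $\frac{1}{4\eps}$, so the self-consistent absorption cannot close: your argument works only for fixed $k=O(1)$, while the lemma's explicit $k$-powers must hold uniformly in the band (they are summed over $k$ in Lemma \ref{thm:xi_l}).

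The paper removes the circularity instead of trying to absorb it: the induction hypothesis is strengthened so that at level $\ell_*$ the \emph{same} bound holds for all $\xi_{n,k}$ with $0\le n\le\ell_*$, not just $n=\ell_*$ (see \eqref{eqn:est_induction_back_1_2}). Then in each improvement step the lower neighbor is already controlled at the current level, so every damped ODE with $1\le n\le\ell_*-1$ gains one power of $\eps$ outright, and the truncated-block energy argument is invoked only for the undamped $n=0$ equation, forced by the already-improved $\xi_{\ell_*-1,k}$ — never by an unknown. Your treatment of $\ell=0$ has a second genuine defect: direct integration of $\partt\ezerok=-\frac{a_0 ik}{\eps}\eonek$ gives $\int_0^t e^{-\lambda_2 k^2 s}\,ds\le\frac{1}{\lambda_2 k^2}$, which saturates at a constant, so the resulting bound on $|\ezerok|$ carries no factor $e^{-\lambda_2 k^2 t}$; it cannot yield \eqref{eqn:est_xi_lk_low_0} as stated, and that decay is precisely what keeps $E(g,N,\ell,t)$ bounded in Lemma \ref{thm:xi_l}. (The dependency is also reversed: in the paper the level-$2$ bound on $\ezerok$ is an input to the $\ell=1$ estimate, obtained by running the inductive step with $\ell_*=2$, not a consequence of the $\ell=1$ bound.)
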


\begin{proof}

For $k=0$, \eqref{eqn:est_xi_lk_low_0} and \eqref{eqn:est_xi_lk_low_1} hold, since direct inspection of \eqref{eqn:PN_error_k} shows that $\elzero(t)=0$ for all $\ell = 0, 1, \cdots, N$.  
We therefore consider only $0 < k \le \frac{1}{2\eps}$.  In this case, we establish \eqref{eqn:est_xi_lk_low_1} and \eqref{eqn:est_xi_lk_low_0} by proving the stronger statements:
\begin{itemize}
	\item[(1)] for $2 \leq \ell \leq N$,
	\beq
	\label{eqn:est_induction_back_1_2}
	|\enk| (t)  \leq 
	\tilde{C}_{N+1-\ell}^k(t) \eps^{2N+2-\ell} k^{3N+4-2\ell} e^{-\lambda_2 k^2 t}, 
	\quad 0 \leq n \leq \ell;
	\eeq
	\item[(2)] for $\ell = 1$,
	\beq
	\label{eqn:est_induction_back_0_2}
	|\eonek| (t)  \leq 
	\tilde{C}_{N}^k(t) \eps^{2N+1} k^{3N+1} e^{-\lambda_2 k^2 t}.
	\eeq
\end{itemize}
When $\ell = 0$, \eqref{eqn:est_xi_lk_low_0} is recovered by setting $\ell = 2$ and $n=0$ in \eqref{eqn:est_induction_back_1_2}.
When $\ell = 1$, \eqref{eqn:est_xi_lk_low_1} is recovered by \eqref{eqn:est_induction_back_0_2}.
When $2 \leq \ell \leq N$, \eqref{eqn:est_xi_lk_low_1}  is recovered by setting $n=\ell$ in \eqref{eqn:est_induction_back_1_2}.

\begin{itemize}
	\item[(1)] To prove \eqref{eqn:est_induction_back_1_2}, we use the method of induction, starting with $N$ and working backward. 
	\begin{itemize}
		\item[(a)]\label{eqn:induction_back_1}
		For the initial step in the induction, we need to show that for $\ell = N$,
		\beq
		\label{eqn:est_elk_step1}
		|\enk|(t) \le \tilde{C}_1^k(t) \eps^{N+2} k^{N+4}e^{-\lambda_2 k^2 t},\quad n = 0, 1, \dots, N.
		\eeq
		We prove \eqref{eqn:est_elk_step1} in two sub-steps.
		\begin{itemize}
			\item[(i)] The first sub-step is to show \eqref{eqn:est_elk_step1} for $n = 1, \dots, N$.  We combine the last two equations of (\ref{eqn:PN_error_k}). This gives
			\beq \label{eqn:ineq_enk}
			\partt |\enk| +  \frac{1}{\eps^2} |\enk| 
			\le \frac{k}{\eps} \left( a_{n-1} |\enmok| + (1-\delta_{n,N}) a_n |\enpok| + \delta_{n,N} a_N |\fNpok|  \right),
			\eeq  
			for $1 \le n \le N$. It follows from \eqref{eqn:est_Hk_low_xi} that
			\beq
			\label{ineq:H0_e}
			|\elk|(t) \le t^{1/2} C^k_{N+1} \eps^{N+1} k^{N+2} e^{-\lambda_2 k^2 t}, \quad \ell = 0, 1, \cdots, N, \quad 0 < k \le \frac{1}{2\eps},
			\eeq
			where $C^k_{N+1}$ is defined in \eqref{eqn:clk}.  We use \eqref{ineq:H0_e} to estimate $\enmok$ and $\enpok$ and \eqref{eqn:est_flk_low} to estimate $\fNpok$.  Then \eqref{eqn:ineq_enk} reduces to 
			\beq
			\partt |\enk| +  \frac{1}{\eps^2} |\enk| 
			\le \frac{1}{\sqrt 3} \left( 2t^{1/2} + \frac{1}{k} \right) C^k_{N+1} \eps^N k^{N+3}e^{-\lambda_2 k^2 t}, \quad 1 \leq n \leq N.
			\eeq
			We then integrate in time, using the zero initial condition for $\enk$ to find 
			\beq \label{eqn:est_xilk_intermediate}
			|\enk|(t) \le \hat{C}(t) C^k_{N+1}  \eps^{N+2} k^{N+3}e^{-\lambda_2 k^2 t}, \quad 1 \leq n \leq N,
			\eeq
			with $\hat{C}(t)$ defined in \eqref{eqn:def_M_Chat}.  Since $\hat{C} C^k_{N+1} \leq \tilde{C}_1^k $, \eqref{eqn:est_xilk_intermediate} verifies \eqref{eqn:est_elk_step1} for $n = 1, \dots, N$. \footnote{Note that power of $k$ in \eqref{eqn:est_xilk_intermediate} is $N+3$, which is actually better than the estimate in \eqref{eqn:est_elk_step1}. However in the second substep, an additional power of $k$ is needed (cf. \eqref{eqn:est_elk_update}) in order to gain an additional factor of $\eps$ in the estimate for $\ezerok$. }
			\item[(ii)] The second sub-step is to use the result of (i) to show that \eqref{eqn:est_elk_step1} holds for $n = 0$.  Since using \eqref{eqn:PN_error_k} directly will result in order reduction by one power of $\eps$, we instead consider the smaller system for $\left\{ \enk \right\}_{n=0}^{N-1}$ and treat $\eNk$ as a source term:
			
			\beq
			\left\{
			\begin{array}{ll}
				\eps\partt \ezerok + a_0 ik \eonek = 0,
				& n = 0;\\
				\eps\partt \enk + a_n ik \enpok + a_{n-1} ik \enmok + \frac{1}{\eps} \enk = 0,
				& 1 \le n \le N-2;\\
				\eps\partt \eNmok  + a_{N-2} ik \eNmtk + \frac{1}{\eps} \eNmok = -a_{N-1} ik \eNk,
				& n = N-1.
			\end{array}
			\right.
			\eeq 
			
			We then repeat the arguments used to establish \eqref{eqn:est_Hk_low_xi} using the estimate for $\eNk$ in \eqref{eqn:est_xilk_intermediate} instead of the estimate for $\fNpok$.  This procedure  requires the introduction of a new functional
			\beq
			\label{def:Hjik} 
			\Hjik \colon u \mapsto \frac{1}{2}\sum_{n=j}^i |u_{n,k}|^2, \quad u \in L^2(d \mu dx),
			\eeq
			which is defined such that $\HjNk = \Hjk $ (cf. \eqref{eqn:Hkj}).  With $\HzeroNmok(\xi)$ and compensating function $\hgammak(\xi)$, defined in \eqref{eqn:hgamma}, one can first derive a differential inequality analogous to \eqref{eqn:partt_Hh_error} and then follow the arguments in part (ii) of the proof of Lemma \ref{thm:energy_xi}.  The result is 
			\beq
				\HzeroNmok(\xi)(t) \leq \frac{t}{2} \hat{C}(t)^2 (C_{N+1}^k)^2 \eps^{2(N+2)} k^{2(N+4)}e^{-2 \lambda_2 k^2 t},
			\eeq
			and then
			\beq
			\label{eqn:est_elk_update}
			|\enk|(t) \leq t^{1/2} \hat{C}(t) C_{N+1}^k\eps^{N+2} k^{N+4}e^{-\lambda_2 k^2 t}, \quad 0 \leq n \leq N-1.
			\eeq
			As compared to \eqref{eqn:est_Hk_low_xi}, the extra powers of $\eps$ and $k$ in \eqref{eqn:est_elk_update} come from the higher powers in the estimate for $\eNk$ in \eqref{eqn:est_xilk_intermediate} when compared to the estimate for $\fNpok$ in \eqref{eqn:est_flk_low}. Since $t^{1/2} \hat{C} C^k_{N+1} \leq \tilde{C}_1^k $, \eqref{eqn:est_elk_update} verifies \eqref{eqn:est_elk_step1} for $n = 0$. 
		\end{itemize}

		\item[(b)] For the next step of the induction, we assume that for some $3 \leq \ell_* \leq N$ fixed, \eqref{eqn:est_induction_back_1_2} holds for $\ell = \ell_*$:
		\beq \label{eqn:est_xink}
		|\enk| (t)  \leq 
		\tilde{C}_{N+1-{\ell_*}}^k(t) \eps^{2N+2-{\ell_*}} k^{3N+4-2{\ell_*}} e^{-\lambda_2 k^2 t}, \quad 0 \leq n \leq \ell_*.
		\eeq
		
		We would like to show 
		\beq
		\label{eqn:est_induction_result}
		|\enk|(t) 
		\leq   \tilde{C}_{N+1-({\ell_*}-1)}^k(t) \eps^{2N+2-({\ell_*}-1)} k^{3N+4-2({\ell_*}-1)}e^{-\lambda_2 k^2 t}, \quad  0 \leq n \leq {\ell_*}-1.
		\eeq

		\begin{itemize}
			\item[(i)] We first show \eqref{eqn:est_induction_result} for $1 \leq n \leq \ell_* - 1$.  Using \eqref{eqn:PN_error_k}, the estimate \eqref{eqn:est_xink} for $\enmok$ and $\enpok$ and the fact that $a_{n-1} \leq 1/\sqrt{3}$ and $a_n \leq 1/\sqrt{3}$, we derive the following estimate for $|\enk|$ for $1 \leq n \leq \ell_* - 1$:
			\begin{align}
			\partt |\enk| +  \frac{1}{\eps^2} |\enk| 
			&\leq \frac{k}{\eps} \left( a_{n-1} |\enmok| + a_n |\enpok| \right) \nonumber \\
			&\leq \frac{2}{\sqrt{3}}\tilde{C}_{N+1-{\ell_*}}^k(t) \eps^{2N+1-{\ell_*}} k^{3N+5-2{\ell_*}}e^{-\lambda_2 k^2 t}.
			\label{eqn:partt_enk}
			\end{align}
			Integration of \eqref{eqn:partt_enk} in time gives
			\beq
			|\enk|(t) 
			\leq \frac{2}{\sqrt{3}(1-\lambda_2/4)} \tilde{C}_{N+1-{\ell_*}}^k(t) \eps^{2N+3-{\ell_*}} k^{3N+5-2{\ell_*}}e^{-\lambda_2 k^2 t},  \label{eqn:est_enk_update}
			\eeq
			which recovers \eqref{eqn:est_induction_result}, using the definition of $\tilde{C}_{N+2-{\ell_*}}^k $ in  \eqref{eqn:cnk_hat}. 
			\item[(ii)] We next show \eqref{eqn:est_induction_result} for $n = 0$, repeating the argument from step (a)(ii).
			We consider the smaller system for $\left\{ \enk \right\}_{n=0}^{\ell_*-2}$ and treat $\elstarmok$ as a source term:
			\beq
			\left\{
			\begin{array}{ll}
				\eps\partt \ezerok + a_0 ik \eonek = 0,
				& n = 0;\\
				\eps\partt \enk + a_n ik \enpok + a_{n-1} ik \enmok + \frac{1}{\eps} \enk = 0,
				& 1 \le n \le  \ell_*-3;\\
				\eps\partt \elstarmtk  + a_{\ell_*-3} ik \elstarmthk + \frac{1}{\eps} \elstarmtk = -a_{\ell_*-2} ik \elstarmok,
				& n = \ell_*-2.
			\end{array}
			\right.
			\eeq 
			Using the energy $\Hzerolsmtwok(\xi)$ defined in \eqref{def:Hjik} and compensating function $\hgammak(\xi)$, defined in \eqref{eqn:hgamma},  we have
			\beq
			\Hzerolsmtwok(\xi) (t) \leq  \frac{2t}{\left(\sqrt{3}(1-\lambda_2/4)\right)^2} \left(\tilde{C}_{N+1-{\ell_*}}^k(t)\right)^2 \eps^{2(2N+3-{\ell_*})} k^{2(3N+6-2{\ell_*})}e^{-2\lambda_2 k^2 t},
			\eeq
			and then 
			\beq
			\label{eqn:est_ezerok_update}
			|\enk|(t) \le \frac{2t^{1/2}}{\sqrt{3}(1-\lambda_2/4)} \tilde{C}_{N+1-{\ell_*}}^k(t) \eps^{2N+3-{\ell_*}} k^{3N+6-2{\ell_*}}e^{-\lambda_2 k^2 t}, \quad 0 \leq n \leq \ell_*-2.
			\eeq
			This estimate is analogous to \eqref{eqn:est_elk_update}.
		\end{itemize}
	\end{itemize}

	\item[(2)] 
	To prove \eqref{eqn:est_induction_back_0_2}, one just need to repeat the argument in (b)(i) with $\ell_* = 2$. 
\end{itemize}

\end{proof}

The coefficients $\tilde{C}_{N-1}^k(t)$ and $\tilde{C}_{N+1-\ell}^k(t)$ in the estimates for $\elk(t)$ can be replaced by some time independent coefficients, at the cost of a reduced decay rate in the error.   
\begin{lem}\label{thm:xi_l}
	Let $g_0 \in L^2(dx)$ be given.  
	For $|k|\eps \leq \frac12$, 
	\begin{empheq}[left=	|\elk| (t)  \leq 
		\empheqlbrace]{alignat=2}
		& \bar{C}(N,2) C^k_{N+1} \eps^{2N} k^{3N} e^{-\frac{\lambda_2}{2} k^2 t}, 
		&\quad  \ell=0,
		\label{eqn:est_xi_lk_low_0_new} \\
		& \bar{C}(N,\ell) C^k_{N+1} \eps^{2N+2-\ell} k^{3N+4-2\ell} e^{-\frac{\lambda_2}{2} k^2 t}, 
		&\quad  1 \leq \ell \leq N,
		\label{eqn:est_xi_lk_low_1_new}
	\end{empheq}	
where
	\beq
		\bar{C}(N,\ell) = 2 A^{N-\ell+1}  \left( \frac{N-\ell+2}{\lambda_2}\right)^\frac{N-\ell+2}{2} e^{-\frac{N-\ell}{2}+\frac{\lambda_2}{2}-1}
	\eeq
and $C^k_{N+1}$ is defined in \eqref{eqn:clk}.   
Hence 
	\begin{empheq}[left=	\|\xillow\|_{L^2(dx)} (t)  \leq 
		\empheqlbrace]{alignat=2}
		&E(g,N,2,t) \eps^{2N},
		&\quad \ell=0,
		\label{eqn:est_xi_low_0} \\
		&E(g,N,\ell,t) \eps^{2N+2-\ell}, 
		&\quad 1 \leq \ell \leq N, 
		\label{eqn:est_xi_low_1}
	\end{empheq}	
where
	\beq \label{eqn:E_def}
		E(g,N,\ell,t) = \bar{C}(N,\ell)   A^{-2N-3+2\ell}  F(g,3N+4-2\ell, t/2)
	\eeq
and  $F$ is defined in \eqref{eqn:C_glt}.

\end{lem}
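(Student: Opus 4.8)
The plan is to take the pointwise-in-$k$ bounds from Lemma~\ref{thm:xi_l_1st} and convert the time-growing prefactors $\tilde{C}^k_{N+1-\ell}(t)$ into honest constants, paying for this with a halving of the exponential decay rate. First I would unfold the recursion \eqref{eqn:cnk_hat}. From \eqref{eqn:def_M_Chat} and the definition of $A$ in \eqref{eqn:clk} one has $M(t) = A\max\{1,t^{1/2}\}$, and (using $k \geq 1$, so $k^{-1}\leq 1$) $\hat{C}(t) \leq \tfrac{3A}{2}\max\{1,t^{1/2}\}$. Iterating \eqref{eqn:cnk_hat} down to the base case $\tilde{C}^k_1(t)$ then gives a clean bound of the form $\tilde{C}^k_{N+1-\ell}(t) \leq \tfrac{3}{2}A^{N-\ell+1}\max\{1,t^{1/2}\}^{N-\ell+2}\,C^k_{N+1}$, i.e.\ a monomial in $\max\{1,t^{1/2}\}$ of degree $N-\ell+2$, with the entire $k$- and $g$-dependence packaged into $C^k_{N+1}$ from \eqref{eqn:clk}.

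The key step is then the elementary decay-rate trade. Multiplying the previous bound by $e^{-\lambda_2 k^2 t}$ and splitting the exponential as $e^{-\frac{\lambda_2}{2}k^2 t}\cdot e^{-\frac{\lambda_2}{2}k^2 t}$, I would absorb the polynomial growth into one factor via the classical inequality $\sup_{s\geq 0}s^\alpha e^{-bs} = (\alpha/(eb))^\alpha$, with $\alpha = (N-\ell+2)/2$ and $b = \frac{\lambda_2}{2}k^2$. Because $k \geq 1$ this supremum is bounded by $\big(\tfrac{N-\ell+2}{\lambda_2}\big)^{(N-\ell+2)/2}e^{-(N-\ell+2)/2}$, uniformly in $k$. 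Collecting constants reproduces the claimed $k$-independent prefactor $\bar{C}(N,\ell)$ and yields the pointwise bounds \eqref{eqn:est_xi_lk_low_0_new}--\eqref{eqn:est_xi_lk_low_1_new}; for $\ell=0$ one simply feeds in the $\ell=2$ instance (so that $\tilde{C}^k_{N-1}$, and hence $\bar{C}(N,2)$, appear), consistent with how \eqref{eqn:est_xi_lk_low_0} was derived in Lemma~\ref{thm:xi_l_1st}.

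Finally, to obtain the $L^2(dx)$ estimates \eqref{eqn:est_xi_low_0}--\eqref{eqn:est_xi_low_1}, I would square the pointwise bounds and sum over the low-frequency band exactly as in \eqref{eqn:fl_low_estimate}; recall $\xi_{\ell,0}=0$, so only $k\geq 1$ contribute. Squaring turns the decay into $e^{-\lambda_2 k^2 t} = e^{-2\lambda_2 k^2(t/2)}$, which is precisely the kernel appearing in the definition \eqref{eqn:C_glt} of $F$ evaluated at time $t/2$. Using $(C^k_{N+1})^2 = 12\,\Hzerok(g)A^{2N+2}$ and pulling out the maximum of $\Hzerok(g)$, the sum $\sum_k (C^k_{N+1})^2 k^{2(3N+4-2\ell)}e^{-\lambda_2 k^2 t}$ is bounded by a constant multiple of $F(g,3N+4-2\ell,t/2)^2$; matching the powers of $A$ produces the factor $A^{-2N-3+2\ell}$ and hence the stated $E(g,N,\ell,t)$ in \eqref{eqn:E_def}.

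I expect the main obstacle to be bookkeeping rather than ideas: one must track the exact degree $N-\ell+2$ of the time polynomial through the recursion and confirm it matches both the exponent $(N-\ell+2)/2$ and the base $\tfrac{N-\ell+2}{\lambda_2}$ inside $\bar{C}(N,\ell)$, and one must repeatedly exploit $k\geq 1$ to discard the harmless negative powers of $k$ (from $\hat{C}$ and from the supremum) into the constant without spoiling the explicit factors $k^{3N}$ and $k^{3N+4-2\ell}$ carried through to the final sum. The index shift distinguishing the $\ell=0$ case (governed by $\bar{C}(N,2)$) from the generic $1\leq\ell\leq N$ case is the easiest place to slip.
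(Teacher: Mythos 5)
Your proposal is correct and follows essentially the same route as the paper: unfold the recursion for $\tilde{C}^k_{N+1-\ell}(t)$, split $e^{-\lambda_2 k^2 t}$ and sacrifice half the decay rate (using $k\geq 1$) to absorb the polynomial-in-$t$ growth into an $N$- and $\ell$-dependent constant, then square and sum over the low-frequency band, recognizing $e^{-\lambda_2 k^2 t}=e^{-2\lambda_2 k^2 (t/2)}$ as the kernel of $F(g,3N+4-2\ell,t/2)$ and matching powers of $A$. The only cosmetic difference is that the paper bounds $\max\{1,t^{1/2}\}\leq (t+1)^{1/2}$ and maximizes $(t+1)^{(N-\ell+2)/2}e^{-\lambda_2 t/2}$ at $t=(N-\ell+2)/\lambda_2-1$ (which is exactly where $\bar{C}(N,\ell)$ comes from), whereas your $\sup_{s\geq 0}s^\alpha e^{-bs}$ computation yields a slightly smaller constant that still implies the stated bound.
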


\begin{proof}
		The strategy is simple: use part of the exponentially decaying term in \eqref{eqn:est_xi_lk_low_0} and \eqref{eqn:est_xi_lk_low_1} to control powers of $t$ in the other coefficients.
		Since $\max\{1, t^{1/2}\} \leq (t+1)^{1/2}$, \eqref{eqn:cnk_hat} and \eqref{eqn:def_M_Chat} imply the following bound: 
		\begin{align}
		\tilde{C}_{N+1-\ell}^k(t) e^{-\lambda_2 k^2 t}
		& \leq A^{N-\ell+1}   (t+1)^{(N-\ell+1)/2}  \left( t^{1/2} + \frac{1}{2k} \right) C^k_{N+1}  e^{-\lambda_2 k^2 t} \nonumber \\
		& \leq  2 A^{N-\ell+1}   (t+1)^{(N-\ell+2)/2} e^{-\frac{\lambda_2}{2} t} C^k_{N+1} e^{-\frac{\lambda_2}{2} k^2 t}, \quad k >1,
		\end{align}
		The product $2 A^{N-\ell+1}  (t+1)^{(N-\ell+2)/2}   e^{-\frac{\lambda_2}{2} t}$ takes its maximum value $\bar{C}(N,\ell) $ at $t = (N-\ell+2)/\lambda_2 - 1$.  This proves \eqref{eqn:est_xi_lk_low_0_new} and \eqref{eqn:est_xi_lk_low_1_new}.

		To establish \eqref{eqn:est_xi_low_0} and \eqref{eqn:est_xi_low_1}, we sum \eqref{eqn:est_xi_lk_low_0_new} and \eqref{eqn:est_xi_lk_low_1_new}, respectively, over all low frequency values of $k$.  
		For example, summing \eqref{eqn:est_xi_lk_low_1_new} gives
		\begin{eqnarray}
		\|\elow_\ell\|^2_{L^2(dx)}(t) 
		&=& \sum_{|k|\eps \le \frac12} |\elk|^2(t) 
		\le 2\sum_{0 < k\eps \le \frac12} \left( \bar{C}(N,\ell) C^k_{N+1}  \eps^{2N+2-\ell} k^{3N+4-2\ell} e^{-\frac{\lambda_2}{2} k^2 t}\right)^2  \nonumber\\
		&\leq &  (\bar{C}(N,\ell))^2 \eps^{2(2N+2-\ell)}  2 \max_{0 < k\eps \leq \frac12} ( C^k_{N+1})^2 \sum_{0 < k\eps \le \frac12}  k^{2(3N+4-2\ell)} e^{-\lambda_2 k^2 t} \nonumber\\
		&=&  (\bar{C}(N,\ell)  )^2 \eps^{2(2N+2-\ell)} 24  \max_{0 < k\eps \leq \frac12} \Hzerok(g) A^{2(N+1)}
		\sum_{0 < k\eps \le \frac12}  k^{2(3N+4-2\ell)} e^{-\lambda_2 k^2 t}  
		,\label{eqn:ell_low_estimate}
		\end{eqnarray}
		which yields \eqref{eqn:est_xi_low_1}.  Then \eqref{eqn:est_xi_low_0} is derived similarly.
		
\end{proof}

\begin{rem}
One could easily prove bounds of the form \eqref{eqn:est_xi_low_0} and \eqref{eqn:est_xi_low_1} by using \eqref{eqn:est_xi_lk_low_0} and \eqref{eqn:est_xi_lk_low_1} directly, with the coefficient $E$ replaced by $\tilde{E}(g,N,\ell,t) = \max\{1, t^{1/2}\} \hat{C}(t) M(t)^{N-\ell} A^{-2N-3+2\ell}  F(g,3N+4-2\ell,t)$.  Although $\tilde{E}$ decays more quickly for large $t$ (due to the difference in the third argument in $F$), the time-dependent factor $\max\{1, t^{1/2}\} \hat{C}(t) M(t)^{N-\ell}$ makes the analysis of $\tilde{E}$ as a function of $N$ more difficult.
 We instead use $E$ in \eqref{eqn:E_def} because it is easier to bound its growth with respect to $N$.  This fact will be useful in Section \ref{sec:constant_math}.
\end{rem}

\subsection{Proof of Theorem \ref{thm:error}} 
\label{sec:proof_main}

We first prove the error estimate for $f^N$. Since $f - f^N = \eta + \xi$, we simply apply the triangle inequality and combine the estimates for $\eta$ in \eqref{eqn:est_eta} and $\xi$ in \eqref{eqn:est_xi}, and get 
\begin{align}
	\|f - f^N\|_{L^2(d \mu dx)} (t) \leq &
	\sqrt{6} \|\ghigh\|_{L^2(d \mu dx)} e^{-\frac{\lambda_1 t}{\eps^2}} + \sqrt2 F(g,N+1,t) \eps^{N+1} \nonumber \\
	& + \sqrt{6t} \|\partx g\|_{L^2(d \mu dx)} e^{-\frac{\lambda_1 t}{\eps^2}} +
	\frac{\sqrt t}{ A} F(g,N+2,t) \eps^{N+1}.
\end{align}
This establishes \eqref{eqn:L2_error} with constants
	\beq
	B(g) =  \sqrt{6}  \|g\|_{L^2(d \mu dx)},
	\quad
	C(\partx g) = \sqrt{6} \|\partx g\|_{L^2(d \mu dx)},
	\quad
	\label{eqn:D_gnt}
	D(g,N,t) = \sqrt{2} F(g,N+1,t) + \frac{\sqrt t}{ A} F(g,N+2,t).
	\eeq
We next prove the error estimate for $\fNl$.   
Since $\fl - \fNl = \el = \xilhigh + \xillow$, we combine the estimate $\eqref{eqn:est_Hhigh_xi}$ with \eqref{eqn:est_xi_low_0} and \eqref{eqn:est_xi_low_1}. 
After some additional trivial bounds ($\| \xilhigh\|_{L^2(dx)} \leq \| \xihigh\|_{L^2(d \mu dx)}$ and $\| \partx \ghigh\|_{L^2(d \mu dx)} \leq  \|\partx g\|_{L^2(d \mu dx)}$), we arrive at
	\begin{empheq}[left=	\| \fl - \fNl \| _{L^2(dx)}  (t)  \leq 
	\empheqlbrace]{alignat=2}
	& \sqrt{6t}  e^{-\frac{\lambda_1 t}{\eps^2}} \| \partx g\|_{L^2(d \mu dx)} 
	+ E(g,N,2,t) \eps^{2N},
	&\quad \ell=0,\\
	&\sqrt{6t}  e^{-\frac{\lambda_1 t}{\eps^2}} \| \partx g\|_{L^2(d \mu dx)}
	+ E(g,N,\ell,t) \eps^{2N+2-\ell}, 
	&\quad 1 \leq \ell \leq N,
	\end{empheq}
with $E$ defined in \eqref{eqn:E_def}. This establishes \eqref{eqn:moment_error} and completes the proof.
\section{Numerical Examples} 
\label{sec:numerics}
In this section, we perform numerical tests to demonstrate the theoretical results, by exploring different  values of $\eps$, $N$, and the initial condition $g$.  All calculations are based on the $P_N$ system \eqref{eqn:PN_moments}.  Since the exact solution of $f$ is not readily available, we use $f^N$ with $N=65$ as a reference solution in order to calculate $L^2$ errors, and as discussed in Remark \ref{thm:remark_fNl}, we use $\fNl$ as a proxy for the coefficients $\fl$ of the exact solution in order to test the asymptotic estimates in Lemma \ref{thm:f}.   

For the spatial discretization of \eqref{eqn:PN_moments}, we use a Fourier-Galerkin method, typically with $100$ Fourier modes, although more modes are added as needed to ensure that the spatial error neglected can be neglected.  The method is implemented with a fast Fourier transform (FFT) algorithm.  What remains is an ODE for the Fourier-Galerkin coefficients that can be solved exactly (up to machine precision).  In some situations, the size of the coefficients differs by many orders of magnitude.  Thus in order to rule out the effect of cumulative round-off error that this discrepancy may create, we use the Multiprecision Computing Toolbox for MATLAB by Advanpix LLC. \cite{advanpixmultiprecision} with $250$ digits.

\begin{eg}\normalfont \label{eg:test1}
	We start with the kinetic equation \eqref{eqn:transport_simple} with three different initial conditions:
\beq  \label{eqn:num1_test1_ic}
\left.
\begin{array}{l}
	g(x,\mu) = g^{(1)} (x) = 1 + 1_{[-\frac{\pi}{2},\frac{\pi}{2}]} (x), \\
	g(x,\mu)  = g^{(2)}(x) = 1 + \cos(x) 1_{[-\frac{\pi}{2},\frac{\pi}{2}]} (x), \\
	g(x,\mu)  = g^{(3)}(x) = 1 + \cos(x).
\end{array}
\right.
\eeq	
Simple calculations with Fourier analysis imply  $g^{(1)} \in H^q(dx)$ for $q < \frac12$ and $g^{(2)} \in H^q(dx)$ for $q < \frac32$, while $g^{(3)}$ is a smooth function.  However, $g^{(1)}$ does not satisfy the regularity assumption in
Theorem \ref{thm:error} which is required for the high-frequency bound \eqref{eqn:est_Hk_high_xi} in Lemma \ref{thm:energy_xi}.

We solve the $P_N$ system \eqref{eqn:PN_moments} with $\eps = 2 \cdot 4^{-m}, $ with $ m=1,\dots, 5$, and $N = 1, 2, \cdots, 5$.  
For each $\eps$ and $N$, $L^2(d\mu dx)$ errors with respect to the reference solution are listed in Table \ref{tab:test1_total_errors}.  
The convergence rates of the coefficients $\fNl$ for $P_4$ and $P_5$ are listed in Tables \ref{tab:test1_P4_f} and \ref{tab:test1_P5_f}.  
The convergence rates of the errors $ \el$ for $\fNl$ are listed in Tables \ref{tab:test1_P4_e} and \ref{tab:test1_P5_e}.

\begin{table}[!htbp]
	\centering
	\footnotesize
	\begin{tabular}{|c||c|c|c|c|c|c|c|c|c|c|c|c|}
		\multicolumn{11}{c}{$g^{(1)}$}  \\ \hline
		$\eps$ & $P_1$ error & order & $P_2$ error & order & $P_3$ error & order & $P_4$ error & order & $P_5$ error & order \\ \hline
1/2    & 6.34E-02    &       & 3.27E-02    &       & 2.07E-02    &       & 1.48E-02    &       & 1.17E-02    &       \\
1/8    & 2.60E-03    & 2.30  & 1.71E-04    & 3.79  & 1.97E-05    & 5.02  & 3.39E-06    & 6.05  & 6.39E-07    & 7.08  \\
1/32   & 1.60E-04    & 2.01  & 2.50E-06    & 3.05  & 7.09E-08    & 4.06  & 3.00E-09    & 5.07  & 1.39E-10    & 6.08  \\
1/128  & 9.99E-06    & 2.00  & 3.89E-08    & 3.00  & 2.76E-10    & 4.00  & 2.91E-12    & 5.00  & 3.38E-14    & 6.01  \\
1/512  & 6.24E-07    & 2.00  & 6.07E-10    & 3.00  & 1.08E-12    & 4.00  & 2.84E-15    & 5.00  & 8.24E-18    & 6.00  \\ 
\hline
		\multicolumn{11}{c}{}\\
		\multicolumn{11}{c}{$g^{(2)}$}  \\ \hline
		$\eps$ & $P_1$ error & order & $P_2$ error & order & $P_3$ error & order & $P_4$ error & order & $P_5$ error & order \\ \hline
1/2    & 4.39E-02    &       & 1.59E-02    &       & 6.63E-03    &       & 3.44E-03    &       & 2.21E-03    &       \\
1/8    & 2.41E-03    & 2.09  & 1.76E-04    & 3.25  & 1.88E-05    & 4.23  & 2.27E-06    & 5.28  & 2.90E-07    & 6.45  \\
1/32   & 1.49E-04    & 2.01  & 2.65E-06    & 3.03  & 7.04E-08    & 4.03  & 2.11E-09    & 5.04  & 6.64E-11    & 6.05  \\
1/128  & 9.31E-06    & 2.00  & 4.14E-08    & 3.00  & 2.74E-10    & 4.00  & 2.05E-12    & 5.00  & 1.62E-14    & 6.00  \\
1/512  & 5.82E-07    & 2.00  & 6.46E-10    & 3.00  & 1.07E-12    & 4.00  & 2.00E-15    & 5.00  & 3.94E-18    & 6.00  \\
\hline
		\multicolumn{11}{c}{}\\
		\multicolumn{11}{c}{$g^{(3)}$}  \\ \hline
		$\eps$ & $P_1$ error & order & $P_2$ error & order & $P_3$ error & order & $P_4$ error & order & $P_5$ error & order \\ \hline
1/2    & 5.78E-02    &       & 1.24E-02    &       & 2.37E-03    &       & 3.94E-04    &       & 5.79E-05    &       \\
1/8    & 3.51E-03    & 2.02  & 2.14E-04    & 2.93  & 1.35E-05    & 3.73  & 8.50E-07    & 4.43  & 5.34E-08    & 5.04  \\
1/32   & 2.18E-04    & 2.00  & 3.31E-06    & 3.01  & 5.21E-08    & 4.01  & 8.18E-10    & 5.01  & 1.28E-11    & 6.01  \\
1/128  & 1.36E-05    & 2.00  & 5.17E-08    & 3.00  & 2.03E-10    & 4.00  & 7.98E-13    & 5.00  & 3.13E-15    & 6.00  \\
1/512  & 8.50E-07    & 2.00  & 8.07E-10    & 3.00  & 7.94E-13    & 4.00  & 7.80E-16    & 5.00  & 7.64E-19    & 6.00
\\ \hline
	\end{tabular}
	\caption{Errors and convergence rates for the $P_N$ solutions in Example \ref{eg:test1}.  According to Theorem \ref{thm:error}, the theoretical order of convergence is $N+1$.}

	\label{tab:test1_total_errors}
\end{table}

\begin{table}[!htbp]
	\centering
	\footnotesize
	\begin{tabular}{|c||c|c|c|c|c|c|c|c|c|c|}
		\multicolumn{11}{c}{$g^{(1)}$}  \\ \hline
		$\eps$ & $\fNzero$    & order & $\fNone$    & order & $\fNtwo$    & order 	& $\fNthree$    & order		& $\fNfour$    & order \\ \hline
1/2    & 2.16E+00 &       & 1.43E-01 &       & 3.84E-02 &       & 1.48E-02 &       & 1.35E-02 &       \\
1/8    & 2.16E+00 & 0.00  & 3.32E-02 & 1.06  & 2.19E-03 & 2.07  & 1.62E-04 & 3.25  & 1.96E-05 & 4.72  \\
1/32   & 2.16E+00 & 0.00  & 8.25E-03 & 1.00  & 1.36E-04 & 2.01  & 2.49E-06 & 3.01  & 7.09E-08 & 4.05  \\
1/128  & 2.16E+00 & 0.00  & 2.06E-03 & 1.00  & 8.48E-06 & 2.00  & 3.89E-08 & 3.00  & 2.76E-10 & 4.00  \\
1/512  & 2.16E+00 & 0.00  & 5.16E-04 & 1.00  & 5.30E-07 & 2.00  & 6.07E-10 & 3.00  & 1.08E-12 & 4.00  \\
 \hline
 \multicolumn{11}{c}{}\\
		\multicolumn{11}{c}{$g^{(2)}$}  \\ \hline
		$\eps$ & $\fNzero$    & order & $\fNone$    & order & $\fNtwo$    & order		& $\fNthree$    & order		& $\fNfour$    & order \\ \hline
1/2    & 1.91E+00 &       & 1.21E-01 &       & 3.65E-02 &       & 1.28E-02 &       & 5.90E-03 &       \\
1/8    & 1.90E+00 & 0.00  & 2.73E-02 & 1.07  & 1.99E-03 & 2.10  & 1.73E-04 & 3.10  & 1.88E-05 & 4.15  \\
1/32   & 1.90E+00 & 0.00  & 6.78E-03 & 1.00  & 1.23E-04 & 2.01  & 2.65E-06 & 3.01  & 7.04E-08 & 4.03  \\
1/128  & 1.90E+00 & 0.00  & 1.69E-03 & 1.00  & 7.69E-06 & 2.00  & 4.14E-08 & 3.00  & 2.74E-10 & 4.00  \\
1/512  & 1.90E+00 & 0.00  & 4.23E-04 & 1.00  & 4.81E-07 & 2.00  & 6.46E-10 & 3.00  & 1.07E-12 & 4.00  \\
 \hline
 \multicolumn{11}{c}{}\\
		\multicolumn{11}{c}{$g^{(3)}$}  \\ \hline
		$\eps$ & $\fNzero$    & order & $\fNone$    & order & $\fNtwo$    & order		& $\fNthree$    & order		& $\fNfour$    & order \\ \hline
1/2    & 1.61E+00 &       & 2.24E-01 &       & 5.50E-02 &       & 1.20E-02 &       & 2.36E-03 &       \\
1/8    & 1.59E+00 & 0.01  & 5.20E-02 & 1.05  & 3.36E-03 & 2.02  & 2.13E-04 & 2.91  & 1.35E-05 & 3.72  \\
1/32   & 1.59E+00 & 0.00  & 1.29E-02 & 1.00  & 2.09E-04 & 2.00  & 3.31E-06 & 3.01  & 5.21E-08 & 4.01  \\
1/128  & 1.59E+00 & 0.00  & 3.23E-03 & 1.00  & 1.30E-05 & 2.00  & 5.17E-08 & 3.00  & 2.03E-10 & 4.00  \\
1/512  & 1.59E+00 & 0.00  & 8.08E-04 & 1.00  & 8.15E-07 & 2.00  & 8.07E-10 & 3.00  & 7.94E-13 & 4.00 
\\ \hline
	\end{tabular}
	\caption{Convergence rates of the coefficients $\fNl$ for the $P_4$ solution in Example \ref{eg:test1}. According to Lemma \ref{thm:f}, the theoretical order of convergence is $\ell$.}
	\label{tab:test1_P4_f}
\end{table}

\begin{table}[!htbp]
	\centering
	\footnotesize
	\begin{tabular}{|c||c|c|c|c|c|c|c|c|c|c|c|c|}
		\multicolumn{13}{c}{$g^{(1)}$}  \\ \hline
		$\eps$ & $\fNzero$    & order & $\fNone$    & order & $\fNtwo$    & order 	& $\fNthree$    & order		& $\fNfour$    & order		& $\fNfive$    & order \\ \hline
1/2    & 2.16E+00 &       & 1.43E-01 &       & 3.80E-02 &       & 1.54E-02 &       & 1.00E-02 &       & 8.71E-03 &       \\
1/8    & 2.16E+00 & 0.00  & 3.32E-02 & 1.06  & 2.19E-03 & 2.06  & 1.62E-04 & 3.28  & 1.90E-05 & 4.52  & 3.39E-06 & 5.66  \\
1/32   & 2.16E+00 & 0.00  & 8.25E-03 & 1.00  & 1.36E-04 & 2.01  & 2.49E-06 & 3.01  & 7.08E-08 & 4.03  & 3.00E-09 & 5.07  \\
1/128  & 2.16E+00 & 0.00  & 2.06E-03 & 1.00  & 8.48E-06 & 2.00  & 3.89E-08 & 3.00  & 2.76E-10 & 4.00  & 2.91E-12 & 5.00  \\
1/512  & 2.16E+00 & 0.00  & 5.16E-04 & 1.00  & 5.30E-07 & 2.00  & 6.07E-10 & 3.00  & 1.08E-12 & 4.00  & 2.84E-15 & 5.00  \\
 \hline
 \multicolumn{13}{c}{}\\
		\multicolumn{13}{c}{$g^{(2)}$}  \\ \hline
		$\eps$ & $\fNzero$    & order & $\fNone$    & order & $\fNtwo$    & order		& $\fNthree$    & order		& $\fNfour$    & order		& $\fNfive$    & order \\ \hline
1/2    & 1.91E+00 &       & 1.21E-01 &       & 3.65E-02 &       & 1.30E-02 &       & 4.91E-03 &       & 2.85E-03 &       \\
1/8    & 1.90E+00 & 0.00  & 2.73E-02 & 1.07  & 1.99E-03 & 2.10  & 1.73E-04 & 3.11  & 1.85E-05 & 4.02  & 2.27E-06 & 5.15  \\
1/32   & 1.90E+00 & 0.00  & 6.78E-03 & 1.00  & 1.23E-04 & 2.01  & 2.65E-06 & 3.01  & 7.03E-08 & 4.02  & 2.11E-09 & 5.04  \\
1/128  & 1.90E+00 & 0.00  & 1.69E-03 & 1.00  & 7.69E-06 & 2.00  & 4.14E-08 & 3.00  & 2.74E-10 & 4.00  & 2.05E-12 & 5.00  \\
1/512  & 1.90E+00 & 0.00  & 4.23E-04 & 1.00  & 4.81E-07 & 2.00  & 6.46E-10 & 3.00  & 1.07E-12 & 4.00  & 2.00E-15 & 5.00  \\
\hline
\multicolumn{13}{c}{}\\
		\multicolumn{13}{c}{$g^{(3)}$}  \\ \hline
		$\eps$ & $\fNzero$    & order & $\fNone$    & order & $\fNtwo$    & order		& $\fNthree$    & order		& $\fNfour$    & order		& $\fNfive$    & order \\ \hline
1/2    & 1.61E+00 &       & 2.24E-01 &       & 5.50E-02 &       & 1.20E-02 &       & 2.30E-03 &       & 3.93E-04 &       \\
1/8    & 1.59E+00 & 0.01  & 5.20E-02 & 1.05  & 3.36E-03 & 2.02  & 2.13E-04 & 2.91  & 1.35E-05 & 3.71  & 8.50E-07 & 4.43  \\
1/32   & 1.59E+00 & 0.00  & 1.29E-02 & 1.00  & 2.09E-04 & 2.00  & 3.31E-06 & 3.01  & 5.21E-08 & 4.01  & 8.18E-10 & 5.01  \\
1/128  & 1.59E+00 & 0.00  & 3.23E-03 & 1.00  & 1.30E-05 & 2.00  & 5.17E-08 & 3.00  & 2.03E-10 & 4.00  & 7.98E-13 & 5.00  \\
1/512  & 1.59E+00 & 0.00  & 8.08E-04 & 1.00  & 8.15E-07 & 2.00  & 8.07E-10 & 3.00  & 7.94E-13 & 4.00  & 7.80E-16 & 5.00 
 \\ \hline
	\end{tabular}
	\caption{Convergence rates of the coefficients $\fNl$ for the $P_5$ solution in Example \ref{eg:test1}. According to Lemma \ref{thm:f}, the theoretical order of convergence is $\ell$.}
	\label{tab:test1_P5_f}
\end{table}

\begin{table}[!htbp]
	\centering
	\footnotesize
	\begin{tabular}{|c||c|c|c|c|c|c|c|c|c|c|}
		\multicolumn{11}{c}{$g^{(1)}$}  \\ \hline
		$\eps$ & $\eNzero$    & order & $\eNone$    & order & $\eNtwo$    & order 	& $\eNthree$    & order		& $\eNfour$    & order \\ \hline
		1/2    & 5.93E-03 &       & 3.62E-03 &       & 4.78E-03 &       & 4.71E-03 &       & 5.87E-03 &       \\
		1/8    & 6.71E-08 & 8.22  & 1.35E-08 & 9.02  & 2.62E-08 & 8.74  & 1.18E-07 & 7.64  & 6.17E-07 & 6.61  \\
		1/32   & 9.34E-13 & 8.07  & 4.45E-14 & 9.11  & 3.20E-13 & 8.16  & 6.59E-12 & 7.06  & 1.39E-10 & 6.06  \\
		1/128  & 1.42E-17 & 8.00  & 1.68E-19 & 9.01  & 4.81E-18 & 8.01  & 4.00E-16 & 7.00  & 3.38E-14 & 6.00  \\
		1/512  & 2.16E-22 & 8.00  & 6.42E-25 & 9.00  & 7.33E-23 & 8.00  & 2.44E-20 & 7.00  & 8.25E-18 & 6.00  \\
		\hline
		\multicolumn{11}{c}{}\\
		\multicolumn{11}{c}{$g^{(2)}$}  \\ \hline
		$\eps$ & $\eNzero$    & order & $\eNone$    & order & $\eNtwo$    & order 	& $\eNthree$    & order		& $\eNfour$    & order \\ \hline
		1/2    & 8.37E-04 &       & 4.47E-04 &       & 7.31E-04 &       & 8.59E-04 &       & 1.35E-03 &       \\
		1/8    & 2.28E-08 & 7.58  & 5.15E-09 & 8.20  & 7.69E-09 & 8.27  & 3.89E-08 & 7.21  & 2.85E-07 & 6.11  \\
		1/32   & 3.01E-13 & 8.10  & 1.71E-14 & 9.10  & 8.96E-14 & 8.19  & 2.25E-12 & 7.04  & 6.65E-11 & 6.03  \\
		1/128  & 4.55E-18 & 8.01  & 6.46E-20 & 9.01  & 1.35E-18 & 8.01  & 1.37E-16 & 7.00  & 1.62E-14 & 6.00  \\
		1/512  & 6.94E-23 & 8.00  & 2.46E-25 & 9.00  & 2.05E-23 & 8.00  & 8.34E-21 & 7.00  & 3.95E-18 & 6.00  \\
		\hline
		\multicolumn{11}{c}{}\\
		\multicolumn{11}{c}{$g^{(3)}$}  \\ \hline
		$\eps$ & $\eNzero$    & order & $\eNone$    & order & $\eNtwo$    & order 	& $\eNthree$    & order		& $\eNfour$    & order \\ \hline
		1/2    & 1.62E-08 &       & 9.57E-08 &       & 8.87E-07 &       & 7.49E-06 &       & 5.70E-05 &       \\
		1/8    & 5.52E-11 & 4.10  & 9.91E-12 & 6.62  & 2.13E-10 & 6.01  & 3.36E-09 & 5.56  & 5.32E-08 & 5.03  \\
		1/32   & 9.48E-16 & 7.91  & 3.47E-17 & 9.06  & 3.21E-15 & 8.01  & 2.02E-13 & 7.01  & 1.28E-11 & 6.01  \\
		1/128  & 1.46E-20 & 8.00  & 1.32E-22 & 9.00  & 4.89E-20 & 8.00  & 1.23E-17 & 7.00  & 3.13E-15 & 6.00  \\
		1/512  & 2.22E-25 & 8.00  & 5.02E-28 & 9.00  & 7.46E-25 & 8.00  & 7.53E-22 & 7.00  & 7.65E-19 & 6.00  
		\\ \hline
	\end{tabular}
	\caption{Convergence rates of the error $ \el$ in the coefficients $\fNl$ for the $P_4$ solution in Example \ref{eg:test1}.  According to \eqref{eqn:moment_error} in Theorem \ref{thm:error}, the theoretical order of convergence is $2N=8$ for $\fNzero$ and $2N+2-\ell = 10-\ell$ for $\fNl$, $\ell=1,\dots, 4$. }
	\label{tab:test1_P4_e}
\end{table}

\begin{table}[!htbp]
	\centering
	\footnotesize
	\begin{tabular}{|c||c|c|c|c|c|c|c|c|c|c|c|c|}
		\multicolumn{13}{c}{$g^{(1)}$}  \\ \hline
		$\eps$ & $\eNzero$    & order & $\eNone$    & order & $\eNtwo$    & order 	& $\eNthree$    & order		& $\eNfour$    & order		& $\eNfive$    & order \\ \hline
1/2    & 4.06E-03 &       & 2.96E-03 &       & 3.32E-03 &       & 3.28E-03 &       & 3.62E-03 &       & 4.55E-03 &       \\
1/8    & 4.02E-09 & 9.97  & 1.13E-09 & 10.66 & 1.23E-09 & 10.68 & 4.42E-09 & 9.75  & 2.27E-08 & 8.64  & 1.18E-07 & 7.62  \\
1/32   & 3.13E-15 & 10.15 & 2.10E-16 & 11.18 & 7.85E-16 & 10.29 & 1.52E-14 & 9.08  & 3.12E-13 & 8.07  & 6.56E-12 & 7.07  \\
1/128  & 2.95E-21 & 10.01 & 4.93E-23 & 11.01 & 7.31E-22 & 10.02 & 5.75E-20 & 9.01  & 4.74E-18 & 8.00  & 3.98E-16 & 7.00  \\
1/512  & 2.81E-27 & 10.00 & 1.17E-29 & 11.00 & 6.96E-28 & 10.00 & 2.19E-25 & 9.00  & 7.22E-23 & 8.00  & 2.43E-20 & 7.00  \\
\hline
\multicolumn{13}{c}{}\\
\multicolumn{13}{c}{$g^{(2)}$}  \\ \hline
		$\eps$ & $\eNzero$    & order & $\eNone$    & order & $\eNtwo$    & order 	& $\eNthree$    & order		& $\eNfour$    & order		& $\eNfive$    & order \\ \hline
1/2    & 5.07E-04 &       & 2.72E-04 &       & 4.34E-04 &       & 4.69E-04 &       & 5.59E-04 &       & 8.59E-04 &       \\
1/8    & 1.42E-09 & 9.22  & 3.26E-10 & 9.84  & 4.12E-10 & 10.00 & 1.27E-09 & 9.25  & 6.31E-09 & 8.22  & 3.89E-08 & 7.22  \\
1/32   & 1.15E-15 & 10.12 & 6.76E-17 & 11.10 & 2.54E-16 & 10.32 & 4.28E-15 & 9.09  & 8.73E-14 & 8.07  & 2.23E-12 & 7.04  \\
1/128  & 1.09E-21 & 10.01 & 1.60E-23 & 11.01 & 2.35E-22 & 10.02 & 1.62E-20 & 9.01  & 1.32E-18 & 8.00  & 1.36E-16 & 7.00  \\
1/512  & 1.03E-27 & 10.00 & 3.80E-30 & 11.00 & 2.24E-28 & 10.00 & 6.17E-26 & 9.00  & 2.02E-23 & 8.00  & 8.29E-21 & 7.00  \\
\hline
\multicolumn{13}{c}{}\\
\multicolumn{13}{c}{$g^{(3)}$}  \\ \hline
		$\eps$ & $\eNzero$    & order & $\eNone$    & order & $\eNtwo$    & order 	& $\eNthree$    & order		& $\eNfour$    & order		& $\eNfive$    & order \\ \hline
1/2    & 1.09E-10 &       & 8.15E-10 &       & 9.02E-09 &       & 9.28E-08 &       & 8.74E-07 &       & 7.45E-06 &       \\
1/8    & 2.10E-13 & 4.51  & 3.97E-14 & 7.16  & 8.41E-13 & 6.69  & 1.32E-11 & 6.39  & 2.10E-10 & 6.01  & 3.34E-09 & 5.56  \\
1/32   & 2.33E-19 & 9.89  & 8.53E-21 & 11.07 & 7.89E-19 & 10.01 & 4.98E-17 & 9.01  & 3.16E-15 & 8.01  & 2.01E-13 & 7.01  \\
1/128  & 2.24E-25 & 9.99  & 2.02E-27 & 11.00 & 7.51E-25 & 10.00 & 1.90E-22 & 9.00  & 4.82E-20 & 8.00  & 1.23E-17 & 7.00  \\
1/512  & 2.14E-31 & 10.00 & 4.82E-34 & 11.00 & 7.16E-31 & 10.00 & 7.23E-28 & 9.00  & 7.35E-25 & 8.00  & 7.49E-22 & 7.00 
  \\ \hline
	\end{tabular}
	\caption{Convergence rates of the error $ \el$ in the coefficients $\fNl$ for the $P_5$ solution in Example \ref{eg:test1}.  According to \eqref{eqn:moment_error} in Theorem \ref{thm:error}, the theoretical order of convergence is $2N=10$ for $\fNzero$ and $2N+2-\ell = 12-\ell$ for $\fNl$, $\ell=1,\dots, 5$. }

	\label{tab:test1_P5_e}
\end{table}

\end{eg}
For all three initial conditions, we observe the convergence rates for $f^N$ indicated by \eqref{eqn:L2_error}  in Table \ref{tab:test1_total_errors}, the convergence rates for $\el$ indicated by \eqref{eqn:moment_error} in Tables \ref{tab:test1_P4_e} and \ref{tab:test1_P5_e}, and the covergence rates for $\fNl$ indicated by Remark \ref{thm:remark_fNl} in Tables \ref{tab:test1_P4_f} and \ref{tab:test1_P5_f}.  We observe these rates even for $g^{(1)}$, which is not in $H^1(dx)$ and thus does not satisfy the hypothesis used to prove these estimates.  This discrepancy may be due to the fact that the Fourier-Galerkin method uses a finite number of waves and thus the numerical approximation of $g^{(1)}$ is in $H^1(dx)$.  However, even with $10,000$ Fourier modes, the results do not change.  Thus while $g^{(1)} \in H^1(dx)$ may be a necessary condition, it may be impossible to verify it numerically in this example.
\section{The benefit of increasing $N$} 
\label{sec:numerics2}

The goal of any apriori error estimate is to provide an indication of how the accuracy of an approximation will improve as a given parameter varies.  For example, the spectral estimate in \eqref{eqn:spectral_est}
suggests that $e^N = f - f^N$ behaves like
\beq
\label{eqn:spectral_increase_N}
\frac{\|e^{N+1}\|}{\|e^N\|} \sim \left(\frac{N}{N+1}\right)^q,
\eeq
where $q$ is related to the regularity of $f$.  Thus the gain realized by increasing $N$ to $N+1$ is not expected to be large, especially when $q$ is small.
On the other hand, the estimate in \eqref{eqn:L2_error} of Theorem \ref{thm:error} suggests that by increasing $N$, we gain an additional factor of $\eps$ in the error estimate; that is,
\beq
\label{eqn:spectral_error_increase_N_eps}
\frac{\|e^{N+1}\|}{\|e^N\|} \sim \eps.
\eeq
Similarly, \eqref{eqn:moment_error} of Theorem \ref{thm:error} suggests that we gain an additional factor of $\eps^2$ in the error estimate for $\eNlN := \eNl = \fl - \fNl$; that is 
\beq
\label{eqn:moment_error_increase_N_eps}
\frac{\| \eNpolN \|}{\| \eNlN \|} \sim \eps^2.
\eeq

However, the statements in \eqref{eqn:spectral_error_increase_N_eps} and \eqref{eqn:moment_error_increase_N_eps} are misleading since, unlike the spectral estimate in \eqref{eqn:spectral_est},  the $\eps$-dependent estimates in \eqref{eqn:L2_error} and \eqref{eqn:moment_error} include coefficients that depend on $N$ and $\ell$.   We begin by exploring this question numerically.

\subsection{Numerical experiments}

\begin{eg}\normalfont \label{2eg:test3}
	We return to Example \ref{eg:test1} from Section \ref{sec:numerics} with  initial condition $g^{(2)}$.   
	We again use the $P_{65}$ solution as a reference. We compute $P_N$ solutions with $\eps = 2 \cdot 4^{-m}$, $ m=1,\dots, 5$, and values of $N$ up to $40$, which in practice is quite large.  We examine the solutions at times $t = 0.1, 1, 10$. 
	
	As before the  spatial discretization is a Fourier-Galerkin method that uses fast Fourier transforms (FFT) for implementation.  For most cases, the spatial grid has $100$ points. However, for smaller $t$ or larger $\eps$, gradients in $x$ become larger; in such cases, more points are needed to ensure that the spacial discretization error can be neglected.  Specifically,  for $t = 0.1$ and $\eps = 1/8$, $1000$ points are used; for  $t = 0.1$ and $\eps = 1/2$, $2500$ points are used;  
	for $t = 1$ and $\eps = 1/2$, $1000$ points are used.  
	
	In Figure \ref{fig:2eg_test3_e}, the ratio in \eqref{eqn:spectral_error_increase_N_eps} (normalized by $\eps$) is plotted as a function of $N$.  In Figures \ref{fig:2eg_test3_err_f0}--\ref{fig:2eg_test3_err_f2}, the ratio in \eqref{eqn:moment_error_increase_N_eps} (normalized by $\eps^{2}$) is plotted for $\ell=0,1,2$.  We observe the following trends:
	\begin{itemize}
		\item[1)] Larger values of $t$ lead to smaller error ratios. Numerically, we find that
		for  $1 \leq N \leq 40$,
		\begin{equation}\label{eqn:observed_C}
		\frac{\|e^{N+1}\|}{\|e^{N}\|} \leq G_1(N,t) \eps,
		\quad
		\text{where}
		\quad
		G_1(N,t) \leq \begin{cases}
		13,  & \quad  t =0.1,\\
		4.5, & \quad  t =1,\\
		1.1, & \quad  t =10,
		\end{cases}
		\end{equation} 
		and for $1 \leq N \leq 40$ and $0 \leq \ell \leq 2$,
		\begin{equation} \label{eqn:observed_C_2}
		\frac{\|\eNpolN\| }{\|\eNlN\|} \leq G_2(N,t) \eps^2, 
		\quad
		\text{where}
		\quad
		G_2(N,t) \leq \begin{cases}
		400,  & \quad  t =0.1,\\
		50,  & \quad  t =1, \\
		20, & \quad  t =10.
		\end{cases}
		\end{equation} 

		\item[2)] For fixed $t$, the solution profiles of the normalized error ratios appear to convergence at $\eps$ decreases. 
		
		\item[3)] As $N$ varies, the solution profiles of the normalized error ratios exhibit plateaus with sharp transitions in between. We do not yet understand the origin of this behavior. 
		
	\end{itemize}

	\begin{figure}
		\centering
		\begin{subfigure}[$t=0.1$, $g^{(2)}$]
			{\includegraphics[width=.31\linewidth]{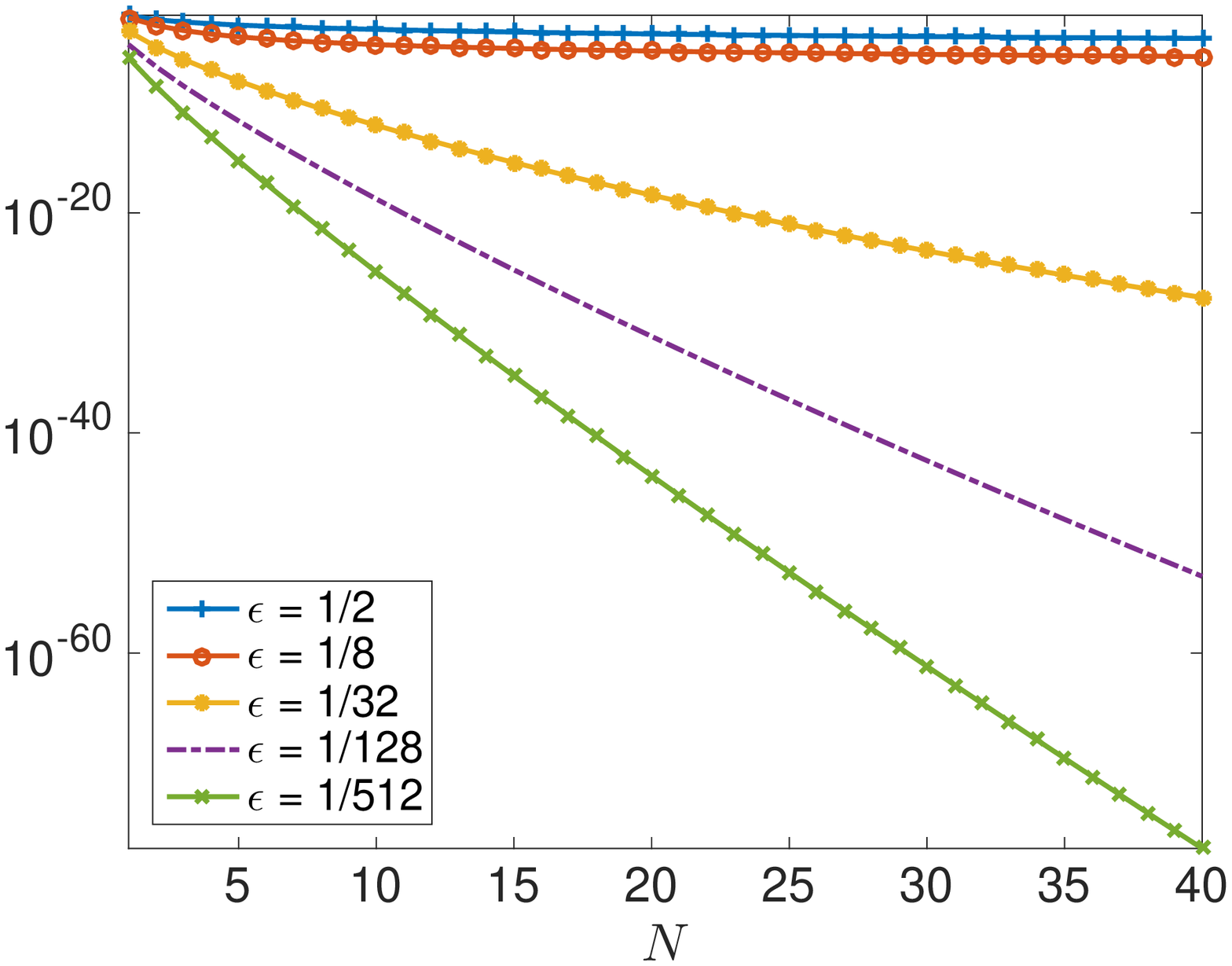}\label{fig:s3_001_fig5}}
		\end{subfigure}%
	\hspace{10pt}
		\begin{subfigure}[$t=1$, $g^{(2)}$]
			{\includegraphics[width=.31\linewidth]{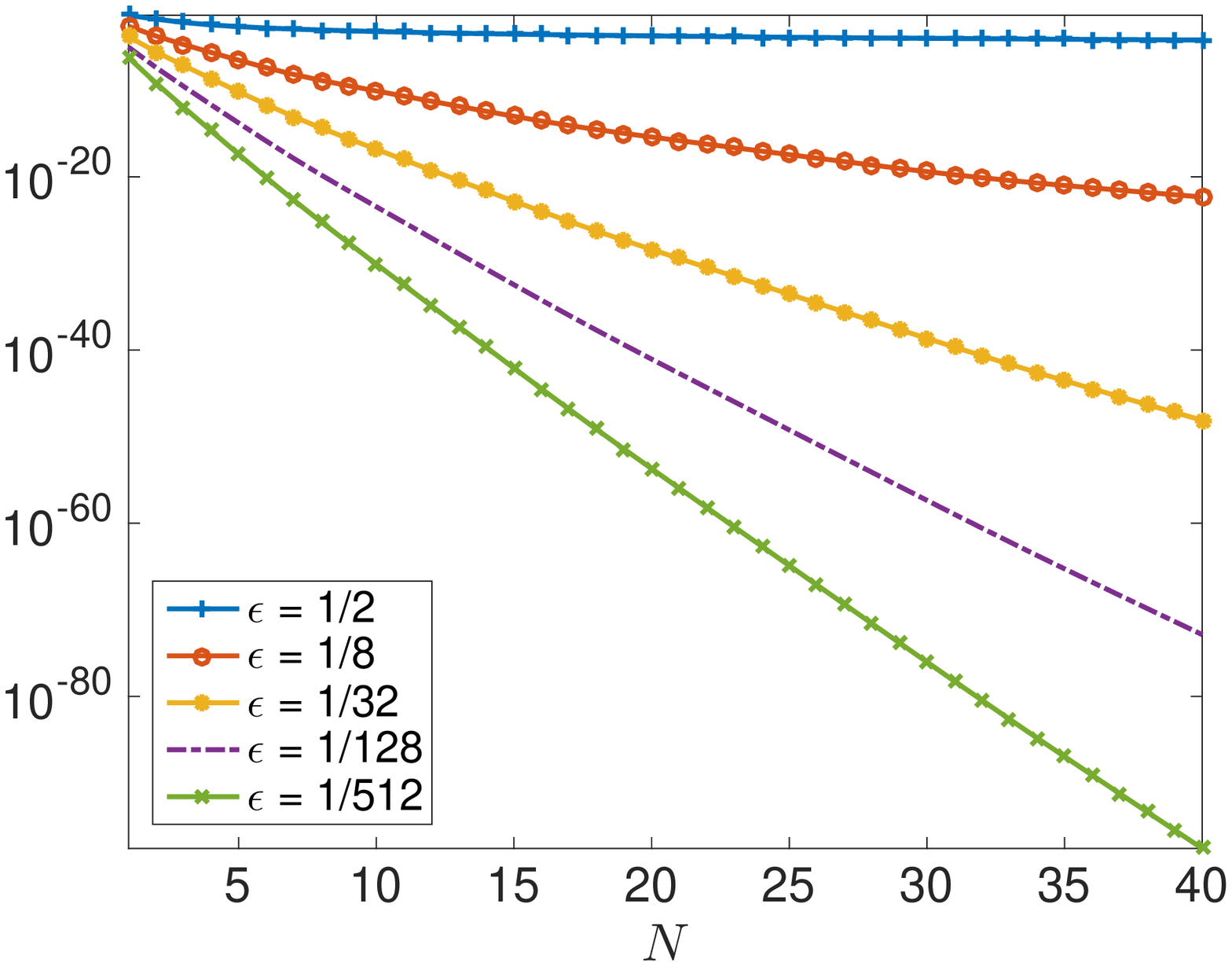}\label{fig:s3_1_fig5}}
		\end{subfigure}%
	\hspace{10pt}
		\begin{subfigure}[$t=10$, $g^{(2)}$]
			{\includegraphics[width=.31\linewidth]{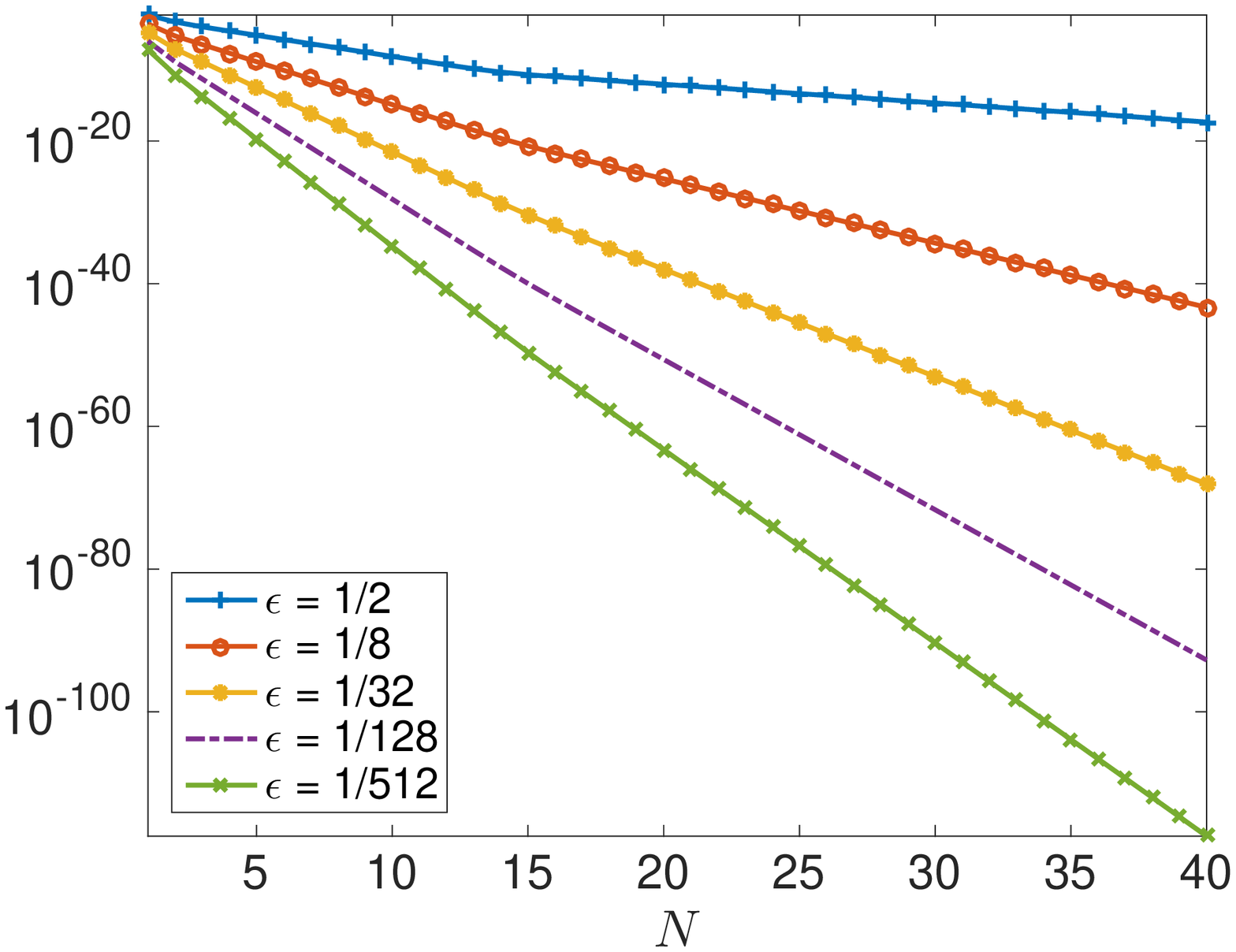}\label{fig:s3_10_fig5}}
		\end{subfigure}%
	\\
	\hspace{4pt}
		\begin{subfigure}[$t=0.1$, $g^{(2)}$]
			{\includegraphics[width=.3\linewidth]{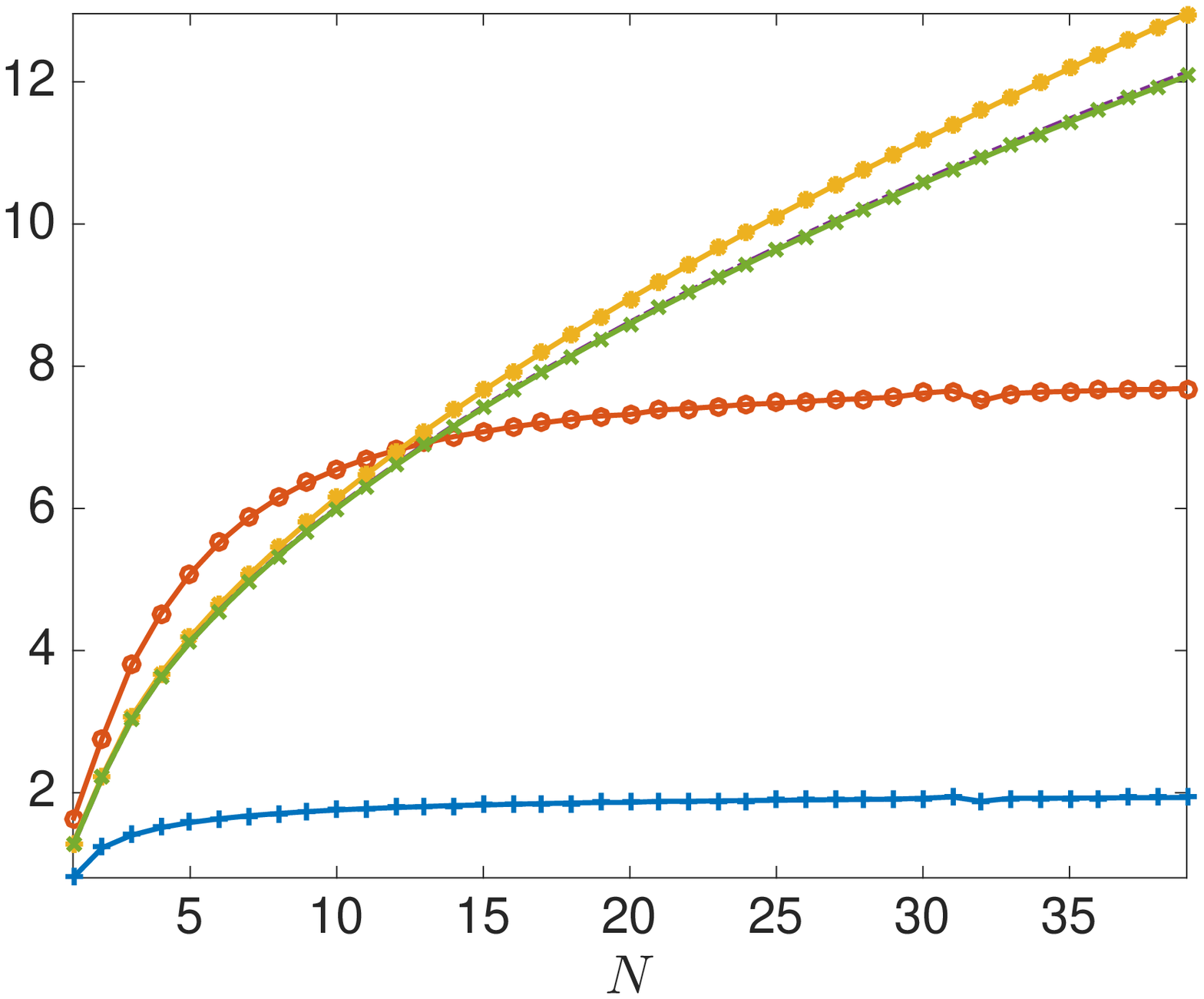}\label{fig:s3_001_fig8}}
		\end{subfigure}%
	\hspace{13pt}
		\begin{subfigure}[$t=1$, $g^{(2)}$]
			{\includegraphics[width=.3\linewidth]{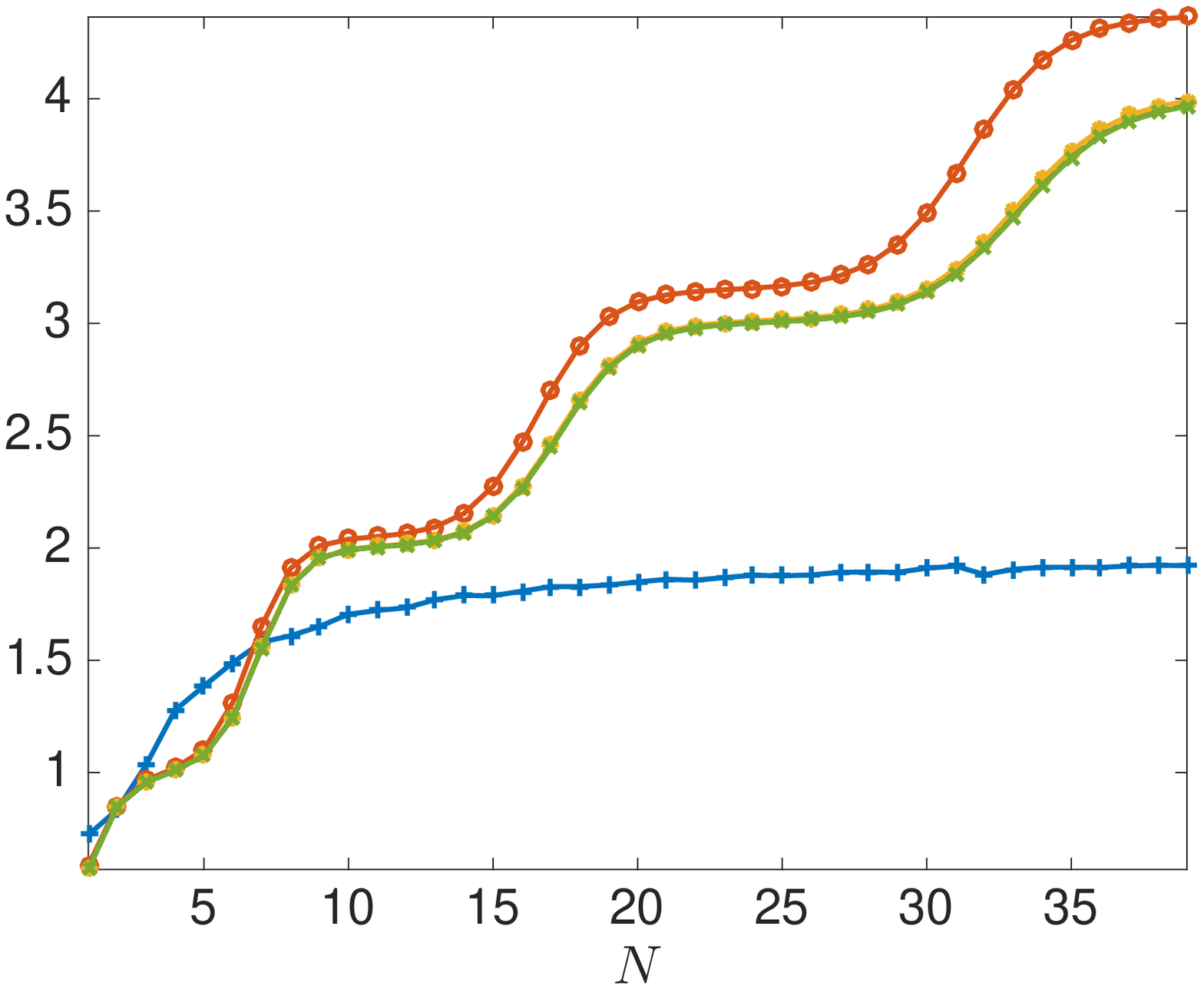}\label{fig:s3_1_fig8}}
		\end{subfigure}%
	\hspace{13pt}
		\begin{subfigure}[$t=10$, $g^{(2)}$]
			{\includegraphics[width=.3\linewidth]{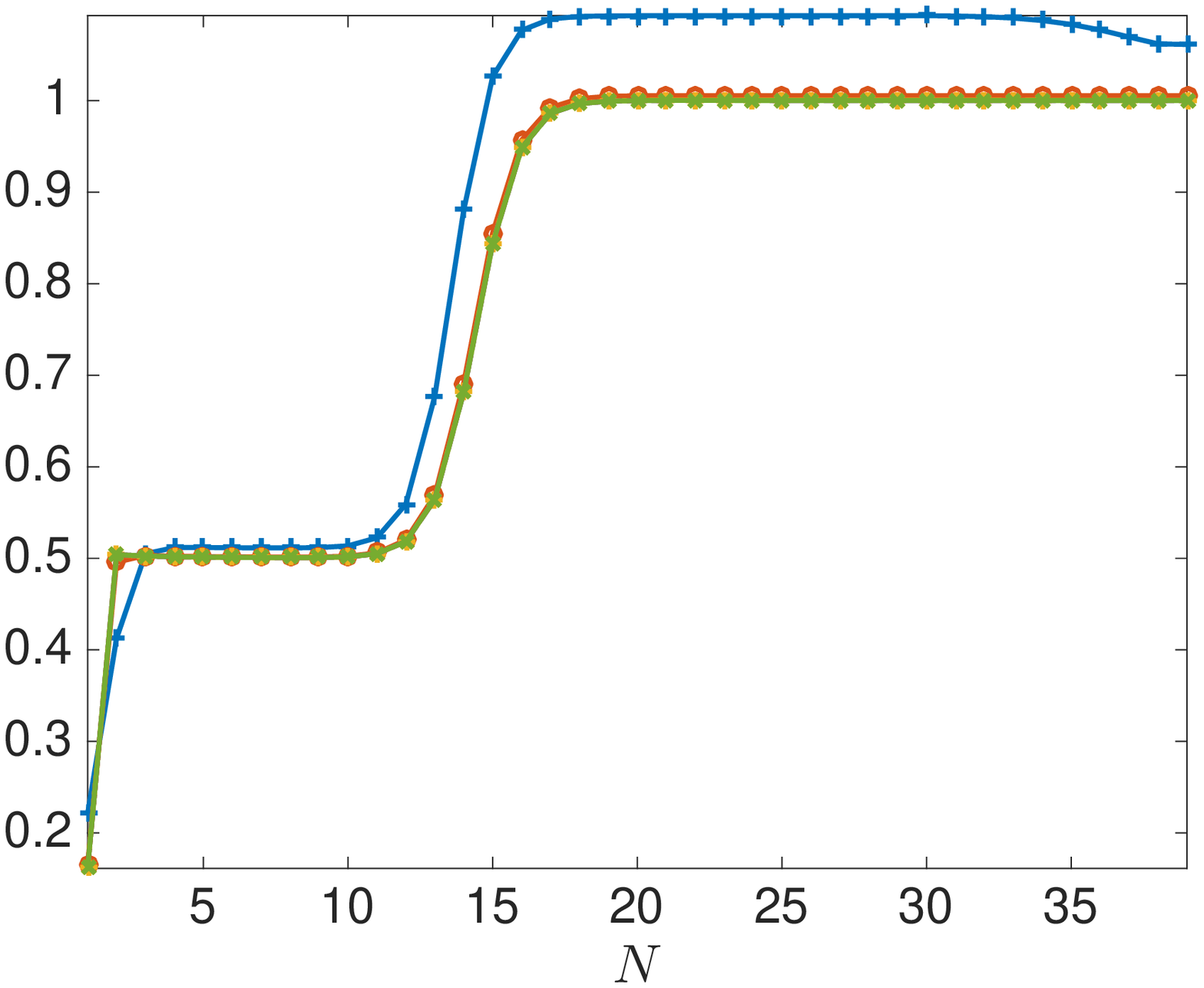}\label{fig:s3_10_fig8}}
		\end{subfigure}%
		\caption{Results from Example \ref{2eg:test3}. Top figures: $\|e^N\|$.  Bottom figures: $\frac{\|e^{N+1}\|}{\|e^N\|\eps}$.}
		\label{fig:2eg_test3_e}
	\end{figure}

	\begin{figure}
		\centering
		\begin{subfigure}[$t=0.1$, $g^{(2)}$]
			{\includegraphics[width=.31\linewidth]{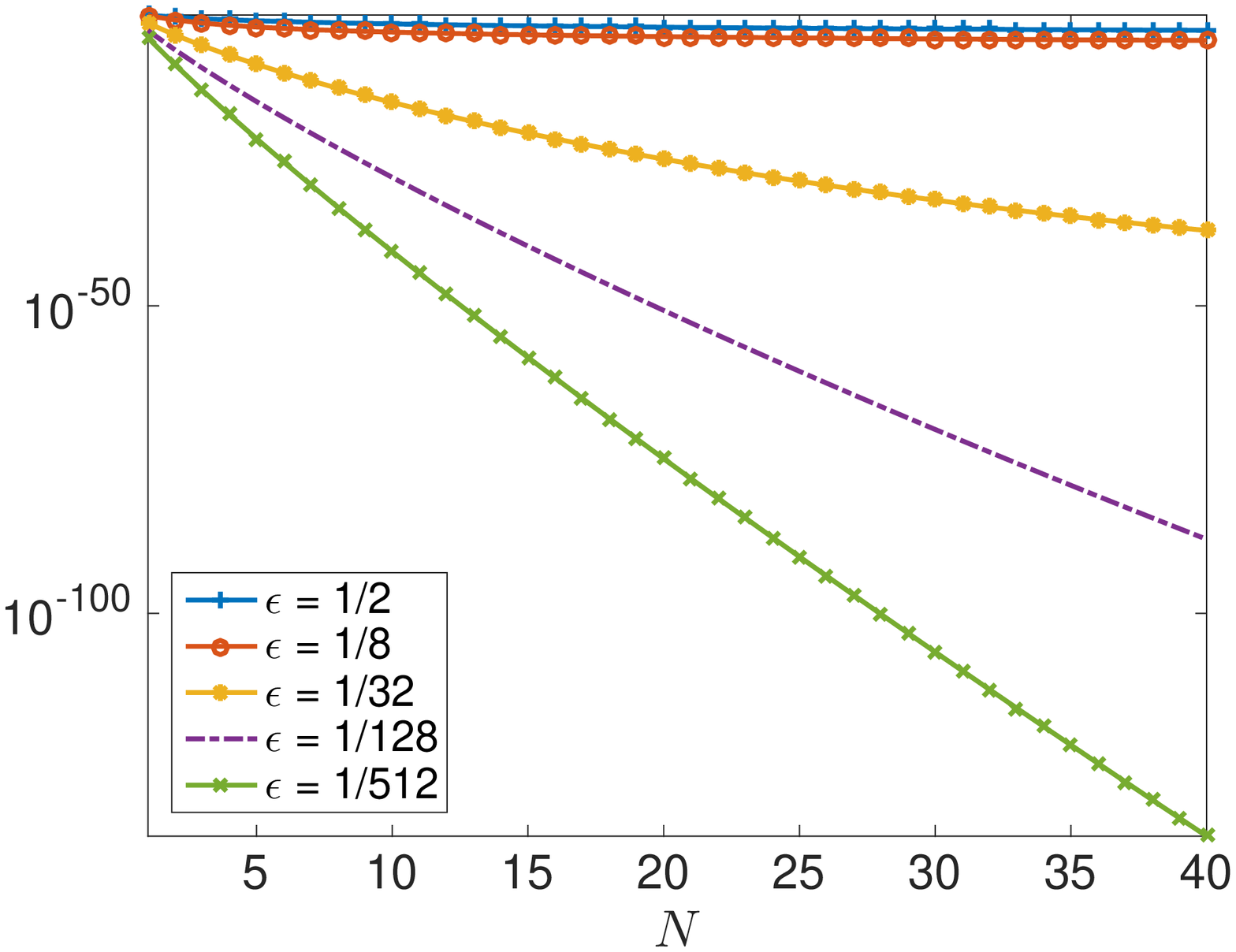}\label{fig:s3_001_fig05}}
		\end{subfigure}%
		\hspace{10pt}
		\begin{subfigure}[$t=1$, $g^{(2)}$]
			{\includegraphics[width=.31\linewidth]{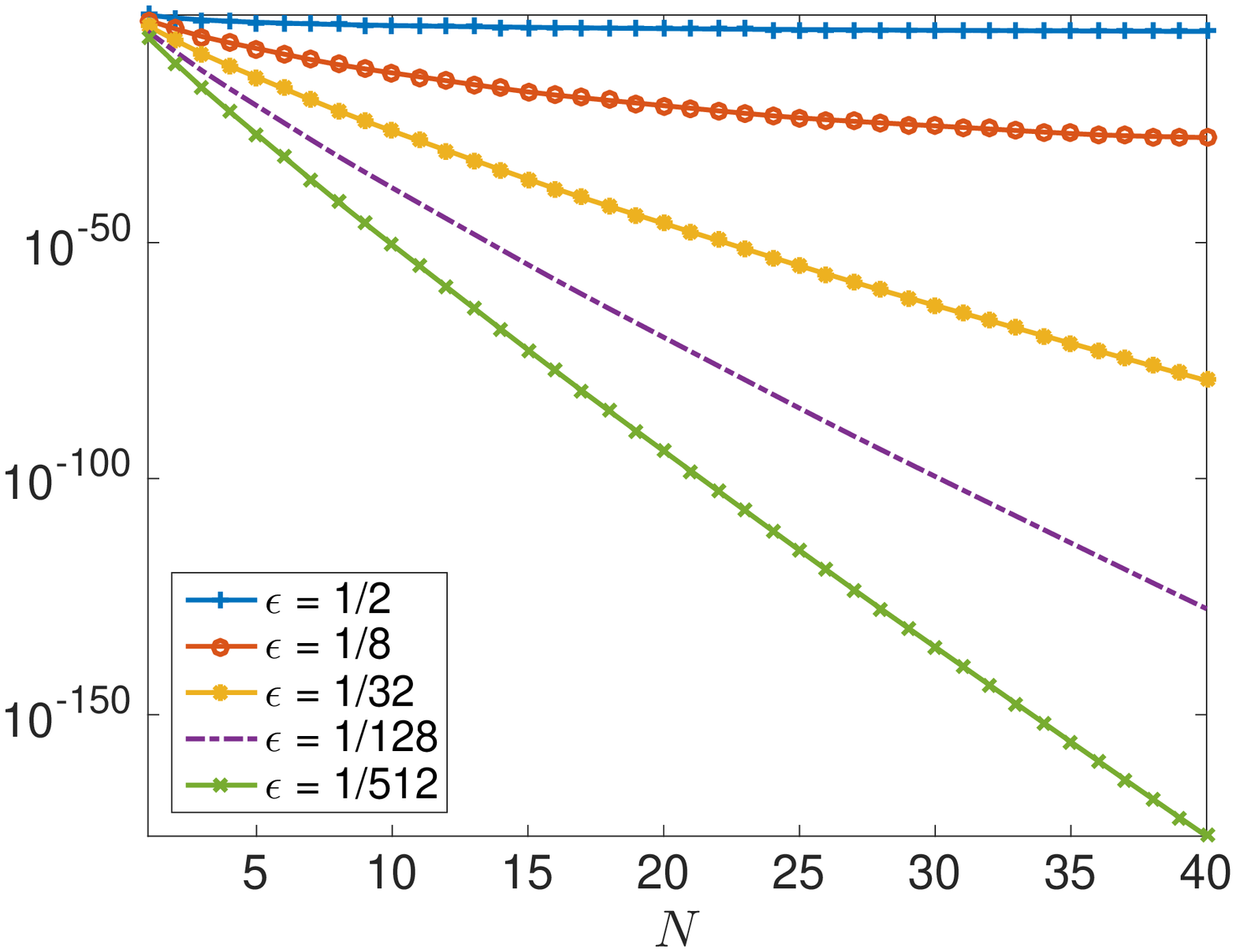}\label{fig:s3_1_fig05}}
		\end{subfigure}%
		\hspace{10pt}
		\begin{subfigure}[$t=10$, $g^{(2)}$]
			{\includegraphics[width=.31\linewidth]{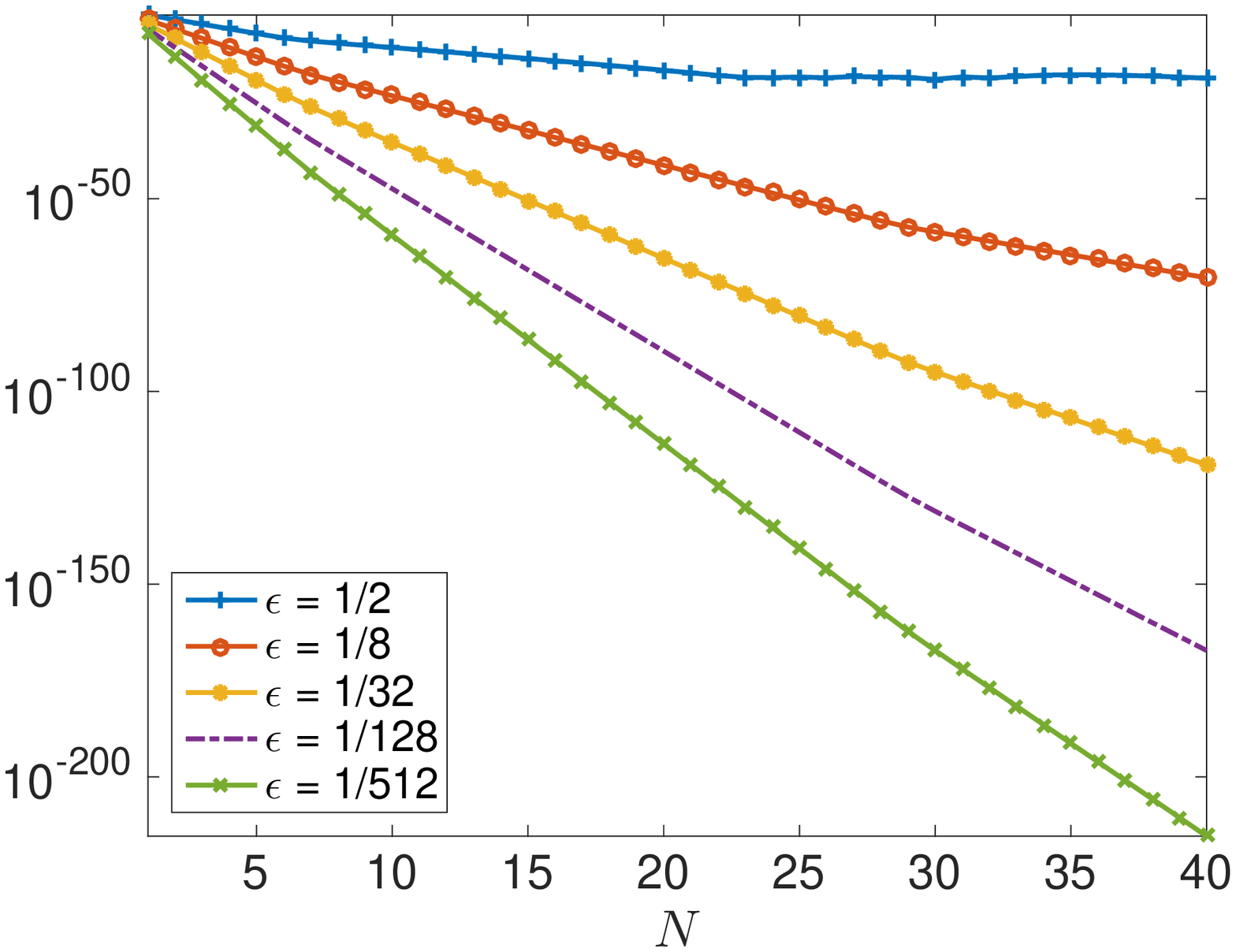}\label{fig:s3_10_fig05}}
		\end{subfigure}%
		\\
		\hspace{4pt}
		\begin{subfigure}[$t=0.1$, $g^{(2)}$]
			{\includegraphics[width=.3\linewidth]{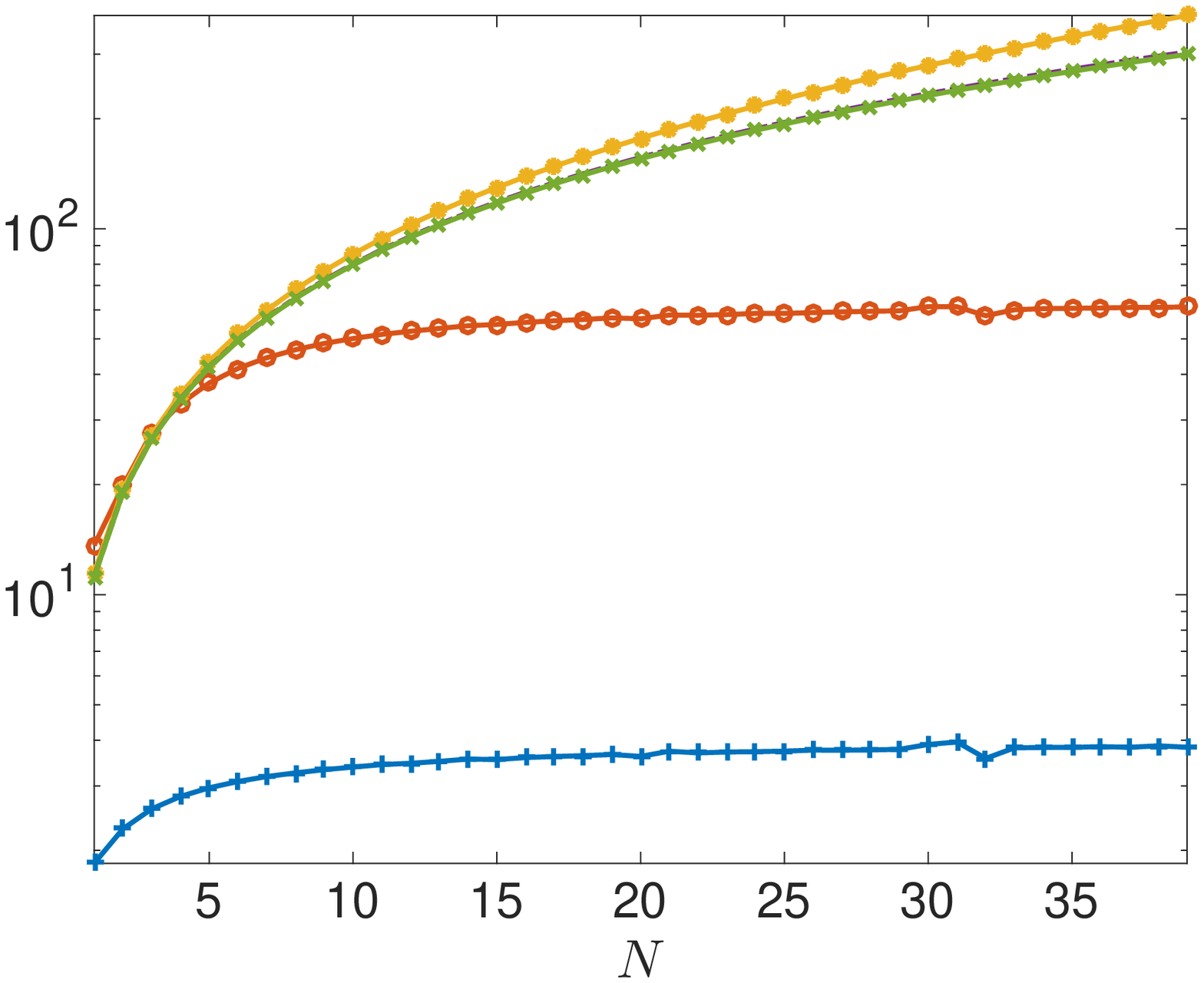}\label{fig:s3_001_fig08}}
		\end{subfigure}%
		\hspace{13pt}
		\begin{subfigure}[$t=1$, $g^{(2)}$]
			{\includegraphics[width=.3\linewidth]{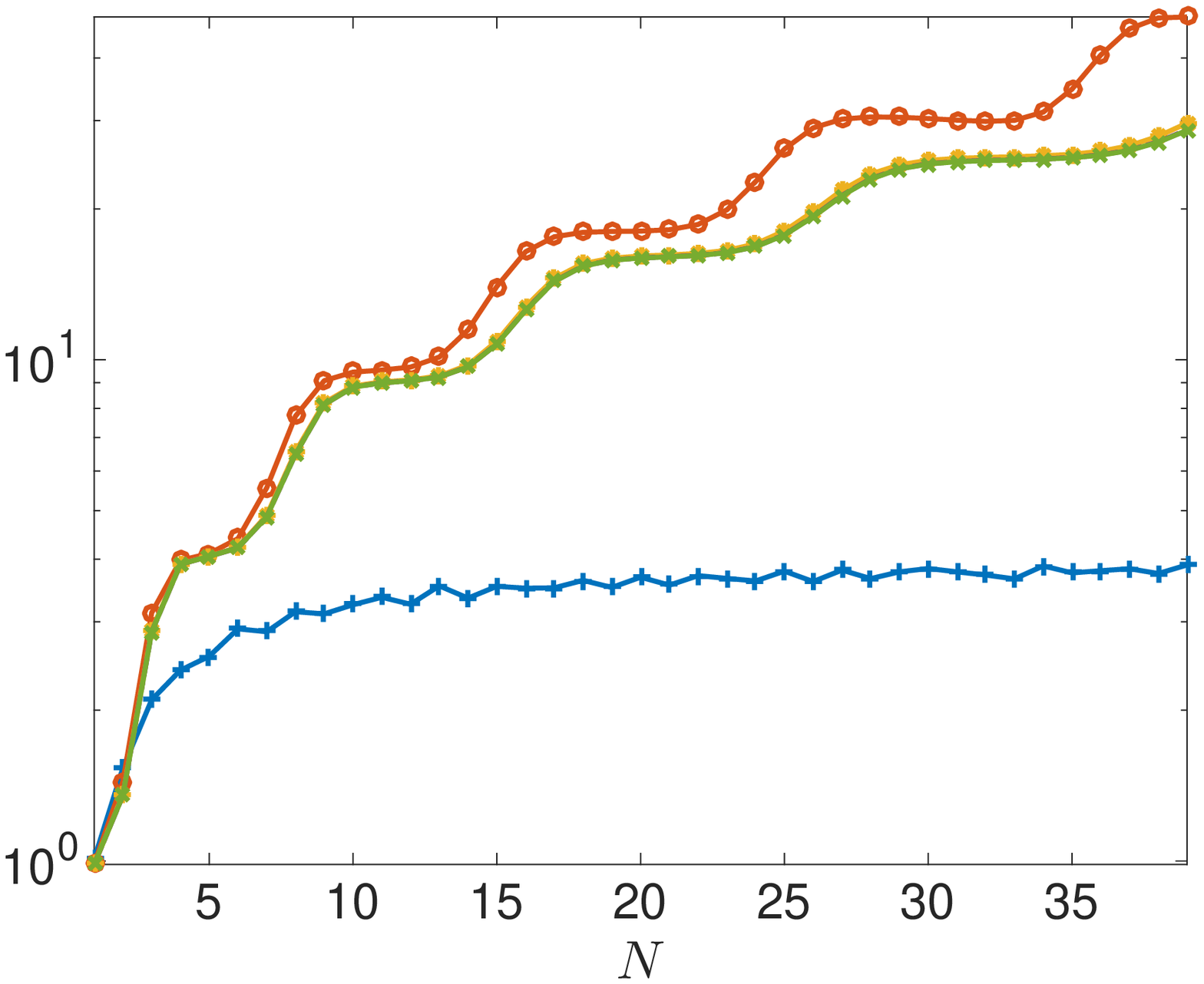}\label{fig:s3_1_fig08}}
		\end{subfigure}%
		\hspace{13pt}
		\begin{subfigure}[$t=10$, $g^{(2)}$]
			{\includegraphics[width=.3\linewidth]{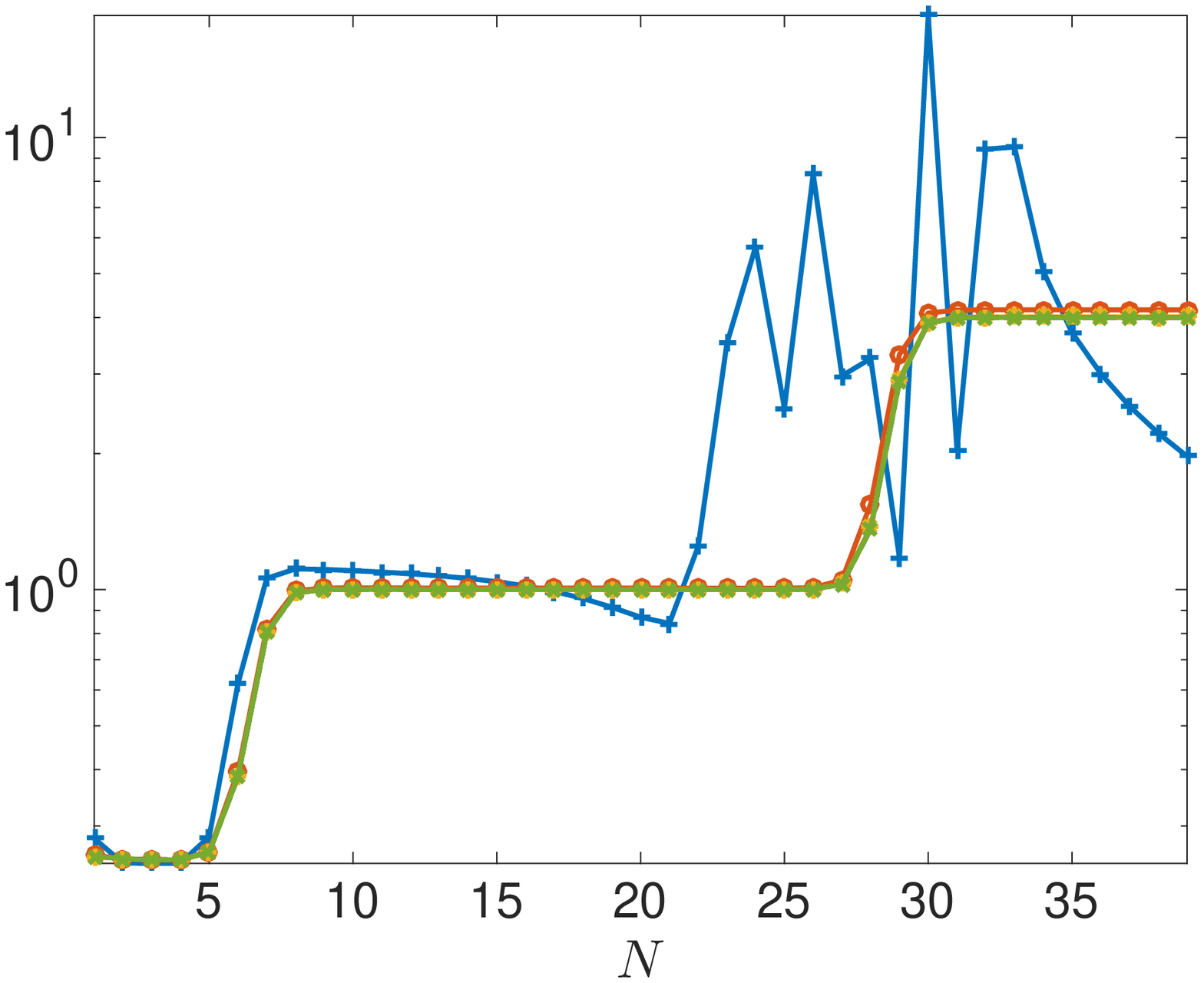}\label{fig:s3_10_fig08}}
		\end{subfigure}%
		\caption{Results from Example \ref{2eg:test3}.  Top figures: $\|\eNzeroN\|$.  Bottom figures: $\frac{\|\eNzeroNpo\|}{\|\eNzeroN\|\eps^2}$.}
		\label{fig:2eg_test3_err_f0}
	\end{figure}
	
	\begin{figure}
		\centering
		\begin{subfigure}[$t=0.1$, $g^{(2)}$]
			{\includegraphics[width=.31\linewidth]{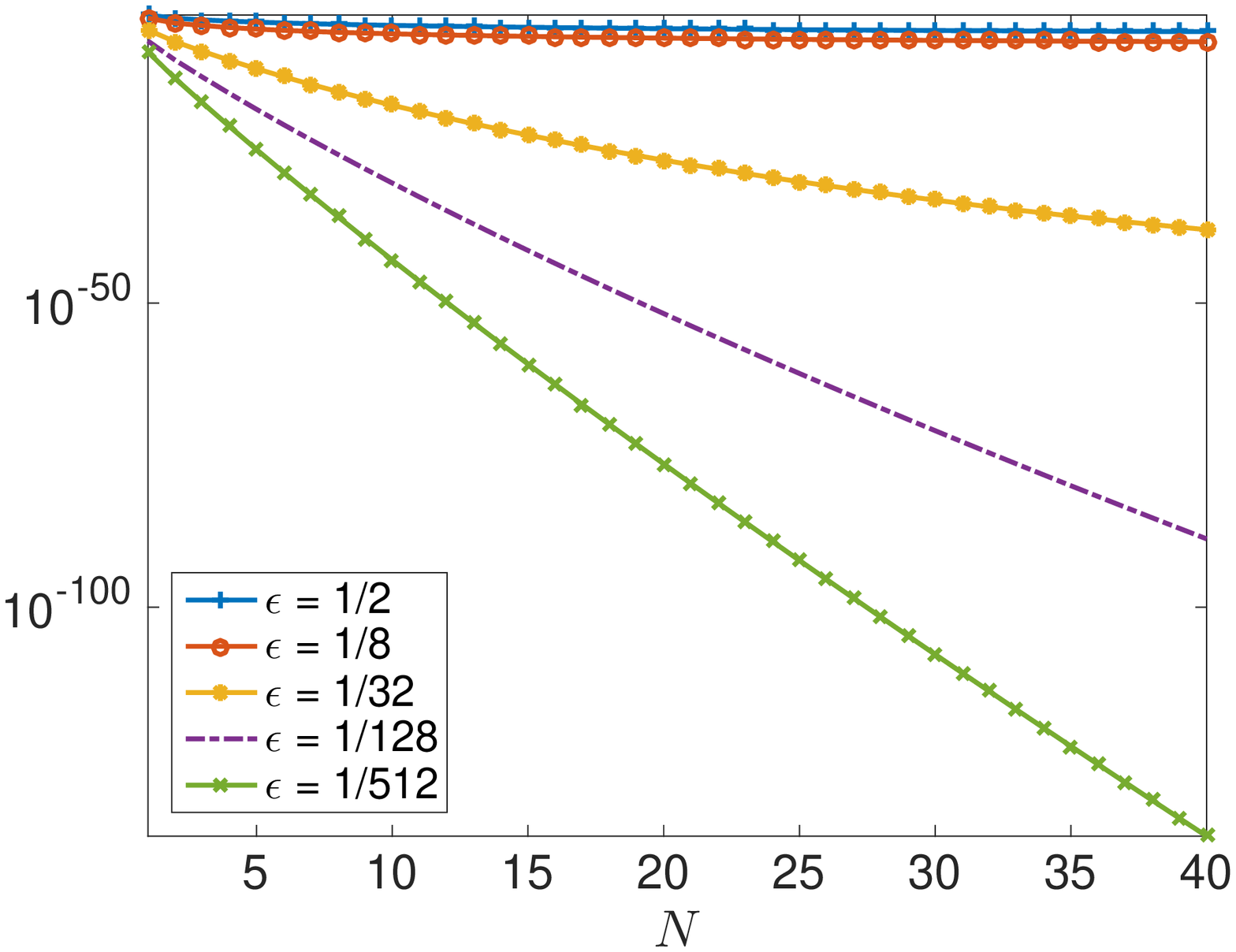}\label{fig:s3_001_fig15}}
		\end{subfigure}%
		\hspace{10pt}
		\begin{subfigure}[$t=1$, $g^{(2)}$]
			{\includegraphics[width=.31\linewidth]{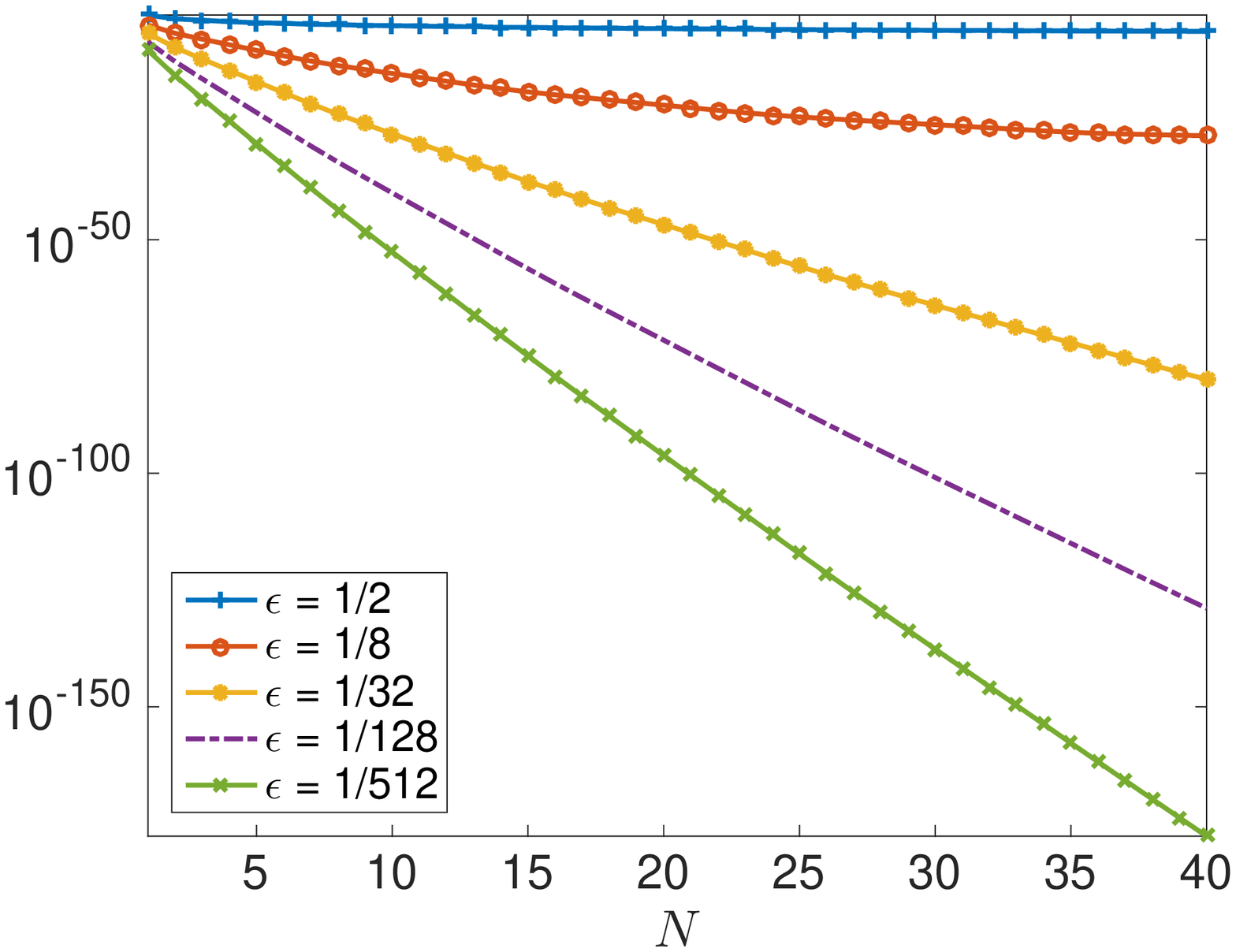}\label{fig:s3_1_fig15}}
		\end{subfigure}%
		\hspace{10pt}
		\begin{subfigure}[$t=10$, $g^{(2)}$]
			{\includegraphics[width=.31\linewidth]{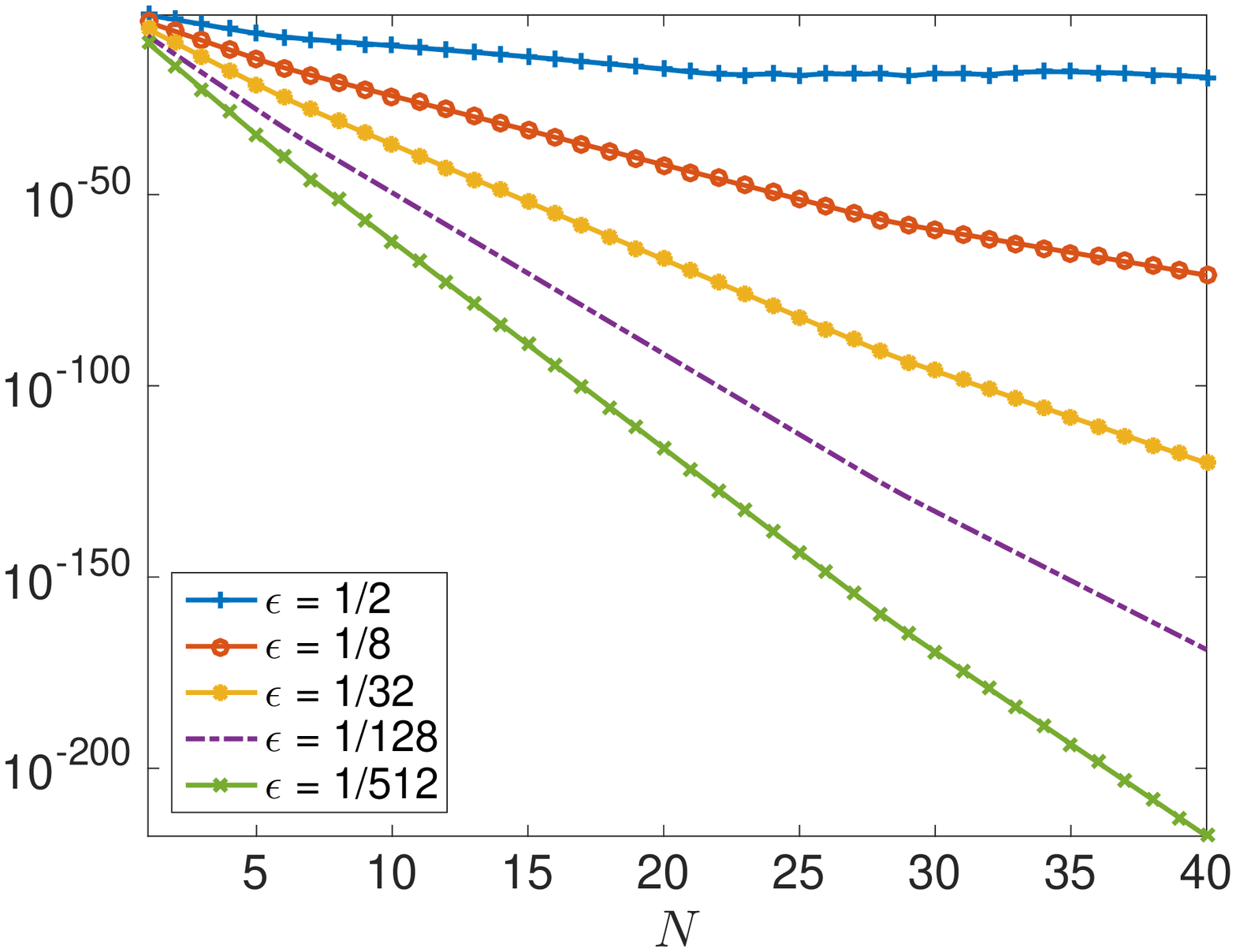}\label{fig:s3_10_fig15}}
		\end{subfigure}%
		\\
		\hspace{4pt}
		\begin{subfigure}[$t=0.1$, $g^{(2)}$]
			{\includegraphics[width=.3\linewidth]{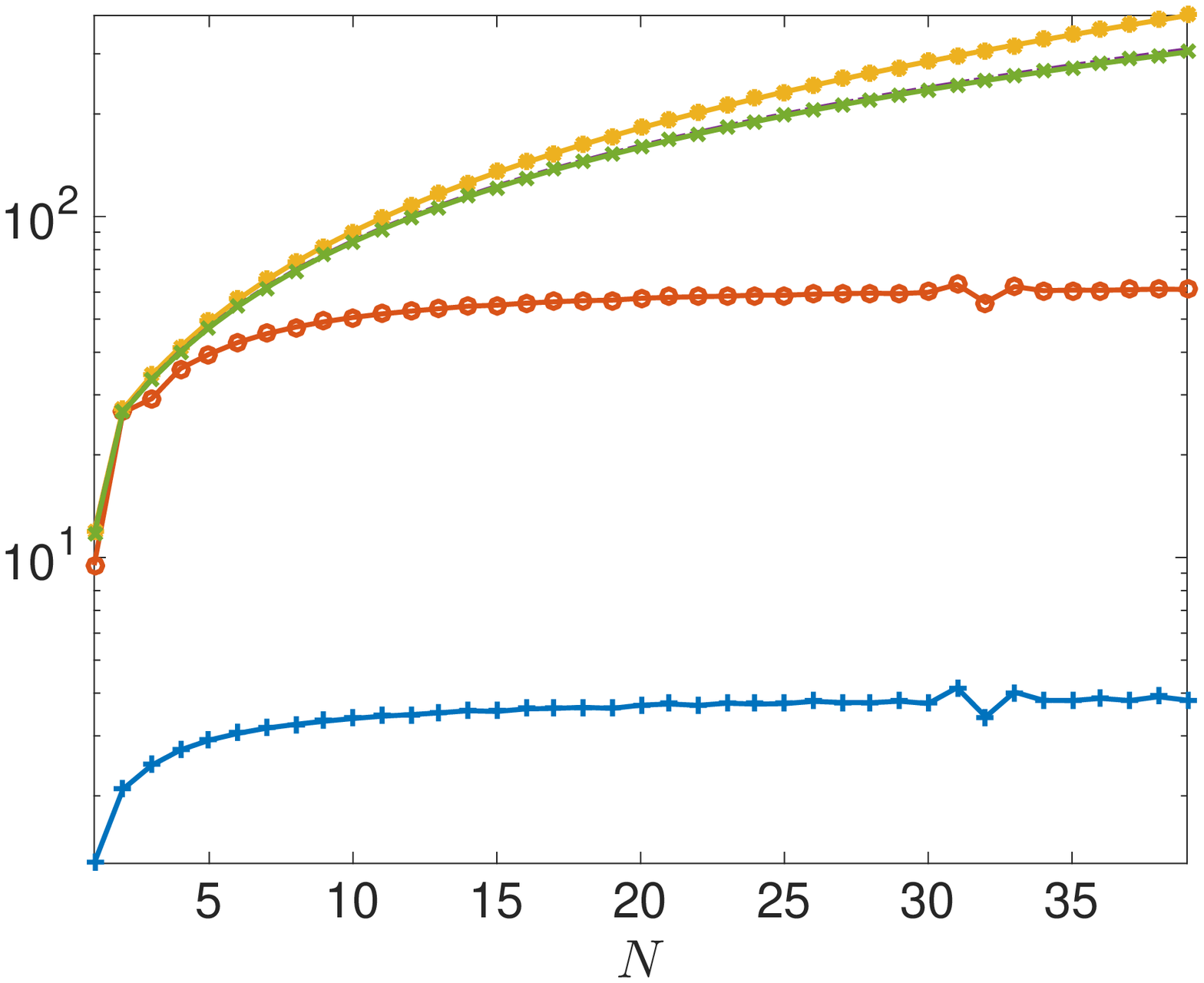}\label{fig:s3_001_fig18}}
		\end{subfigure}%
		\hspace{13pt}
		\begin{subfigure}[$t=1$, $g^{(2)}$]
			{\includegraphics[width=.3\linewidth]{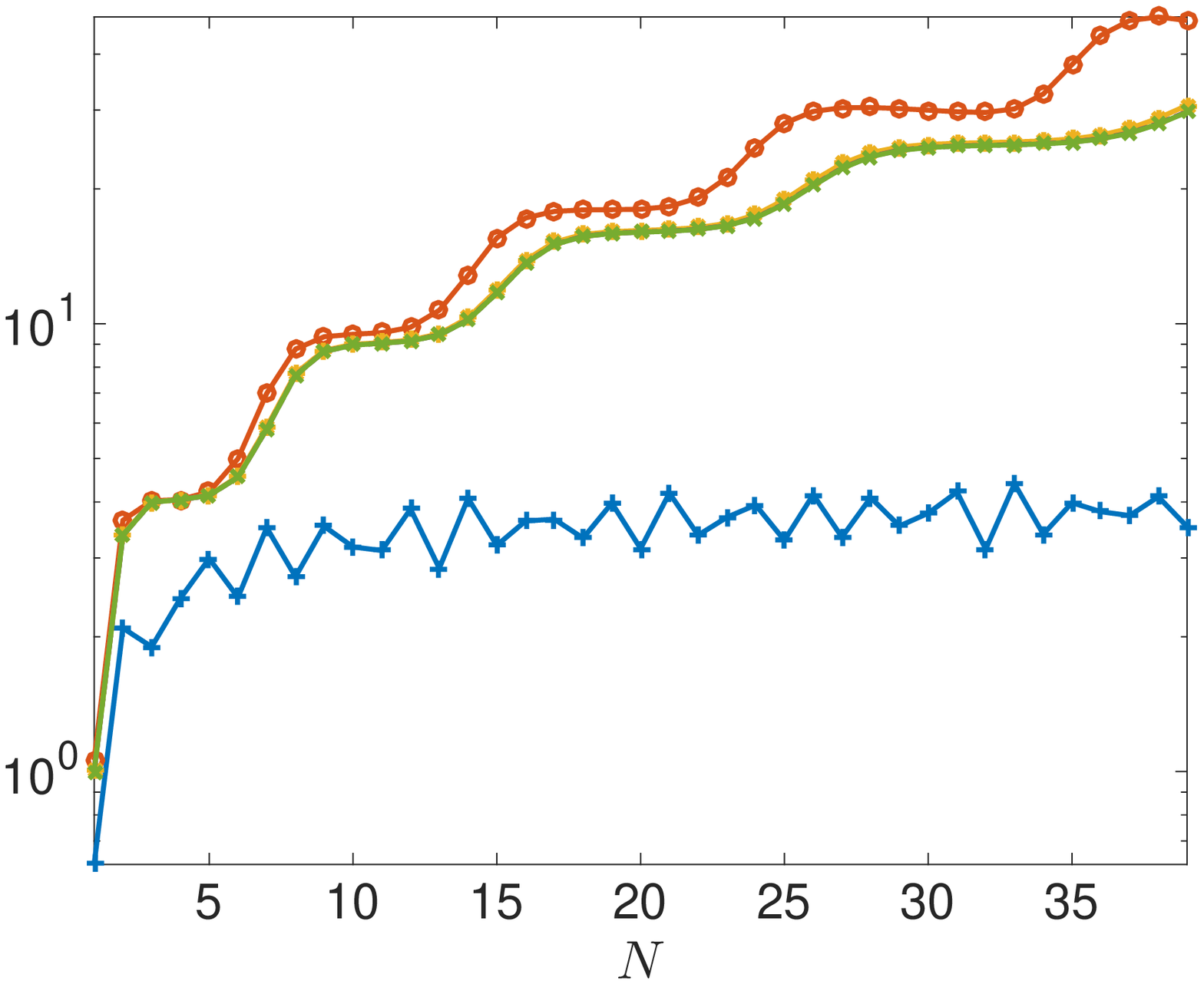}\label{fig:s3_1_fig18}}
		\end{subfigure}%
		\hspace{13pt}
		\begin{subfigure}[$t=10$, $g^{(2)}$]
			{\includegraphics[width=.3\linewidth]{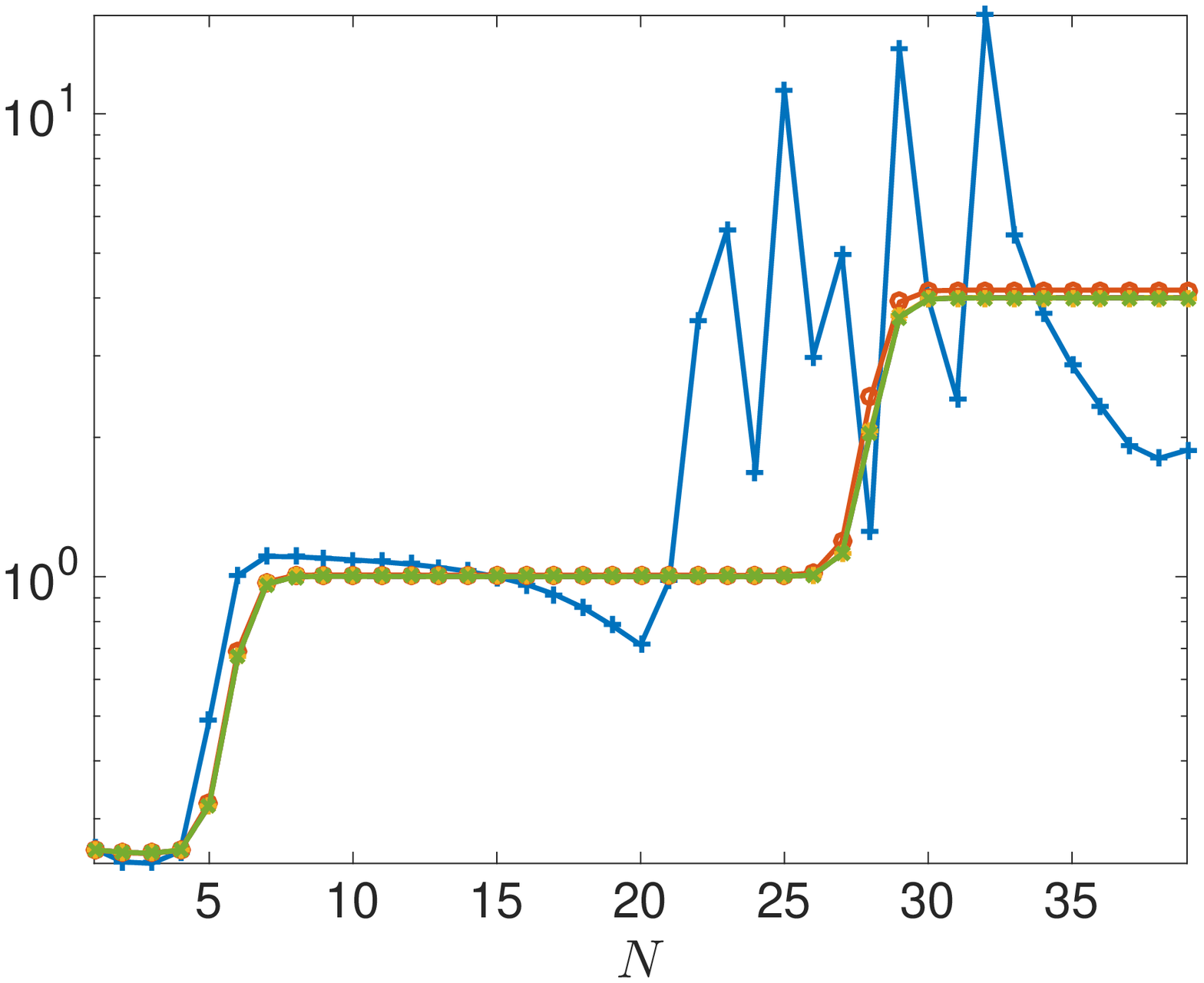}\label{fig:s3_10_fig18}}
		\end{subfigure}%
		\caption{Results from Example \ref{2eg:test3}.  Top figures: $\|\eNoneN\|$.  Bottom figures: $\frac{\|\eNoneNpo\|}{\|\eNoneN\|\eps^2}$.}
		\label{fig:2eg_test3_err_f1}
	\end{figure}
	
	\begin{figure}
		\centering
		\begin{subfigure}[$t=0.1$, $g^{(2)}$]
			{\includegraphics[width=.31\linewidth]{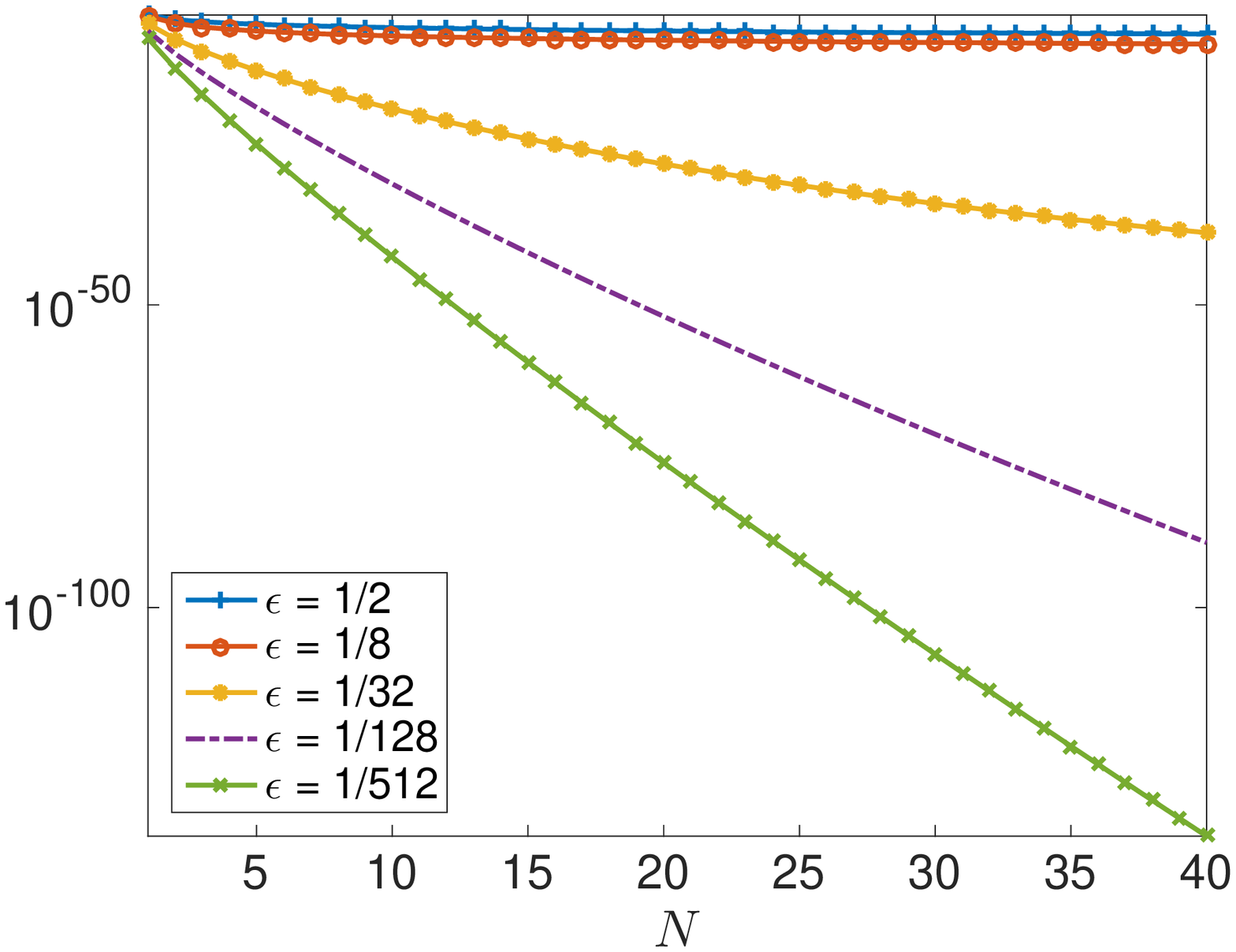}\label{fig:s3_001_fig25}}
		\end{subfigure}%
		\hspace{10pt}
		\begin{subfigure}[$t=1$, $g^{(2)}$]
			{\includegraphics[width=.31\linewidth]{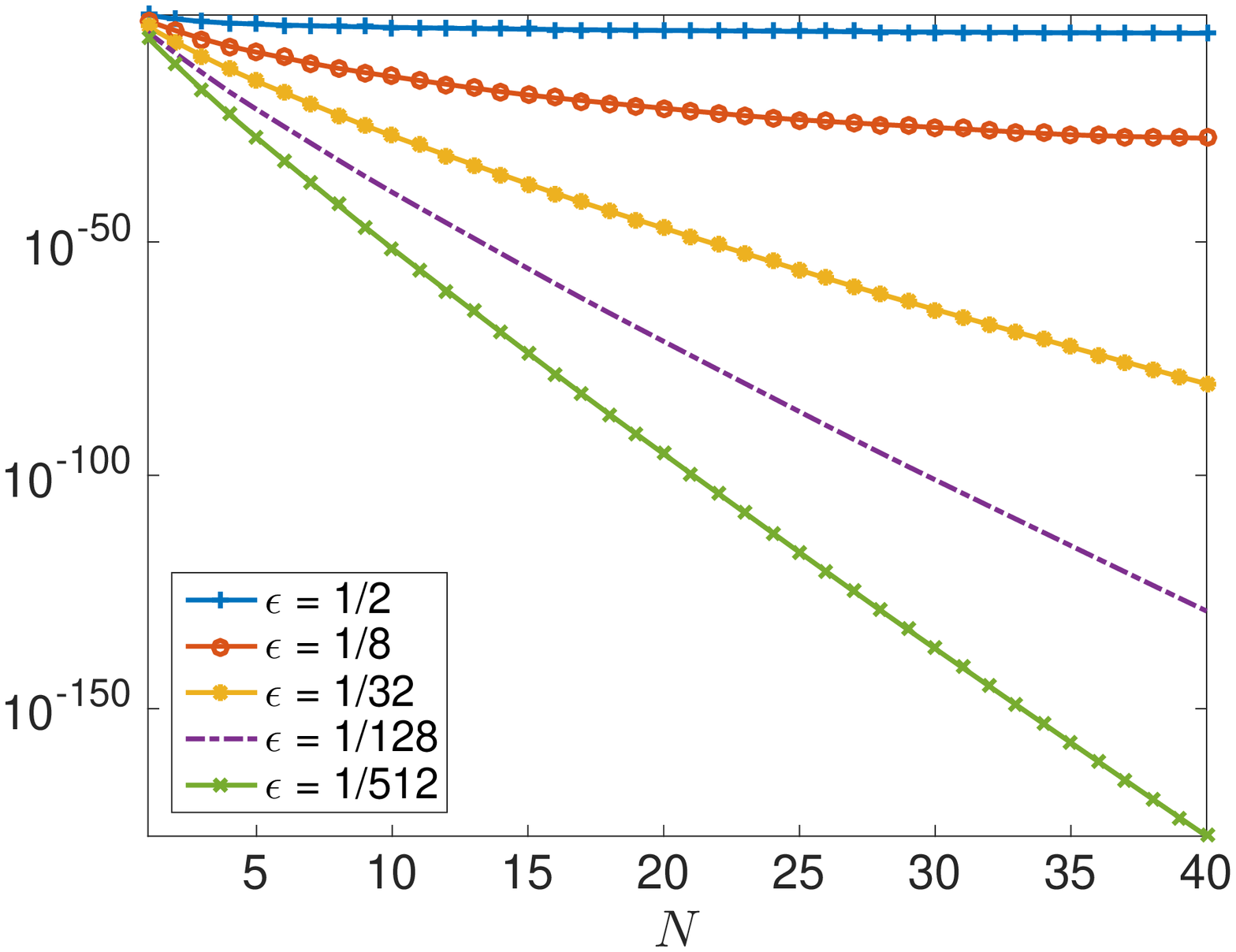}\label{fig:s3_1_fig25}}
		\end{subfigure}%
		\hspace{10pt}
		\begin{subfigure}[$t=10$, $g^{(2)}$]
			{\includegraphics[width=.31\linewidth]{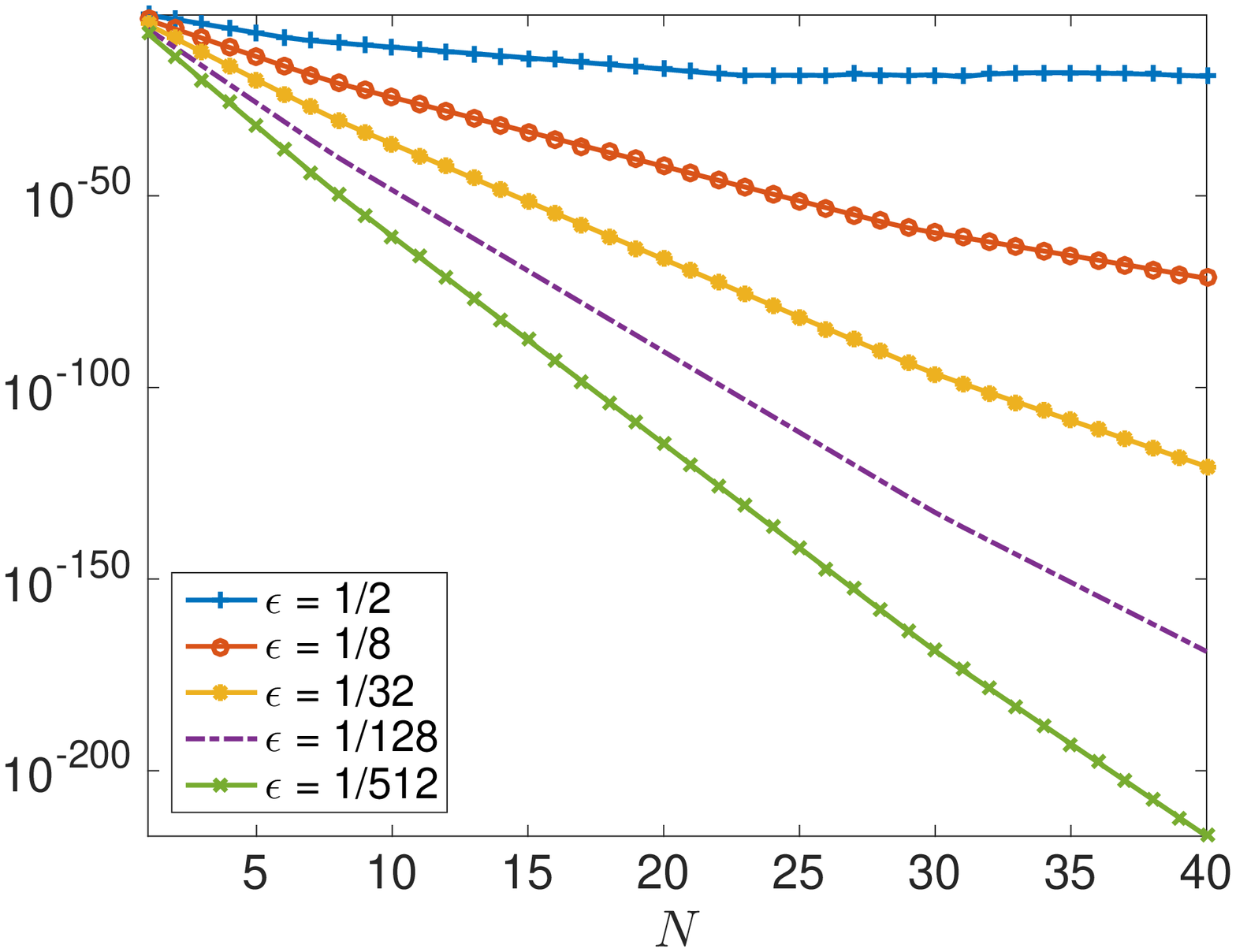}\label{fig:s3_10_fig25}}
		\end{subfigure}%
		\\
		\hspace{4pt}
		\begin{subfigure}[$t=0.1$, $g^{(2)}$]
			{\includegraphics[width=.3\linewidth]{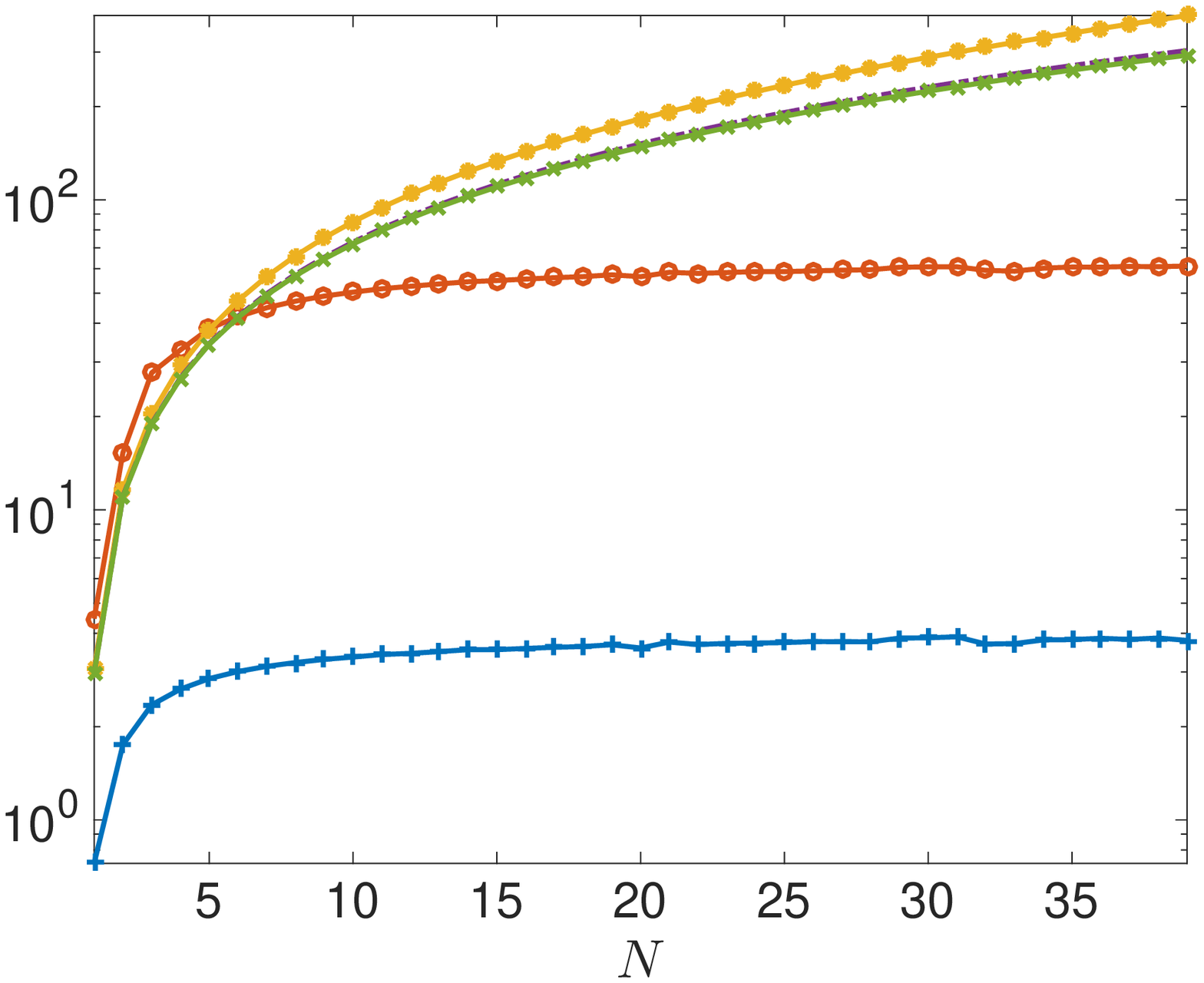}\label{fig:s3_001_fig28}}
		\end{subfigure}%
		\hspace{13pt}
		\begin{subfigure}[$t=1$, $g^{(2)}$]
			{\includegraphics[width=.3\linewidth]{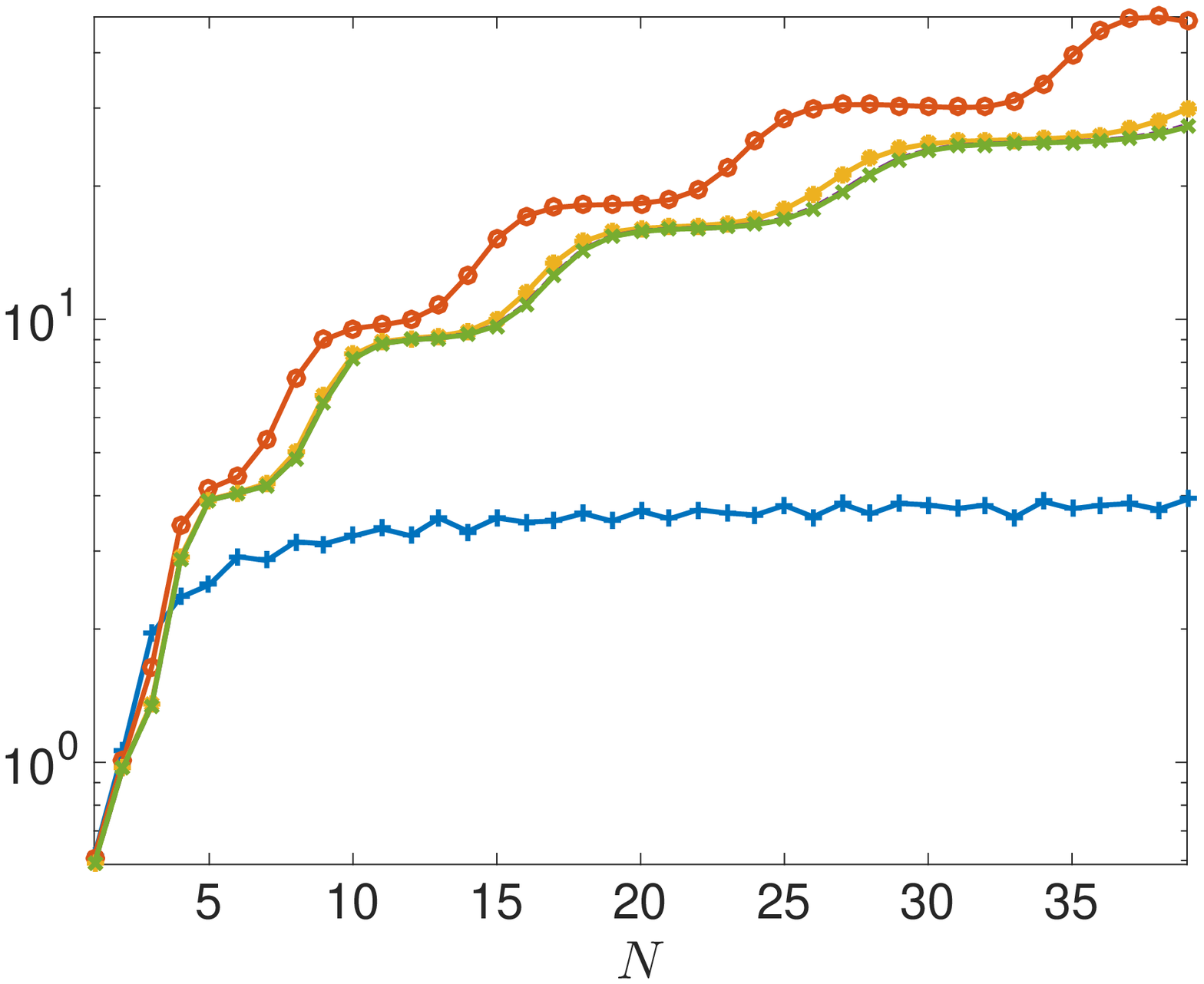}\label{fig:s3_1_fig28}}
		\end{subfigure}%
		\hspace{13pt}
		\begin{subfigure}[$t=10$, $g^{(2)}$]
			{\includegraphics[width=.3\linewidth]{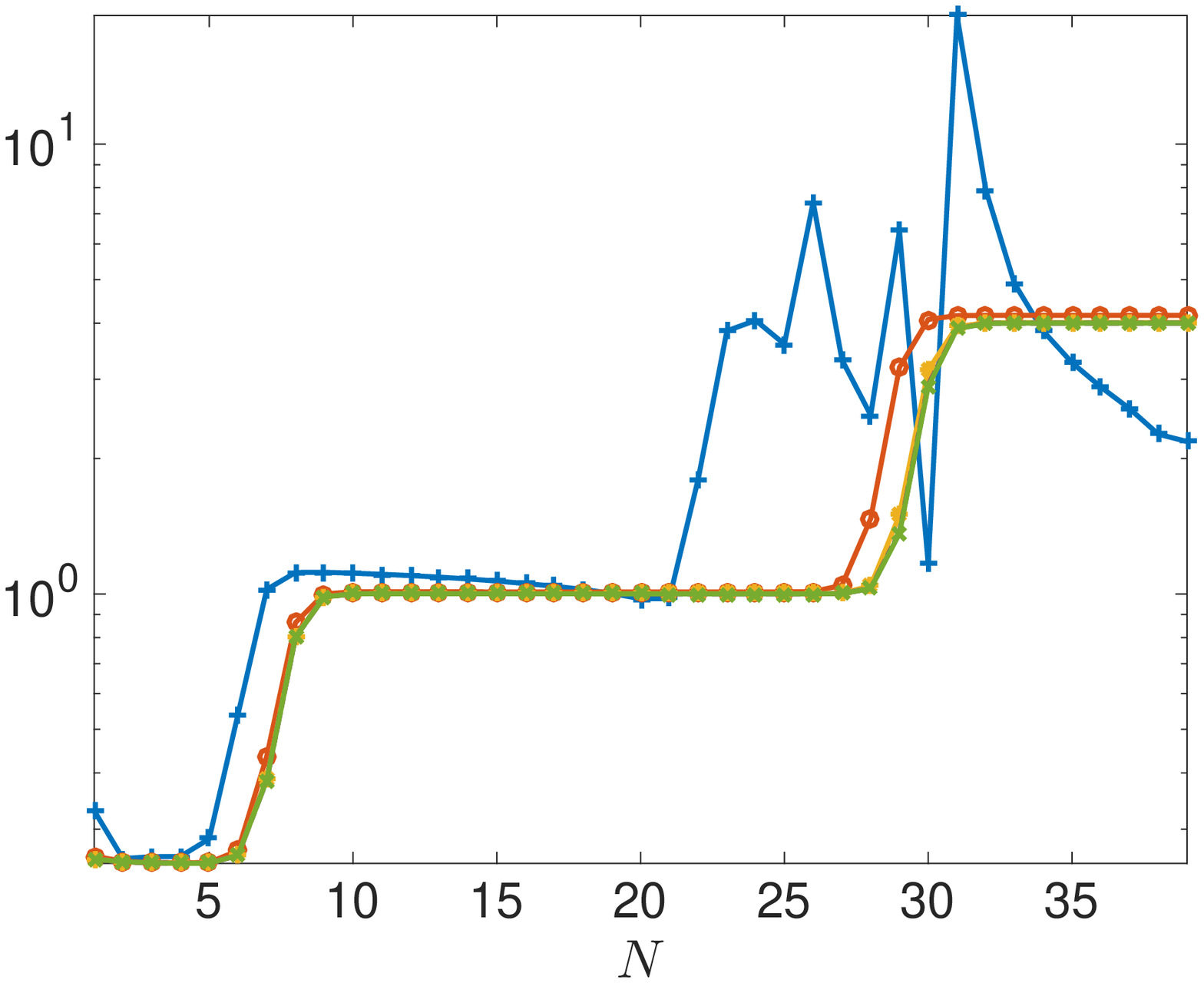}\label{fig:s3_10_fig28}}
		\end{subfigure}%
		\caption{Results from Example \ref{2eg:test3}.  Top figures: $\|\eNtwoN\|$.  Bottom figures: $\frac{\|\eNtwoNpo\|}{\|\eNtwoN\|\eps^2}$.}
		\label{fig:2eg_test3_err_f2}
	\end{figure}

\end{eg}

\begin{eg}\normalfont \label{2eg:test4}
	We repeat the previous test, this time using the initial condition $g^{(3)}$ from Example \ref{eg:test1}.  
	Because $g^{(3)}$ is smooth, $20$ grid points are sufficient to ensure that the spatial error in the Fourier-Galerkin discretization is negligible.
	For large values of $N$, the errors are so small that 300 digits are used.
	
	\begin{figure}
		\centering
		\begin{subfigure}[$t=0.1$, $g^{(3)}$]
			{\includegraphics[width=.31\linewidth]{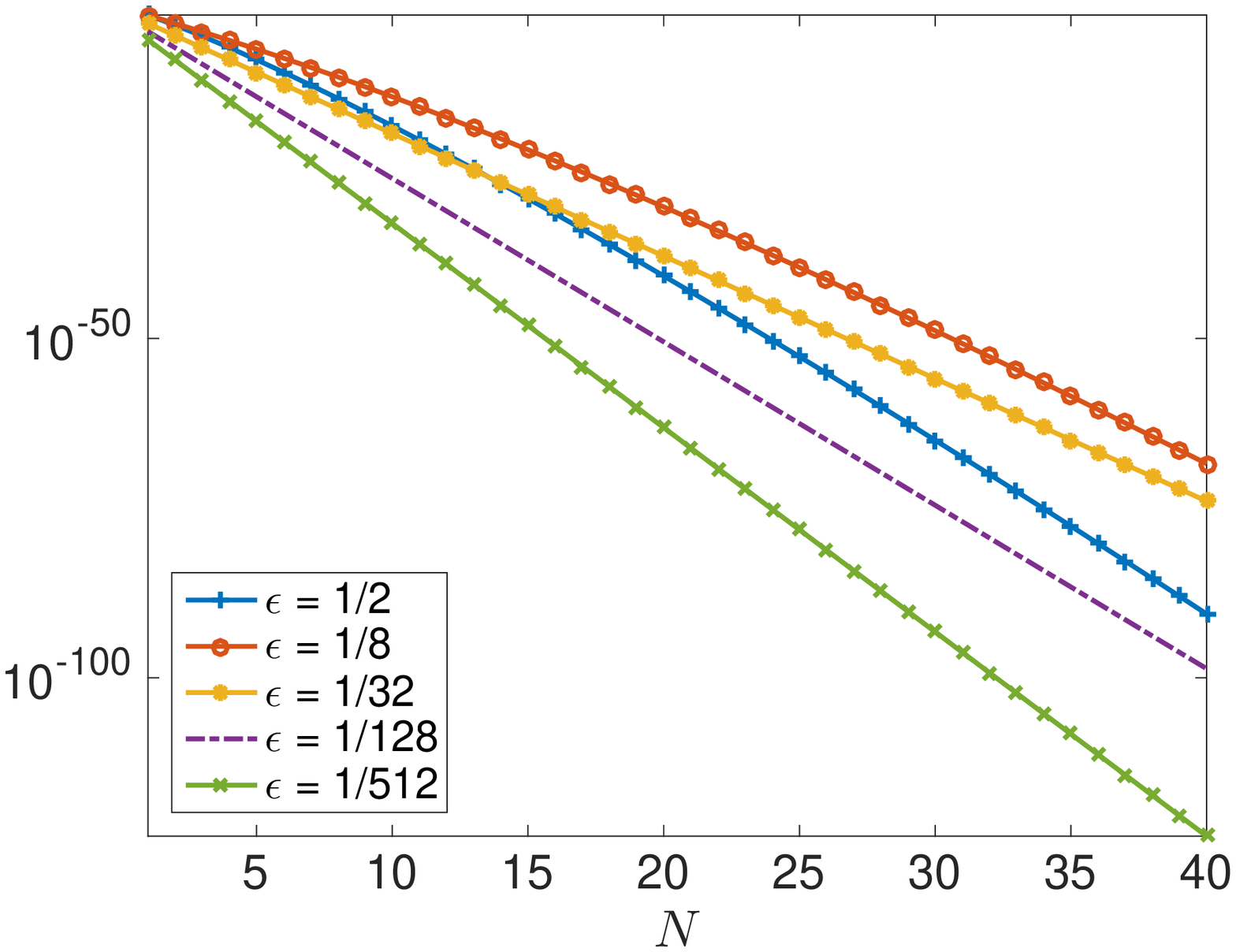}\label{fig:s3_2_001_fig5}}
		\end{subfigure}%
		\hspace{10pt}
		\begin{subfigure}[$t=1$, $g^{(3)}$]
			{\includegraphics[width=.31\linewidth]{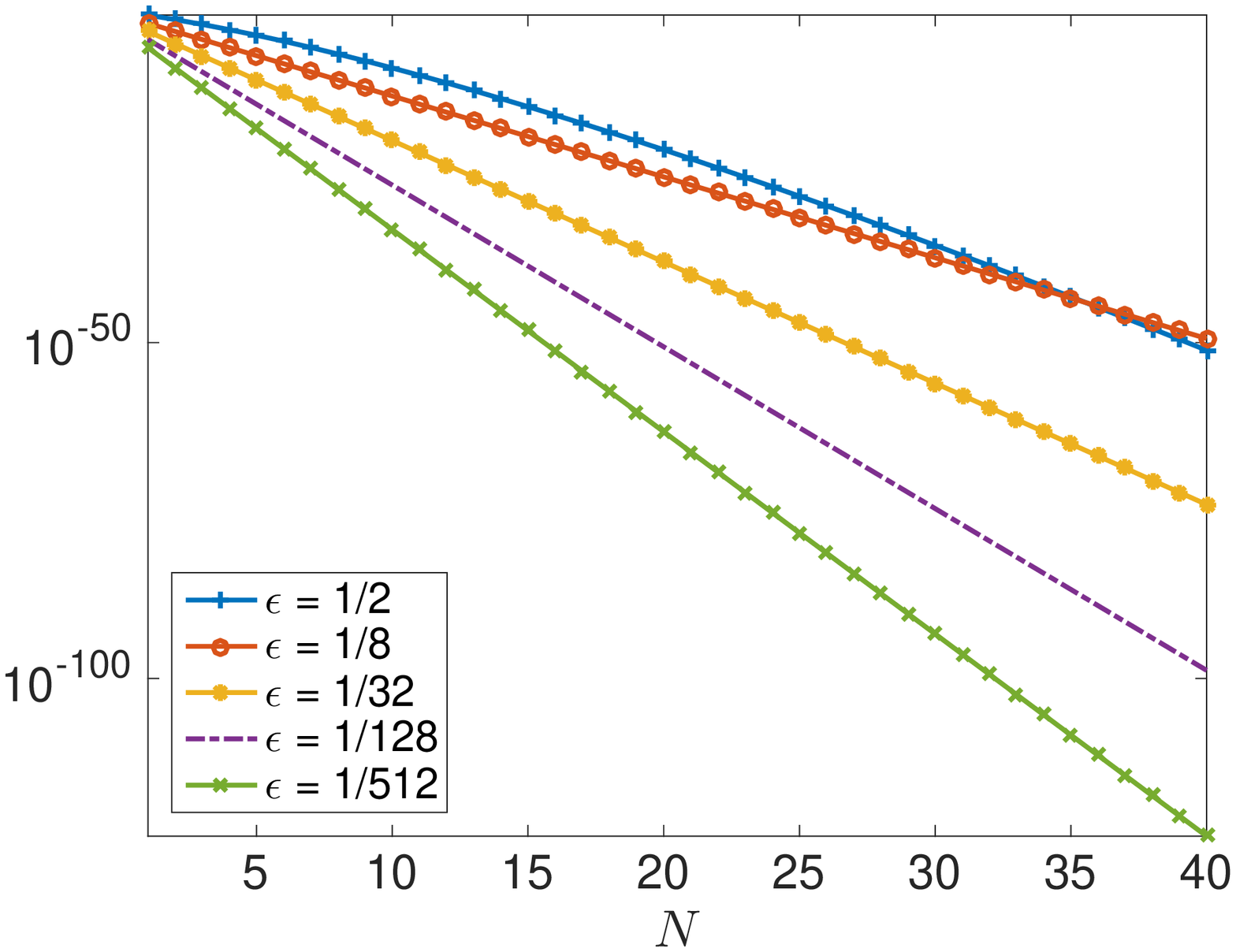}\label{fig:s3_2_1_fig5}}
		\end{subfigure}%
		\hspace{10pt}
		\begin{subfigure}[$t=10$, $g^{(3)}$]
			{\includegraphics[width=.31\linewidth]{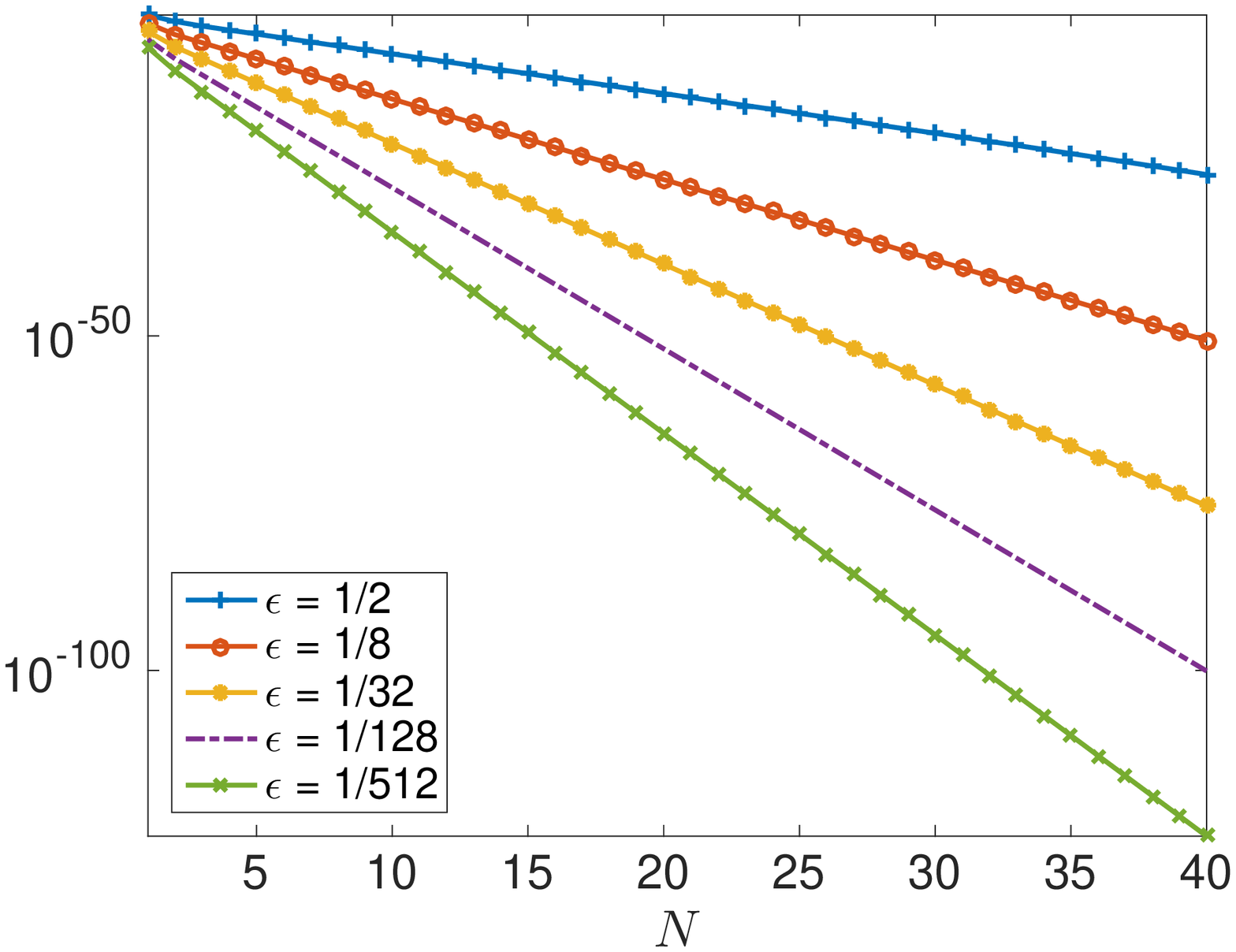}\label{fig:s3_2_10_fig5}}
		\end{subfigure}%
		\\
		\hspace{4pt}
		\begin{subfigure}[$t=0.1$, $g^{(3)}$]
			{\includegraphics[width=.3\linewidth]{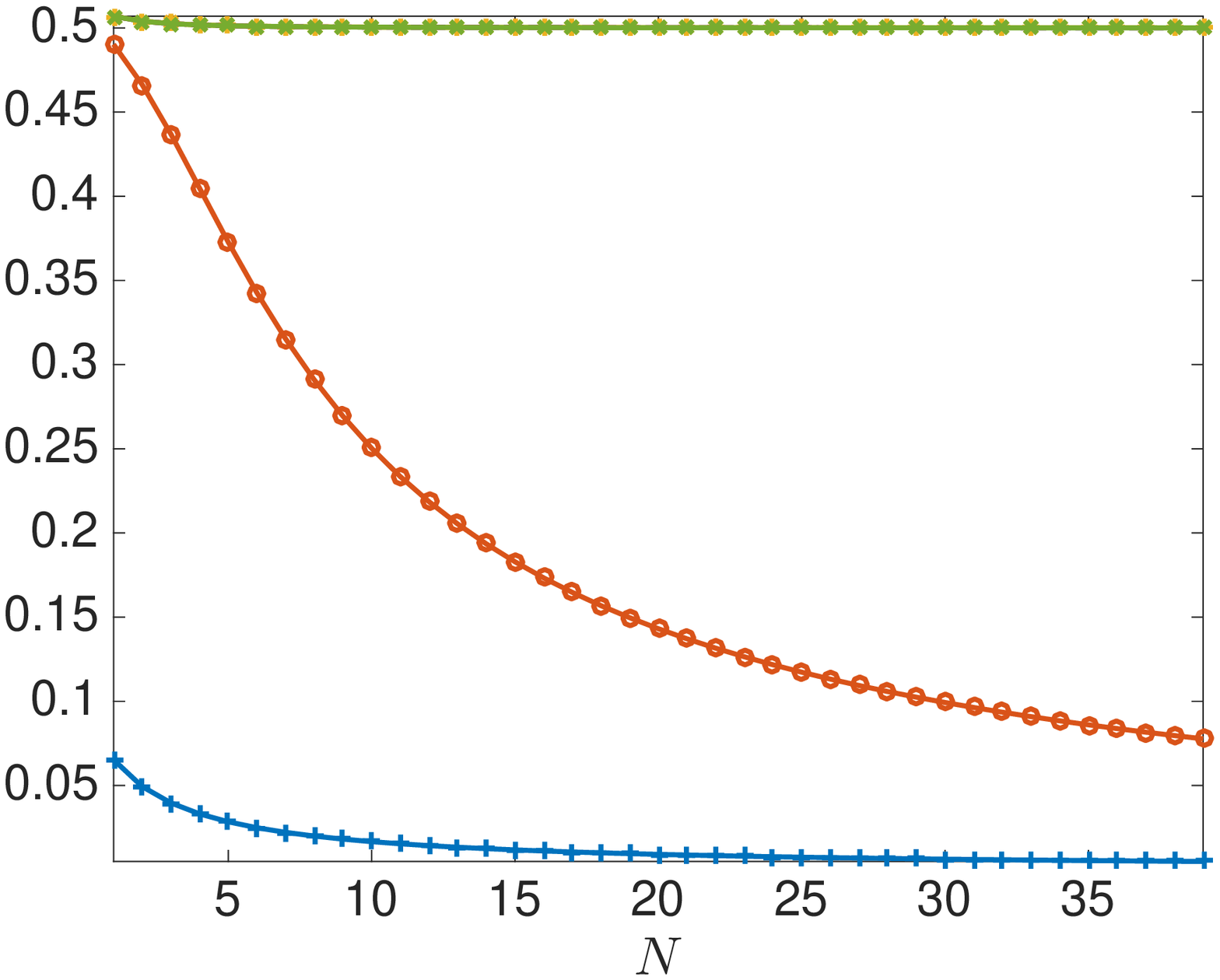}\label{fig:s3_2_001_fig8}}
		\end{subfigure}%
		\hspace{13pt}
		\begin{subfigure}[$t=1$, $g^{(3)}$]
			{\includegraphics[width=.3\linewidth]{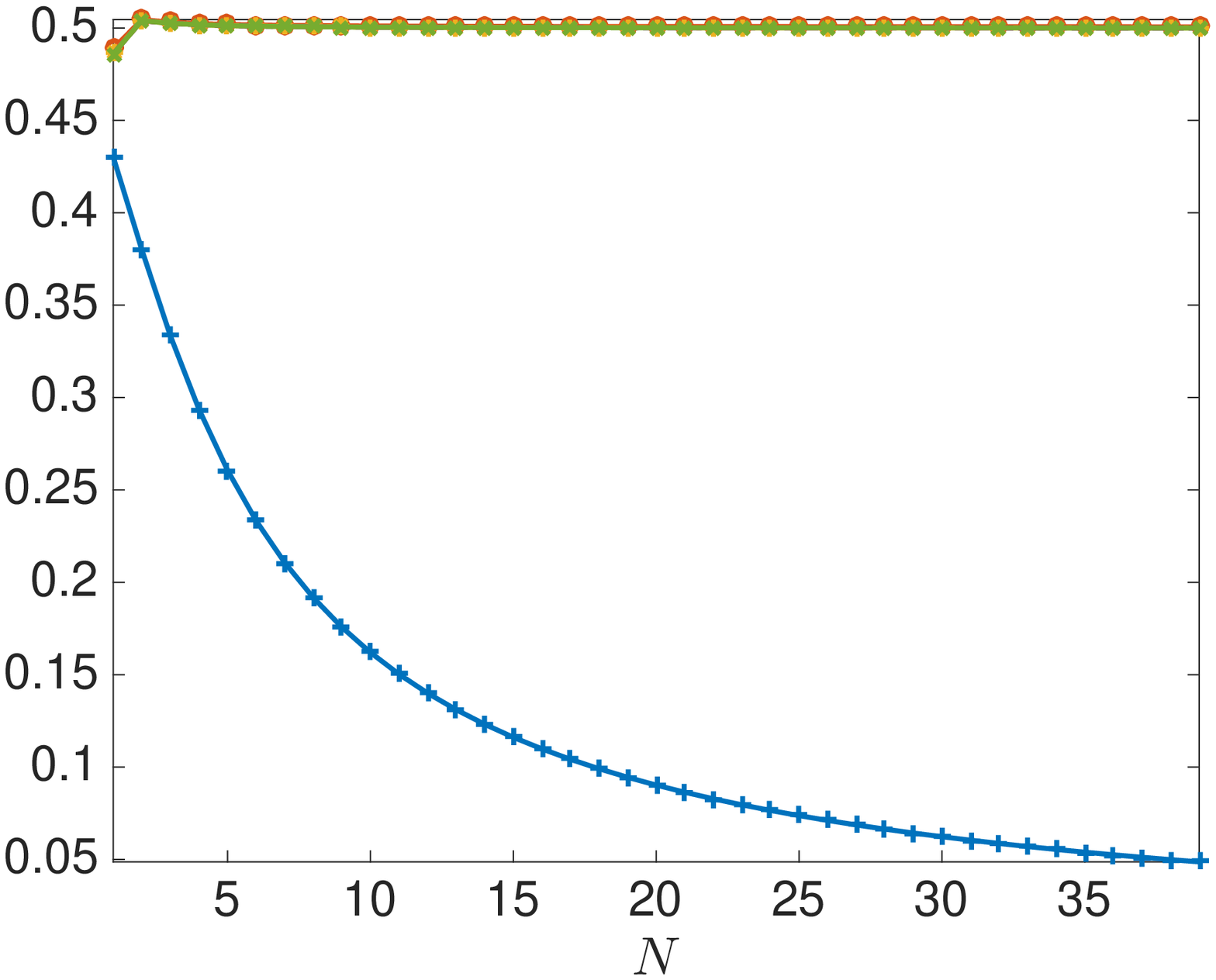}\label{fig:s3_2_1_fig8}}
		\end{subfigure}%
		\hspace{13pt}
		\begin{subfigure}[$t=10$, $g^{(3)}$]
			{\includegraphics[width=.3\linewidth]{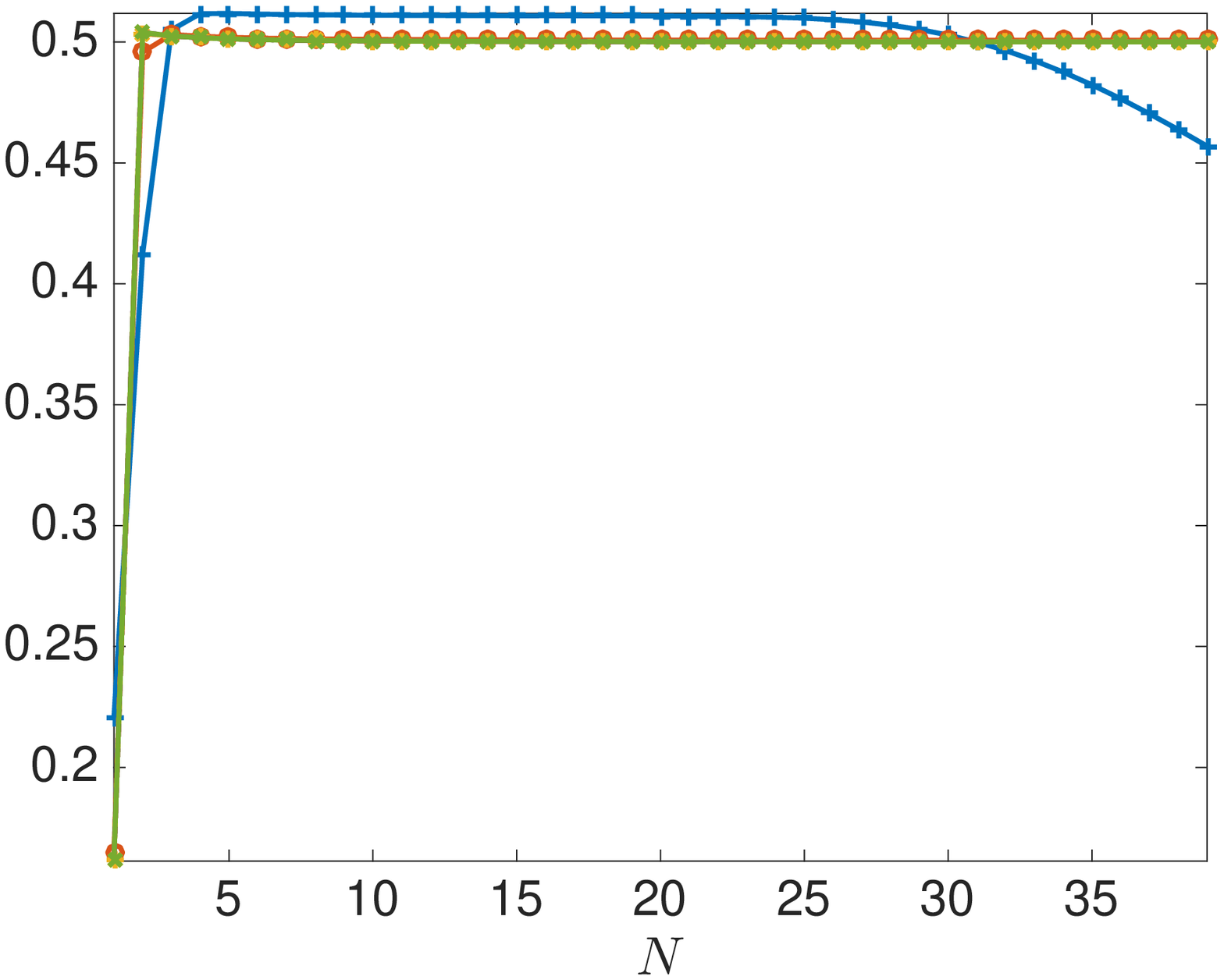}\label{fig:s3_2_10_fig8}}
		\end{subfigure}%
		\caption{Results from Example \ref{2eg:test3}.  Top figures: $\|e^N\|$.  Bottom figures: $\frac{\|e^{N+1}\|}{\|e^N\|\eps}$.}
		\label{fig:2eg_test3_2_e}
	\end{figure}

	\begin{figure}
		\centering
		\begin{subfigure}[$t=0.1$, $g^{(3)}$]
			{\includegraphics[width=.31\linewidth]{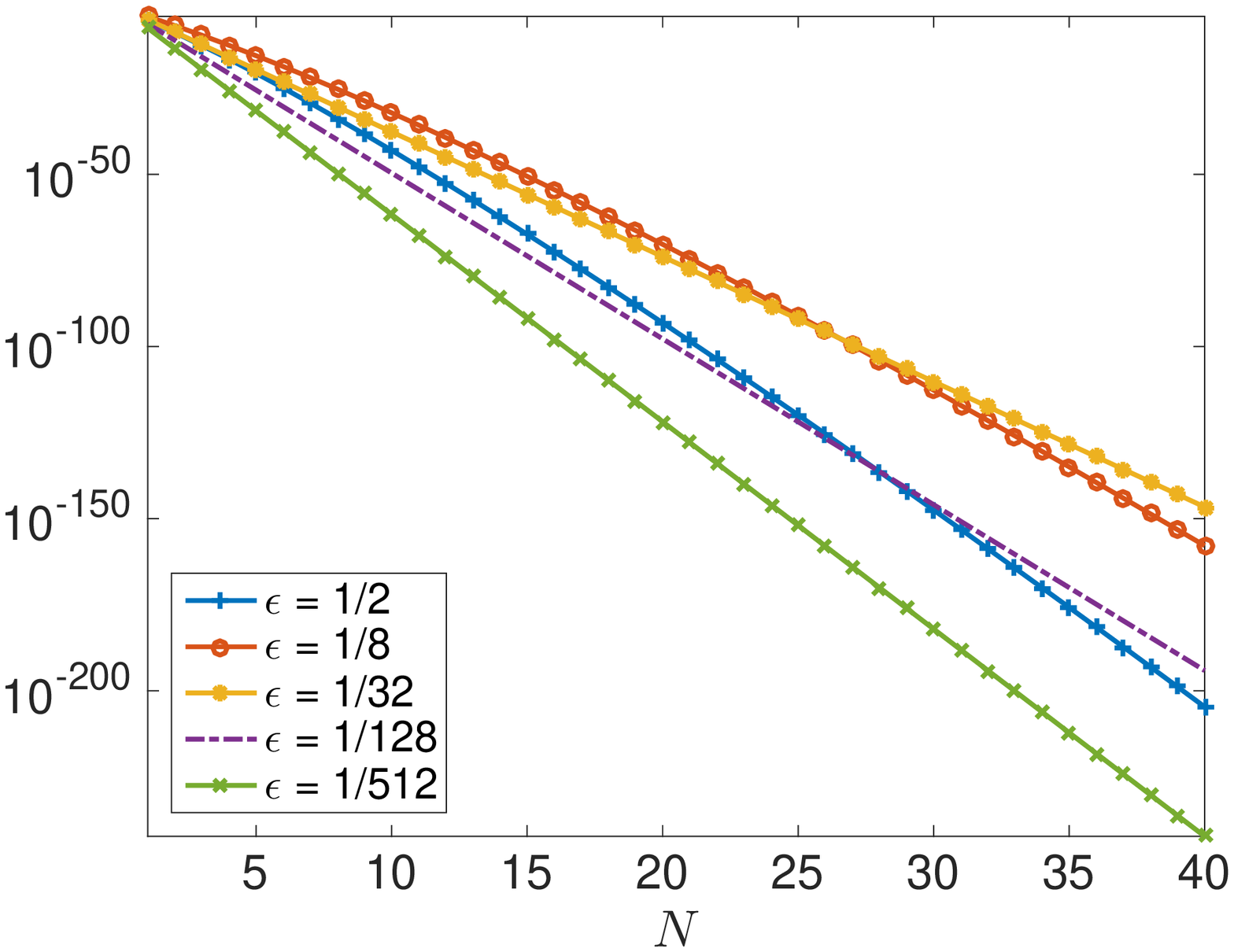}\label{fig:s3_2_001_fig05}}
		\end{subfigure}%
		\hspace{10pt}
		\begin{subfigure}[$t=1$, $g^{(3)}$]
			{\includegraphics[width=.31\linewidth]{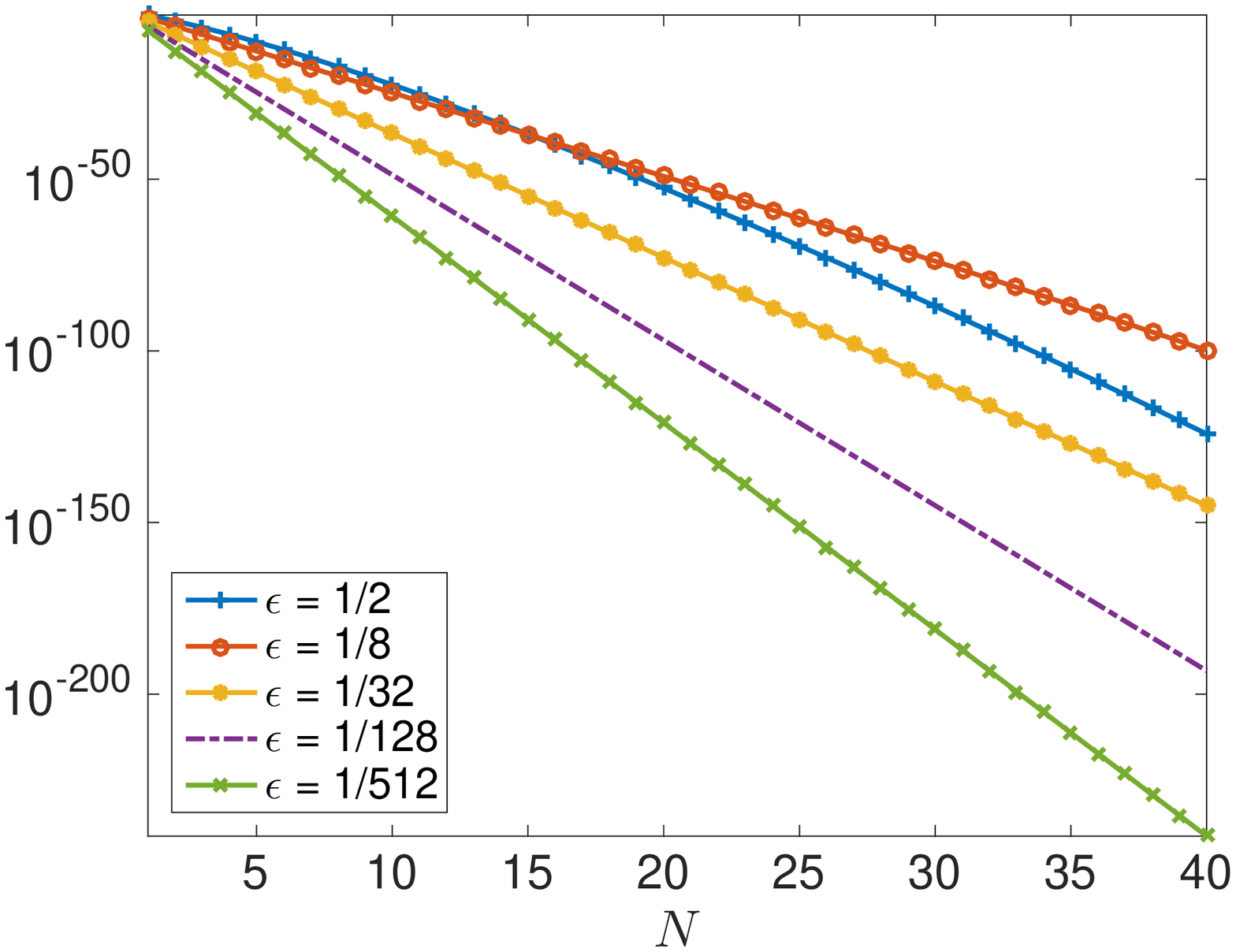}\label{fig:s3_2_1_fig05}}
		\end{subfigure}%
		\hspace{10pt}
		\begin{subfigure}[$t=10$, $g^{(3)}$]
			{\includegraphics[width=.31\linewidth]{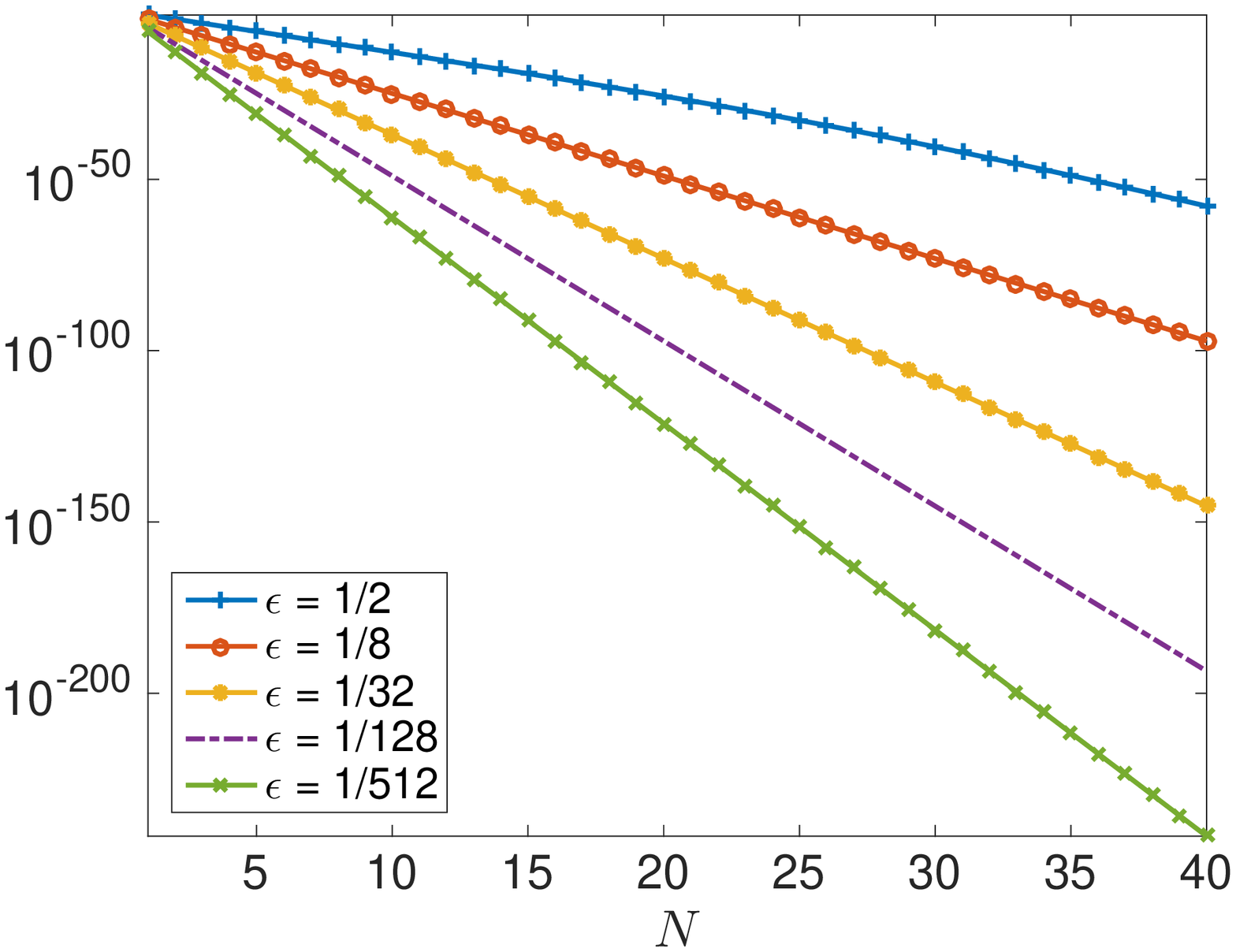}\label{fig:s3_2_10_fig05}}
		\end{subfigure}%
		\\
		\hspace{4pt}
		\begin{subfigure}[$t=0.1$, $g^{(3)}$]
			{\includegraphics[width=.3\linewidth]{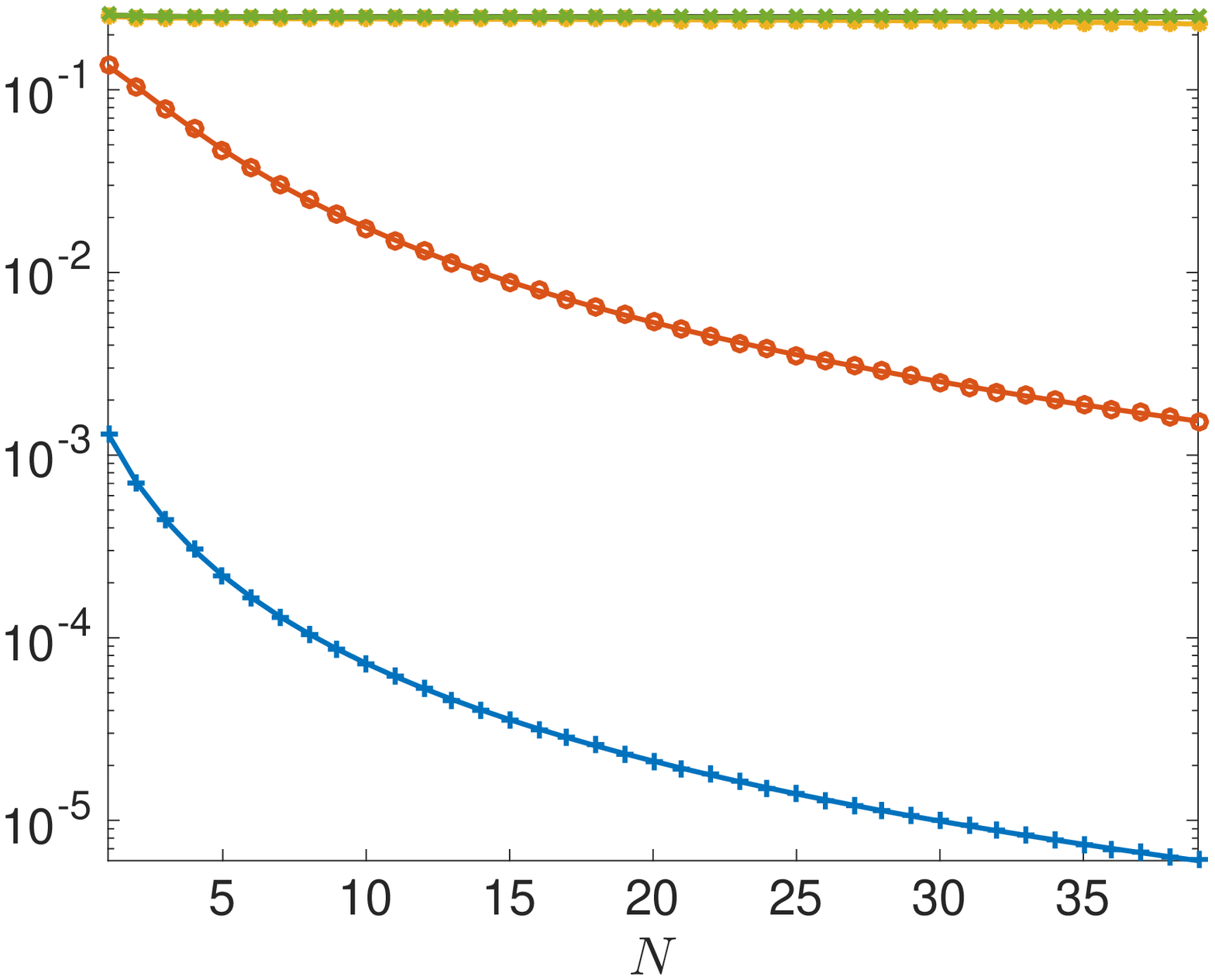}\label{fig:s3_2_001_fig08}}
		\end{subfigure}%
		\hspace{13pt}
		\begin{subfigure}[$t=1$, $g^{(3)}$]
			{\includegraphics[width=.3\linewidth]{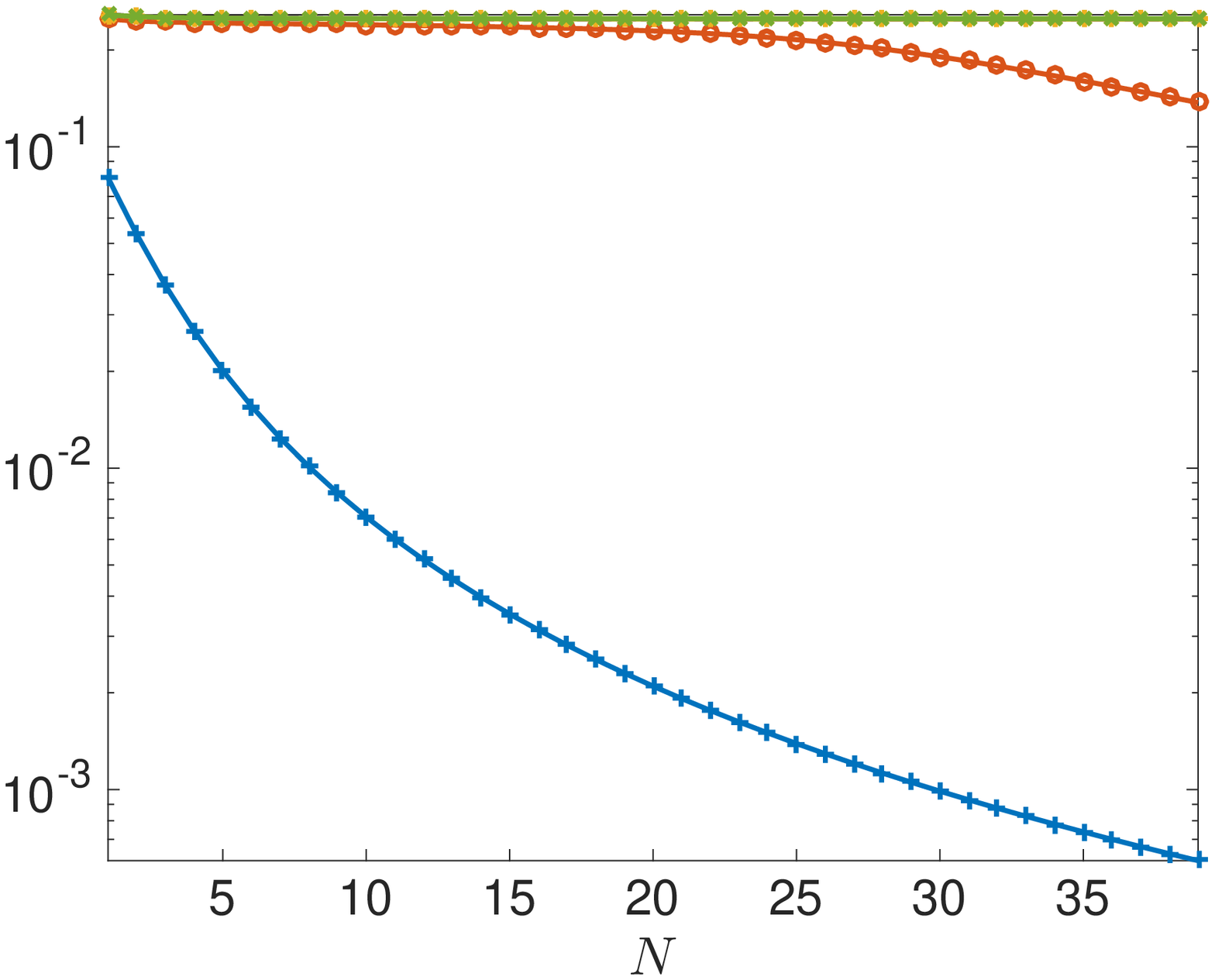}\label{fig:s3_2_1_fig08}}
		\end{subfigure}%
		\hspace{13pt}
		\begin{subfigure}[$t=10$, $g^{(3)}$]
			{\includegraphics[width=.3\linewidth]{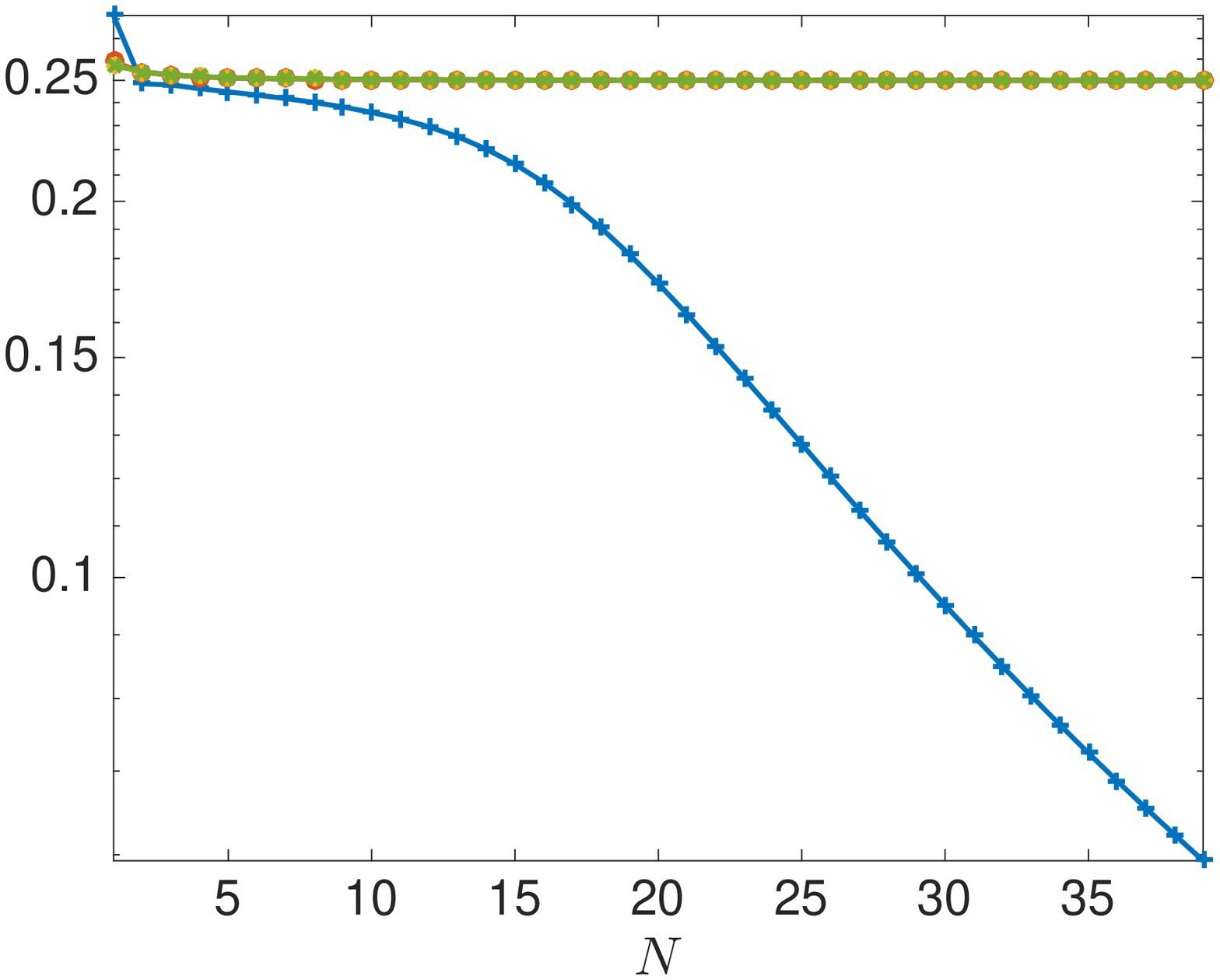}\label{fig:s3_2_10_fig08}}
		\end{subfigure}%
		\caption{Results from Example \ref{2eg:test3}.  Top figures: $\|\eNzeroN\|$. Bottom figures: $\frac{\|\eNzeroNpo\|}{\|\eNzeroN\|\eps^2}$.
		}
		\label{fig:2eg_test3_2_err_f0}
	\end{figure}
	
	\begin{figure}
		\centering
		\begin{subfigure}[$t=0.1$, $g^{(3)}$]
			{\includegraphics[width=.31\linewidth]{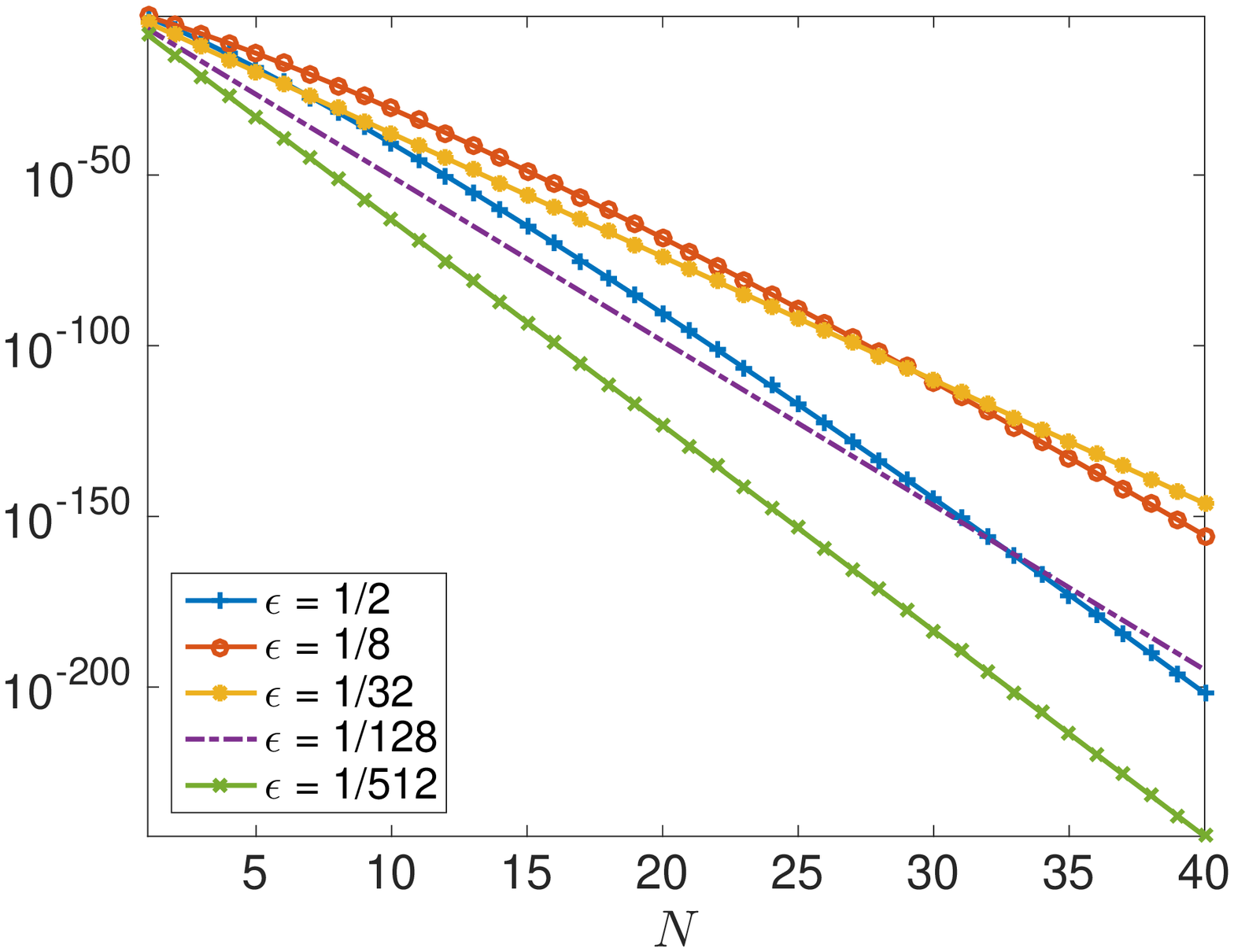}\label{fig:s3_2_001_fig15}}
		\end{subfigure}%
		\hspace{10pt}
		\begin{subfigure}[$t=1$, $g^{(3)}$]
			{\includegraphics[width=.31\linewidth]{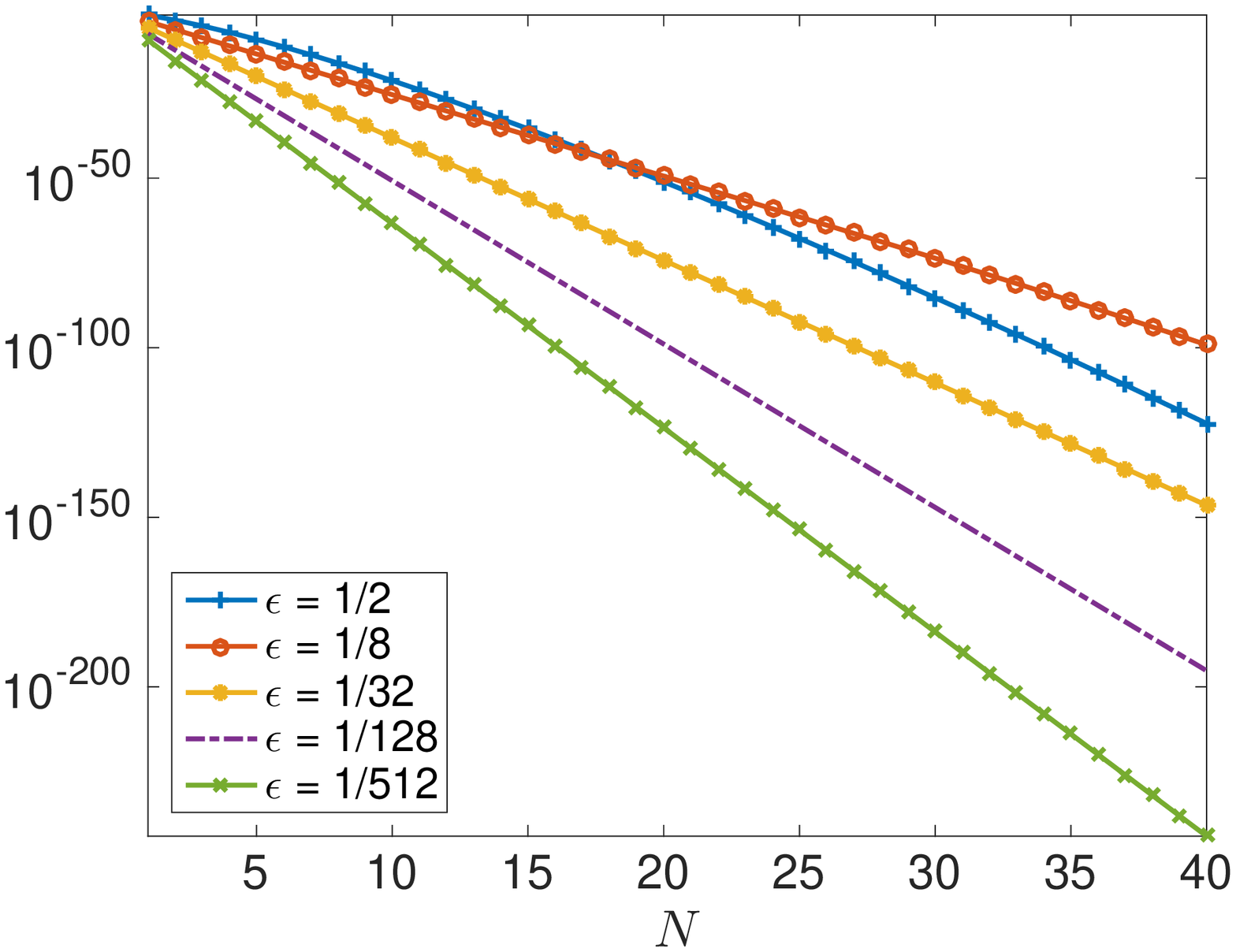}\label{fig:s3_2_1_fig15}}
		\end{subfigure}%
		\hspace{10pt}
		\begin{subfigure}[$t=10$, $g^{(3)}$]
			{\includegraphics[width=.31\linewidth]{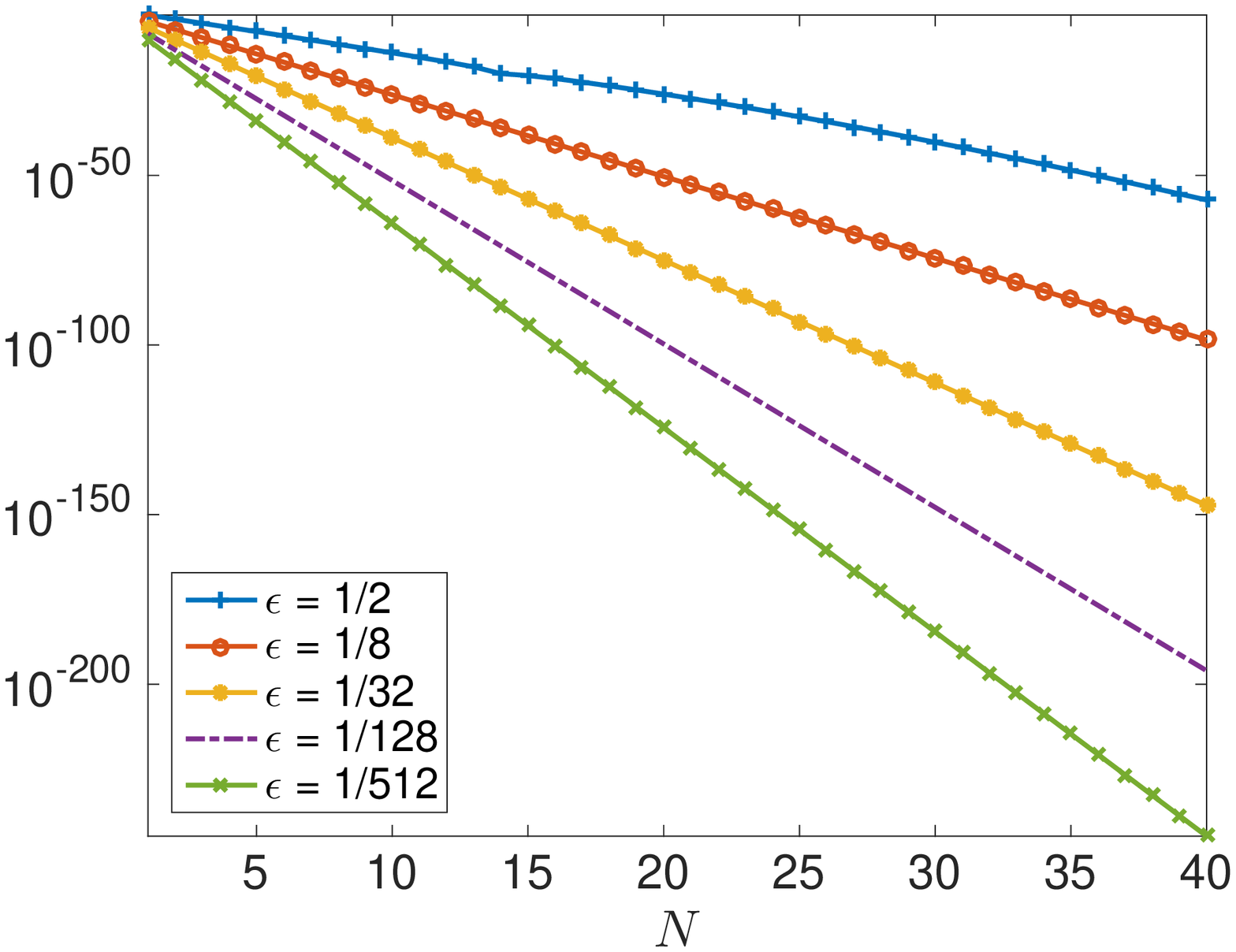}\label{fig:s3_2_10_fig15}}
		\end{subfigure}%
		\\
		\hspace{4pt}
		\begin{subfigure}[$t=0.1$, $g^{(3)}$]
			{\includegraphics[width=.3\linewidth]{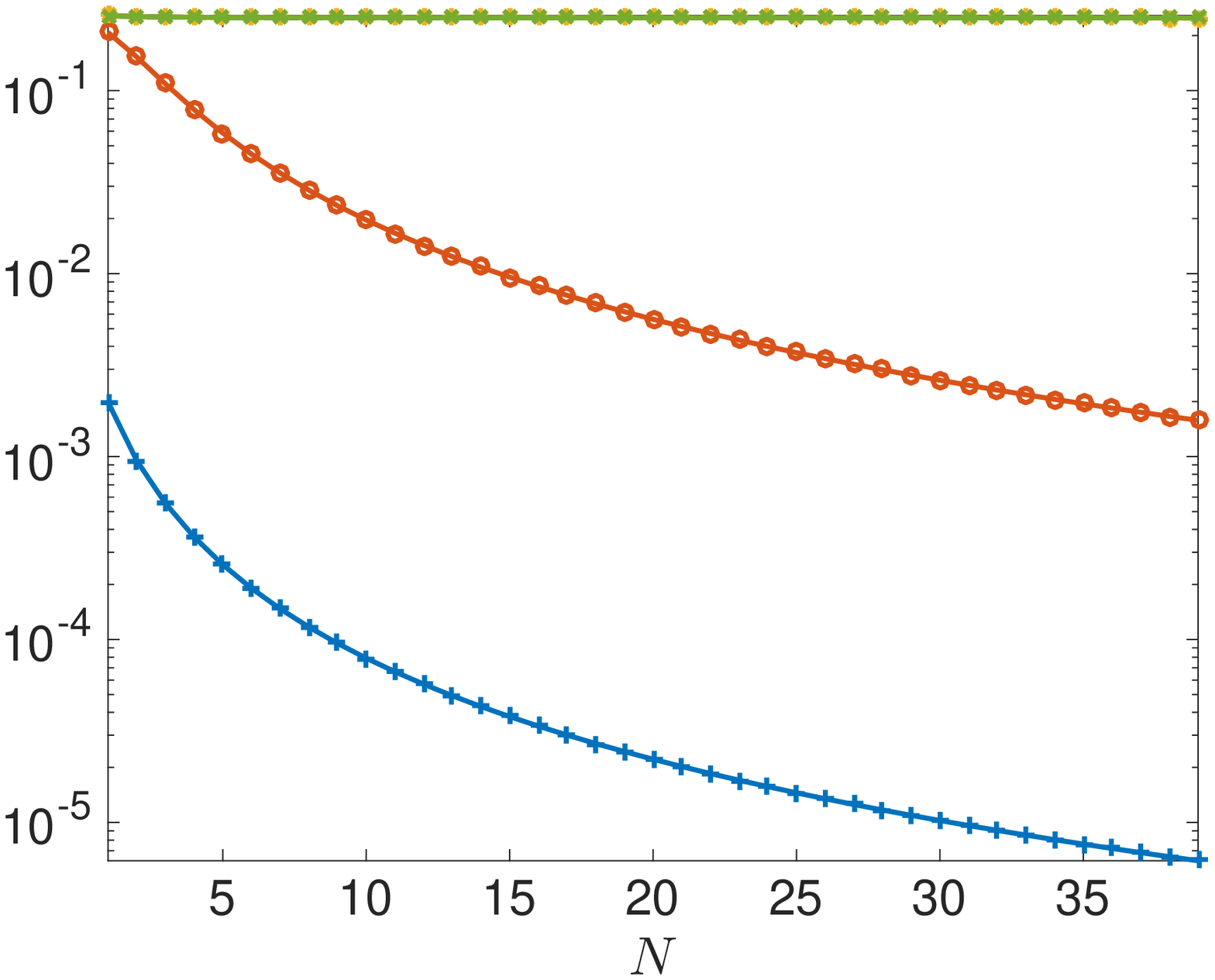}\label{fig:s3_2_001_fig18}}
		\end{subfigure}%
		\hspace{13pt}
		\begin{subfigure}[$t=1$, $g^{(3)}$]
			{\includegraphics[width=.3\linewidth]{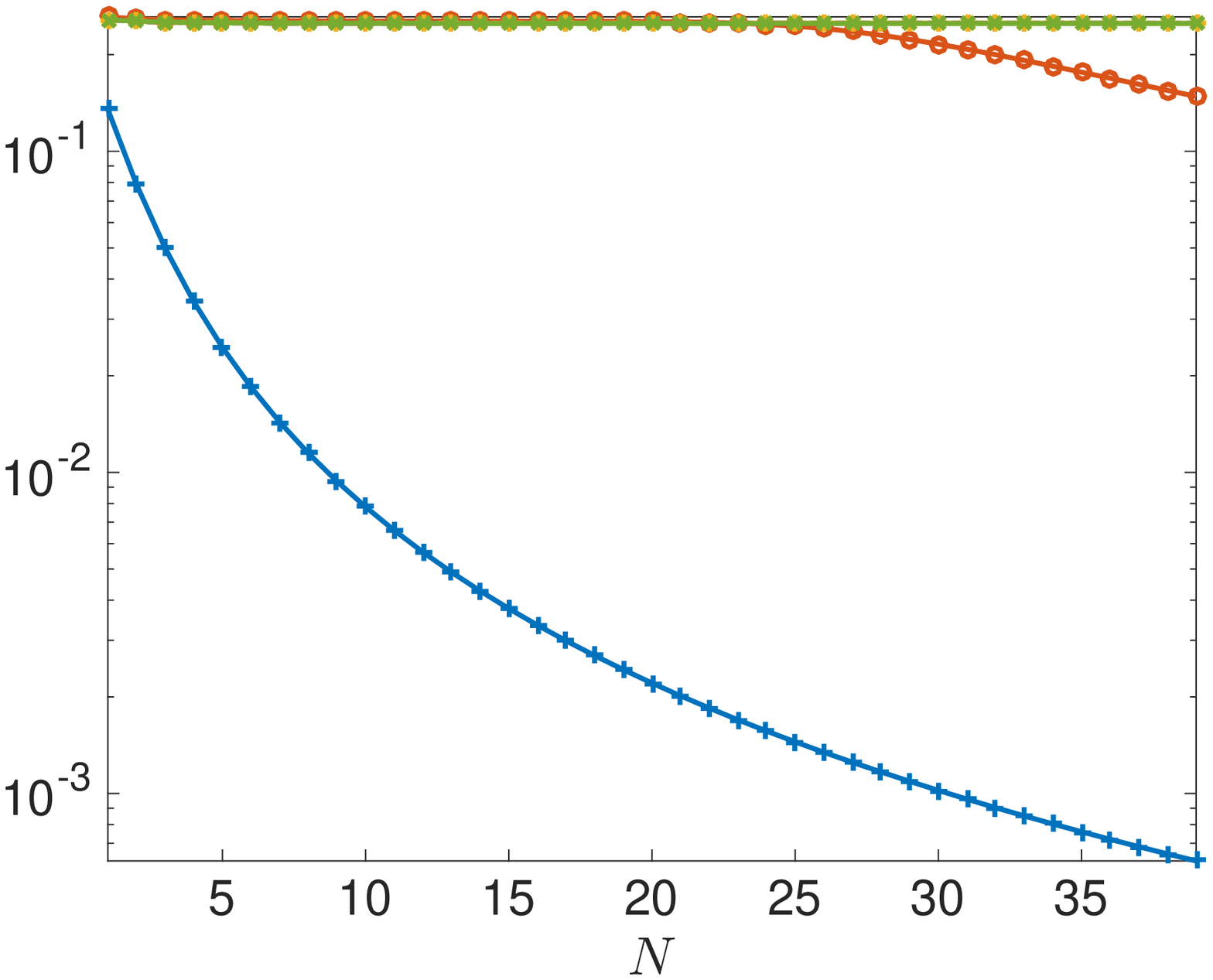}\label{fig:s3_2_1_fig18}}
		\end{subfigure}%
		\hspace{13pt}
		\begin{subfigure}[$t=10$, $g^{(3)}$]
			{\includegraphics[width=.3\linewidth]{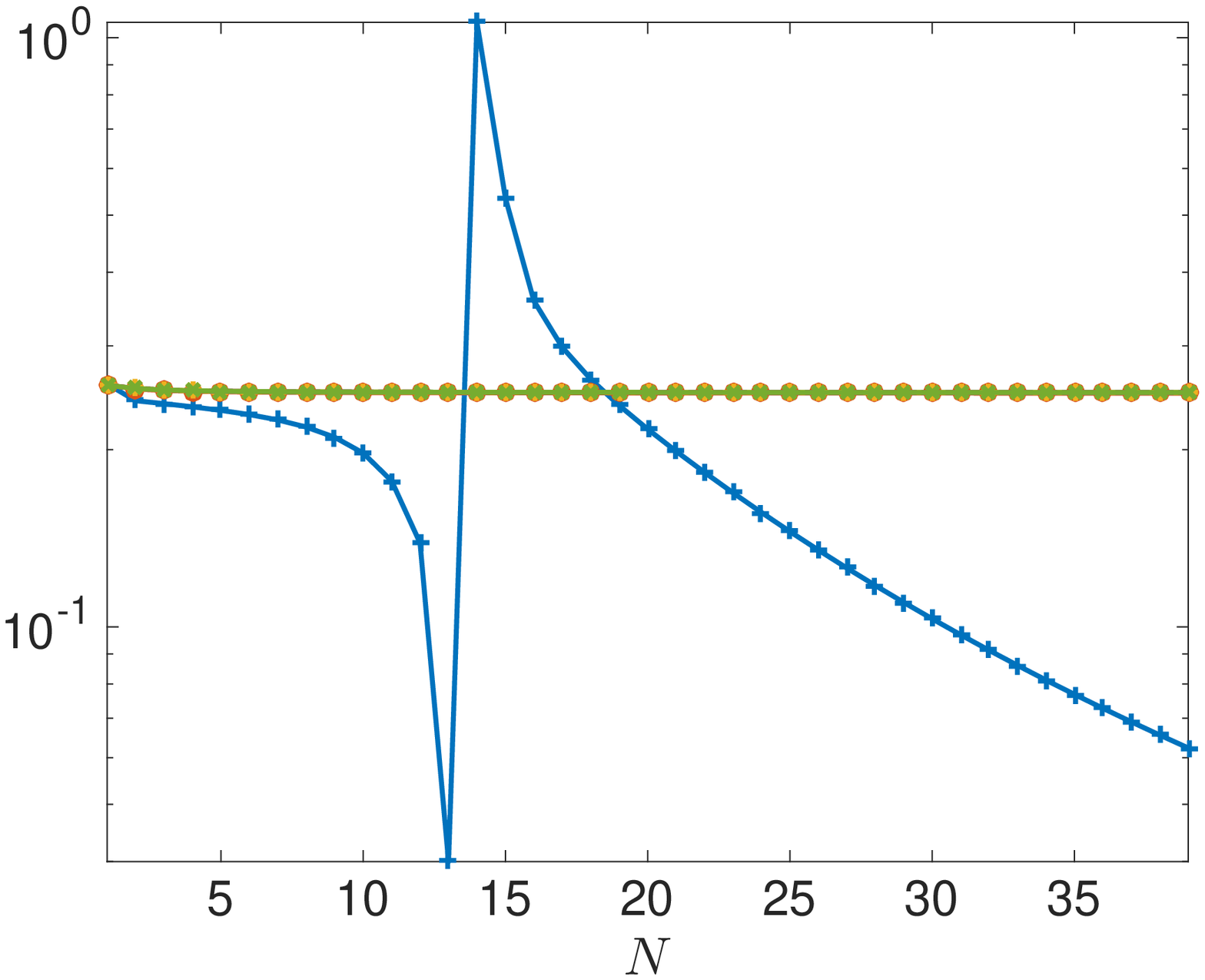}\label{fig:s3_2_10_fig18}}
		\end{subfigure}%
		\caption{Results from Example \ref{2eg:test3}.  Top figures: $\|\eNoneN\|$.  Bottom figures: $\frac{\|\eNoneNpo\|}{\|\eNoneN\|\eps^2}$.}
		\label{fig:2eg_test3_2_err_f1}
	\end{figure}
	
	\begin{figure}
		\centering
		\begin{subfigure}[$t=0.1$, $g^{(3)}$]
			{\includegraphics[width=.31\linewidth]{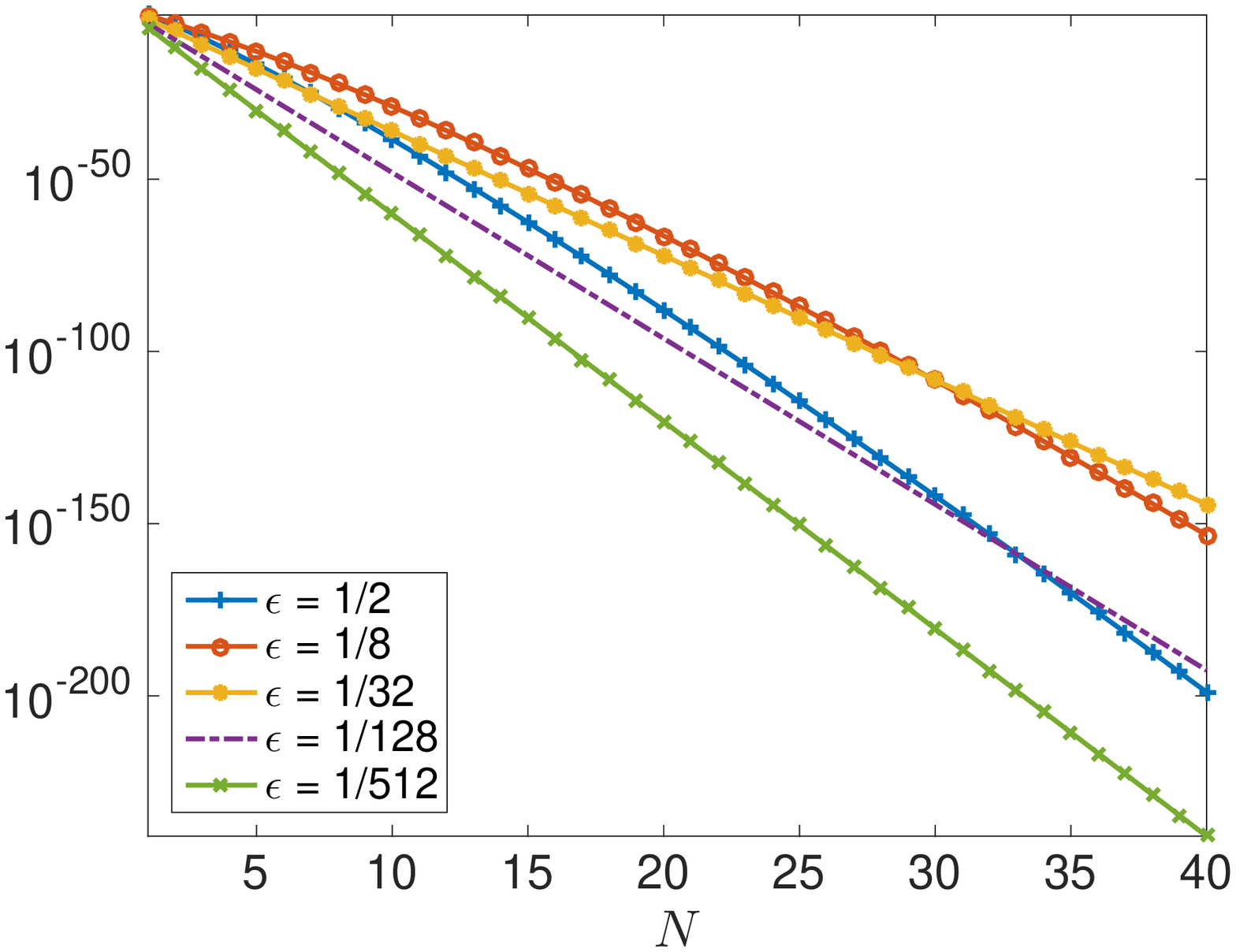}\label{fig:s3_2_001_fig25}}
		\end{subfigure}%
		\hspace{10pt}
		\begin{subfigure}[$t=1$, $g^{(3)}$]
			{\includegraphics[width=.31\linewidth]{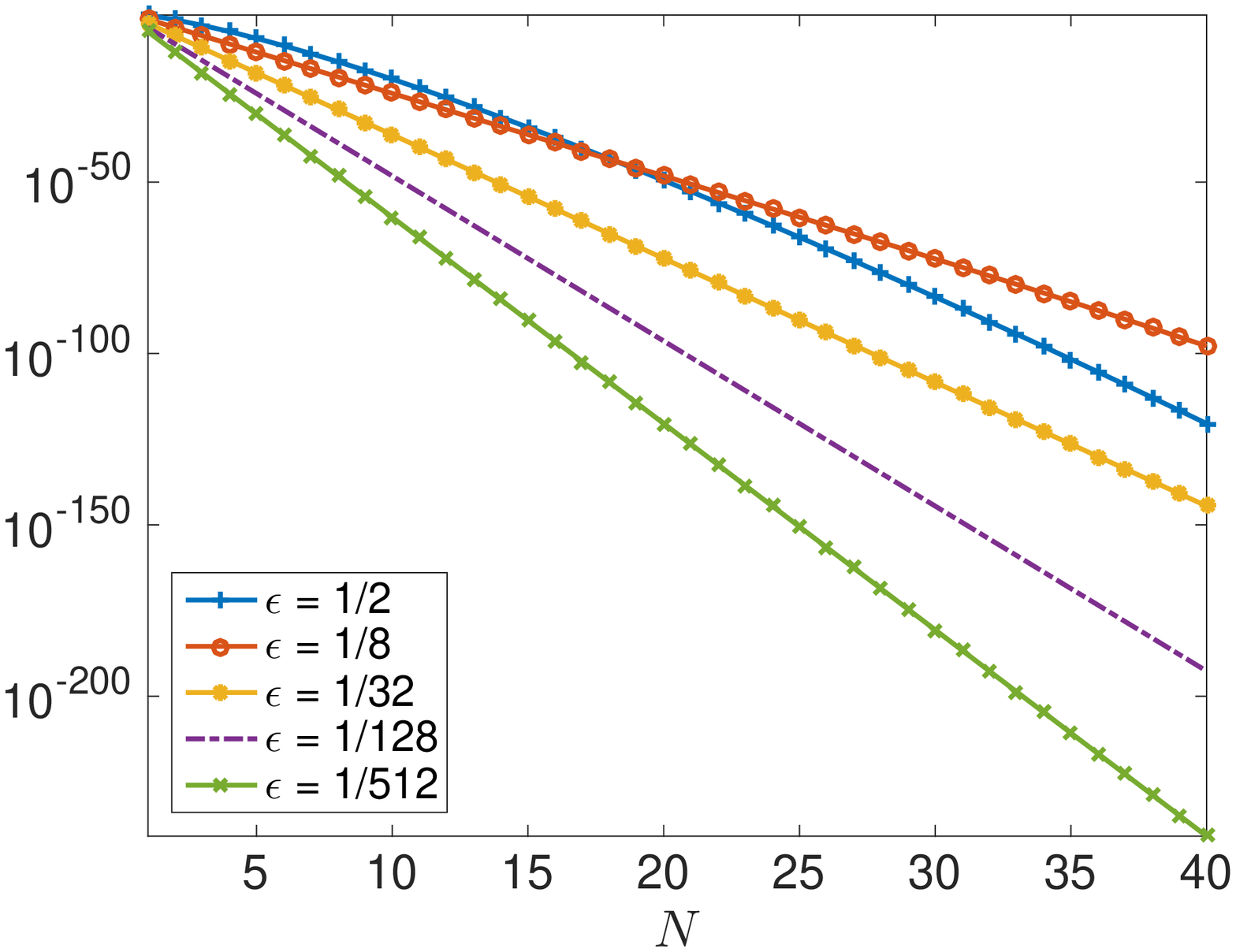}\label{fig:s3_2_1_fig25}}
		\end{subfigure}%
		\hspace{10pt}
		\begin{subfigure}[$t=10$, $g^{(3)}$]
			{\includegraphics[width=.31\linewidth]{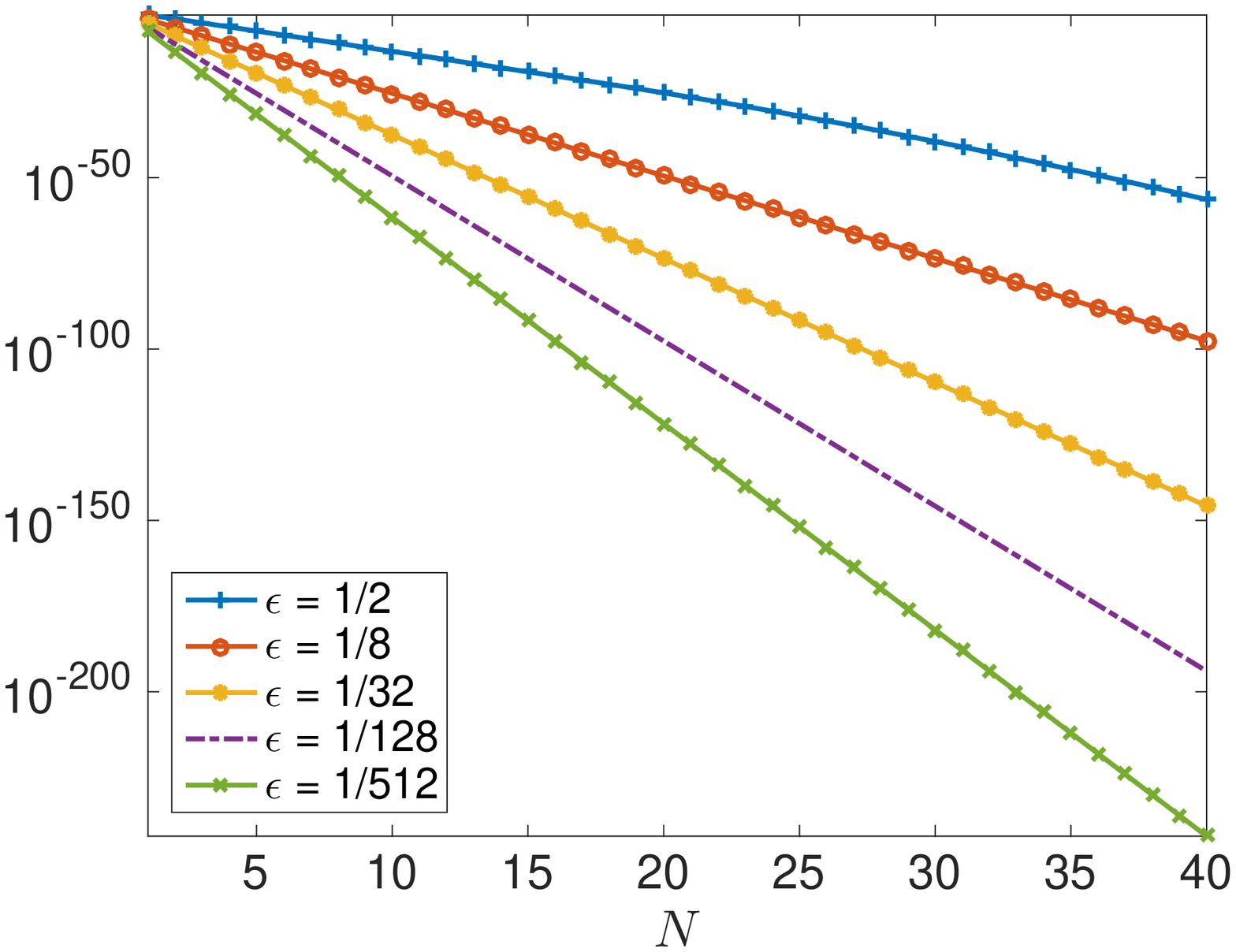}\label{fig:s3_2_10_fig25}}
		\end{subfigure}%
		\\
		\hspace{4pt}
		\begin{subfigure}[$t=0.1$, $g^{(3)}$]
			{\includegraphics[width=.3\linewidth]{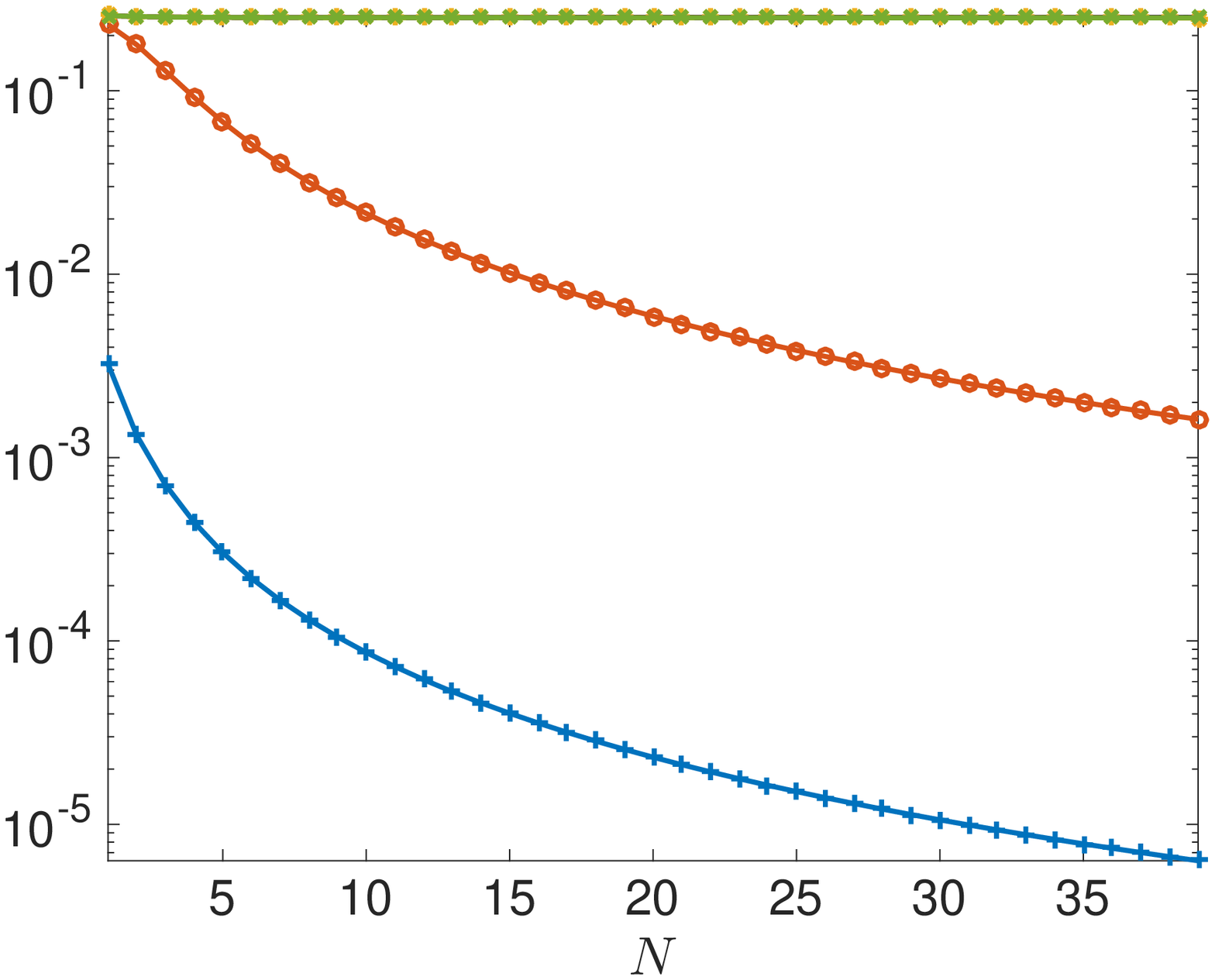}\label{fig:s3_2_001_fig28}}
		\end{subfigure}%
		\hspace{13pt}
		\begin{subfigure}[$t=1$, $g^{(3)}$]
			{\includegraphics[width=.3\linewidth]{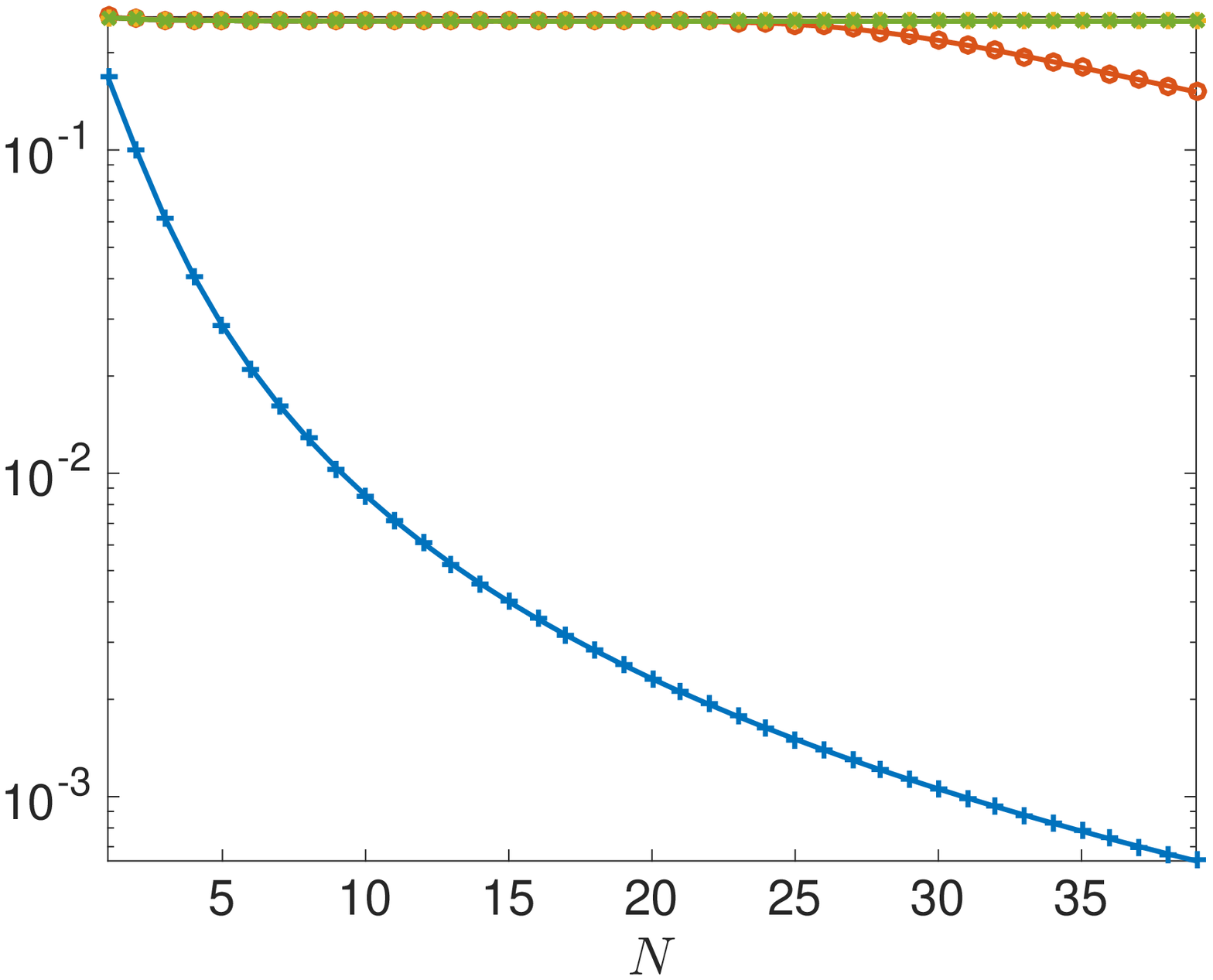}\label{fig:s3_2_1_fig28}}
		\end{subfigure}%
		\hspace{13pt}
		\begin{subfigure}[$t=10$, $g^{(3)}$]
			{\includegraphics[width=.3\linewidth]{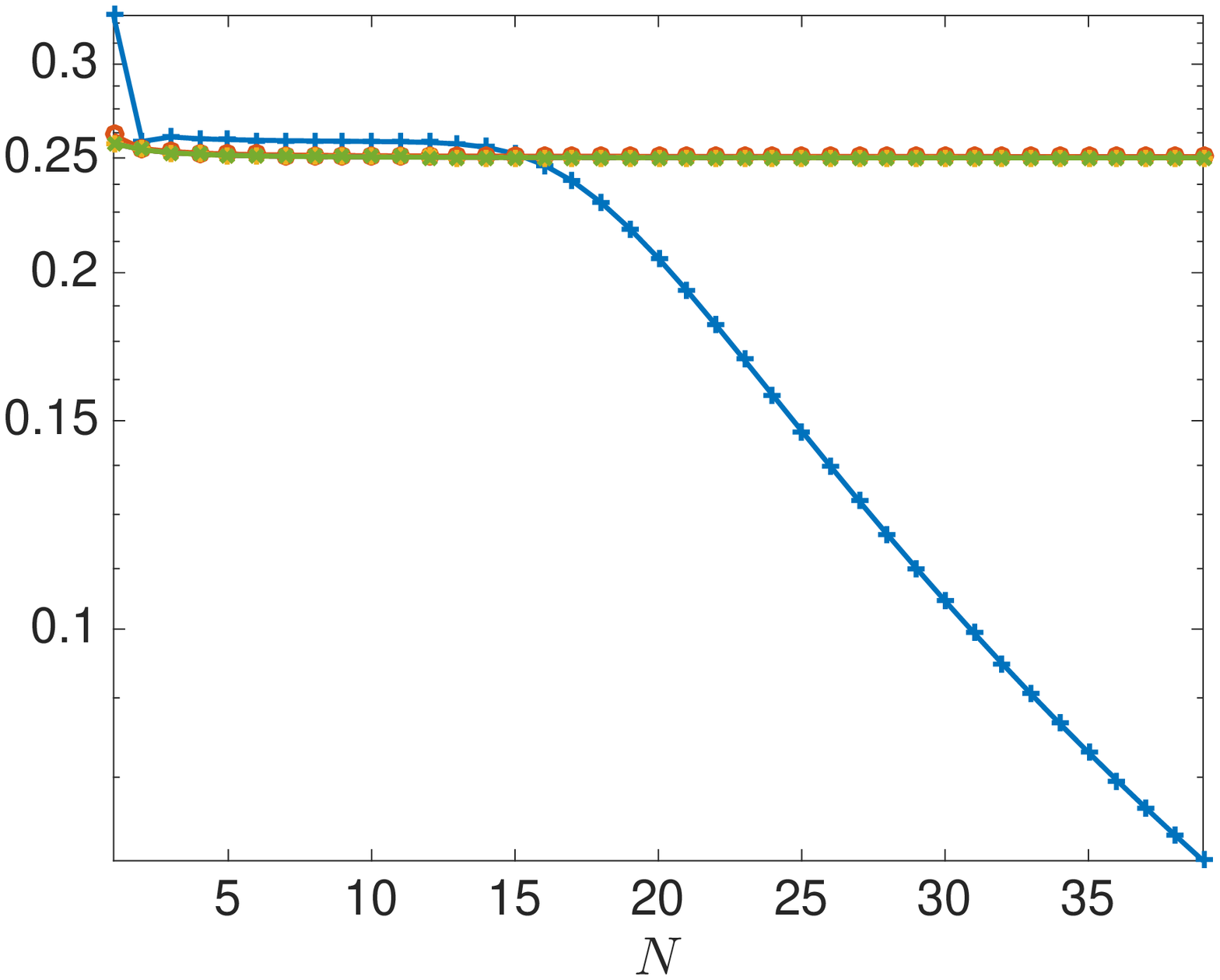}\label{fig:s3_2_10_fig28}}
		\end{subfigure}%
		\caption{Results from Example \ref{2eg:test3}.  Top figures: $\|\eNtwoN\|$.  Bottom figures: $\frac{\|\eNtwoNpo\|}{\|\eNtwoN\|\eps^2}$.}
		\label{fig:2eg_test3_2_err_f2}
	\end{figure}

 	In Figure \ref{fig:2eg_test3_2_e}, the ratio in \eqref{eqn:spectral_error_increase_N_eps} (normalized by $\eps$) is plotted as a function of $N$.  In Figures \ref{fig:2eg_test3_2_err_f0}--\ref{fig:2eg_test3_2_err_f2}, the ratio in \eqref{eqn:moment_error_increase_N_eps} (normalized by $\eps^{2}$) is plotted for $\ell=0,1,2$ as a function of $N$.  As in the previous example, profiles of the normalized error ratios appear to convergence at $\eps$ decreases.  However, unlike the previous example, the ratios do not appear to decay significantly as time increases.  Indeed, they are already less than one for $t=0.1$.  Numerically, we see that $e^{N+1} \leq 0.5 \eps e^N$ and $\eNpolN \leq 0.25 \eps^2 \eNlN$ ($\ell = 0,1,2$) for all three tested value of $t$.  Also, we do not observe the plateaus and transitions seen in the previous example.
\end{eg}

\subsection{Quantifying coefficients in the error estimates}%
\label{sec:constant_math}
The manner in which the estimates in \eqref{eqn:L2_error} and \eqref{eqn:moment_error} depend on $N$ and $\ell$ can ultimately be traced back to the coefficient $F(g,n,t)$, defined in \eqref{eqn:C_glt}.  Indeed the results in \eqref{eqn:est_fl_low}, \eqref{eqn:est_eta_low}, \eqref{eqn:est_hlow_xi}, \eqref{eqn:est_xi_low_0} and \eqref{eqn:est_xi_low_1} all depend on $F(g,n,t)$ for some value of $n$:  in \eqref{eqn:est_fl_low}, $n = \ell$; in \eqref{eqn:est_eta_low}, $n = N+1$; in \eqref{eqn:est_hlow_xi}, $n = N+2$; in \eqref{eqn:est_xi_low_0}, $n = 3N$, and in \eqref{eqn:est_xi_low_1}, $n = 3N+4-2\ell$.  The dependence of $F(g,n,t)$ on $n$ arises via the term $\an(2\lambda_2 t)$, where (recall that) $\lambda_2 = 4/45$ and 
\beq \label{eqn:ant}
\an(s) : = \sum_{k>0} \left\{(Ak)^{2n} e^{-k^2 s}\right\}.  
\eeq 
For example, according to  \eqref{eqn:L2_error}  and \eqref{eqn:D_gnt}, after an initial layer, 
\beq
\label{eqn:e_upper_bound_strategy}
\|e^N\| (t) 
\leq \tilde{c}(t) \, \sqrt{a_{N+2}(2\lambda_2 t)} \, \eps^{N+1} ,
\quad \text{where} \quad \tilde{c}(t) = 2 \left(\sqrt{2} + \frac{\sqrt{t}}{A}\right) \left( 24 \max_{k>0} \Hzerok(g) \right)^{1/2} .
\eeq
Similarly, it follows from \eqref{eqn:moment_error}, \eqref{eqn:E_def}, and \eqref{eqn:C_glt} that  after an initial layer, 
\begin{align}
\label{eqn:el_upper_bound_strategy}
\| \eNlN \| (t) 
&\leq 
\tilde{d}   \left( \frac{N-n_\ell+2}{e A^2 \lambda_2}\right)^\frac{N-n_\ell+2}{2} 
\sqrt{a_{3N+4-2n_\ell}(\lambda_2 t)}
\, \eps^{2N+2-n_\ell},
\end{align}
where
\beq
n_\ell = \begin{cases}
	2,  & \ell = 0\\
	\ell, & 1 \leq \ell \leq N,
\end{cases}
\qquand 
\tilde{d} = 8 \left( 6 \, e^{\lambda_2 }  \max\limits_{k >0} \Hzerok(g) \right)^{1/2}.
\eeq

By interpreting the right-hand side of \eqref{eqn:ant} as a Riemann sum, we bound $\an$ as follows:  
\begin{multline}
\label{eqn:ans_integral_bound}
\an(s)  \leq A^{2n} \left( e^{-s}
+ \int_1^\infty (x+1)^{2n} e^{-s x^2 } dx 	\right) 
 \leq A^{2n} \left( e^{-s}
 + \int_1^\infty (2x)^{2n} e^{-s x^2 } dx 	\right) \\
 \leq (2A)^{2n} \left(   e^{-s}
 + \int_1^\infty x^{2n+1} e^{-s x^2 } dx 	\right) 
= (2A)^{2n} \left(  e^{-s}   + \frac{1}{2} e^{-s}  b_n(s)  \right), \qquad\qquad\quad
\end{multline}
where 
\beq
b_n(s) = \sum\limits_{k=0}^n \left(\frac{1}{s}\right)^{k+1} \frac{n!}{(n-k)!}, 
\qquad 	n \geq 0.
\eeq
Setting \eqref{eqn:ans_integral_bound} into \eqref{eqn:e_upper_bound_strategy} gives
\beq  \label{eqn:bounds_En}
\|e^{N}\|(t) \leq \tilde{c}(t) (2A)^{N+2}  e^{-\lambda_2 t}  \left( \frac{1}{2} b_{N+2}(2\lambda_2 t) + 1 \right)^{1/2} \, \eps^{N+1} 
=:E^N(t),
\eeq
and setting \eqref{eqn:ans_integral_bound} into \eqref{eqn:el_upper_bound_strategy} gives
\begin{align}
\| \eNlN \| (t) 
&\leq 
\tilde{d}   \left( \frac{N-n_\ell+2}{e A^2 \lambda_2}\right)^\frac{N-n_\ell+2}{2} 
(2A)^{3N+4-2n_\ell}  e^{-\lambda_2 t/2}
\left( \frac12 b_{3N+4-2n_\ell} (\lambda_2 t) + 1 \right)^{1/2}
\eps^{2N+2-n_\ell}
=: E_\ell^N (t). \label{eqn:def_ElN}
\end{align}
Thus, the error-bound ratios
\begin{equation}
\label{eq:error_bound_ratio}
\frac{E^{N+1}(t)}{E^N(t)} = 2A \left(\frac{b_{N+3}(2\lambda_2 t) + 2 }{b_{N+2}(2\lambda_2 t) + 2 }\right)^{1/2} \eps 
\end{equation}
and 
	\begin{align} \label{est:ratio_est_l}
	\frac{E_\ell^{N+1}(t) }{E_\ell^N (t)} 
	& = 
	 \left( \frac{N-n_\ell+3}{e A^2 \lambda_2^2}  \right)^{1/2} 
	\left( 1 + \frac{1}{N-n_\ell+2} \right)^\frac{N-n_\ell+2}{2}  	(2A)^3
	\left( \frac{b_{3N+7-2n_\ell}(\lambda_2 t)+2}{b_{3N+4-2n_\ell} (\lambda_2 t) +2}  \right)^{1/2}  \eps^2 \nonumber\\
	& \leq 8A^2
	\left( \frac{N-n_\ell+3}{ \lambda_2^2}  \right)^{1/2}   
	\left( \frac{b_{3N+7-2n_\ell}(\lambda_2 t)+2}{b_{3N+4-2n_\ell} (\lambda_2 t) +2}  \right)^{1/2}  \eps^2
	\end{align}
	can be used to quantity how much the estimates of $\|e^{N}\|$ and $\|e^{N}_\ell\|$ improve as $N$ increases. 
	
	It is easy to verify that $b_n$ satisfies the following recurrence formula:
	\beq
	b_0(s) = \frac{1}{s} \qquand b_{n+1}(s) = \frac{n+1}{s} b_n(s) + \frac{1}{s}, \quad\text{for } n \geq 0.
	\eeq
	Hence, for any $n \geq 1$, 
	\begin{equation}
	\label{eq:bn_ratio}
	\left(\frac{b_{n+1}(s) + 2 }{b_n(s) + 2 }\right)
	=  \left(\frac{\frac{n+1}{s} b_n(s) + \frac{1}{s} + 2 }{b_n(s) + 2 }\right)
	= \frac{n+1}{s}  \left(\frac{ b_n(s) + \frac{1}{n+1} }{b_n(s) + 2 }\right)
	+ \left(\frac{2 }{b_n(s) + 2 }\right)
	\leq \frac{n+1}{s}  + 1.
	\end{equation}
When applied to \eqref{eq:error_bound_ratio}, \eqref{eq:bn_ratio} with $n = N+2$ implies that 
\begin{equation}
\frac{E^{N+1}(t)}{E^N(t)} 
\leq  2A \left( \frac{N+3}{2\lambda_2 t} + 1 \right)^{1/2} \eps.  \label{eqn:est_ratio_l_1}
\end{equation}
This dependence on $t$ suggests that the normalized true-error ratio $\|e^{N+1}\|/ (\eps \| e^{N}\| )$ decreases as $t$ increases, as observed in Example \ref{2eg:test3}.  Similarly, using \eqref{eq:bn_ratio} in \eqref{est:ratio_est_l} with $n = 3N+4-2n_\ell, \dots, 3N+6-2n_\ell$ gives
\begin{align}
\frac{E_\ell^{N+1}(t) }{E_\ell^N (t)} 
&\leq  8A^2 \left( \frac{N-n_\ell+3}{\lambda_2^2}  \right)^{1/2}  
\left( \frac{3N+5-2n_\ell}{\lambda_2 t} + 1 \right)^{1/2}  \left( \frac{3N+6-2n_\ell}{\lambda_2 t} + 1 \right)^{1/2}   \left( \frac{3N+7-2n_\ell}{\lambda_2 t} + 1 \right)^{1/2}  \eps^2 \nonumber\\
&\leq  8A^2 \left( \frac{N-n_\ell+3}{\lambda_2^2}  \right)^{1/2}    \left( \frac{3N+7-2n_\ell}{\lambda_2 t} + 1 \right)^{3/2}  \eps^2 , \label{eqn:est_ratio_l_1}
\end{align}
This also suggests that the normalized true-error ratio $\|\eNpolN\|/ (\eps^2 \| \eNlN\| )$ decreases as $t$ increases, as observed for the first three moments in Example \ref{2eg:test3}.  
However, in both cases, $t$ needs to be sufficiently large in order for these ratios to be small.  In particular, any increase in the coefficient $\lambda_2=4/45$ will yield better bounds for $E^{N+1}/E^{N}$ and $E_\ell^{N+1} / E_\ell^N$. The  numerical results in the following and final example suggest that this value of $\lambda_2$, which is established in Lemma \ref{thm:energy_f}, is probably not optimal.

\begin{eg}\normalfont \label{2eg:test2}
	We investigate the ratio $a_{n+1}/a_n$ numerically using the finite sum
	\beq \label{eqn:anK}
	\anK(s) = \sum_{0<k\leq K} \left\{(Ak)^{2n} e^{-k^2 s}\right\}.
	\eeq 
	Numerical test suggest that $\anK$ converges as  $K \to \infty$ and that $K=1000$ is sufficient to capture the behavior of the infinite sum in \eqref{eqn:ant}, and therefore use $\an \approx \an^{1000}$  in the remainder of the computation.
	We compute $a_n(s)$ for $n = 1,\dots, 120$ and different values $s$.  We then plot the ratios $a_{n+1}/a_n$ in Figure \ref{fig:2eg_test2} and make the following observations:
\begin{itemize}

\item[1)]  It appears from the plots in Figure \ref{fig:2eg_test2} that 
\beq
\label{eqn:ans_num_ratio}
\frac{a_{n+1}(s)}{a_n (s)} \sim \frac{n+1}{s} .
\eeq
This approximation is consistent with the theoretical bound  in \eqref{eq:bn_ratio} for large $n$, and the profiles of the two ratios match quite well.   
	
\item[2)]  	Recall again from Lemma \ref{thm:energy_f} that $\lambda_2 = 4/45$.  Thus if we set $s = s(t) = 2 \lambda_2 t$, the first values of $s = 8/450, 8/45, 80/45$ in Figure \ref{fig:2eg_test2}(a)-(c) correspond to the values $t = 0.1, 1, 10$ that are used in Examples \ref{2eg:test3} and \ref{2eg:test4}.  As $s$ increases (\ref{fig:2eg_test2}(d)-(f)), we begin to see plateaus connected by sharp transitions.  This behavior is most notable in Figures \ref{fig:2eg_test2}(d)--\ref{fig:2eg_test2}(f), and it is reminiscent of the profiles of the normalized error ratios from Example \ref{2eg:test3} (cf. plots (e) and (f) of Figures \ref{fig:2eg_test3_e}--\ref{fig:2eg_test3_err_f2}), albeit at smaller values of $t$. Currently, we do not have any explanation for these jumps or their locations.  However, the fact that this behavior emerges for larger values of $s$ suggests that it may be possible to prove Lemma \ref{thm:energy_f} with a larger value of $\lambda_2$.

		\begin{figure}
			\centering
			\begin{subfigure}[$s= \frac{8}{450}$]{
			\includegraphics[width=.31\linewidth]{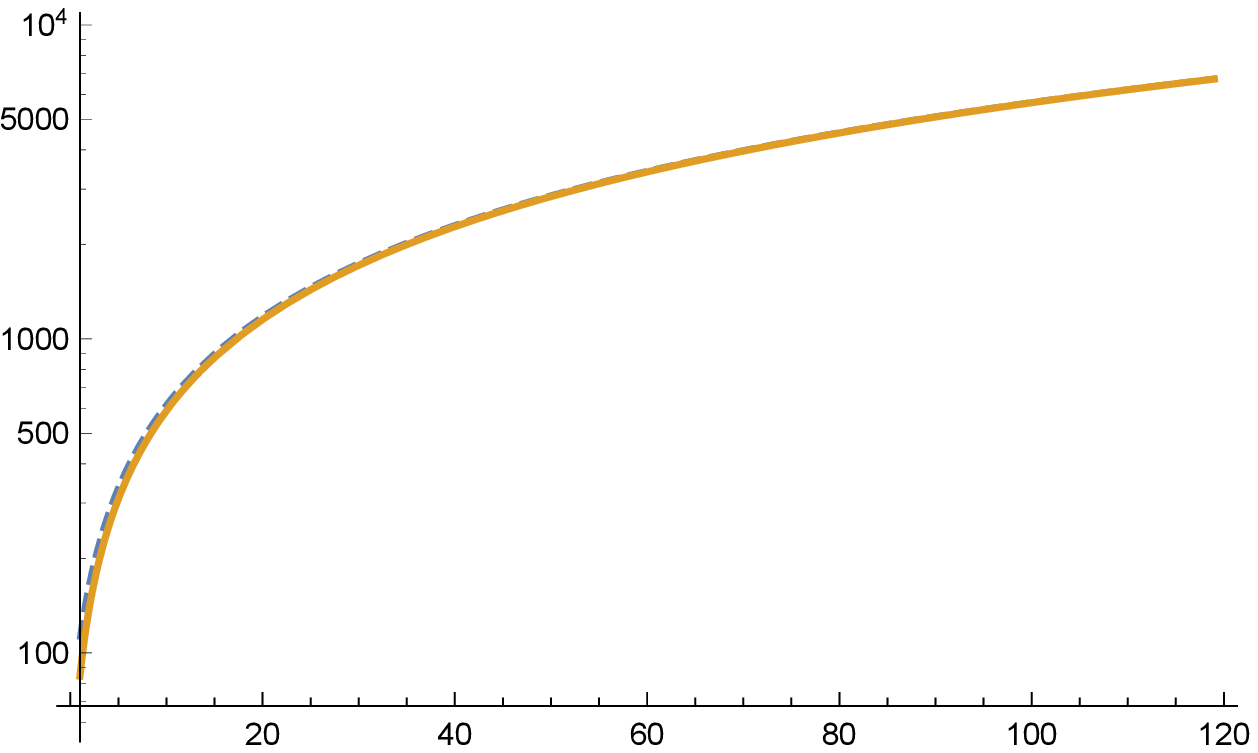}}
			\end{subfigure}%
			\begin{subfigure}[$s= \frac{8}{45}$]{
			\includegraphics[width=.31\linewidth]{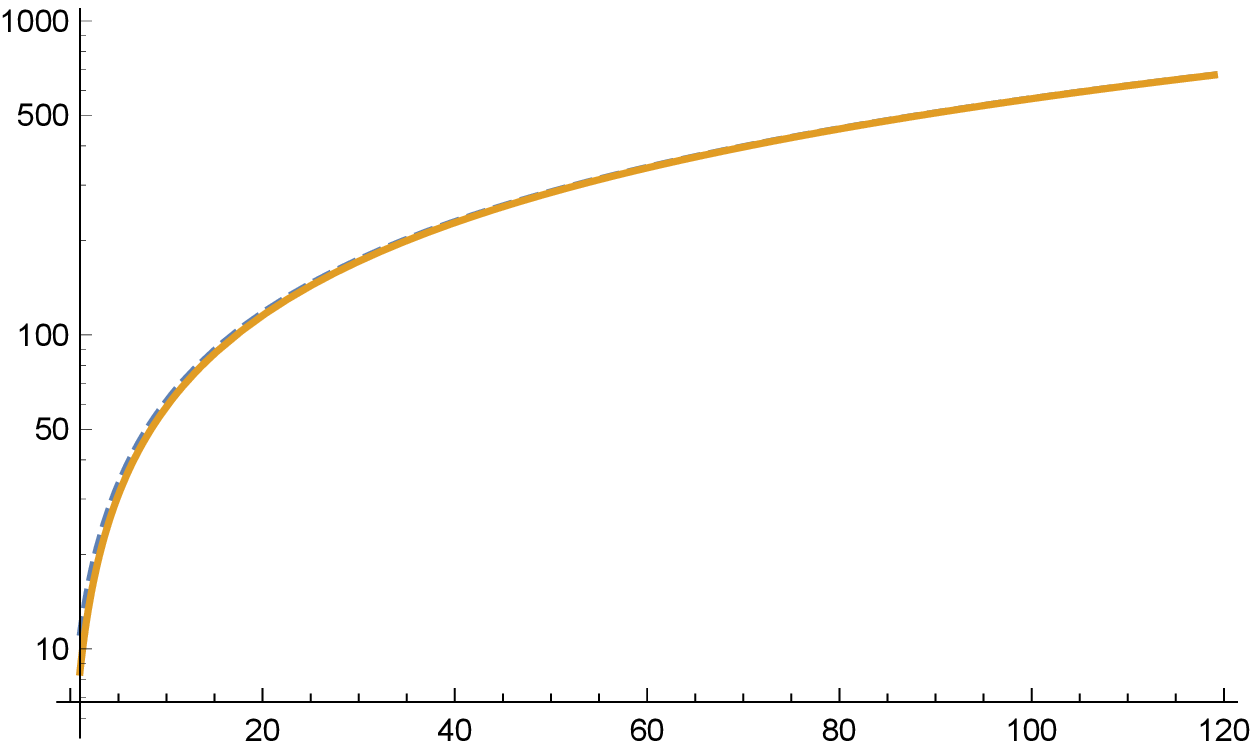}}
			\end{subfigure}%
			\begin{subfigure}[$s= \frac{80}{45}$]{
			\includegraphics[width=.31\linewidth]{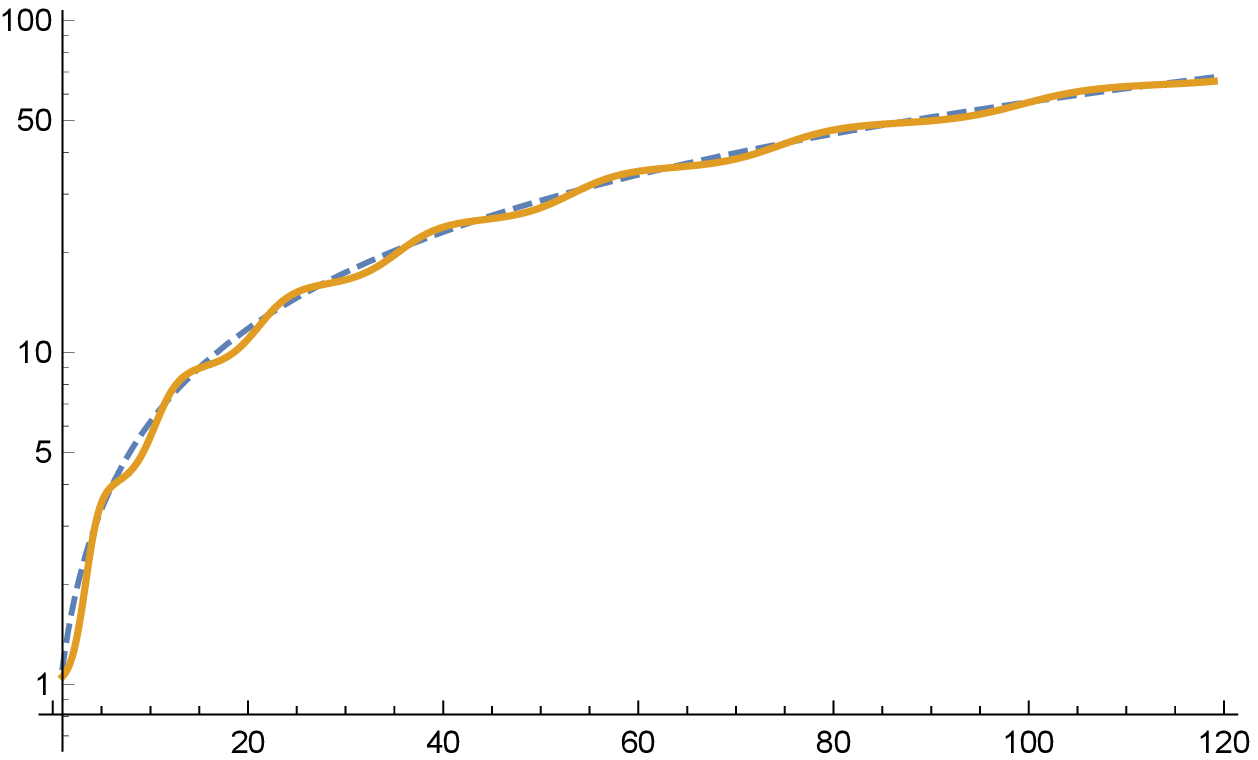}}
			\end{subfigure}\\
			\begin{subfigure}[$s=5$]{
			\includegraphics[width=.31\linewidth]{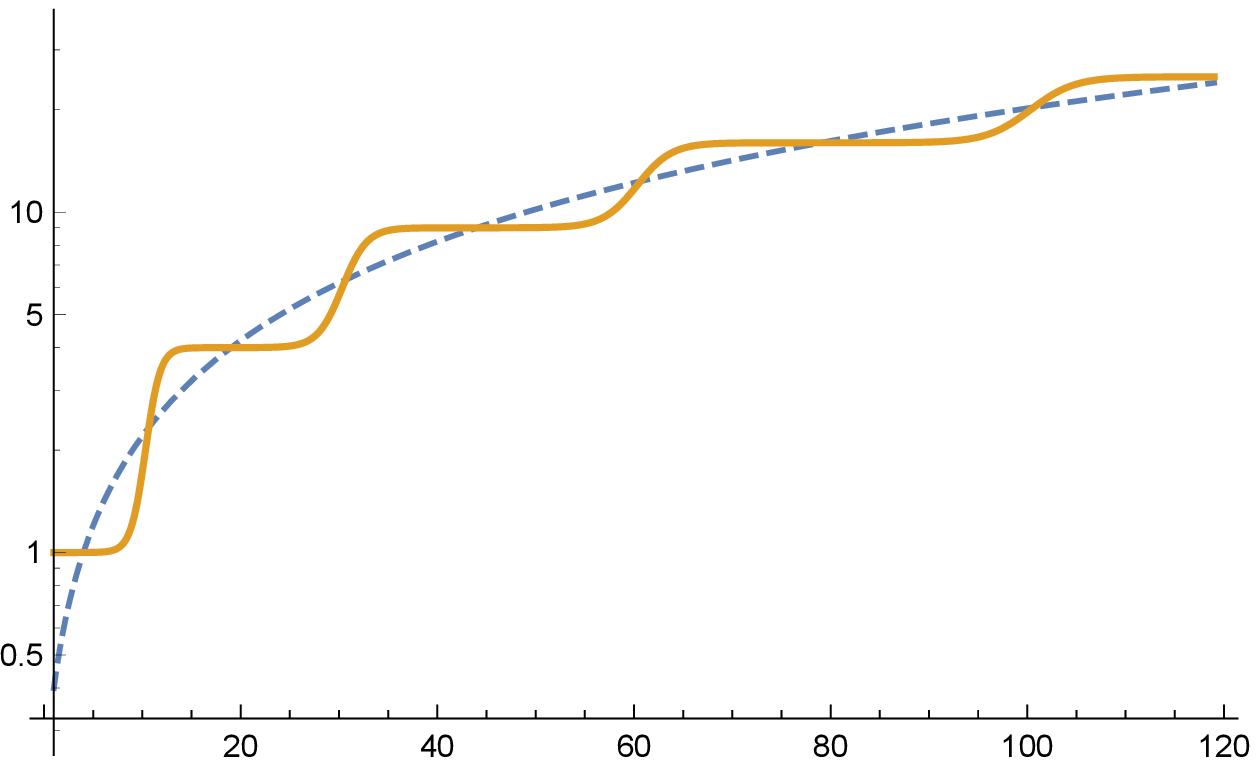}}
			\end{subfigure}%
			\begin{subfigure}[$s=10$]{
			\includegraphics[width=.31\linewidth]{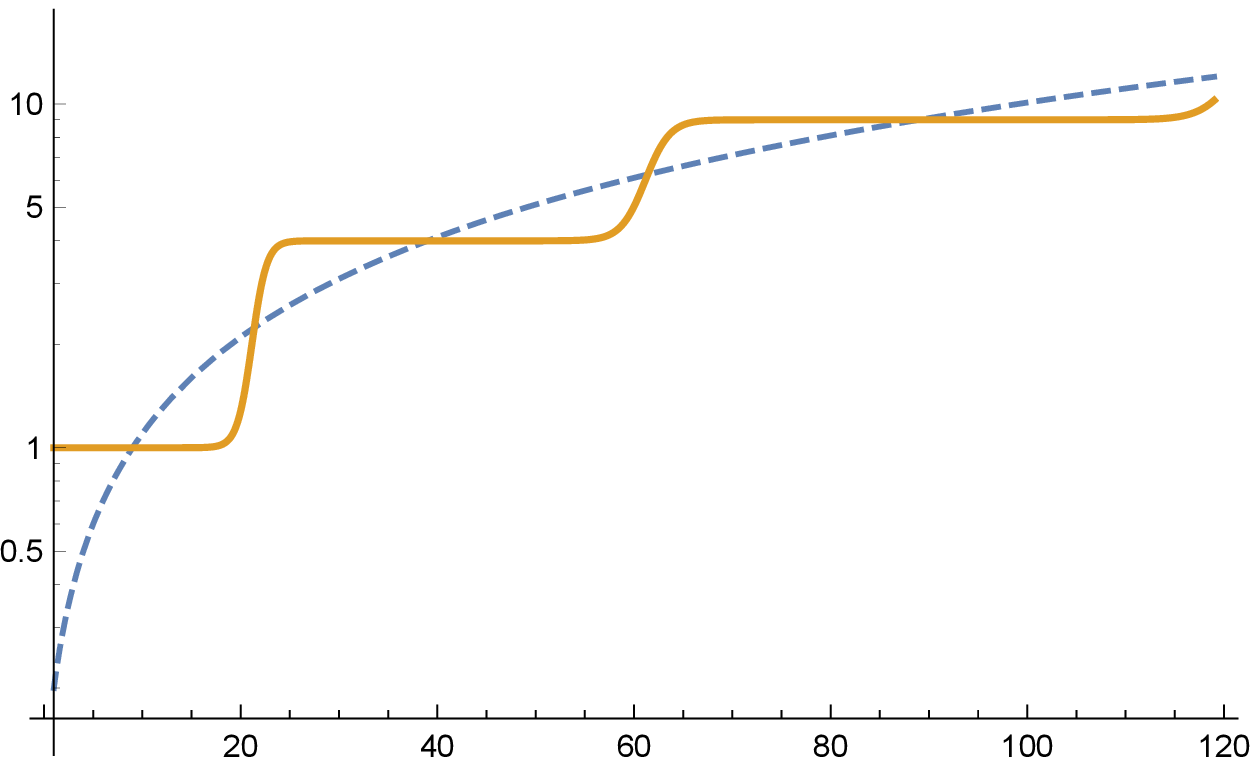}}
			\end{subfigure}%
			\begin{subfigure}[$s=15$]{
			\includegraphics[width=.31\linewidth]{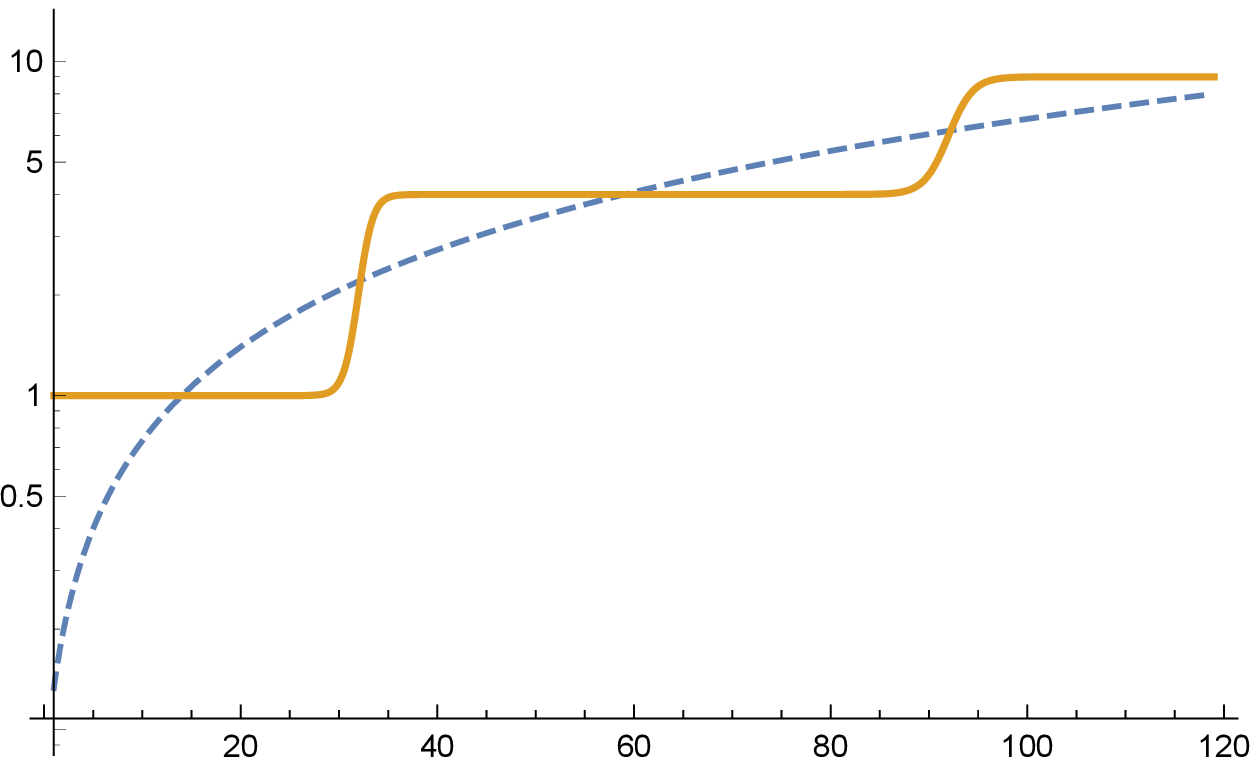}}
			\end{subfigure}
			\caption{Results from Example \ref{2eg:test2} for different values of $s$.  The orange curves are $a_{n+1}/a_n$ vs. $n$.  The blue dashed curves are of $(n+1)/s$.  }
			\label{fig:2eg_test2}
		\end{figure}
		
\end{itemize}
\end{eg}

\section{Conclusion} 
\label{sec:conclusion}
In this paper, we give error estimates, in terms of a multiscale parameter $\eps$, for the spectral approximation in the velocity variable of an idealized kinetic model.  This approximation yields a linear, symmetric hyperbolic system of partial differential equations for the expansion coefficients, which are functions of $x$ and $t$.
Under the assumption that the initial data $g$ is isotropic, with $g\in L^2(d\mu dx)$ and $\partial_x g  \in L^2(d\mu dx)$, we prove that the error in the spectral approximation with $N$ modes is $\cO(\eps^{N+1})$.  In additional, we prove super-convergent results for the expansion coefficients.  We also provide numerical results that support the theoretical estimates.  These results exhibit the predicted order of convergence even when $\partial_x g \notin L^2(d\mu dx)$.  Thus it remains open whether this condition is necessary for our result. 

The coefficients of the error estimates are independent of $\eps$ but not $N$.  Thus, in an effort to demonstrate the practical benefit when increasing $N$, we investigate these coefficients both theoretically and numerically.  In particular, we find that the ratio of successive error bounds in $N$ is itself bounded above by the product $ \eps \alpha_N(t)$, where 
\begin{equation}
\alpha_N(t)
\leq  2A \left( \frac{N+3}{2\lambda_2 t} + 1 \right)^{1/2},
\end{equation}
with  $\lambda_2 = 4/45$ and $A = A(\lambda_2) \simeq 1.2.$ 
Meanwhile, the ratio in the error estimate for the moments is bounded above by the product $ \eps^2 \beta_{N,\ell}(t)$, where 
\beq
\beta_{N,\ell} (t) = 8A^2 \left( \frac{N-n_\ell+3}{\lambda_2^2}  \right)^{1/2}    \left( \frac{3N+7-2n_\ell}{\lambda_2 t} + 1 \right)^{3/2}.  
\eeq
Thus for reasonable (but not too large) values of $N$ and $t$ sufficiently large, our estimate of the spectral error improves significantly as $N$ is increased. In our analysis, we are able to prove our theoretical results with $\lambda_2 = 4/45$.  However, numerical results suggest that a larger value of $\lambda_2$ is possible and demonstrate that the theoretical benefit of having a larger value is significant.

In the future we intend to establish the theoretical results of this paper with a larger value of $\lambda_2$.   In addition, we will explore the $\eps$-dependent behavior of the error under more general initial conditions such as anisotropic initial conditions, real boundary conditions, non-zero absorption and sources, spatially dependent scattering, and higher-dimensional problems. We also hope to investigate alternative angular discretizations and nonlinear systems.


\appendix
\section{Spectral Error Estimate} 
\label{sec:appendix}

The purpose of the section is to show that, with sufficient regularity on the initital condition $g$, the standard estimate \eqref{eqn:spectral_est} holds with a constant $C$ that is independent of $\eps \in [0,1]$.

\begin{defn}
	Let $r$, $q$, $s$, and $S$ be non-negative integers.  For any $u \in L^2(d \mu dx)$, define the shorthand $u^{(r,q)} = \partial_x^r \partial_\mu^q u$ and the semi-norm $| u|_{{r,q}} = \| u^{(r,q)} \|_{L^2(d \mu dx)}$.  Then define the space 
	\begin{equation}
	V^s(d \mu dx) = \left\{ u \in L^2(d \mu dx)  \colon \sum\limits_{q=0}^s |u|_{{s-q,q}} < \infty \right\}
	\end{equation}
	with the associated semi-norm $|\cdot|_{V^s(d \mu dx)} = \sum\limits_{q=0}^s |\cdot|_{{s-q,q}}$.  Finally, let 
	\begin{equation}
		H^S(d \mu dx) = \left\{ u \in L^2(d \mu dx)  \colon \sum_{s=0}^{S} | \cdot |_{V^s(d \mu dx)} < \infty\right\}
	\end{equation}
	be the usual Sobolev space with norm $\| \cdot \|_{H^S} = \sum_{s=0}^{S} | \cdot |_{V^s(d \mu dx)}$.
\end{defn}

\begin{lem} \label{thm:est_der_f}
	Let $f$ solve \eqref{eqn:transport_simple} with initial condition $g \in V^{s}(d \mu dx)$ for some positive integer $s$.  Then $f \in C([0,\infty);V^{s}(d \mu dx))$ with
	\begin{equation}
	\label{eqn:f-H-bound}
	| f|_{{s-q,q}} (t) \leq (q+1)! \, |g|_{V^s(d \mu dx)}
	\end{equation}
	for all integers $q \in [0, s]$ and $t \geq 0$.
\end{lem}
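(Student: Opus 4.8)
The plan is to differentiate \eqref{eqn:transport_simple_1} by the mixed operator $\partial_x^{s-q}\partial_\mu^q$, carry out an $L^2(d\mu dx)$ energy estimate on each resulting equation, and then induct on the number $q$ of $\mu$-derivatives, starting from $q=0$. The base case is essentially free: applying $\partx^s$ to \eqref{eqn:transport_simple_1} and using that $\bar f$ is independent of $\mu$ (so that $\partx^s \bar f$ is just the $\mu$-average of $f^{(s,0)}$) shows that $f^{(s,0)}$ solves the \emph{same} equation \eqref{eqn:transport_simple_1} with data $g^{(s,0)}$. The dissipative structure \eqref{eqn:dissipation} then makes $\|f^{(s,0)}\|_{L^2(d\mu dx)}^2$ nonincreasing in time, so $|f|_{s,0}(t) \le |g|_{s,0} \le |g|_{V^s(d \mu dx)}$, which is exactly \eqref{eqn:f-H-bound} for $q=0$ (where $(q+1)!=1$).

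For the inductive step, the crucial observation is how $\partial_\mu^q$ interacts with the streaming term. Since $\partial_\mu \mu = 1$ and $\partial_\mu^2 \mu = 0$, Leibniz gives, for $q\ge 1$, $\partial_\mu^q(\mu\partx f) = \mu\,\partial_\mu^q \partx f + q\,\partial_\mu^{q-1}\partx f$; and because $\partial_\mu^q \bar f = 0$ for $q\ge 1$, applying $\partial_x^{s-q}\partial_\mu^q$ to \eqref{eqn:transport_simple_1} produces the clean equation
\[
\eps \partt f^{(s-q,q)} + \mu\, \partx f^{(s-q,q)} + \frac{1}{\eps} f^{(s-q,q)} = -\,q\, f^{(s-q+1,\,q-1)}.
\]
The structural point is that the forcing term $f^{(s-q+1,q-1)}$ has the same total order $s$ but one fewer $\mu$-derivative, so it is precisely what the inductive hypothesis controls. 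Multiplying by $f^{(s-q,q)}$ and integrating over $d\mu dx$, the streaming contribution $\int \mu\,\partx\!\big(f^{(s-q,q)}\big)^2\,d\mu dx$ vanishes by periodicity in $x$, and (after Cauchy--Schwarz on the right and division by $|f|_{s-q,q}$) I obtain
\[
\eps \frac{d}{dt}|f|_{s-q,q} + \frac{1}{\eps}|f|_{s-q,q} \le q\, |f|_{s-q+1,\,q-1}.
\]

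Assuming inductively that $|f|_{s-q+1,q-1}(t) \le q!\,|g|_{V^s(d \mu dx)}$, I integrate this linear inequality with integrating factor $e^{t/\eps^2}$, using $|f|_{s-q,q}(0) = |g|_{s-q,q}\le |g|_{V^s(d \mu dx)}$, to get
\[
|f|_{s-q,q}(t) \le |g|_{V^s(d \mu dx)}\, e^{-t/\eps^2} + q\cdot q!\,|g|_{V^s(d \mu dx)}\,\eps\big(1-e^{-t/\eps^2}\big).
\]
For $\eps \in (0,1]$ one has $e^{-t/\eps^2}\le 1$ and $\eps\big(1-e^{-t/\eps^2}\big)\le 1$, so $|f|_{s-q,q}(t) \le (1+q\cdot q!)\,|g|_{V^s(d \mu dx)} \le (q+1)!\,|g|_{V^s(d \mu dx)}$, where the final step is the elementary bound $1+q\cdot q! \le q! + q\cdot q! = (q+1)!$ (using $q!\ge 1$). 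This closes the induction, and the fact that the $\eps$ produced by the time integral is exactly cancelled against the bound $\eps\le 1$ is what renders the constant independent of $\eps$.

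The main obstacle is one of rigor rather than of idea: the manipulations above are a priori estimates performed as if $f$ were smooth. I would justify them by first deriving the differentiated equations and energy identities for smooth (or $P_N$-truncated / mollified) data, for which all integrations by parts and differentiations under the integral are legitimate, and then passing to the limit by density; this same approximation argument simultaneously upgrades the conclusion to the regularity claim $f\in C([0,\infty);V^s(d\mu dx))$. The only delicate bookkeeping is to track that each application of Leibniz to the streaming term drops exactly one $\mu$-derivative while preserving the total order $s$, and that the combinatorial factor $q$ it generates is absorbed cleanly into the factorial growth $(q+1)!$.
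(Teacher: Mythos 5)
Your proof is correct, and it reproduces the paper's overall skeleton exactly: induction on the number $q$ of $\mu$-derivatives, the commutator identity $\partial_\mu^q(\mu\,\partx f) = \mu\,\partial_\mu^q\partx f + q\,\partial_\mu^{q-1}\partx f$ together with $\partial_\mu^q \bar f = 0$ for $q\ge 1$, the resulting source term $-q\,f^{(s-q+1,q-1)}$ controlled by the inductive hypothesis, and the absorption of a factor $\eps\le 1$ into the factorial constant via $1 + q\cdot q! \le (q+1)!$. Where you genuinely differ is in \emph{how} the per-step estimate is obtained. The paper casts each differentiated equation as the generic problem \eqref{eqn:generic}, invokes the explicit mild (Duhamel) solution \eqref{eqn:solution} along characteristics, and takes $L^2$ norms to get the a priori bound \eqref{ineq:est}; you instead run an $L^2(d\mu\,dx)$ energy estimate (the streaming term vanishing by periodicity in $x$) and integrate the resulting differential inequality with the factor $e^{t/\eps^2}$. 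Both routes yield the same bound, $e^{-t/\eps^2}|g|_{s-q,q} + \eps\, q\cdot q!\,(1-e^{-t/\eps^2})\,|g|_{V^s(d\mu dx)}$. The more substantive divergence is the base case $q=0$: because there the right-hand side is $\frac{1}{\eps^2}\overline{f^{(s,0)}}$, which involves the unknown itself, the paper must close a self-referential bound with a maximum-point argument (evaluating at the time $t_*$ where $|f|_{s,0}$ attains its maximum on $[0,t]$); you avoid this entirely by noting that $f^{(s,0)}$ solves the original equation \eqref{eqn:transport_simple} with data $g^{(s,0)}$, so the dissipativity \eqref{eqn:dissipation} of the scattering operator makes $|f|_{s,0}$ nonincreasing. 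Your base case is cleaner; what the paper's approach buys is a single generic lemma covering both the base case and the inductive step, and a bound that never requires dividing by a possibly vanishing norm or differentiating $t\mapsto|f|_{s-q,q}(t)$ --- technicalities your energy method incurs, which you correctly flag and would dispose of by the mollification/density argument you sketch.
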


\begin{proof}
	Given $h \in C([0,\infty);L^2(d \mu dx))$ and $v \in L^2(d \mu dx)$, the equation 
	\begin{subnumcases}{\label{eqn:generic}}
	\partt u(x, \mu, t) + \frac{1}{\eps} \mu \partx u(x, \mu, t) + \frac{1}{\eps^2} u(x, \mu, t)  =  h(x, \mu, t),
	& $(x,\mu,t) \in [-\pi,\pi) \times [-1,1] \times (0,\infty),$ \qquad \qquad\\
	u(x, \mu, 0) = v(x,\mu),
	& $(x,\mu) \in  [-\pi,\pi) \times [-1,1],$
	\end{subnumcases}	
	has a mild solution (see, for example,\cite[p.402]{renardy2006introduction}) $u \in C([0,\infty);L^2(d \mu dx))$, given by
	\beq\label{eqn:solution}
	u(x,\mu,t) = e^{-\frac{t}{\eps^2}} v(x-\frac{1}{\eps}\mu t,\mu) + \int_0^t \, e^{-\frac{t-\tau}{\eps^2}} h(x-\frac{1}{\eps}\mu (t-\tau),\mu,\tau)\, d\tau.
	\eeq
	where the argument $x-\eps^{-1}\mu t$ is understood with respect to the periodicity of the spatial domain.
	Applying the triangle equality to \eqref {eqn:solution} gives, for each $t \geq 0$,
	\begin{align} 
	||u||_{L^2{(d \mu dx)}} (t) &\leq e^{-\frac{t}{\eps^2}} ||v||_{L^2{(d \mu dx)}} + \int_0^t \, e^{-\frac{t-\tau}{\eps^2}} ||h||_{L^2{(d \mu dx)}} (\tau)\, d\tau \nonumber \\
	&\leq e^{-\frac{t}{\eps^2}} ||v||_{L^2{(d \mu dx)}} + \eps^2 (1 - e^{-\frac{t}{\eps^2}} )  \max\limits_{\tau \in [0,t]} ||h||_{L^2{(d \mu dx)}} (\tau). \label{ineq:est}
	\end{align}

	We now proceed by induction on $q$.  If $q = 0$, then differentiation of \eqref{eqn:transport_simple_1} in $x$ gives 
	\begin{equation}
	\eps \partt f^{(r,0)} + \mu \partx f^{(r,0)} + \frac{1}{\eps} f^{(r,0)} = \frac{1}{\eps} \overline{f^{(r,0)}}
	\end{equation} 
	for any integer $r \geq 0$.  Hence  $u = f^{(s,0)}$ satisfies \eqref{eqn:generic} with source $h = \frac{1}{\eps^2} \overline{f^{(s,0)}}  \in C([0,\infty);L^2(d \mu dx))$ and initial condition $v = g^{(s,0)} \in L^2(d \mu dx)$.  Thus \eqref{ineq:est} gives
	\begin{align} 
	\label{eqn:fs0_bnd}
	|f|_{s,0} (t) 
	& \leq e^{-\frac{t}{\eps^2}} |g|_{s,0}+ (1 - e^{-\frac{t}{\eps^2}} )  \max\limits_{\tau \in [0,t]} ||\overline{f^{(s,0)}}||_{L^2{(d \mu dx)}} (\tau) \nonumber \\
	& \leq e^{-\frac{t}{\eps^2}} |g|_{V^s(d \mu dx)} + (1 - e^{-\frac{t}{\eps^2}} )  \max\limits_{\tau \in [0,t]} |f|_{s,0} (\tau),
	\end{align}

	Let $t_\ast \in [0,t]$ be such that $|f|_{s,0} (t_\ast) = \max\limits_{\tau \in [0,t]} |f|_{s,0} (\tau)$.  Then $|f|_{s,0} (t_\ast) = \max\limits_{\tau \in [0,t_\ast]} |f|_{s,0} (\tau)$ so that, according to \eqref{eqn:fs0_bnd},
	\begin{equation}
	|f|_{s,0} (t_\ast)  \leq e^{-\frac{t_\ast}{\eps^2}} |g|_{V^s(d \mu dx)} + (1 - e^{-\frac{t_\ast}{\eps^2}} ) |f|_{s,0} (t_\ast).
	\end{equation}
	Therefore
	\begin{equation}
	|f|_{s,0} (t) \leq |f|_{s,0} (t_\ast)  \leq |g|_{V^s(d \mu dx)},
	\end{equation}
	which verifies \eqref{eqn:f-H-bound}.

	Next assume that \eqref{eqn:f-H-bound} holds for $q = q_0$, with $0 \leq q_0 < s$.  Differentiation of \eqref{eqn:transport_simple_1} in $x$ and $\mu$ gives 
		\begin{equation}
		\eps \partt f^{(r,q_0+1)} + \mu \partx f^{(r,q_0+1)} + \frac{1}{\eps} f^{(r,q_0+1)} = - (q_0+1) f^{(r+1,q_0)}
		\end{equation}
	for any $r \geq 0$.  Therefore $u = f^{(s-(q_0+1),q_0+1)}$ satisfies \eqref{eqn:generic} with the source $h = -\frac{q_0+1}{\eps} f^{(s-q_0,q_0)} \in C([0,\infty);L^2(d \mu dx))$ and initial condition $ v = g^{(s-(q_0+1),q_0+1)} \in L^2(d \mu dx)$. Thus \eqref{ineq:est} gives
	\begin{align} 
	|f|_{s-(q_0+1),q_0+1} (t)
	& \leq e^{-\frac{t}{\eps^2}} |g|_{s-(q_0+1),q_0+1} +  \eps  (1 - e^{-\frac{t}{\eps^2}} ) (q_0+1) \max\limits_{\tau \geq 0} |f|_{s-q_0,q_0} (\tau) 
	\nonumber \\
	& \leq  |g|_{V^s(d \mu dx)} +  \eps (q_0+1)   (q_0 + 1)! \, |g|_{V^s(d \mu dx)} 
	\nonumber \\ 
	&\leq (q_0 + 2)! \, |g|_{V^s(d \mu dx)}.
	\end{align}
	
\end{proof}

\begin{rem}
	For sufficiently small $\eps$, the bound
	\begin{equation}
	\label{eqn:f-H-bound_eps}
	| f|_{{s-q,q}} (t) \leq \left( \prod_{i=0}^{q} (1+i\eps) \right) |g|_{V^s(d \mu dx)}
	\end{equation}
	provides a sharper estimate than \eqref 
	{eqn:f-H-bound}.  The proof of this alternative bound uses the same arguments.
\end{rem}

\begin{thm}\label{thm:spectral}
	Suppose that $g \in H^{1+q}(d \mu dx)$ for some integer $q>0$.  Then there exists a constant 
	$C = C(g,q)$, 
	such that 
	\beq
	\label{eqn:spectral_bound}
	\| f - f^N \|_{L^2(d \mu dx)}(t) \leq C (1+ t^{1/2}) N^{-q} , \quad \forall t \geq 0. 
	\eeq
\end{thm}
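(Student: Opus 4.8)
The plan is to reuse the error splitting $e^N = \eta + \xi$ from \eqref{eqn:def_el}, where $\eta = f - \cP f$ is the angular truncation error and $\xi = \cP f - f^N$ solves the inhomogeneous $P_N$ system \eqref{eqn:PN_error} with zero initial data. I will show separately that $\|\eta\|_{L^2(d\mu\,dx)}(t) \leq C_1 N^{-q}$ and $\|\xi\|_{L^2(d\mu\,dx)}(t) \leq C_2\,\sqrt{t}\,N^{-q}$, with $C_1,C_2$ depending on $g$ and $q$ but \emph{not} on $\eps$; the triangle inequality then yields \eqref{eqn:spectral_bound}. The decisive point throughout is that every Sobolev bound on $f$ is supplied by Lemma \ref{thm:est_der_f}, whose constants are $\eps$-independent; this is exactly what upgrades the classical, $\eps$-ignorant spectral estimate to one that is uniform in $\eps \in [0,1]$.

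The first ingredient is a purely angular Legendre estimate: there is a constant $c_q$, depending only on $q$, such that $\|(I-\cP)u\|_{L^2(d\mu)} \leq c_q N^{-q}\|\partial_\mu^q u\|_{L^2(d\mu)}$ for sufficiently smooth $u$. I would prove this by iterating the Bonnet identity $(2\ell+1)P_\ell = P_{\ell+1}' - P_{\ell-1}'$ for the unnormalized Legendre polynomials $P_\ell$ inside the coefficient integral $u_\ell = \int_{-1}^1 u\,p_\ell\,d\mu$ and integrating by parts; the boundary contributions vanish because $P_{\ell+1}(\pm1) = P_{\ell-1}(\pm1)$, so after $q$ steps each coefficient $u_\ell$ is expressed through coefficients of $\partial_\mu^q u$ with a prefactor of size $\mO(\ell^{-q})$. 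Squaring and summing the tail over $\ell > N$ gives the claim. Applying this pointwise in $x$ to $u = f(x,\cdot,t)$ and integrating over $x$ gives $\|\eta\|_{L^2(d\mu\,dx)}(t) \leq c_q N^{-q}\,|f|_{0,q}(t)$, and Lemma \ref{thm:est_der_f} with $s=q$ bounds $|f|_{0,q}(t) \leq (q+1)!\,|g|_{V^q(d\mu\,dx)}$ uniformly in $t$ and $\eps$.

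For $\xi$, I would run a direct energy estimate on \eqref{eqn:PN_error}. Multiplying the $\ell$th equation by $\xi_\ell$, integrating in $x$, and summing over $0 \leq \ell \leq N$, the advection couplings telescope and cancel by periodicity (the system is symmetric hyperbolic), leaving
\beq
\frac{\eps}{2}\frac{d}{dt}\|\xi\|_{L^2(d\mu\,dx)}^2 + \frac{1}{\eps}\sum_{\ell=1}^N \|\xi_\ell\|_{L^2(dx)}^2 = -a_N \int_{-\pi}^{\pi} \partx \fNpo\,\xi_N\,dx .
\eeq
A Young split of the right-hand side as $\tfrac{1}{2\eps}\|\xi_N\|^2 + \tfrac{\eps a_N^2}{2}\|\partx \fNpo\|^2$ lets the first term be absorbed into the dissipation, and the factors of $\eps$ then cancel, leaving $\frac{d}{dt}\|\xi\|^2 \leq a_N^2\|\partx\fNpo\|_{L^2(dx)}^2$. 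Integrating and using $\xi(0)=0$ together with $a_N \leq 1/\sqrt3$ from \eqref{eqn:as} gives $\|\xi\|^2(t) \leq \tfrac13\,t\max_{[0,t]}\|\partx\fNpo\|_{L^2(dx)}^2$. Since $\fNpo$ is a single Legendre coefficient of $\partx f$, the angular estimate yields $\|\partx\fNpo\|_{L^2(dx)} \leq \|(I-\cP)\partx f\|_{L^2(d\mu\,dx)} \leq c_q N^{-q}|f|_{1,q}$, and Lemma \ref{thm:est_der_f} with $s=q+1$ bounds $|f|_{1,q} \leq (q+1)!\,|g|_{V^{q+1}(d\mu\,dx)}$ uniformly in $t$ and $\eps$; this is exactly where the hypothesis $g \in H^{1+q}$ enters (one $x$-derivative for the source, $q$ angular derivatives for the decay). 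Collecting the two bounds proves \eqref{eqn:spectral_bound}.

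The steps are individually routine, and the main obstacle is to make the $\eps$-uniformity airtight. One must verify that absorbing $\tfrac{1}{2\eps}\|\xi_N\|^2$ into the $O(1/\eps)$ dissipation genuinely removes all powers of $\eps$ from the $\xi$-estimate (it does, once the $\eps$ in front of $\frac{d}{dt}$ has cancelled), and that the angular estimate, being purely in $\mu$, introduces no hidden $\eps$-dependence. In this way all $\eps$-sensitivity is confined to the derivative bounds of Lemma \ref{thm:est_der_f}, which are uniform by construction, and the $\sqrt{t}$ in \eqref{eqn:spectral_bound} is seen to originate solely from the time integration in the $\xi$-estimate.
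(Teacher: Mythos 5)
Your proposal is correct and follows essentially the same route as the paper's proof: the same splitting $f - f^N = \eta + \xi$, the same energy estimate for $\xi$ (your moment-by-moment computation with telescoping advection terms is exactly the paper's integration of the projected equation \eqref{eqn:eqn_xi} against $\xi$, written in the Legendre basis, with the same Young-inequality absorption into the dissipation and the same $\sqrt{t}$ arising from time integration), and the same use of Lemma \ref{thm:est_der_f} to make every constant uniform in $\eps$. The only cosmetic differences are that you sketch a proof of the Legendre approximation inequality via Bonnet's identity where the paper cites it from the literature, and that you keep the sharper source bound $a_N\|\partx \fNpo\|_{L^2(dx)}$ in place of the paper's bound $\|\mu\partx\eta\|_{L^2(d\mu dx)}$, neither of which changes the argument.
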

\begin{proof}
	We begin by estimating $\xi = Pf - f^N$ in terms of $\eta$.  A direct calculation using \eqref{eqn:transport_simple} and \eqref{eqn:spectral} shows that
	\beq\label{eqn:eqn_xi}
	\partt \xi + \frac{1}{\eps} \cP \left( \mu \partx \xi \right) + \frac{1}{\eps^2} (\xi  - \bar{\xi}) = -\frac{1}{\eps} \cP \left(  \mu \partx \eta \right),
	\eeq
	which is equivalent to the system \eqref{eqn:PN_error}. Integrating \eqref{eqn:eqn_xi} against $\xi$ on the left gives
	\begin{align}
		\frac{1}{2} \partt \| \xi \|_{L^2(d \mu dx)}^2 + \frac{1}{\eps^2} \| \tilde{\xi} \|_{L^2(d \mu dx)}^2
		&= -\frac{1}{\eps} \iint \xi \, \cP \left( \mu \partx \eta \right)  \, d\mu dx \nonumber \\
		&= -\frac{1}{\eps} \iint \xi  \, \mu \partx \eta   \, d\mu dx 
		\nonumber \\
		&=   -\frac{1}{\eps} \iint \bar{\xi} \, \mu \partx \eta \, d\mu dx
		-\frac{1}{\eps} \iint \tilde{\xi}  \, \mu \partx \eta   \, d\mu dx, \label{eqn:eqn_1}
	\end{align}
	where $\tilde{\xi} = \xi - \bar{\xi}$.  
	For $N  \geq 1$, $\mu$ and  $\eta$ are orthogonal; hence the first term in the last line of \eqref{eqn:eqn_1} is zero.   Meanwhile Young's inequality yields a bound on the second term:
	\beq
	-\frac{1}{\eps} \iint \tilde{\xi}  \, \mu \partx \eta   \, d\mu dx 
	\leq \frac{1}{2\eps^2} \| \tilde{\xi} \|_{L^2(d \mu dx)}^2 + \frac{1}{2} \|  \mu \partx \eta \|_{L^2(d \mu dx)}^2.
	\eeq
	Hence, \eqref{eqn:eqn_1} reduces to
	\beq 
	\partt \| \xi \|_{L^2(d \mu dx)}^2 + \frac{1}{\eps^2} \| \tilde{\xi} \|_{L^2(d \mu dx)}^2
	\leq  \|  \mu \partx \eta \|_{L^2(d \mu dx)}^2, 
	\eeq
	and therefore,
	\beq \label{ineqn:xi}
	\partt \| \xi \|_{L^2(d \mu dx)}^2 
	\leq  \|  \mu \partx \eta \|_{L^2(d \mu dx)}^2.
	\eeq
	Since $\xi|_{t=0}=0$, integrating \eqref{ineqn:xi} in time gives
	\beq
	\label{eqn:bound_in_eta}
	\| \xi \|_{L^2(d \mu dx)}^2(t) 
	\leq t \sup_{\tau \geq 0} \left \{ \|  \mu \partx \eta \|_{L^2(d \mu dx)}^2(\tau)\right\}
	\leq t \sup_{\tau \geq 0} \left\{ \| \partx \eta \|_{L^2(d \mu dx)}^2(\tau)\right\}
	\eeq
	Thus it remains only to bound $ \|  \partx \eta \|_{L^2(d \mu dx)}$.

	We now turn to polynomial approximation theory:  given a function $\psi \in H^q(d \mu)$, where $H^q(d \mu)$ is the Sobolev space of functions with $q$ weak derivatives in $L^2(d \mu)$, there exists a constant $K_1 > 0$ such that  \cite[Lemma 2.2]{ben1998spectral}
	\begin{equation}\label{eqn:bounds_eta_u}
		\|\psi - \cP \psi \|_{L^2(d \mu)} \leq K_1 \| \psi \|_{H^q(d \mu)} N^{-q}.
	\end{equation}
	We apply this result to $\partx \eta = \partial_x f - \cP \partial_x f$, using also Lemma \ref{thm:est_der_f}, to find that
		\begin{align}
		\|  \partx \eta \|_{L^2(d \mu dx)} (\tau)
		&\leq   K_1 \| \partx f \|_{L^2(dx;H^q (d \mu))} (\tau) N^{-q} \nonumber\\
		&=  K_1  \left(\sum\limits_{0 \leq r \leq q}  | f |_{{1,r}} (\tau) \right) \, N^{-q} \nonumber\\
		&\leq  K_1  \sum\limits_{0 \leq r \leq q} (r+1)!\, | g |_{V^{1+r}(d \mu dx)} \, N^{-q} \nonumber\\
		&\leq K_1   (q+1)!\,  \| g \|_{ H^{1+q}(d \mu dx)} \, N^{-q}.
		\label{eqn:poly_error_f}
		\end{align}
	This bound is independent of $t$.   Thus combining \eqref{eqn:bound_in_eta} and \eqref{eqn:poly_error_f} gives
	\beq
	\label{eqn:xi_final}
	\| \xi \|_{L^2(d \mu dx)}(t) \leq  K_1 (q+1)! \, \, t^{1/2}  \| g \|_{ H^{1+q}(d \mu dx)} N^{-q}.
	\eeq

To complete the proof, we estimate $\eta = f - \cP f$ using \eqref{eqn:bounds_eta_u} and Lemma \ref{thm:est_der_f}, 
\begin{equation}
\label{eqn:eta_final}
\| \eta\|_{L^2(d \mu dx)} (t) \leq K_1 \| f \|_{L^2(dx;H^q(d \mu))}(t) N^{-q} 
\leq  K_1  (q+1)! \, \| g \|_{ H^{q}(d \mu dx)} N^{-q} , \quad \forall t \geq 0. 
\end{equation}
Combining \eqref{eqn:xi_final} and \eqref{eqn:eta_final} recovers \eqref{eqn:spectral_bound} with 
$C = K_1 (q+1)! \, \| g \|_{ H^{1+q}(d \mu dx)}$. 

\end{proof}

\bibliographystyle{abbrv}
\bibliography{refer}

\begin{thebibliography}{10}

\bibitem{advanpixmultiprecision}
{Multiprecision Computing Toolbox for MATLAB 4.3.3.12177, Advanpix LLC.,
  Yokohama, Japan}.

\bibitem{bardos1984diffusion}
C.~Bardos, R.~Santos, and R.~Sentis.
\newblock Diffusion approximation and computation of the critical size.
\newblock {\em Transactions of the american mathematical society},
  284(2):617--649, 1984.

\bibitem{ben1998spectral}
G.~Ben-Yu.
\newblock {\em Spectral methods and their applications}.
\newblock World Scientific, 1998.

\bibitem{bensoussan1979boundary}
A.~Bensoussan, J.~L. Lions, and G.~C. Papanicolaou.
\newblock Boundary layers and homogenization of transport processes.
\newblock {\em Publications of the Research Institute for Mathematical
  Sciences}, 15(1):53--157, 1979.

\bibitem{boyd2003physics}
T.~J.~M. Boyd and J.~J. Sanderson.
\newblock {\em The physics of plasmas}.
\newblock Cambridge University Press, 2003.

\bibitem{brezis2010functional}
H.~Brezis.
\newblock {\em Functional analysis, Sobolev spaces and partial differential
  equations}.
\newblock Springer Science \& Business Media, 2010.

\bibitem{canuto2010spectral}
C.~G. Canuto, M.~Y. Hussaini, A.~Quarteroni, and T.~A. Zang.
\newblock {\em Spectral methods: Fundamentals in single domains}.
\newblock Springer, 2010.

\bibitem{case1967linear}
K.~M. Case and P.~F. Zweifel.
\newblock {\em Linear transport theory}.
\newblock Addison-Wesley, 1967.

\bibitem{Cercignani-1988}
C.~Cercignani.
\newblock {\em The {B}oltzmann Equation and its Applications}, volume~67 of
  {\em Applied Mathematical Sciences}.
\newblock Springer-Verlag, New York, 1988.

\bibitem{Cercignani-Illner-Pulvirenti-1994}
C.~Cercignani, R.~Illner, and M.~Pulvirenti.
\newblock {\em The Mathematical Theory of Dilute Gases}, volume 106 of {\em
  Applied Mathematical Sciences}.
\newblock Springer-Verlag, New York, 1994.

\bibitem{chapman1970mathematical}
S.~Chapman and T.~G. Cowling.
\newblock {\em The mathematical theory of non-uniform gases: an account of the
  kinetic theory of viscosity, thermal conduction and diffusion in gases}.
\newblock Cambridge university press, 1970.

\bibitem{dautray2012mathematical}
R.~Dautray and J.-L. Lions.
\newblock {\em Mathematical Analysis and Numerical Methods for Science and
  Technology: Volume 1 Physical Origins and Classical Methods}.
\newblock Springer Science \& Business Media, 2012.

\bibitem{davison1957neutron}
B.~Davison and J.~B. Sykes.
\newblock Neutron transport theory.
\newblock 1957.

\bibitem{Dolbeault:2015fj}
J.~Dolbeault, C.~Mouhot, and C.~Schmeiser.
\newblock {Hypocoercivity for linear kinetic equations conserving mass}.
\newblock {\em Transactions of the American Mathematical Society},
  367(6):3807--3828, 2015.

\bibitem{evans1998partial}
L.~Evans.
\newblock {\em Partial differential equations}.
\newblock American Mathematical Society, 1998.

\bibitem{frank2016convergence}
M.~Frank, C.~Hauck, and K.~Kuepper.
\newblock Convergence of filtered spherical harmonic equations for radiation
  transport.
\newblock {\em Commun. Math. Sci}, 14(5):1443--1465, 2016.

\bibitem{Habetler-Matkowsky-1975}
G.~J. {Habetler} and B.~J. {Matkowsky}.
\newblock {Uniform asymptotic expansions in transport theory with small mean
  free paths, and the diffusion approximation}.
\newblock {\em Journal of Mathematical Physics}, 16:846--854, Apr. 1975.

\bibitem{hauck2009temporal}
C.~D. Hauck and R.~B. Lowrie.
\newblock Temporal regularization of the p\_n equations.
\newblock {\em Multiscale Modeling \& Simulation}, 7(4):1497--1524, 2009.

\bibitem{hazeltine2004framework}
R.~D. Hazeltine and F.~L. Waelbroeck.
\newblock {\em The framework of plasma physics}.
\newblock Westview, 2004.

\bibitem{hesthaven2007spectral}
J.~S. Hesthaven, S.~Gottlieb, and D.~Gottlieb.
\newblock {\em Spectral methods for time-dependent problems}, volume~21.
\newblock Cambridge University Press, 2007.

\bibitem{Larsen-Keller-1974}
E.~W. {Larsen} and J.~B. {Keller}.
\newblock {Asymptotic solution of neutron transport problems for small mean
  free paths}.
\newblock {\em Journal of Mathematical Physics}, 15:75--81, Jan. 1974.

\bibitem{larsen1996asymptotic}
E.~W. Larsen, J.~E. Morel, and J.~M. McGhee.
\newblock Asymptotic derivation of the multigroup p 1 and simplified pn
  equations with anisotropic scattering.
\newblock {\em Nuclear science and engineering}, 123(3):328--342, 1996.

\bibitem{lewis1984computational}
E.~E. Lewis and W.~F. Miller.
\newblock {\em Computational methods of neutron transport}.
\newblock John Wiley and Sons, Inc., New York, NY, 1984.

\bibitem{Lewis-Miller-1984}
E.~E. Lewis and W.~F. Miller.
\newblock {\em Computational Methods of Neutron Transport}.
\newblock John Wiley and Sons, 1984.

\bibitem{Markovich-Ringhofer-Schmeiser-1998}
P.~A. Markowich, C.~A. Ringhofer, and C.~Schmeiser.
\newblock {\em {S}emiconductor {E}quations}.
\newblock Springer-Verlag, New York, 1990.

\bibitem{mezzacappa1999neutrino}
A.~Mezzacappa and O.~Messer.
\newblock Neutrino transport in core collapse supernovae.
\newblock {\em Journal of Computational and Applied Mathematics},
  109(1):281--319, 1999.

\bibitem{mihalas1999foundations}
D.~Mihalas and B.~Weibel-Mihalas.
\newblock {\em Foundations of radiation hydrodynamics}.
\newblock Courier Corporation, 1999.

\bibitem{Pomraning-1973}
G.~C. Pomraning.
\newblock {\em Radiation Hydrodynamics}.
\newblock Pergamon Press, New York, 1973.

\bibitem{renardy2006introduction}
M.~Renardy and R.~C. Rogers.
\newblock {\em An introduction to partial differential equations}, volume~13.
\newblock Springer Science \& Business Media, 2006.

\bibitem{selberherr2012analysis}
S.~Selberherr.
\newblock {\em Analysis and simulation of semiconductor devices}.
\newblock Springer Science \& Business Media, 2012.

\end{thebibliography}
\end{document}